\let\C\undefined
\numberwithin{equation}{section}
\newtheorem{proposition}{Proposition}[section]
\newtheorem{theorem}[proposition]{Theorem}
\newtheorem{lemma}[proposition]{Lemma}
\newtheorem{corollary}[proposition]{Corollary}
\theoremstyle{definition}
\newtheorem{definition}[proposition]{Definition}
\newtheorem{remark}[proposition]{Remark}
\DeclareMathOperator{\dist}{dist}
\DeclareMathOperator{\tr}{tr}
\newcommand{\restr}[1]{\vert_{#1}}
\newcommand{\Deriv}{\mathrm{D}}
\newcommand{\defeq}{\coloneqq}
\newcommand{\Nset}{\mathbb{N}}
\newcommand{\Zset}{\mathbb{Z}}
\newcommand{\Rset}{\mathbb{R}}
\newcommand{\Sset}{\mathbb{S}}
\newcommand{\RPset}{\mathbb{RP}}
\newcommand{\Bset}{\mathbb{B}}
\newcommand{\dif}{\,\mathrm{d}}
\newcommand{\compose}{\,\circ\,}
\newcommand{\manifold}[1]{\mathcal{#1}}
\newcommand{\lifting}[1]{\smash{\widetilde{#1}}}
\DeclarePairedDelimiter{\brk}{(}{)}
\DeclarePairedDelimiter{\sqb}{[}{]}
\DeclarePairedDelimiter{\abs}{\lvert}{\rvert}
\DeclarePairedDelimiter{\seminorm}{\lvert}{\rvert}\DeclarePairedDelimiter{\norm}{\lVert}{\rVert}
\DeclarePairedDelimiter{\cycle}{\llbracket}{\rrbracket}
\DeclarePairedDelimiterX{\intvc}[2]{[}{]}{#1,#2}
\DeclarePairedDelimiterX{\intvl}[2]{(}{]}{#1,#2}
\DeclarePairedDelimiterX{\intvr}[2]{[}{)}{#1,#2}
\DeclarePairedDelimiterX{\intvo}[2]{(}{)}{#1,#2}
\newcommand{\VMO}{\mathrm{VMO}}
\newcommand{\BMO}{\mathrm{BMO}}
\newcommand{\energy}{\mathfrak{E}}
\newcommand{\sobolev}{\mathrm{W}}
\newcommand{\Lip}{\mathrm{Lip}}
\newcommand{\lebesgue}{\mathrm{L}}
\newcommand{\continuous}{\mathrm{C}}
\newcommand{\jac}{\mathcal{J}}
\newcommand{\eqpunct}[1]{\,\text{#1}}
\DeclareMathOperator*{\area}{Area}
\newcommand\stSymbol[1][]{%
\nonscript\;#1\vert
\allowbreak
\nonscript\;
\mathopen{}}
\DeclarePairedDelimiterX\set[1]\{\}{%
\renewcommand\st{\stSymbol[\delimsize]}
#1
}
\providecommand{\st}{\stSymbol}
\renewcommand{\PrintDOI}[1]{%
  \href{http://dx.doi.org/#1}{doi:#1}%
}
\newcommand{\PrintarXiv}[1]{%
  \href{https://arxiv.org/abs/#1}{arXiv:#1}%
}
\title%
{%
Heterotopic energy for Sobolev mappings%
}
\author{Antoine Detaille}
\address[A. Detaille]{
Universite Claude Bernard Lyon 1\\ CNRS\\ Centrale Lyon\\ INSA Lyon\\ Université Jean Monnet\\ ICJ UMR5208\\
69622 Villeurbanne\\
France}
\curraddr[A. Detaille]{
ETH Z\"urich\\
Departement Mathematik\\
R\"amistrasse 101\\
8092 Z\"urich \\
Switzerland
}
\email{antoine.detaille@math.ethz.ch }
\author{Jean Van Schaftingen}
\address[J. Van Schaftingen]{
Universit\'e catholique de Louvain, Institut de Recherche en Math\'ematique et Physique, Chemin du Cyclotron 2 bte L7.01.01, 1348 Louvain-la-Neuve, Belgium}
\dedicatory{To the blessed memory of Haïm Brezis,
creative, generous and radiant master of mathematics}
\email{Jean.VanSchaftingen@UCLouvain.be}
\thanks{J. Van Schaftingen was supported by the Projet de Recherche T.0229.21 ``Singular Harmonic Maps and Asymptotics of Ginzburg--Landau Relaxations'' of the Fonds de la Recherche Scientifique--FNRS}
\keywords{}
\subjclass[2020]{58D15 (46E35, 46T10, 58C25)}
\begin{document}

\begin{abstract}
We study the notion of heterotopic energy defined as the limit of Sobolev energies of Sobolev mappings in a given homotopy class approximating almost everywhere a given Sobolev mapping.
We show that the heterotopic energy is finite if and only if the mappings in the corresponding homotopy classes are homotopic on a codimension one skeleton of a triangulation of the domain. When this is the case, the heterotopic energy of a mapping is the sum of its Sobolev energy and its disparity energy, defined as the minimum energy of a bubble to pass between these homotopy classes. 
At the more technical level, we rely on a framework that works when the target and domain manifolds are not simply connected and there is no canonical isomorphism between homotopy groups with different basepoints.
\end{abstract}
\maketitle

\section{Introduction}

Given compact Riemannian manifolds \(\manifold{M}\) and \(\manifold{N}\) (without boundary) with \(m = \dim \manifold{M} \ge 2\) and maps \(u, v \in \continuous^\infty \brk{\manifold{M}, \manifold{N}}\), we are interested in the \emph{heterotopic energy} defined as 
\begin{equation*}
  \energy^{1, m}_{\mathrm{het}}
  \brk{u, v}
  \defeq
  \inf
  \set[\bigg]{
  \liminf_{j \to \infty}
  \int_{\manifold{M}} \abs{\Deriv v_j}^m
  \st
  v_j \in \continuous^\infty \brk{\manifold{M}, \manifold{N}} \text{ is homotopic to } v \text{ and }
  v_j \to u
  }\eqpunct{.}
\end{equation*}
In other words, the heterotopic energy quantifies the cost of approximating a given map with mappings from a fixed homotopy class.
Obviously, this quantity will be mostly interesting only when \(u\) and \(v\) are not homotopic.
In this introduction, we restrict to smooth maps in our definitions and statements for the sake of simplicity, especially when speaking about homotopy classes;
the definition and the discussion extend to a lower regularity setting, as we will carefully discuss in the body of the text starting from \cref{definition_heter_energy}.

Even though, to the best of our knowledge, the notion of heterotopic energy has never been defined as such in the existing literature, such a problem of finding the minimal energy cost for approximating a map from another homotopy class is a pervasive question in the realm of mappings into manifolds.
This is our main motivation for introducing this quantity and studying its main properties.
To be more specific, we list below a few possible applications, along with references. 

First, sequences of mappings converging weakly to a mapping in another homotopy class appear in bubbling phenomena for harmonic and \(p\)-harmonic maps \citelist{\cite{Sacks_Uhlenbeck_1981}\cite{Parker_1996}\cite{Struwe_1985}\cite{Brezis_Coron_1983}\cite{Brezis_Coron_1984}\cite{Lions_1985}*{\S 4.5}}.

Such sequences also appear naturally in the problem of weak approximation of Sobolev mappings in \(\sobolev^{1, p} \brk{\manifold{M}, \manifold{N}}\) where \(p \in \Nset\) and \(p < m = \dim \manifold{M}\), where weakly approximating sequences yield thanks to a Fubini argument and Fatou's lemma sequences converging weakly on \(p\)-dimensional submanifolds and subskeletons \citelist{\cite{Bethuel_2020}\cite{Hardt_Riviere_2003}\cite{Hardt_Riviere_2008}\cite{Pakzad_Riviere_2003}}.
The definition of heterotopic energy shares many features of Bethuel, Brezis, and Coron’s relaxed energy \cite{Bethuel_Brezis_Coron_1990}.

Finally, this quantity provides a way of measuring the distance between homotopy classes, in the spirit of, notably, \citelist{\cite{Rubinstein_Shafrir_2007}\cite{Levi_Shafrir_2014}\cite{Brezis_Mironescu_Shafrir_2016_CRAS}\cite{Brezis_Mironescu_Shafrir_2016}\cite{Shafrir_2018}}, although with no apparent formal mathematical connection. 

The goal of this work is to characterize the heterotopic energy.
To make more natural the definitions that are necessary to state our main result, let us first present a naive strategy to obtain competitors in the definition of \( \energy^{1, m}_{\mathrm{het}}\brk{u, v} \) in the special case where \( \manifold{M} = \manifold{N} = \Sset^{m} \).
We assume for the sake of simplicity \( u \colon \Sset^m \to \Sset^m \) to be constant in some small geodesic ball \( \Bar{B}_{\rho}\brk{a} \subset \Sset^{m} \) with \(\rho\) sufficiently small;
it is possible to return to this situation thanks to the \emph{opening} procedure, that will be explained in~\cref{lemma_opening}.

Since homotopy classes of mappings from \( \Sset^{m} \) to \( \Sset^{m} \) are completely characterized by the degree of Brouwer, we can define \( v_{j}  \colon \Sset^m \to \Sset^m \) as the map obtained from \( u \) by inserting in the smaller ball \( \Bar{B}_{r_{j}}\brk{a} \), with \( 0 < r_{j} < \rho \) and \( r_{j} \to 0 \) as \( j \to \infty \), a map having degree equal to \( \deg{v} - \deg{u} \); with this definition, it is clear that \( v_{j} \) is homotopic to \( v \) and that \( v_{j} \to u \) as \( j \to \infty \).
Hence, \( v_{j} \) is a competitor for \( \smash{\energy^{1, m}_{\mathrm{het}}\brk{u, v}} \).
To obtain a competitor as good as possible, we are led to chose the map that we insert in the ball \( \smash{\Bar{B}_{r_{j}}\brk{a}} \) with an energy as small as possible.
In other words, we aim at taking a map that minimizes the \( \sobolev^{1,m} \) energy among all maps having degree equal to \( \deg{v} - \deg{u} \) and with fixed value on the boundary of the ball.

This seemingly naive strategy is at the core of our work.
To implement it in greater generality, we define, if \(u = w\) in \(\manifold{M} \setminus B_\rho\brk{a}\) with \(\rho\) sufficiently small, the \emph{topological disparity}
\(
  \sqb{u, w, B_\rho\brk{a}}
\) as the homotopy class of maps in \(\continuous^\infty \brk{\Sset^m, \manifold{N}}\) that are homotopic to a map given by \( \smash{u \restr{\Bar{B}_\rho\brk{a}}} \) on the northern hemisphere and by \( \smash{w \restr{\Bar{B}_\rho\brk{a}}} \) on the southern one (see §\ref{section_topological_disparity}).
The \emph{topological energy} of the disparity is then defined as (see §\ref{section_topological_energy})
\[
 \mathfrak{E}^{1, m}_{\mathrm{top}} \brk{ \sqb{u, w, B_\rho\brk{a}}}
 \defeq  
 \inf \set[\bigg]{\int_{\Sset^m} \abs{\Deriv f}^m \st f \in \sqb{u, w, B_\rho\brk{a}} \subseteq \continuous^{\infty} \brk{\Sset^m, \manifold{N}}}\eqpunct{.}
\]
Here and in what follows, we assume that we have fixed once for all an identification of the ball \( \overline{\Bset^m}\) with both  the northern and the southern hemispheres of \(\Sset^m\) that coincide on the equator.
Although the homotopy class \( \sqb{u, w, B_\rho\brk{a}}\) does depend on the choices of orientations when identifying the geodesic ball \( \Bar{B}_{\rho}\brk{a} \subset \Sset^m \) with \( \overline{\Bset^{m}} \), this is not the case for its energy \( \smash{\mathfrak{E}^{1, m}_{\mathrm{top}} \brk{ \sqb{u, w, B_\rho\brk{a}}}} \).
Indeed, choosing a different orientation for the identification of \( \smash{\Bar{B}_{\rho}\brk{a}} \) with \( \overline{\Bset^{m}} \) will result in reflecting the homotopy class \( \sqb{u, w, B_\rho\brk{a}}\) with respect to a hyperplane, which may modify the class, but not its topological energy, which is clearly invariant under isometries.
We define the \emph{disparity energy} of \(u \colon \manifold{M} \to \manifold{N}\) with respect to \(v\colon \manifold{M} \to \manifold{N}\) as (see §\ref{section_disparity_energy})
\begin{multline*}
\energy^{1, m}_{\mathrm{disp}} \brk{u, v}
 \defeq 
 \inf \Bigl\{\energy^{1, m}_{\mathrm{top}} \brk{\sqb{u, w, B_{\rho} \brk{a}}}  \stSymbol[\Big] w \in \continuous^\infty \brk{\manifold{M}, \manifold{N}} \text{ homotopic to \(v\) }\\[-.5em]
 \text{ and } u = w \text{ in \(\manifold{M} \setminus B_\rho\brk{a}\)}\Bigr\}\eqpunct{.}
\end{multline*}
The infimum above runs over all balls \( B_{\rho} \brk{a} \subset \manifold{M} \) with \( \rho \) sufficiently small, and all corresponding maps \( w \).
We will show that the disparity energy induces a distance on homotopy classes in \(\continuous^\infty \brk{\manifold{M}, \manifold{N}}\) (see \cref{proposition_disparity_distance} below).

Our first main result is that the energy disparity is essentially equivalent to the heterotopic energy.
\begin{theorem}
\label{theorem_intro_heterotopic_disparity}
For every \(u, v \in \continuous^\infty \brk{\manifold{M}, \manifold{N}}\),
\begin{equation}
\label{eq_oox5LeeVahZ0theevah1shai}
  \energy^{1, m}_{\mathrm{het}}
  \brk{u, v}
  = \int_{\manifold{M}} \abs{\Deriv u}^m
  + \energy^{1, m}_{\mathrm{disp}} \brk{u, v}\eqpunct{.}
\end{equation}
In particular, \(\energy^{1, m}_{\mathrm{het}}
  \brk{u, v} < \infty\) if and only if \(u\) and \(v\) are homotopic on an \(\brk{m-1}\)-dimensional triangulation of \(\manifold{M}\).
\end{theorem}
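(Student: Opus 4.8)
For the upper bound in \eqref{eq_oox5LeeVahZ0theevah1shai}, the plan is to make rigorous the ``naive strategy'' of the introduction. If $u$ and $v$ are not homotopic on an $(m-1)$-skeleton of $\manifold{M}$, then no admissible $w$ exists and $\energy^{1,m}_{\mathrm{disp}}(u,v)=+\infty$, so there is nothing to prove; otherwise, given $\varepsilon>0$, we fix $w\in\continuous^\infty(\manifold{M},\manifold{N})$ homotopic to $v$ with $u=w$ on $\manifold{M}\setminus B_\rho(a)$ for some small geodesic ball, and $f\in\sqb{u,w,B_\rho(a)}\subseteq\continuous^\infty(\Sset^m,\manifold{N})$ with $\int_{\Sset^m}\abs{\Deriv f}^m\le\energy^{1,m}_{\mathrm{top}}(\sqb{u,w,B_\rho(a)})+\varepsilon$. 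Using \cref{lemma_opening} to assume that $u$ is constant near $a$, we build $v_j$ equal to $u$ on $\manifold{M}\setminus B_\rho(a)$, equal to a conformally rescaled copy of $f$ (through the fixed identification of $\overline{\Bset^m}$ with a hemisphere) on $B_{r_j}(a)$ with $r_j\to 0$, and interpolating logarithmically in the intermediate annulus. Conformal invariance of the $m$-energy keeps the bubble's contribution equal to $\int_{\Sset^m}\abs{\Deriv f}^m$, the annular transition costs $o(1)$, each $v_j$ is smooth, converges to $u$ almost everywhere, and is homotopic to $v$ by the very definition of the topological disparity class; hence $\energy^{1,m}_{\mathrm{het}}(u,v)\le\int_{\manifold{M}}\abs{\Deriv u}^m+\energy^{1,m}_{\mathrm{top}}(\sqb{u,w,B_\rho(a)})+\varepsilon$, and taking the infimum over $w$ and then $\varepsilon\to0$ gives ``$\le$''.

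For the lower bound, take $v_j\in\continuous^\infty(\manifold{M},\manifold{N})$ homotopic to $v$ with $v_j\to u$ almost everywhere and $L\defeq\liminf_j\int_{\manifold{M}}\abs{\Deriv v_j}^m<\infty$. After passing to a subsequence attaining $L$ along which $\abs{\Deriv v_j}^m\dif x$ converges weakly-$*$ to a Radon measure $\mu$, compactness of $\manifold{M}$ gives $L=\mu(\manifold{M})$, while lower semicontinuity and a.e.\ convergence give $u\in\sobolev^{1,m}(\manifold{M},\manifold{N})$ and $\mu\ge\abs{\Deriv u}^m\dif x$; write $\mu=\abs{\Deriv u}^m\dif x+\nu$ with $\nu\ge0$, so that it remains to show $\nu(\manifold{M})\ge\energy^{1,m}_{\mathrm{disp}}(u,v)$. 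A concentration--compactness and Courant--Lebesgue analysis produces a finite set $\Sigma=\set{a_1,\dots,a_k}$ (the atoms of $\nu$ above a quantization threshold $\varepsilon_{\manifold{N}}>0$) and, for small disjoint balls $B_\rho(a_i)$, good radii $s_i\in(\rho/2,\rho)$ along which $v_j\restr{\partial B_{s_i}(a_i)}$ is homotopic to $u\restr{\partial B_{s_i}(a_i)}$ for $j$ large. Pasting $u$ on one hemisphere and $v_j\restr{B_{s_i}(a_i)}$ on the other, joined by a negligible-energy neck coming from the strong $\sobolev^{1,m}$ convergence on the good slice, yields sphere maps in a well-defined class $\mathcal{C}_i$ with energy at most $\int_{B_\rho(a_i)}\abs{\Deriv v_j}^m+o(1)+C\rho^m$; the classes $\mathcal{C}_i$ account for all the topology separating $u$ from $v$, so by additivity of the topological disparity their ``sum'' is realized as $\sqb{u,w,B_\rho(a)}$ for a single ball and some $w$ homotopic to $v$. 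Combining $\sum_i\energy^{1,m}_{\mathrm{top}}(\mathcal{C}_i)\ge\energy^{1,m}_{\mathrm{disp}}(u,v)$ with lower semicontinuity of the energy on $\manifold{M}\setminus\bigcup_iB_\rho(a_i)$ and superadditivity of $\liminf$ gives $L\ge\int_{\manifold{M}\setminus\bigcup_iB_\rho(a_i)}\abs{\Deriv u}^m+\energy^{1,m}_{\mathrm{disp}}(u,v)-o_\rho(1)$, and $\rho\to0$ finishes ``$\ge$''.

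The finiteness criterion then follows at once: since $u\in\continuous^\infty(\manifold{M},\manifold{N})$ has finite $m$-energy, \eqref{eq_oox5LeeVahZ0theevah1shai} shows $\energy^{1,m}_{\mathrm{het}}(u,v)<\infty$ if and only if $\energy^{1,m}_{\mathrm{disp}}(u,v)<\infty$, that is, if and only if there is $w$ homotopic to $v$ with $u=w$ outside a ball; filling this in cell by cell over a triangulation (and using that the disparity energy over finitely many balls agrees with the one over a single ball) shows this is equivalent to $u$ and $v$ being homotopic on the $(m-1)$-skeleton of some triangulation of $\manifold{M}$.

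I expect the crux to be the lower bound, and within it the step extracting genuine competitors for the topological disparity energy out of a merely a.e.\ convergent, energy-bounded sequence: one must at the same time keep the ``neck'' joining $v_j$ near a concentration point to $u$ away from it of negligible energy --- which is where the strong $\sobolev^{1,m}$ convergence on carefully chosen Courant--Lebesgue slices, together with the opening procedure, is essential --- and track homotopy classes when the basepoint varies without any canonical identification of the homotopy groups, which is precisely the technical framework announced in the abstract; dealing with several concentration points at once moreover relies on the additivity of the topological disparity.
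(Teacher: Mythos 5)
Your proposal is correct and follows essentially the same two-pronged strategy as the paper: the upper bound via the opening procedure followed by insertion of an almost-optimal bubble (this is precisely \cref{proposition_upper_estimate} together with \cref{lemma_opening}), and the lower bound via a bubbling/concentration analysis extracting, from an a.e.~convergent energy-bounded sequence, a finite set of concentration points and well-defined topological defects (this is \cref{theorem_concentration_measures} and \cref{theorem_intro_heterotopic_concentration}), followed by the merging estimate \cref{proposition_Edis_controlled_several_bubbles}.

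One implementation difference worth flagging: to produce a well-defined homotopy class near a concentration point, you paste $u$ and $v_j$ hemispheres joined by a neck of negligible energy, whereas the paper proceeds slightly differently --- it fills the boundary data $\hat{v}_j\restr{\partial B_\rho\brk{a_i}}$ by a \emph{small-energy extension} (\cref{lemma_eta_extension}), shows that the resulting map $\Tilde{u}_j$ is homotopic to (a smoothing of) $u$ via the mixed Poincar\'e inequality (\cref{lemma_mixed_poincare}) and the quantitative homotopy criterion (\cref{proposition_relative_homotopy}), and then uses $\sqb{\Tilde u_j,\hat v_j,B_\rho\brk{a_i}}$ as the comparison class. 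This sidesteps the need to build a neck and keeps the two maps with \emph{identical} boundary data, which makes the class unambiguous. Relatedly, you leave implicit the stabilisation of the classes $\mathcal C_i$ as $j\to\infty$; the paper makes this rigorous via the discreteness of the range of $\energy^{1,m}_{\mathrm{top}}$ (\cref{proposition_properties_Etop}~\ref{it_Etop_discrete}), from which the estimate $\energy^{1,m}_{\mathrm{top}}\brk{\sqb{\Tilde u_j,\hat v_j,B_\rho\brk{a_i}}}\le\mu\brk{\Bar B_\rho\brk{a_i}}+C\brk{\rho+\varepsilon}$ is upgraded to a $j$- and $\rho$-independent bound $\le\mu\brk{\set{a_i}}$. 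Your ``additivity of topological disparity'' step matches \cref{label_merging} and \cref{proposition_Edis_controlled_several_bubbles}, and your closing observation about basepoint tracking and orientations is indeed what the formalism of §\ref{section_topological_disparity} is designed to handle. Finally, note that once $u$ has been opened so as to be constant near $a$, the bubble $f$ can be inserted with matching constant boundary value and no logarithmic transition annulus is needed; this is a harmless simplification that the paper's \cref{proposition_upper_estimate} exploits.
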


The necessity of the homotopy condition for  \( \smash{\energy^{1, m}_{\mathrm{het}}}
  \brk{u, v} < \infty\) in~\cref{theorem_intro_heterotopic_disparity}
is due
to Hang and Lin \cite{Hang_Lin_2003_III}*{Theorem 6.1} (see also \citelist{\cite{White_1988}*{Theorem 2.1}\cite{Hang_Lin_2003_II}*{Theorem 4.1}}).
This condition is part of the pervasive phenomenon of homotopic stability on lower-dimensional sets \citelist{\cite{White_1988}\cite{Brezis_Li_Mironescu_Nirenberg_1999}\cite{Rubinstein_Sternberg_1996}\cite{Hang_Lin_2003_II}}.

When \(\manifold{M} = \Sset^m\) or more generaly if \(\operatorname{id}_{\manifold{M}^{m - 1}}\) is homotopic to a constant in \(\continuous \brk{\manifold{M}^{m - 1}, \manifold{M}}\), then \( \smash{\energy^{1, m}_{\mathrm{het}}}
  \brk{u, v} < \infty\).
On the other hand, we will have \( \smash{\energy^{1, m}_{\mathrm{het}}
  \brk{u, v}} =\infty\) if, given \(u', v' \in \continuous \brk{\manifold{M}', \manifold{N}}\) that are not homotopic with \(\dim \manifold{M}' \le m - 1\) and \(\manifold{M} = \manifold{M}'\times \manifold{M}''\), we consider
  \(u\brk{x',x''} \defeq u' \brk{x'}\) and \(v \brk{x', x''} \defeq v' \brk{x'}\);
  this may happen for instance if \(\pi_{\ell} \brk{\manifold{N}}\not \simeq \set{0}\) with \(\manifold{M}' = \Sset^\ell\), and thus in particular if \(\manifold{N} = \Sset^\ell\): assume that an \( \brk{m-1} \)-dimensional triangulation \( \manifold{M}^{m-1} \) is chosen so that it contains a product of \( \manifold{M}' \) with a triangulation of \( \manifold{M}'' \) -- we note that being homotopic on a triangulation does not depend on the choice of the triangulation;
  in this case, \( u \) and \( v \) cannot be homotopic on \( \manifold{M}^{m-1} \), for otherwise it would restrict to a homotopy on \( \manifold{M}' \).
  We will also have \(\smash{\energy^{1, m}_{\mathrm{het}}
  \brk{u, v}} =\infty\) if \( u \) is constant and \( v \in \continuous^\infty \brk{\RPset^m, \RPset^n}\) is the canonical injection of \(\RPset^m\) in \(\RPset^n\) with \(m \le n\), since \(v \restr{\RPset^1}\) is then the generator of \(\pi_1 \brk{\RPset^n}\).

  \medskip

The identity \eqref{eq_oox5LeeVahZ0theevah1shai} in~\cref{theorem_intro_heterotopic_disparity} states that one \emph{cannot do better} than the naive strategy that we have just presented: modifying the map \( u \) only in a small ball to obtain maps homotopic to \( v \) and converging to \( u \) yields the optimal contribution to \( \energy^{1, m}_{\mathrm{het}}\brk{u, v} \).

In some special situations, the expression of the heterotopic energy can be simplified, and allows to recover some familiar formulas.
\Cref{corollary_intro_spheres}, \cref{corollary_intro_projective}, and \cref{corollary_intro_area} below follow directly from \cref{theorem_intro_heterotopic_disparity}, the definition of the disparity energy, and \cref{proposition_Etop_sphere}, \cref{proposition_Etop_projective}, or \cref{proposition_Etop_area} respectively.

A model example of \cref{theorem_intro_heterotopic_disparity}  deals with mappings into spheres, where the heterotopic energy can be computed from the difference of degree between the maps \( u \) and \( v \).

\begin{corollary}[Mappings into spheres]
\label{corollary_intro_spheres}
	For every \( u, v \in \continuous^{\infty}\brk{\manifold{M}, \Sset^{m}} \),
	\begin{enumerate}[label=(\roman*)]
		\item if \( \manifold{M} \) is orientable, then 
		\[
			\energy^{1, m}_{\mathrm{het}}
			\brk{u, v}
			= \int_{\manifold{M}} \abs{\Deriv u}^m
			+ m^{m/2}\abs{\Sset^{m}}\abs{\deg{u}-\deg{v}}\eqpunct{,}
		\]
		\item if \( \manifold{M} \) is not orientable, then
		\[
			\energy^{1, m}_{\mathrm{het}}
			\brk{u, v}
			=
			\left\{
			\begin{aligned}
				&\int_{\manifold{M}} \abs{\Deriv u}^m && \text{if \( u \) and \( v \) are homotopic,}\\
				&\int_{\manifold{M}} \abs{\Deriv u}^m
				+ m^{m/2}\abs{\Sset^{m}} & &\text{otherwise.}
			\end{aligned}
			\right.
		\]
	\end{enumerate}
\end{corollary}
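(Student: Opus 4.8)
My plan is to deduce the statement from \cref{theorem_intro_heterotopic_disparity} by computing the disparity energy, the only missing ingredient being a degree count for the glued maps defining the topological disparity. By \cref{theorem_intro_heterotopic_disparity} we have \(\energy^{1,m}_{\mathrm{het}}\brk{u,v} = \int_{\manifold{M}} \abs{\Deriv u}^m + \energy^{1,m}_{\mathrm{disp}}\brk{u,v}\), so everything reduces to evaluating \(\energy^{1,m}_{\mathrm{disp}}\brk{u,v}\). Any topological disparity \(\sqb{u,w,B_\rho\brk{a}}\) is a homotopy class of maps \(\Sset^m \to \Sset^m\), hence is classified by a Brouwer degree \(d \in \Zset \cong \pi_m\brk{\Sset^m}\), and \cref{proposition_Etop_sphere} gives \(\energy^{1,m}_{\mathrm{top}}\brk{\sqb{u,w,B_\rho\brk{a}}} = m^{m/2} \abs{\Sset^m} \abs{d}\). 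Thus I would reduce the computation of \(\energy^{1,m}_{\mathrm{disp}}\brk{u,v}\) to determining the degrees \(d\) realised by the admissible competitors, namely the \(w \in \continuous^\infty\brk{\manifold{M},\Sset^m}\) homotopic to \(v\) with \(u = w\) outside \(B_\rho\brk{a}\), and to minimising \(\abs{d}\) over them.

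The core step is a degree bookkeeping. Given an admissible \(w\), let \(f \colon \Sset^m \to \Sset^m\) be equal to \(u\restr{\Bar{B}_\rho\brk{a}}\) on the northern hemisphere and to \(w\restr{\Bar{B}_\rho\brk{a}}\) on the southern one; this is well defined and continuous because \(u = w\) on \(\partial B_\rho\brk{a}\), and it represents \(\sqb{u,w,B_\rho\brk{a}}\). I would evaluate \(d = \deg f\) by counting signed preimages of a value \(y \in \Sset^m\) which is a common regular value of \(u\) and \(w\) and avoids \(u\brk{\partial B_\rho\brk{a}}\): the preimages of \(y\) in the two open hemispheres are exactly those of \(u\) and of \(w\) inside \(B_\rho\brk{a}\), counted with signs that are opposite for the two hemispheres since the fixed identifications of \(\overline{\Bset^m}\) with them coincide on the equator. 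As \(u = w\) outside \(B_\rho\brk{a}\), the preimages of \(y\) lying outside the ball carry the same data for \(u\) and for \(w\) and cancel out; when \(\manifold{M}\) is orientable this yields \(\abs{d} = \abs{\deg u - \deg w} = \abs{\deg u - \deg v}\) (using \(\deg w = \deg v\)), the precise value of \(d\) depending on orientation conventions that, as noted after the definition of the topological energy, do not affect the energy; when \(\manifold{M}\) is non-orientable the same count read modulo \(2\) gives \(d \equiv \deg_2 u + \deg_2 v \pmod 2\), with \(\deg_2\) the mod \(2\) degree, so that \(d\) is odd exactly when \(u\) and \(v\) fail to be homotopic. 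I expect this bookkeeping --- the opposite orientations of the hemispheres and the exact cancellation on the set where \(u\) and \(w\) agree --- to be the only genuinely delicate point, the rest being standard degree and obstruction theory.

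If \(\manifold{M}\) is orientable, I would first note that admissible competitors exist: since \(\pi_k\brk{\Sset^m} = \set{0}\) for \(0 \le k \le m-1\), any two maps \(\manifold{M} \to \Sset^m\) are homotopic on an \(\brk{m-1}\)-dimensional triangulation of \(\manifold{M}\), and, for \(\rho\) sufficiently small, a competitor is obtained by opening \(u\) near \(a\) (\cref{lemma_opening}) so that it is constant on \(\Bar{B}_\rho\brk{a}\) and then inserting in that ball a bubble of degree \(\deg v - \deg u\), as in the construction of competitors behind \cref{theorem_intro_heterotopic_disparity}. Every admissible \(w\) satisfies \(\deg w = \deg v\), so by the core step \(\abs{d} = \abs{\deg u - \deg v}\) for all of them, and \cref{proposition_Etop_sphere} gives \(\energy^{1,m}_{\mathrm{disp}}\brk{u,v} = m^{m/2}\abs{\Sset^m}\abs{\deg u - \deg v}\); substituting into \cref{theorem_intro_heterotopic_disparity} yields~(i).

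If \(\manifold{M}\) is non-orientable, Hopf's classification theorem identifies the homotopy classes of maps \(\manifold{M} \to \Sset^m\) with \(H^m\brk{\manifold{M};\Zset} \cong \Zset/2\Zset\), the identification being the mod \(2\) degree. If \(u\) and \(v\) are homotopic, then \(w = u\) is admissible and \(f\) maps both hemispheres by the same map \(u\restr{\Bar{B}_\rho\brk{a}}\) with opposite orientations, so \(\deg f = 0\) and \(\energy^{1,m}_{\mathrm{disp}}\brk{u,v} = 0\). If \(u\) and \(v\) are not homotopic, then by the core step every admissible \(w\) gives \(d\) odd, whence \(\abs{d} \ge 1\) and \(\energy^{1,m}_{\mathrm{disp}}\brk{u,v} \ge m^{m/2}\abs{\Sset^m}\); conversely, opening \(u\) near \(a\) and inserting in \(B_\rho\brk{a}\) a single conformal bubble of degree \(1\) produces an admissible \(w\) --- indeed \(\deg_2 w = \deg_2 u + 1 = \deg_2 v\) since \(\deg_2 u \ne \deg_2 v\) --- whose disparity has \(\abs{d} = 1\), so \(\energy^{1,m}_{\mathrm{disp}}\brk{u,v} = m^{m/2}\abs{\Sset^m}\). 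Substituting into \cref{theorem_intro_heterotopic_disparity} yields~(ii).
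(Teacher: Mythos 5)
Your argument is correct and takes essentially the same route as the paper, which simply asserts that \cref{corollary_intro_spheres} follows from \cref{theorem_intro_heterotopic_disparity}, the definition of the disparity energy, and \cref{proposition_Etop_sphere}. You have merely supplied the degree bookkeeping that the paper leaves implicit: the identification of \(\deg\sqb{u,w,B_\rho(a)}\) with \(\pm(\deg u - \deg w)\) in the orientable case via cancellation of signed preimages outside \(B_\rho(a)\), its mod-\(2\) analogue in the non-orientable case, and the construction of an admissible competitor by opening and inserting a bubble (noting that the opening is a homotopy relative to \(\manifold{M}\setminus B_\rho(a)\), so it does not change the disparity with \(u\)).
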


At the heart of the above formula is the fact that, for mappings between spheres, the degree of Brouwer \emph{completely encodes} homotopy classes.
The difference from the orientable and the non-orientable case comes from the fact that the degree of Brouwer is well-defined for sphere-valued maps defined on an orientable manifold, while when the domain is not orientable, there are only two homotopy classes of maps into \( \Sset^{m} \) (see also the remark after \cref{lemma_disparity_trivial_homotopic}).

A similar situation occurs for mappings into the real projective spaces \( \RPset^m\).

\begin{corollary}[Mappings into projective spaces]
\label{corollary_intro_projective}
	Assume that \( \manifold{M} \) is simply connected.
	For every \( u, v \in \continuous^{\infty}\brk{\manifold{M}, \RPset^{m}} \),
	\[
		\energy^{1, m}_{\mathrm{het}}
		\brk{u, v}
		= \int_{\manifold{M}} \abs{\Deriv u}^m
		+  m^{m/2}\,2\abs{\RPset^{m}}d_{u,v}\eqpunct{.}
	\]
\end{corollary}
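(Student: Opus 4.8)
The plan is to read the formula off from \cref{theorem_intro_heterotopic_disparity}, the definition of the disparity energy, and \cref{proposition_Etop_projective}. By \cref{theorem_intro_heterotopic_disparity}, \(\energy^{1, m}_{\mathrm{het}}\brk{u, v} = \int_{\manifold{M}} \abs{\Deriv u}^m + \energy^{1, m}_{\mathrm{disp}}\brk{u, v}\), so everything reduces to showing
\begin{equation*}
  \energy^{1, m}_{\mathrm{disp}}\brk{u, v} = m^{m/2}\, 2\abs{\RPset^m}\, d_{u, v}\eqpunct{.}
\end{equation*}
Unfolding the definition of \(\energy^{1, m}_{\mathrm{disp}}\), I must compute the infimum of \(\energy^{1, m}_{\mathrm{top}}\brk{\sqb{u, w, B_\rho\brk{a}}}\) over all \(w \in \continuous^\infty\brk{\manifold{M}, \RPset^m}\) homotopic to \(v\) and equal to \(u\) on \(\manifold{M} \setminus B_\rho\brk{a}\). \Cref{proposition_Etop_projective} evaluates the topological energy of a single such class: since \(\pi_m\brk{\RPset^m} \cong \Zset\) is generated by \(\id_{\Sset^m}\) followed by the Riemannian double cover \(p \colon \Sset^m \to \RPset^m\), a local isometry with \(\int_{\Sset^m}\abs{\Deriv p}^m = m^{m/2}\abs{\Sset^m} = m^{m/2}\,2\abs{\RPset^m}\), one gets \(\energy^{1, m}_{\mathrm{top}}\brk{\sqb{u, w, B_\rho\brk{a}}} = m^{m/2}\,2\abs{\RPset^m}\abs{k}\), where \(k \in \Zset\) is the Brouwer degree of a lift through \(p\) of a map \(\Sset^m \to \RPset^m\) representing the class \(\sqb{u, w, B_\rho\brk{a}}\). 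It thus remains to prove that the infimum of \(\abs{k}\) over admissible \(w\) equals \(d_{u, v}\).

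Here I would use that \(\manifold{M}\) is simply connected --- hence closed and orientable, so that degrees of maps \(\manifold{M} \to \Sset^m\) are defined --- in order to lift through \(p\). Fix a lift \(\lifting{u} \in \continuous^\infty\brk{\manifold{M}, \Sset^m}\) of \(u\). For each admissible \(w\), since \(w = u\) off the ball \(B_\rho\brk{a}\) and the ball is simply connected, there is a unique lift \(\lifting{w}\) of \(w\) with \(\lifting{w} = \lifting{u}\) on \(\manifold{M} \setminus B_\rho\brk{a}\), in particular on \(\partial B_\rho\brk{a}\); then \(\sqb{u, w, B_\rho\brk{a}}\) lifts to the homotopy class of the map \(\Sset^m \to \Sset^m\) equal to \(\lifting{u}\restr{\Bar{B}_\rho\brk{a}}\) on one hemisphere and to \(\lifting{w}\restr{\Bar{B}_\rho\brk{a}}\) on the other, whose degree equals \(\pm\brk{\deg\lifting{u} - \deg\lifting{w}}\) because \(\lifting{u}\) and \(\lifting{w}\) agree outside \(B_\rho\brk{a}\) and the two hemispheres carry opposite orientations along the equator. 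Lifting a homotopy from \(w\) to \(v\) over the simply connected \(\manifold{M} \times \intvc{0}{1}\) shows that \(\lifting{w}\) is homotopic to one of the two lifts of \(v\), so \(\deg\lifting{w} \in \set{\deg\lifting{v}, \brk{-1}^{m+1}\deg\lifting{v}}\) for any chosen lift \(\lifting{v}\) of \(v\); conversely, both values occur, because \(\Sset^m\) is \(\brk{m-1}\)-connected (so any two maps from the \(\brk{m-1}\)-skeleton of \(\manifold{M}\) into \(\Sset^m\) are homotopic, in particular \(\lifting{u}\), \(\lifting{v}\) and its antipode are) and any prescribed relative degree can be installed by a local modification inside \(B_\rho\brk{a}\). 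Hence \(\inf_w\abs{k} = \min\set{\abs{\deg\lifting{u} - \deg\lifting{v}}, \abs{\deg\lifting{u} + \deg\lifting{v}}}\), which is exactly \(d_{u, v}\): for \(m\) odd, \(\RPset^m\) is orientable, the antipodal map of \(\Sset^m\) has degree \(+1\), the two lifts of \(v\) share a single degree, and this reduces to \(\abs{\deg\lifting{u} - \deg\lifting{v}}\); for \(m\) even, both terms are needed.

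The step I expect to be the main obstacle is the sign bookkeeping in the non-orientable case \(m\) even: the antipodal map then reverses orientation, so the degree of a lift flips with the choice of lift, and one must check both that \(d_{u, v}\) is well defined --- independent of the two lift choices, of the orientation used to compute degrees, and of the identification of \(\overline{\Bset^m}\) with the hemispheres of \(\Sset^m\) --- and that the matching lower bound \(\inf_w\abs{k} \ge d_{u, v}\) holds, i.e. that no admissible competitor beats the two degrees listed above. A minor additional point is to confirm that admissible competitors realizing each of those two degrees genuinely exist, which is the (here automatic) requirement that \(u\) and \(v\) be homotopic on an \(\brk{m-1}\)-dimensional triangulation of \(\manifold{M}\), together with a clean gluing of the degree-changing local modification.
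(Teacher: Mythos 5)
Your proof is correct and follows the same route as the paper: reduce via \cref{theorem_intro_heterotopic_disparity} to computing the disparity energy, apply \cref{proposition_Etop_projective} to evaluate the topological energy of each admissible disparity class, and minimize the absolute lifting degree over admissible competitors \(w\). The paper treats this last minimization as immediate, so your explicit lifting argument---the unique lift \(\lifting{w}\) agreeing with \(\lifting{u}\) off the ball, the constraint \(\deg\lifting{w}\in\set{\deg\lifting{v},\,(-1)^{m+1}\deg\lifting{v}}\) obtained by lifting a homotopy, the realizability of both values by inserting a degree-correcting bubble, and the arithmetic identity \(\min\set{\abs{a-b},\abs{a+b}}=\abs{\abs{a}-\abs{b}}\)---supplies exactly the details the paper leaves implicit, including the sign bookkeeping for even \(m\).
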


In \cref{corollary_intro_projective}, \( d_{u,v} \) is defined in terms of the covering \(\lifting{u}\) and \(\lifting{v} \in \continuous \brk{\manifold{M}, \Sset^m}\) of \(u\) and \(v\) respectively, as \(d_{u,v} \defeq \abs{\deg \lifting{u} - \deg \lifting{v}} \) if \(m\) is odd and \( d_{u,v} \defeq  \abs{\abs{\deg \lifting{u}} - \abs{\deg \lifting{v}}}\) if it is even.

In \cref{corollary_intro_spheres} and \cref{corollary_intro_projective}, the energy gap between the heterotopic energy and the Sobolev energy is linear with respect to the difference of degree between the maps \( u \) and \( v \).
As expressed by \cref{theorem_intro_heterotopic_disparity}, this is directly related to the rate of growth of the minimal energy required to construct a map with fixed degree.
For a general target manifold \( \manifold{N} \) however, the such rate of growth \emph{need not} be linear.
For more details as well as references, see~\cref{proposition_Etop_Hopf} and the comment below the proposition.

A common feature in both \cref{corollary_intro_spheres} and \cref{corollary_intro_projective} is that the energy involves the \emph{area} of the homotopy class formed by the difference between \( u \) and \( v \).
Our next statement expresses that this is not an isolated phenomenon, and that there is always a relation between the heterotopic energy and the minimal area enclosed by the maps \( u \) and \( v \), provided that one works with the language of \emph{homology}.

\begin{corollary}
\label{corollary_intro_area}
	Assume that \( \manifold{M} \) is orientable and that \( m \leq \dim{\manifold{N}} \).
	For every \( u, v \in \continuous^{\infty} \brk{\manifold{M}, \manifold{N}} \),
	\[
		\energy^{1, m}_{\mathrm{het}}
		\brk{u, v}
		\geq \int_{\manifold{M}} \abs{\Deriv u}^m
		+ m^{m/2}\area\brk{\sqb{u\brk{\manifold{M}}}-\sqb{v\brk{\manifold{M}}}}\eqpunct{,}
	\]
	and equality occurs when \( m =  2 \) and the Hurewicz homomorphism \( \mathfrak{hur} \colon \pi_{m}\brk{\manifold{N}} \to H_{m}\brk{\manifold{N}} \) is an isomorphism,
	where \( \area\brk{\sqb{u\brk{\manifold{M}}}-\sqb{v\brk{\manifold{M}}}} \) denotes the minimal area of a map realizing the homology class \( \sqb{u\brk{\manifold{M}}}-\sqb{v\brk{\manifold{M}}} \), taking into account the multiplicity.
\end{corollary}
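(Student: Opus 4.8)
The plan is to deduce \cref{corollary_intro_area} from \cref{theorem_intro_heterotopic_disparity} and \cref{proposition_Etop_area}. By the identity~\eqref{eq_oox5LeeVahZ0theevah1shai}, it suffices to establish that
\[
  \energy^{1, m}_{\mathrm{disp}}\brk{u, v}
  \ge m^{m/2}\,\area\brk[\big]{\sqb{u\brk{\manifold{M}}} - \sqb{v\brk{\manifold{M}}}}\eqpunct{,}
\]
with equality when \(m = 2\) and \(\mathfrak{hur}\) is an isomorphism. Recalling that the disparity energy is the infimum of the topological energies \(\energy^{1, m}_{\mathrm{top}}\brk{\sqb{u, w, B_\rho\brk{a}}}\) over all competitors \(w \in \continuous^\infty\brk{\manifold{M}, \manifold{N}}\) homotopic to \(v\) and coinciding with \(u\) on \(\manifold{M} \setminus B_\rho\brk{a}\), this reduces to estimating those topological energies --- from below in general, and from above in the two-dimensional conformal case --- by bounds that do not depend on \(w\).

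The crucial first step is to identify the singular homology class carried by the topological disparity. Let \(S \in \continuous^\infty\brk{\Sset^m, \manifold{N}}\) represent \(\sqb{u, w, B_\rho\brk{a}}\), built from \(u\restr{\Bar{B}_\rho\brk{a}}\) on the northern hemisphere and from \(w\restr{\Bar{B}_\rho\brk{a}}\) on the southern one, the two agreeing on the equator because \(u = w\) there. Up to the orientation conventions used to glue two copies of \(\overline{\Bset^m}\) into \(\Sset^m\) --- which, as already noted, do not affect the resulting area --- the pushforward of the fundamental class decomposes as \(S_*\sqb{\Sset^m} = \brk{u\restr{\Bar{B}_\rho\brk{a}}}_*\sqb{\overline{\Bset^m}} - \brk{w\restr{\Bar{B}_\rho\brk{a}}}_*\sqb{\overline{\Bset^m}}\). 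Writing \(\sqb{\manifold{M}} = \sqb{\Bar{B}_\rho\brk{a}} + \sqb{\manifold{M} \setminus B_\rho\brk{a}}\) as a sum of singular chains with opposite boundaries along \(\partial B_\rho\brk{a}\), and using \(u = w\) on \(\manifold{M} \setminus B_\rho\brk{a}\) to cancel the contributions of these two pieces, one obtains
\[
  S_*\sqb{\Sset^m} = u_*\sqb{\manifold{M}} - w_*\sqb{\manifold{M}} = \sqb{u\brk{\manifold{M}}} - \sqb{v\brk{\manifold{M}}}\eqpunct{,}
\]
the last equality because \(w\) is homotopic to \(v\). In particular this class --- the Hurewicz image of \(\sqb{u, w, B_\rho\brk{a}}\) --- is independent of the admissible competitor \(w\).

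The lower bound is then the inequality part of \cref{proposition_Etop_area}, whose mechanism is the pointwise estimate \(\abs{\Deriv f}^m \ge m^{m/2}\,\jac_m f\) --- the arithmetic--geometric mean inequality applied to the singular values of \(\Deriv f\), with equality exactly for weakly conformal maps --- combined with \(\int_{\Sset^m} \jac_m f \ge \area\brk{f_*\sqb{\Sset^m}}\); together these give \(\energy^{1, m}_{\mathrm{top}}\brk{\sqb{f}} \ge m^{m/2}\,\area\brk{f_*\sqb{\Sset^m}}\) for every homotopy class. By the previous step, \(\energy^{1, m}_{\mathrm{top}}\brk{\sqb{u, w, B_\rho\brk{a}}} \ge m^{m/2}\,\area\brk[\big]{\sqb{u\brk{\manifold{M}}} - \sqb{v\brk{\manifold{M}}}}\) for every admissible \(w\), and taking the infimum over \(w\) yields the asserted inequality in \cref{corollary_intro_area}.

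For the equality when \(m = 2\) and \(\mathfrak{hur}\) is an isomorphism, I would invoke the matching identity \(\energy^{1, 2}_{\mathrm{top}}\brk{\sqb{f}} = 2\,\area\brk{f_*\sqb{\Sset^2}}\) of \cref{proposition_Etop_area}, which rests on the fact that in dimension two the deficit \(\abs{\Deriv f}^2 - 2\,\jac_2 f\) vanishes precisely on conformal maps, and on the existence of an energy-minimizing --- hence conformal, hence area-realizing --- harmonic two-sphere in each homology class when \(\mathfrak{hur}\) is an isomorphism. With the homological identification above, every admissible \(w\) then satisfies \(\energy^{1, 2}_{\mathrm{top}}\brk{\sqb{u, w, B_\rho\brk{a}}} = 2\,\area\brk[\big]{\sqb{u\brk{\manifold{M}}} - \sqb{v\brk{\manifold{M}}}}\), so it only remains to produce one admissible \(w\): this is the naive insertion described in the introduction, obtained by modifying \(u\) inside a small ball by a sphere carrying the class \(\sqb{u\brk{\manifold{M}}} - \sqb{v\brk{\manifold{M}}}\), which is homotopic to \(v\) whenever admissible competitors exist at all, that is, whenever \(u\) and \(v\) are homotopic on the \(\brk{m-1}\)-skeleton and thus \(\energy^{1, 2}_{\mathrm{het}}\brk{u, v} < \infty\) --- the only case in which equality is in question. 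The step I expect to be the main obstacle is the homological identification: turning the ``difference of degrees'' heuristic of the sphere case into the identity \(S_*\sqb{\Sset^m} = \sqb{u\brk{\manifold{M}}} - \sqb{v\brk{\manifold{M}}}\) with full control of the orientation conventions and of the independence from \(w\); granting this, \cref{corollary_intro_area} is a repackaging of \cref{proposition_Etop_area}.
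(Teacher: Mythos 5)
Your proposal is correct and takes essentially the same route as the paper: reduce to the disparity energy via \cref{theorem_intro_heterotopic_disparity}, compute \(\mathfrak{hur}\brk{\sqb{u,w,B_\rho\brk{a}}} = \sqb{u\brk{\manifold{M}}} - \sqb{v\brk{\manifold{M}}}\) (which is exactly the identity the paper records after the corollary), and then apply \cref{proposition_Etop_area} together with the definition of \(\energy^{1,m}_{\mathrm{disp}}\). One small inaccuracy worth correcting: the equality case of \cref{proposition_Etop_area} does not rest on the existence of an energy-minimizing harmonic two-sphere (a Sacks--Uhlenbeck-type statement) but on Morrey's \(\varepsilon\)-conformality theorem, which reparametrizes an arbitrary nearly-area-minimizing representative of the homotopy class to a nearly-conformal one and so gives the upper bound without assuming a minimizer exists.
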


As will be shown, the lower estimate follows from the arithmetico-geometric inequality to relate the Jacobian and the differential of a map, while the equality case follows from the \emph{Morrey \( \varepsilon \)-conformality theorem}~\cite{Morrey_1948}, which is available only in dimension \( 2 \).

The assumption that \( \manifold{M} \) is orientable ensures that \( \manifold{M} \) itself is a homology cycle, so that the cycles \( \sqb{u\brk{\manifold{M}}} \) and \( \sqb{v\brk{\manifold{M}}} \) are well-defined.
In this case, for any ball \( B_{\rho}\brk{a} \subset \manifold{M} \), the homology class associated with the disparity between \( u \) and a map \( w \) homotopic to \( v \) may be computed as
\[
	\mathfrak{hur}\brk{\sqb{u,w,B_{\rho}\brk{a}}} = \sqb{u\brk{\manifold{M}}}-\sqb{w\brk{\manifold{M}}}
	=
	\sqb{u\brk{\manifold{M}}}-\sqb{v\brk{\manifold{M}}}\eqpunct{.}
\]

The increasing complexity of the statements of the above corollaries pertains to a crucial technical difficulty when working with free homotopy classes, and that will be of key importance to us in our choice of formalism to work with.
When \(\manifold{M} = \Sset^m\), the homotopy class \(\sqb{u, w, B_{\rho} \brk{a}}\) appearing in the definition of the disparity energy \( \smash{\energy^{1, m}_{\mathrm{disp}}} \brk{u, v}\) is the difference in \(\pi_{m} \brk{\manifold{N}, u \brk{c}}\) between the classes corresponding to \(u\) and \(w\) for any \(c \in \manifold{M} \setminus B_{\rho}\brk{a}\).
Letting \(b = u \brk{c}\), the class corresponding to \(w\) will be independent on \(b\) if and only if the the action of \(\pi_1 \brk{\manifold{N}, b}\) on \(\pi_{m}\brk{\manifold{N}, b}\) is trivial. 
This will be the case for instance when \(\pi_1 \brk{\manifold{N}, b} \simeq \set{0}\), including in particular the case \(\manifold{N} = \Sset^m\) (see \cref{corollary_intro_spheres}) or when \(\pi_1 \brk{\manifold{N}, b}\) acts trivially on \(\pi_{m} \brk{\manifold{N},b}\), covering the case \(\manifold{N} = \RPset^m\) with \(m\) odd (see \cref{corollary_intro_projective}).
Otherwise, one needs to take into account the action of \(\pi_1 \brk{\manifold{N}, b}\) on \(\pi_{m}\brk{\manifold{N}, b}\); this is what \cref{corollary_intro_projective} does when \(m\) is even.
On the domain side, the homotopy classes from a general domain \(\manifold{M}\) to \(\manifold{N}\) do not have in general a group structure, and we need thus a formalism that gives a meaning to the difference between homotopy classes. 
These considerations show that purely algebraic manipulations in \(\pi_{m}\brk{\manifold{N}}\) cannot describe the difference in topology and justify our need to rely on a formalism and computation methods that take into account the way individual topological charges are interlinked.

\bigskip
At the core of the proof of the upper estimate on \( \energy^{1, m}_{\mathrm{het}}\brk{u, v} \) lies the insertion strategy that we sketched above.
Concerning the lower estimate, it relies on an instance of a \emph{bubbling phenomenon}.
Therefore, we next present a bubbling statement fitted to our purposes.

\begin{theorem}
\label{theorem_intro_heterotopic_concentration}
For every \(u, v \in \continuous^\infty \brk{\manifold{M}, \manifold{N}}\),
if \(\brk{v_j}_{j \in \Nset}\) is a sequence in \(\continuous^\infty \brk{\manifold{M}, \manifold{N}}\) that converges almost everywhere to \(u\), if for every \(j \in \Nset\), \(v_j\) is homotopic to \(v\), and if
\[
 \sup_{j \in \Nset}\int_{\manifold{M}} \abs{\Deriv v_j}^m <\infty\eqpunct{,}
\]
then up to a subsequence, there exist points \(a_1, \dotsc, a_I \in \manifold{M}\) such that, for every radius \( \rho \in \intvo{0}{\infty} \) sufficiently small, there exists a map \(w \in \continuous^\infty \brk{\manifold{M}, \manifold{N}}\) such that 
\(w = u\) in \(\manifold{M} \setminus \bigcup_{i = 1}^I B_\rho\brk{a_i}\), \(w\) is homotopic to \(v\), for every \(i \in \set{1, \dotsc, I}\),
\begin{equation}
\label{eq_iasooYu9pohhuf7Jai0Eghoo}
 \lim_{r \to 0} \liminf_{j \to \infty} \int_{B_r \brk{a_i}} \abs{\Deriv v_j}^m
 \ge \mathfrak{E}^{1, m}_{\mathrm{top}} \brk{ \sqb{u,w, B_\rho\brk{a_i}}}\eqpunct{,}
\end{equation}
and 
\begin{equation}
\label{eq_quivu5Dai3Oongis9ohPooch}
 \sum_{i = 1}^I \mathfrak{E}^{1, m}_{\mathrm{top}} \brk{ \sqb{u, w, B_\rho\brk{a_i}}}
 \ge \energy^{1, m}_{\mathrm{disp}} \brk{u, v}\eqpunct{.}
\end{equation}
\end{theorem}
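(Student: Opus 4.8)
The plan is to run a concentration--compactness analysis on the sequence $\brk{v_j}_{j \in \Nset}$, to pin down the finitely many points at which the topology concentrates, and to capture each resulting bubble by a surgery performed on $u$ in a small ball.

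\emph{Defect measure and concentration points.} Since $\manifold{N}$ is compact and $\sup_j \int_{\manifold{M}} \abs{\Deriv v_j}^m < \infty$, the maps $v_j$ are bounded in $\sobolev^{1, m}$, so that $v_j \weakto u$ weakly in $\sobolev^{1, m}\brk{\manifold{M}, \manifold{N}}$ by the almost everywhere convergence; passing to a subsequence, $\abs{\Deriv v_j}^m \weakto^* \mu$ weakly-$*$ as Radon measures on $\manifold{M}$, with $\mu \ge \abs{\Deriv u}^m$. I fix $\varepsilon_0 > 0$ small enough that any $g \in \continuous^\infty\brk{\Sset^{m - 1}, \manifold{N}}$ with $\int_{\Sset^{m - 1}} \abs{\Deriv g}^m < \varepsilon_0$ has image in a geodesically convex ball of $\manifold{N}$ (via $\sobolev^{1, m}\brk{\Sset^{m - 1}} \hookrightarrow \continuous^{0, 1/m}$) and small enough that $\mathfrak{E}^{1, m}_{\mathrm{top}}\brk{\alpha} \ge \varepsilon_0$ for every nontrivial homotopy class $\alpha$ (the energy gap). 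The set $\set{a_1, \dotsc, a_I} \defeq \set{a \in \manifold{M} \st \mu(\{a\}) \ge \varepsilon_0}$ is then finite, since disjoint small balls around distinct such points each carry $\mu$-mass at least $\varepsilon_0$.

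\emph{Localising the disparity.} For a generic small translate $\manifold{T}$ of a fixed fine triangulation of $\manifold{M}$, and along a further subsequence, one may assume: $\manifold{T}^{m - 1}$ avoids $\set{a_1, \dotsc, a_I}$, is not charged by $\mu$, and meets each $m$-simplex in at most one $a_i$; and $v_j\restr{\manifold{T}^{m - 1}}$ is bounded in $\sobolev^{1, m}\brk{\manifold{T}^{m - 1}}$ and converges $\mathcal{H}^{m - 1}$-almost everywhere to $u\restr{\manifold{T}^{m - 1}}$ (Fubini, Fatou). By the compact embedding $\sobolev^{1, m}\brk{\manifold{T}^{m - 1}} \hookrightarrow\hookrightarrow \continuous^0$, the convergence on $\manifold{T}^{m - 1}$ is uniform, hence $v_j\restr{\manifold{T}^{m - 1}}$ is homotopic to $u\restr{\manifold{T}^{m - 1}}$ for $j$ large. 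Thus on each $m$-simplex $\sigma$, $v_j\restr\sigma$ and $u\restr\sigma$ agree up to homotopy on $\partial\sigma$ and determine a relative bubble class $\beta^j_\sigma$ within the formalism of \S\ref{section_topological_disparity}, and by additivity of relative classes over the $m$-cells, $\sum_\sigma \beta^j_\sigma$ is the disparity of $v_j$ --- hence of $v$ --- relative to $u$. Subdividing each $m$-simplex without an $a_i$ so finely that every sub-cell $\tau$ has $\mu\brk{\Bar\tau}$ much smaller than $\varepsilon_0$, the same uniform-convergence argument on the finer skeleton together with $\limsup_j \int_\tau \abs{\Deriv v_j}^m \le \mu\brk{\Bar\tau}$ and the capping-off construction below shows, using the energy gap, that the relative class of $v_j$ on every such $\tau$ vanishes for $j$ large; hence $\beta^j_\sigma = 0$ unless $\sigma = \sigma_i \ni a_i$. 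Writing $\beta^j_i$ for the class of $\sigma_i$ and extracting once more (the relevant classes lie in a countable set), $\beta^j_i = \beta_i$ is eventually independent of $j$, with $\sum_{i = 1}^I \beta_i$ equal to the disparity of $v$ relative to $u$.

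\emph{Surgery and energy estimates.} Let $\rho > 0$ be small enough that the balls $\Bar B_\rho\brk{a_i}$ are disjoint, lie in $\sigma_i^\circ$ and in charts, and satisfy $\mu\brk{\Bar B_\rho\brk{a_i} \setminus \set{a_i}} < \varepsilon_0 / 3$ (possible as $\mu\brk{\Bar B_\rho\brk{a_i}} \to \mu(\{a_i\})$ when $\rho \to 0$); after opening (\cref{lemma_opening}) I also assume $u$ constant near each $a_i$. For a suitable large $j = j\brk{\rho}$, the coarea formula and the uniform convergence on spheres provide radii $r_i \in \intvo{\rho/2}{\rho}$ along which $v_j\restr{\partial B_{r_i}\brk{a_i}}$ has bounded $m$-energy and is $\continuous^0$-close to $u\restr{\partial B_{r_i}\brk{a_i}}$. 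I set $w \defeq u$ outside $\bigcup_i B_\rho\brk{a_i}$, $w \defeq v_j$ on each $\Bar B_{r_i}\brk{a_i}$, and on each annulus I follow $u$ and then interpolate to $v_j\restr{\partial B_{r_i}\brk{a_i}}$ over a thin collar of vanishing width, so that the interpolation energy is negligible; after mollification $w \in \continuous^\infty\brk{\manifold{M}, \manifold{N}}$, $w = u$ off $\bigcup_i B_\rho\brk{a_i}$, and since the topology of $v_j$ outside $\bigcup_i \Bar B_{r_i}\brk{a_i}$ is trivial, $w$ is homotopic to $v_j$, hence to $v$, and $\sqb{u, w, B_\rho\brk{a_i}} = \beta_i$. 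For \eqref{eq_iasooYu9pohhuf7Jai0Eghoo}, given $r > 0$ small and $j$ large along a subsequence with a good radius $r' \in \intvo{r/2}{r}$, one caps $v_j\restr{\Bar B_{r'}\brk{a_i}}$ --- transplanted to a hemisphere of $\Sset^m$ by a reparametrisation of bounded conformal distortion, hence preserving the $m$-energy up to a dimensional constant --- with a thin interpolating collar and with the constant $u\brk{a_i}$ on the other hemisphere, obtaining a map in the class $\beta_i$ of $m$-energy at most $\int_{B_r\brk{a_i}} \abs{\Deriv v_j}^m + o\brk{1}$; thus $\mathfrak{E}^{1, m}_{\mathrm{top}}\brk{\beta_i} \le \liminf_{j \to \infty} \int_{B_r\brk{a_i}} \abs{\Deriv v_j}^m$ for all small $r$, which is \eqref{eq_iasooYu9pohhuf7Jai0Eghoo}. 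Finally, collecting all the $\beta_i$ into a single ball by connected sums produces a competitor $w'$ homotopic to $v$ with $u = w'$ outside one ball, so by the subadditivity of $\mathfrak{E}^{1, m}_{\mathrm{top}}$,
\[
	\energy^{1, m}_{\mathrm{disp}}\brk{u, v}
	\le \mathfrak{E}^{1, m}_{\mathrm{top}}\Bigl( \textstyle\sum_{i = 1}^I \beta_i \Bigr)
	\le \sum_{i = 1}^I \mathfrak{E}^{1, m}_{\mathrm{top}}\brk{\beta_i}
	= \sum_{i = 1}^I \mathfrak{E}^{1, m}_{\mathrm{top}}\brk{\sqb{u, w, B_\rho\brk{a_i}}}\eqpunct{,}
\]
which is \eqref{eq_quivu5Dai3Oongis9ohPooch}.

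\emph{Main obstacle.} The heart of the matter is the bubbling analysis of the second step in the critical dimension: because $\sobolev^{1, m}$ does not embed into $\continuous^0$ on $m$-dimensional domains, the topology cannot be localised by oscillation estimates, and one must descend to codimension-one skeletons --- where $\sobolev^{1, m} \hookrightarrow \continuous^{0, 1/m}$ --- and invoke the energy gap, all while keeping the free-homotopy bookkeeping (basepoints and the $\pi_1\brk{\manifold{N}}$-action, in the formalism of \S\ref{section_topological_disparity}) coherent across the nested scales $r < r' < r_i < \rho$ and across the identification of the bubbles of $v_j$ with the pieces of $w$. Arranging a single subsequence along which every $\liminf$ becomes a genuine limit and all relative classes stabilise is a further technical point.
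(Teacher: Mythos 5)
Your proposal takes a genuinely different route from the paper's. The paper first proves a measure-theoretic bubbling statement (\cref{theorem_concentration_measures}), where the surgery is done by \emph{extending} the boundary values of $v_j$ into small balls with controlled energy (\cref{lemma_eta_extension}), verifying the resulting map is homotopic to $u$ via a purely quantitative $\sobolev^{1,m}$ relative-homotopy criterion (\cref{proposition_relative_homotopy}, proved through the mixed Poincaré inequality, \cref{lemma_mixed_poincare}), and then transporting to $w$ by \cref{lemma_discrepancy_homotopy_transport}. You instead slice along a generic translate of a triangulation, upgrade skeletal $\sobolev^{1,m}$ convergence to uniform convergence by the compact embedding on $(m-1)$-cells, localise the topological defect to $m$-cells containing a concentration point via subdivision and the energy gap, and perform the surgery by cut-and-paste of $v_j$ inside small balls. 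This Fubini-on-skeletons strategy is closer to the Hang--Lin circle of ideas and is a legitimate alternative; the paper's route avoids explicit simplicial bookkeeping in the bubbling step, which pays off when $u$ is merely Sobolev rather than smooth (the case needed for \cref{theorem_concentration_measures}).

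However, there is a gap you should address. You assert that ``since the topology of $v_j$ outside $\bigcup_i \Bar{B}_{r_i}\brk{a_i}$ is trivial, $w$ is homotopic to $v_j$''. You have established that the relative class $\beta^j_\sigma$ vanishes on every subdivided cell $\tau$ \emph{not} containing a concentration point, but the cell $\sigma_i\ni a_i$ is not subdivided, and the surgery keeps $v_j$ only inside $\Bar B_{r_i}\brk{a_i}\subsetneq \sigma_i$. To conclude both that $w$ is homotopic to $v_j$ and that $\sqb{u,w,B_\rho\brk{a_i}}$ agrees with the class $\beta_i$ computed on the full cell $\sigma_i$, you must show the relative class of $u$ against $v_j$ on the annulus $\sigma_i\setminus B_{r_i}\brk{a_i}$ (rel \emph{both} boundary spheres) is trivial. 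The ingredients are present --- the $\mu$-mass of the annulus is small since the atom sits at $a_i$, and you already have uniform closeness of $v_j$ to $u$ on spheres of good radius --- but the step is not made. The paper's \cref{proposition_relative_homotopy} disposes of exactly this point in one stroke. A second, lesser gap: to extract a subsequence along which $\beta^j_i$ is ``eventually independent of $j$'', countability of $\pi_m\brk{\manifold{N}}$ alone is not enough; you should invoke the finiteness of homotopy classes of bounded energy up to the $\pi_1$-action (the ingredient behind \cref{proposition_properties_Etop}~\ref{it_Etop_discrete}), or argue only at the level of $\energy^{1,m}_{\mathrm{top}}\brk{\beta^j_i}$, which takes finitely many values. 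Finally, as you flag, the basepoint bookkeeping is the crux: when you write $\sum_\sigma \beta^j_\sigma$ and later ``collect the $\beta_i$ by connected sums'', the sum only makes sense after transporting each class along a path, and the result depends on the path through the $\pi_1\brk{\manifold{N}}$-action; the paper neutralises this via \cref{lemma_superpose_disparities}--\cref{label_merging} and \cref{proposition_properties_Etop}~\ref{it_Etop_action}, and your argument should go through those same lemmas rather than leave it as a remark.
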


Bubbling statements, such as~\cref{theorem_intro_heterotopic_concentration} and its companion~\cref{theorem_concentration_measures} in the body of the text, are ubiquitous in the study of weakly converging maps; see for instance, but not only,~\citelist{\cite{Giaquinta_Modica_Soucek_1998_II}*{Theorem~3.1.5.1}\cite{Hang_Lin_2003_III}*{Theorem~10.1}\cite{Hardt_Riviere_2008}*{Proposition~3.4}\cite{Bethuel_2020}*{Remark~1}\cite{Detaille_VanSchaftingen}*{Proposition~2.1}}.
We draw the attention of the reader to the very sharp and general character of both bubbling results that we present here: they apply to any weakly converging sequence, and they relate the atoms of the limiting measure towards which the convergence of the differential occurs to the topological defect between the homotopy classes, which can be realized by \emph{one} given map \( w \), homotopic to \( v \), and which differs from \( u \) only on a finite number of small balls.
We believe that stating such precise results and providing them with a complete proof is also of independent interest.

\section{The heterotopic energy on Sobolev spaces}
\subsection{Continuous, VMO, and Sobolev homotopies}

Whereas the introduction 
was restricted to the case of smooth maps for the simplicity of the exposition, our discussion can be extended to a lower regularity framework where the notion of homotopy carries out.
It will therefore be instrumental to our endeavor to consider homotopies between low regularity maps.
For this purpose, we begin this section with a short exposition of the notion of homotopy in various functions spaces.
We start with the most classical setting of continuous maps.

Two maps \(u, v \in \continuous  \brk{\manifold{M}, \manifold{N}}\) are said to be homotopic in \(\continuous\brk{\manifold{M}, \manifold{N}}\) whenever there exists a mapping \(H \in \continuous\brk{\intvc{0}{1}\times \manifold{M}, \manifold{N}}\) such that 
\(H \brk{0, \cdot} = u\) and \(H \brk{1, \cdot}=v\). 
When \(\manifold{M}\) is compact, \(u\) and \(v\) are homotopic if and only if there exists \(H \in \continuous\brk{\intvc{0}{1}, \continuous \brk{\manifold{M}, \manifold{N}}}\) such that \(H \brk{0} = u\) and \(H \brk{1} = v\), where the space \(\continuous \brk{\manifold{M}, \manifold{N}}\) is endowed with the topology of the uniform distance.
Moreover, if \(\manifold{N}\) is compact, there exists \(\delta \in \intvo{0}{\infty}\) such that if  \(d \brk{u, v}\le \delta\) everywhere in \(\manifold{M}\), then \(u\) and \(v\) are homotopic. 
Indeed, it suffices to take \(H \brk{t, x} = \Pi_{\manifold{N}} \brk{\brk{1 - t}u \brk{x}+ t v \brk{x}}\), where \(\Pi_{\manifold{N}}\) is the nearest-point retraction of a neighbourhood of \(\manifold{N}\) on \(\manifold{N}\).

Most of the homotopy theory carries on in the framework of maps of \emph{vanishing mean oscillation} \citelist{\cite{Brezis_Nirenberg_1995}\cite{Brezis_Nirenberg_1996}} (see also \cite{Brezis_1997}).
A function \(u \colon \manifold{M} \to \Rset^\nu\) belongs to \(\VMO \brk{\manifold{M}, \Rset^\nu}\) whenever
\[
  \lim_{\delta \to 0}
  \sup_{\substack{a \in \manifold{M}\\ 0 < r < \delta}}
  \fint_{B_r \brk{a}} \fint_{B_r \brk{a}}
  \abs{u \brk{x} - u \brk{y}} \dif x \dif y
  = 0\eqpunct{.}
\]
The space \( \VMO\brk{\manifold{M}, \Rset^{\nu}} \) is endowed with the norm
\[
  \norm{u}_{\BMO}
  \defeq \norm{u}_{\lebesgue^1} + \seminorm{u}_{\BMO}\eqpunct{,}
\]
where
\[
  \seminorm{u}_{\BMO}
  \defeq
  \sup_{\substack{a \in \manifold{M}\\ r > 0}}
  \fint_{B_r \brk{a}} \fint_{B_r \brk{a}}
  \abs{u \brk{x} - u \brk{y}} \dif x \dif y\eqpunct{.}
\]
Continuous functions are a dense subset of \(\VMO \brk{\manifold{M}, \Rset^\nu}\) \cite{Sarason_1975}.
One defines then
\[
\VMO \brk{\manifold{M}, \manifold{N}}
\defeq
\set[\big]{u \in \VMO \brk{\manifold{M}, \Rset^\nu}
\st u \in \manifold{N} \text{ almost everywhere}}\eqpunct{.}
\]
The topology of \(\VMO \brk{\manifold{M}, \manifold{N}}\) -- and of \(\VMO \brk{\manifold{M}, \Rset^\nu}\) -- can also be described by the basis of open sets (see  \cite{Brezis_Nirenberg_1995}*{Lemma A.16})
\[
 \set[\bigg]{v \colon \manifold{M} \to \manifold{N} \st
 \int_{\manifold{M}} d \brk{u, v} < \varepsilon
 \text{ and }
 \sup_{\substack{a \in \manifold{M}\\ 0 < r < \delta}}
  \fint_{B_r \brk{a}} \fint_{B_r \brk{a}}
  d \brk{u \brk{x}, u \brk{y}} \dif x \dif y
  < \eta
 }\eqpunct{,}
\]
when \( u \) runs over all maps in \(\VMO \brk{\manifold{M}, \manifold{N}} \) and \( \varepsilon \), \( \delta \), and \( \eta \) run over all positive numbers.

One can then define maps \(u\) and \(v \in \VMO \brk{\manifold{M}, \manifold{N}}\) to be homotopic in \(\VMO \brk{\manifold{M}, \manifold{N}}\) whenever there exists a mapping \(H \in \continuous \brk{\intvc{0}{1}, \VMO \brk{\manifold{M}, \manifold{N}}}\) such that \(H \brk{0} = u\) and \(H \brk{1} = v\).

Homotopy classes are also open sets in \(\VMO \brk{\manifold{M}, \manifold{N}}\):

\begin{proposition}[Brezis \& Nirenberg \cite{Brezis_Nirenberg_1995}*{Lemma A.19}]
The path-connected components of \(\VMO \brk{\manifold{M}, \manifold{N}}\) are open.
\end{proposition}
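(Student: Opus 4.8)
The plan is to deduce the openness of the path-connected components from a purely local statement: it suffices to prove that every \(u \in \VMO\brk{\manifold{M}, \manifold{N}}\) admits an open neighbourhood all of whose elements are homotopic to \(u\) in \(\VMO\brk{\manifold{M}, \manifold{N}}\). Granting this, each path-connected component, being the union over its own points of such neighbourhoods, is open. So I fix \(u\) and look for \(\varepsilon, \rho > 0\) and a number \(\eta > 0\) depending only on \(\manifold{N}\) such that the basic neighbourhood formed by those \(v \colon \manifold{M} \to \manifold{N}\) with \(\int_{\manifold{M}} d\brk{u, v} < \varepsilon\) and \(\sup_{a \in \manifold{M},\, 0 < r < \rho} \fint_{B_r\brk{a}} \fint_{B_r\brk{a}} d\brk{v\brk{x}, v\brk{y}} \dif x \dif y < \eta\) is contained in the path-connected component of \(u\).

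The main device is averaging at a small scale. Let \(\Pi_{\manifold{N}}\) be the nearest-point retraction, which is Lipschitz on a tubular neighbourhood \(\set{y \in \Rset^\nu \st \dist\brk{y, \manifold{N}} < \delta_0}\) of \(\manifold{N}\) for some \(\delta_0 > 0\), and set \(\eta \defeq \delta_0\). For \(w \colon \manifold{M} \to \manifold{N}\) and \(0 < r\) below the injectivity radius of \(\manifold{M}\), consider the extrinsic averages \(\Bar{w}_r\brk{a} \defeq \fint_{B_r\brk{a}} w\). The elementary bound \(\dist\brk{\Bar{w}_r\brk{a}, \manifold{N}} \le \fint_{B_r\brk{a}} \fint_{B_r\brk{a}} \abs{w\brk{x} - w\brk{y}} \dif x \dif y\) shows that as soon as \(w\) has oscillation below \(\delta_0\) at all scales below some \(\rho\), the map \(P_r w \defeq \Pi_{\manifold{N}} \compose \Bar{w}_r\) is a well-defined continuous map \(\manifold{M} \to \manifold{N}\) for every \(r \in \intvo{0}{\rho}\). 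The crucial fact --- which I would import from the mollification estimates of Brezis and Nirenberg \cite{Brezis_Nirenberg_1995} --- is that, under this oscillation hypothesis, the assignment \(t \mapsto P_{tr} w\) for \(t \in \intvl{0}{1}\), extended by \(P_0 w \defeq w\), is continuous from \(\intvc{0}{1}\) into \(\VMO\brk{\manifold{M}, \manifold{N}}\); in particular \(w\) is homotopic to \(P_r w\) in \(\VMO\brk{\manifold{M}, \manifold{N}}\) for every such \(r\). For \(t\) bounded away from \(0\), continuity follows from the continuity of dilations in \(\lebesgue^1\) and \(\BMO\) composed with the Lipschitz map \(\Pi_{\manifold{N}}\); continuity at \(t = 0\) amounts to the convergence \(\Bar{w}_{tr} \to w\) in \(\lebesgue^1\) and \(\BMO\) as \(t \to 0\), which is the defining property of \(\VMO\), together with the fact that \(\Bar{w}_{tr}\) has oscillation at scale \(s\) controlled by that of \(w\) at scale comparable to \(s + tr\), uniformly in \(t\).

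Granting this, I close the argument as follows. Since \(u \in \VMO\brk{\manifold{M}, \manifold{N}}\), I may choose \(\rho > 0\) so small that \(u\) has oscillation below \(\delta_0\) at all scales below \(\rho\), and I fix once and for all some \(r_0 \in \intvo{0}{\rho}\). I set \(\mu \defeq \min_{a \in \manifold{M}} \abs{B_{r_0}\brk{a}} > 0\), I let \(L\) be the Lipschitz constant of \(\Pi_{\manifold{N}}\), and I let \(\delta_{\manifold{N}} > 0\) be such that any two maps in \(\continuous\brk{\manifold{M}, \manifold{N}}\) at uniform distance at most \(\delta_{\manifold{N}}\) are homotopic --- such a \(\delta_{\manifold{N}}\) exists by the discussion preceding the statement, since \(\manifold{N}\) is compact. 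Finally I take \(\varepsilon \defeq \delta_{\manifold{N}} \mu / L\) and call \(U\) the corresponding basic neighbourhood of \(u\). For \(v \in U\), the oscillation condition allows me to form \(P_{r_0} v\), so that \(u\) is homotopic to \(P_{r_0} u\) and \(v\) is homotopic to \(P_{r_0} v\) in \(\VMO\brk{\manifold{M}, \manifold{N}}\); moreover, using the Lipschitz bound for \(\Pi_{\manifold{N}}\), Jensen's inequality, and the elementary estimate \(\abs{p - q} \le d\brk{p, q}\) for \(p, q \in \manifold{N}\), one gets \(\abs{P_{r_0} u \brk{a} - P_{r_0} v \brk{a}} \le \brk{L / \mu} \int_{\manifold{M}} d\brk{u, v} < \delta_{\manifold{N}}\) for every \(a \in \manifold{M}\). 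Therefore \(P_{r_0} u\) and \(P_{r_0} v\), being continuous maps at uniform distance at most \(\delta_{\manifold{N}}\), are homotopic in \(\continuous\brk{\manifold{M}, \manifold{N}}\), and hence in \(\VMO\brk{\manifold{M}, \manifold{N}}\), as a continuous homotopy is a continuous path for the uniform distance and the latter injects continuously into \(\VMO\brk{\manifold{M}, \manifold{N}}\) --- uniform convergence of continuous maps entails both \(\lebesgue^1\)-convergence and, through uniform continuity of the limit, smallness of the small-scale oscillation. Chaining these three homotopies shows that \(v\) is homotopic to \(u\) in \(\VMO\brk{\manifold{M}, \manifold{N}}\), so \(U\) lies in the path-connected component of \(u\).

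I expect the only genuine obstacle to be the \(\VMO\)-continuity of the path \(t \mapsto P_{tr} w\), and in particular its continuity at \(t = 0\): this requires both the \(\lebesgue^1\)- and \(\BMO\)-convergence of the averages \(\Bar{w}_{tr}\) towards \(w\) and a control, uniform in \(t\), of their small-scale oscillation --- precisely the kind of estimate at the technical heart of the Brezis--Nirenberg analysis of \(\VMO\) homotopy. The remaining ingredients --- the reduction to the local statement, the well-definedness and continuity of \(P_r w\), and the comparison of the continuous representatives through the homotopy criterion for uniformly close continuous maps --- are routine once this is available.
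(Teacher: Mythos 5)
Your proposal follows essentially the same route as the cited source (Brezis--Nirenberg, Lemma A.19): reduce openness of the components to exhibiting, around each \(u\in\VMO\brk{\manifold{M},\manifold{N}}\), a basic neighbourhood contained in its component; regularise by ball averaging and nearest-point projection \(P_r=\Pi_{\manifold{N}}\compose\,\fint_{B_r}\), with the Schoen--Uhlenbeck-type estimate \(\dist\brk{\Bar{w}_r\brk{a},\manifold{N}}\le\fint\fint\abs{w\brk{x}-w\brk{y}}\) guaranteeing well-definedness; then chain the VMO homotopies \(u\sim P_{r_0}u\), \(v\sim P_{r_0}v\), and a continuous homotopy between the uniformly close continuous maps \(P_{r_0}u\) and \(P_{r_0}v\). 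The one genuine technical point you defer --- VMO-continuity of \(t\mapsto P_{tr}w\) down to \(t=0\) --- is indeed exactly the Brezis--Nirenberg mollification machinery that underlies their Lemma A.19, and your use of the easy implication (continuous homotopy gives VMO homotopy, via \(\seminorm{\cdot}_{\BMO}\le 2\norm{\cdot}_{\lebesgue^\infty}\)) avoids any circularity with their Lemma A.20. The argument is correct.
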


For continuous maps, homotopies in \(\VMO \brk{\manifold{M}, \manifold{N}}\) and in \(\continuous \brk{\manifold{M}, \manifold{N}}\) are equivalent:
\begin{proposition}[Brezis \& Nirenberg \cite{Brezis_Nirenberg_1995}*{Lemma A.20}]
\label{proposition_C_and_VMO_homotopies}
Given \(u, v \in \continuous \brk{\manifold{M}, \manifold{N}}\), the following are equivalent:
\begin{enumerate}[label=(\roman*)]
 \item \(u\) and \(v\) are homotopic in \(\VMO \brk{\manifold{M}, \manifold{N}}\),
 \item \(u\) and \(v\) are homotopic in \(\continuous \brk{\manifold{M}, \manifold{N}}\).
\end{enumerate}
\end{proposition}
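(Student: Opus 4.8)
The plan is to prove the two implications separately; the passage from (ii) to (i) is elementary, while the converse requires a regularization argument whose only delicate point is a uniformity statement along the homotopy.

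\emph{From (ii) to (i).} Suppose \(H \in \continuous\brk{\intvc{0}{1}, \continuous\brk{\manifold{M}, \manifold{N}}}\) joins \(u\) to \(v\). I claim \(H\) is automatically continuous as a map into \(\VMO\brk{\manifold{M}, \manifold{N}}\), which reduces to the observation that uniform convergence of continuous maps to a continuous limit forces convergence in the VMO topology: if \(f_n \to f\) uniformly with \(f \in \continuous\brk{\manifold{M}, \manifold{N}}\), then \(\int_{\manifold{M}} d\brk{f_n, f} \to 0\), while for every \(a \in \manifold{M}\) and every \(r > 0\) one has \(\fint_{B_r\brk{a}} \fint_{B_r\brk{a}} d\brk{f_n\brk{x}, f_n\brk{y}} \dif x \dif y \le 2 \sup_{\manifold{M}} d\brk{f_n, f} + \fint_{B_r\brk{a}} \fint_{B_r\brk{a}} d\brk{f\brk{x}, f\brk{y}} \dif x \dif y\), and the last term is uniformly small for small \(r\) by uniform continuity of \(f\) on the compact manifold \(\manifold{M}\); in view of the description of the VMO topology recalled above this gives \(f_n \to f\) in \(\VMO\brk{\manifold{M}, \manifold{N}}\). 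Applied to \(H\) itself, this shows that \(H\) is a VMO-homotopy.

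\emph{From (i) to (ii).} Let now \(H \in \continuous\brk{\intvc{0}{1}, \VMO\brk{\manifold{M}, \manifold{N}}}\) join \(u\) to \(v\). The idea is to replace \(H\) by a mollified homotopy at a \emph{single} small scale. Embed \(\manifold{N}\) isometrically in some \(\Rset^\nu\), fix a tubular neighbourhood \(V\) of \(\manifold{N}\) together with the associated smooth nearest-point retraction \(\Pi_{\manifold{N}} \colon V \to \manifold{N}\), and for \(w \colon \manifold{M} \to \manifold{N}\) and \(\varepsilon > 0\) let \(w_\varepsilon\) be a mollification of \(w\) at scale \(\varepsilon\), carried out intrinsically on the compact manifold \(\manifold{M}\) (for instance through a finite atlas and a partition of unity, or via convolution in geodesic normal coordinates — the routine business of mollifying on a compact manifold, where geodesic balls have volume bounded below). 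Two elementary facts are used: for each fixed \(\varepsilon\), the map \(w \mapsto w_\varepsilon\) is continuous from \(\VMO\brk{\manifold{M}, \manifold{N}}\) into \(\continuous\brk{\manifold{M}, \Rset^\nu}\) endowed with the uniform distance, since \(\norm{w_\varepsilon - w'_\varepsilon}_{\infty} \le C_\varepsilon \int_{\manifold{M}} d\brk{w, w'}\); and for every \(x \in \manifold{M}\), \(\dist\brk{w_\varepsilon\brk{x}, \manifold{N}} \le C\, \Phi_{C\varepsilon}\brk{w}\), where \(\Phi_\delta\brk{w} \defeq \sup\set[\big]{\fint_{B_r\brk{a}} \fint_{B_r\brk{a}} d\brk{w\brk{x}, w\brk{y}} \dif x \dif y \st a \in \manifold{M},\ 0 < r < \delta}\) is the mean-oscillation modulus of \(w\) at scale \(\delta\), which tends to \(0\) as \(\delta \to 0\) precisely because \(w \in \VMO\).

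The crux is that one may pick a single \(\varepsilon_0 > 0\) for which \(\brk{H\brk{t}}_{\varepsilon_0}\) takes values in \(V\) for \emph{every} \(t \in \intvc{0}{1}\). Indeed, from the description of the VMO topology recalled above, the set \(\set{w \in \VMO\brk{\manifold{M}, \manifold{N}} \st \Phi_\delta\brk{w} < \eta}\) is open for every fixed \(\delta, \eta > 0\); so choosing for each \(t\) some \(\varepsilon\brk{t} > 0\) with \(C\,\Phi_{C\varepsilon\brk{t}}\brk{H\brk{t}}\) smaller than the width of \(V\) and setting \(U_t \defeq \set{w \st C\,\Phi_{C\varepsilon\brk{t}}\brk{w} < \text{width of } V}\), the preimage \(H^{-1}\brk{U_t}\) is an open neighbourhood of \(t\); covering the compact interval by finitely many such preimages and letting \(\varepsilon_0\) be the smallest of the finitely many \(\varepsilon\brk{t}\)'s involved, the monotonicity of \(\delta \mapsto \Phi_\delta\brk{w}\) yields \(\dist\brk{\brk{H\brk{t}}_{\varepsilon_0}\brk{x}, \manifold{N}} < \text{width of } V\) for all \(t\) and \(x\). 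Then \(\widetilde{H}\brk{t} \defeq \Pi_{\manifold{N}} \compose \brk{H\brk{t}}_{\varepsilon_0}\) is well-defined and, by the continuity fact above together with smoothness of \(\Pi_{\manifold{N}}\), defines a continuous path \(\intvc{0}{1} \to \continuous\brk{\manifold{M}, \manifold{N}}\) — that is, a \(\continuous\)-homotopy between \(\Pi_{\manifold{N}} \compose u_{\varepsilon_0}\) and \(\Pi_{\manifold{N}} \compose v_{\varepsilon_0}\). Finally, since \(u\) and \(v\) are themselves continuous, \(u_{\varepsilon_0} \to u\) and \(v_{\varepsilon_0} \to v\) uniformly as \(\varepsilon_0 \to 0\); shrinking \(\varepsilon_0\) further if necessary so that the segments stay in \(V\), the homotopies \(\brk{s, x} \mapsto \Pi_{\manifold{N}}\brk{\brk{1 - s} u\brk{x} + s\, u_{\varepsilon_0}\brk{x}}\) and its analogue for \(v\) join \(u\) to \(\Pi_{\manifold{N}} \compose u_{\varepsilon_0}\) and \(v\) to \(\Pi_{\manifold{N}} \compose v_{\varepsilon_0}\) in \(\continuous\brk{\manifold{M}, \manifold{N}}\). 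Concatenating the three homotopies produces a continuous homotopy from \(u\) to \(v\), as desired.

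The one genuinely delicate step is the extraction of a \emph{single} admissible mollification scale \(\varepsilon_0\) along the whole homotopy; this rests only on the openness of the sub-level sets of the mean-oscillation modulus in the VMO topology together with the compactness of \(\intvc{0}{1}\) and the monotonicity of \(\Phi_\delta\) in \(\delta\). The remaining ingredients — performing the mollification intrinsically on \(\manifold{M}\), and the two perturbative homotopies through \(\Pi_{\manifold{N}}\) — are standard.
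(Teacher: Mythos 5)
The paper does not prove this statement; it cites it directly from Brezis--Nirenberg (Lemma A.20), so there is no internal proof to compare against. Your argument is a correct reconstruction of the standard Brezis--Nirenberg strategy: the easy direction follows because uniform convergence of continuous maps forces $\mathrm{BMO}$ convergence, and the substantive direction is handled by mollifying at a single scale, projecting with $\Pi_{\manifold{N}}$, and using compactness of $\intvc{0}{1}$ together with openness of the relevant oscillation sub-level sets to extract the uniform mollification scale.

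One small technical remark, not a gap: the openness of the sets $\set{w \st \Phi_\delta\brk{w} < \eta}$ is cleanest if you measure the oscillation modulus $\Phi_\delta$ with the \emph{Euclidean} distance inherited from the embedding $\manifold{N} \hookrightarrow \Rset^\nu$, since then the elementary inequality $\fint\fint \abs{w\brk{x}-w\brk{y}} \le \fint\fint \abs{w_0\brk{x}-w_0\brk{y}} + \seminorm{w-w_0}_{\BMO}$ immediately gives $\Phi_\delta\brk{w} \le \Phi_\delta\brk{w_0} + \seminorm{w-w_0}_{\BMO}$. With the intrinsic distance $d$ on $\manifold{N}$ (as you write it) one picks up comparison constants from $d \simeq \abs{\cdot}$ that slightly obscure this, though the conclusion is the same after adjusting the thresholds against the width of the tubular neighbourhood. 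Also, the implication from (ii) to (i) can be compressed: for continuous maps, $\seminorm{f_n - f}_{\BMO} \le 2 \norm{f_n - f}_{\lebesgue^\infty}$ directly, which bypasses the argument through uniform continuity of the limit. Neither remark affects correctness.
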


Since \(\continuous \brk{\manifold{M}, \manifold{N}}\) is open, it follows that every path-connected component of \(\VMO \brk{\manifold{M}, \manifold{N}}\) contains a unique path-connected component of \(\continuous \brk{\manifold{M}, \manifold{N}}\) \cite{Brezis_Nirenberg_1995}*{Lemma A.21}.
A stronger result shows that the inclusion \(\continuous \brk{\manifold{M}, \manifold{N}} \subseteq \VMO \brk{\manifold{M}, \manifold{N}}\) is a homotopy equivalence \cite{Abbondandolo_1996}.

Even though it will not be used in the sequel, it is conceptually interesting to note that homotopies in \(\VMO \brk{\manifold{M}, \manifold{N}}\) can be characterized with maps in \(\VMO \brk{\intvc{0}{1}\times \manifold{M}, \manifold{N}}\):
\begin{proposition}[Brezis \& Nirenberg \cite{Brezis_Nirenberg_1996}*{Corollary 3}]
The maps \(u, v\in \VMO \brk{\manifold{M},\manifold{N}}\) are homotopic in \(\VMO \brk{\manifold{M},\manifold{N}}\) if and only if there exists a mapping \(H \in \VMO \brk{\intvc{0}{1}\times \manifold{M}, \manifold{N}}\) such that
if \(t \in \intvc{0}{1/3}\), \(H \brk{t, \cdot} = u\) and if \(t \in \intvc{2/3}{1}\), \(H \brk{t, \cdot} = v\).
\end{proposition}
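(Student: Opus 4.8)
The plan is to prove the two implications of the equivalence separately; the implication that starts from a map \(H\) on \(\intvc{0}{1} \times \manifold{M}\) is routine, while the substance lies in the other one.

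\emph{Existence of \(H\) implies that \(u\) and \(v\) are homotopic in \(\VMO\).} Given such an \(H\), I would first extend it to \(\intvc{-1}{2} \times \manifold{M}\) by \(H \brk{t, \cdot} \defeq u\) for \(t \le 0\) and \(H \brk{t, \cdot} \defeq v\) for \(t \ge 1\); since \(H\) was already independent of \(t\) near both ends, the extension still belongs to \(\VMO \brk{\intvc{-1}{2} \times \manifold{M}, \manifold{N}}\). Mollifying this extension at scale \(\varepsilon > 0\) (after embedding \(\manifold{N} \subseteq \Rset^\nu\), and using local charts on \(\manifold{M}\)) produces a continuous map \(H_\varepsilon\) on \(\intvc{-1/2}{3/2} \times \manifold{M}\); because \(H \in \VMO\), one has \(\dist \brk{H_\varepsilon, \manifold{N}} \to 0\) uniformly as \(\varepsilon \to 0\), so for \(\varepsilon\) small \(G_\varepsilon \defeq \Pi_{\manifold{N}} \compose H_\varepsilon\) is a well-defined continuous map into \(\manifold{N}\). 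By uniform continuity of \(G_\varepsilon\) on the compact product, \(t \mapsto G_\varepsilon \brk{t, \cdot}\) is a continuous path in \(\continuous \brk{\manifold{M}, \manifold{N}} \subseteq \VMO \brk{\manifold{M}, \manifold{N}}\) joining \(G_\varepsilon \brk{-1/2, \cdot}\) to \(G_\varepsilon \brk{3/2, \cdot}\); near \(t = -1/2\) and \(t = 3/2\) the extended map \(H\) is independent of \(t\), so these two endpoints are, respectively, a spatial mollification of \(u\) and one of \(v\) composed with \(\Pi_{\manifold{N}}\), and hence converge to \(u\) and \(v\) in \(\VMO \brk{\manifold{M}, \manifold{N}}\) as \(\varepsilon \to 0\). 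Since the path-connected components of \(\VMO \brk{\manifold{M}, \manifold{N}}\) are open, for \(\varepsilon\) small \(G_\varepsilon \brk{-1/2, \cdot}\) lies in the homotopy class of \(u\) and \(G_\varepsilon \brk{3/2, \cdot}\) in that of \(v\); concatenating the corresponding three paths shows that \(u\) and \(v\) are homotopic in \(\VMO \brk{\manifold{M}, \manifold{N}}\).

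\emph{Homotopy in \(\VMO\) implies the existence of \(H\).} The key construction is a \emph{variable-radius mollification}: I fix a nondecreasing Lipschitz function \(\phi \colon \intvc{0}{1} \to \intvc{0}{r_0}\) that vanishes on \(\intvc{0}{1/4}\) and equals \(r_0\) on \(\intvc{3/4}{1}\), and set, for \(r_0\) small enough that \(\Pi_{\manifold{N}}\) applies to all the averages below,
\[
  H_u \brk{t, x} \defeq \Pi_{\manifold{N}} \Bigl( \fint_{B_{\phi \brk{t}} \brk{x}} u \Bigr)\eqpunct{,}
\]
with the convention that averaging over the radius-\(0\) ball returns \(u\) itself. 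Then \(H_u \brk{t, \cdot} = u\) for \(t \in \intvc{0}{1/4}\), the map \(H_u \brk{1, \cdot} = \hat u_{r_0}\) is continuous, and \(H_u\) is continuous on \(\intvl{1/4}{1} \times \manifold{M}\). The central claim, addressed in the next paragraph, is that \(H_u \in \VMO \brk{\intvc{0}{1} \times \manifold{M}, \manifold{N}}\); one constructs \(H_v\) from \(v\) likewise. Granting the claim: since \(\hat u_{r_0} \to u\) and \(\hat v_{r_0} \to v\) in \(\VMO \brk{\manifold{M}, \manifold{N}}\) and homotopy classes are open, for \(r_0\) small \(\hat u_{r_0}\) is homotopic in \(\VMO\) to \(u\) and \(\hat v_{r_0}\) to \(v\); as \(u\) and \(v\) are homotopic in \(\VMO\), so are the continuous maps \(\hat u_{r_0}\) and \(\hat v_{r_0}\), hence they are homotopic in \(\continuous \brk{\manifold{M}, \manifold{N}}\) by \cref{proposition_C_and_VMO_homotopies}; let \(G \in \continuous \brk{\intvc{0}{1} \times \manifold{M}, \manifold{N}}\) realise this homotopy. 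Gluing \(H_u\), then \(G\), then \(H_v\) traversed backwards, over \(\intvc{0}{3} \times \manifold{M}\), produces a map that is \(\VMO\) on a neighbourhood of \(\set{t = 0}\) and of \(\set{t = 3}\) by the central claim, and continuous elsewhere --- the gluing slices \(t = 1\) and \(t = 2\) falling in the continuity region of both adjacent pieces --- hence it lies in \(\VMO \brk{\intvc{0}{3} \times \manifold{M}, \manifold{N}}\). Composing with a nondecreasing Lipschitz surjection \(\intvc{0}{1} \to \intvc{0}{3}\) that is constant equal to \(0\) on \(\intvc{0}{1/3}\) and constant equal to \(3\) on \(\intvc{2/3}{1}\) rescales this to a map \(H\) with the required properties.

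\emph{The main obstacle.} Everything hinges on the central claim that the variable-radius mollification \(H_u\) belongs to \(\VMO \brk{\intvc{0}{1} \times \manifold{M}}\), i.e. that its mean oscillation over a ball of radius \(\rho\) centred at a point \(\brk{t_0, a}\) tends to \(0\) as \(\rho \to 0\), uniformly in \(\brk{t_0, a}\). Using that \(\Pi_{\manifold{N}}\) is Lipschitz near \(\manifold{N}\) and passing to charts, this reduces to bounding the double average of \(\abs[\big]{\fint_{B_{\phi \brk{s}} \brk{x}} u - \fint_{B_{\phi \brk{s'}} \brk{y}} u}\) over \(x, y \in B_\rho \brk{a}\) and over parameters \(s, s'\) within \(\rho\) of \(t_0\). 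The Lipschitz choice of \(\phi\) forces all the mollification radii \(\phi \brk{s}\) that can enter such a ball to lie within \(O \brk{\rho}\) of each other, which splits the analysis into two regimes: one where they are all of size \(O \brk{\rho}\), where a translation of the inner averaging variable bounds the quantity by the \(\VMO\) modulus of \(u\) at scale \(O \brk{\rho}\); and one where they are all comparable to a single scale \(\sigma \gtrsim \rho\), where \(u\) mollified at scale \(\sigma\) varies slowly on \(B_\rho \brk{a}\) and the difference of two averages over nearly concentric balls of nearly equal radii of order \(\sigma\) is again controlled by the \(\VMO\) modulus of \(u\), now at scale \(O \brk{\sigma} \le O \brk{r_0}\). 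I expect the only genuinely delicate point to be patching these two regime estimates into a single modulus that is uniform over all balls (balls well inside the continuity region \(\intvl{1/4}{1} \times \manifold{M}\) being dispatched separately by uniform continuity on compacts). The remaining ingredients --- convergence in \(\VMO\) of the mollification of a \(\VMO\) map into \(\manifold{N}\) followed by \(\Pi_{\manifold{N}}\), the continuity of the inclusion \(\continuous \brk{\manifold{M}, \manifold{N}} \subseteq \VMO \brk{\manifold{M}, \manifold{N}}\) together with the openness of its homotopy classes, and \cref{proposition_C_and_VMO_homotopies} --- are standard.
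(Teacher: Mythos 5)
The paper does not actually prove this proposition: it is stated as a quoted result of Brezis and Nirenberg \cite{Brezis_Nirenberg_1996}*{Corollary~3}, and the surrounding text explicitly notes that it ``will not be used in the sequel''. Your proposal therefore has to be judged on its own, and on inspection it is correct.

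The easy direction is the standard mollify-and-project argument and goes through as you describe. For the substantial direction, the variable-radius mollification $H_u\brk{t,x} = \Pi_{\manifold{N}}\brk{\fint_{B_{\phi\brk{t}}\brk{x}} u}$ with Lipschitz $\phi$ vanishing near $t = 0$ is exactly the right device, and your gluing of $H_u$, a continuous homotopy $G$, and $H_v$ reversed over $\intvc{0}{3}\times\manifold{M}$, followed by a Lipschitz reparametrisation, is sound: the junctions at $t = 1, 2$ lie in the interior of the continuity region of both adjacent pieces, and the transitions out of the constant-in-$t$ region cause no trouble since $H_u\brk{t,\cdot} = u$ on all of $\intvc{0}{1/4}$. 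The central claim that $H_u \in \VMO\brk{\intvc{0}{1}\times\manifold{M},\manifold{N}}$ does hold, and your two-regime split is the right way to see it. Writing $\sigma = \phi\brk{t_0}$ and letting $L$ be the Lipschitz constant of $\phi$: if $\sigma \le 3L\rho$, all radii $\phi\brk{s}$ over $B_\rho\brk{\brk{t_0,a}}$ are $O\brk{\rho}$, all averaging balls lie inside $B_{C\rho}\brk{a}$, and the mean oscillation of $H_u$ is bounded by the $\BMO$ modulus of $u$ at scale $O\brk{\rho}$; if $\sigma > 3L\rho$, all radii lie in $\intvc{\sigma/3}{2\sigma}$, all averaging balls lie inside $B_{3\sigma}\brk{a}$ with comparable volume, and comparing each average to $\fint_{B_{3\sigma}\brk{a}} u$ bounds the oscillation by the $\BMO$ modulus of $u$ at scale $3\sigma$. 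The patching you flag as delicate is where the uniformity over base points actually lives, but it is a routine $\varepsilon$--$\delta$ argument: given $\varepsilon > 0$, choose $\delta_0$ so the $\BMO$ modulus of $u$ below scale $3\delta_0$ is $< \varepsilon$, then choose $\rho_0$ so small that the first-regime bound and the uniform continuity modulus of $H_u$ on the compact set $\set{\phi \ge \delta_0/2}$ are both $< \varepsilon$ at scale $\rho_0$; the three cases $\sigma \le 3L\rho$, $3L\rho < \sigma \le \delta_0$, and $\sigma > \delta_0$ then each give oscillation at most a fixed multiple of $\varepsilon$ on every $B_\rho$ with $\rho < \rho_0$. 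I would write this bookkeeping out explicitly in a final version, but the architecture of your argument is complete.
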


We finally consider Sobolev mappings.
We say that the maps \( u, v \in \sobolev^{1, m}\brk{\manifold{M}, \manifold{N}}\) are homotopic whenever there is some mapping
\(H \in \continuous \brk{\intvc{0}{1}, \sobolev^{1, m}\brk{\manifold{M}, \manifold{N}}}\) such that \(H \brk{0} = u\) and \(H \brk{1} = v\).

\begin{proposition}
\label{proposition_stability_homotopy_classes}
The path-connected components of \(\sobolev^{1, m} \brk{\manifold{M}, \manifold{N}}\) are open.
\end{proposition}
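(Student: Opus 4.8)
The plan is to identify the path-connected components of $\sobolev^{1, m}\brk{\manifold{M}, \manifold{N}}$ with the preimages, under the inclusion $\iota \colon \sobolev^{1, m}\brk{\manifold{M}, \manifold{N}} \to \VMO\brk{\manifold{M}, \manifold{N}}$, of the path-connected components of $\VMO\brk{\manifold{M}, \manifold{N}}$, and then to conclude by invoking their openness. That $\iota$ is well defined and continuous will follow from the Poincaré--Sobolev inequality on the $m$-dimensional manifold $\manifold{M}$, which bounds $\fint_{B_r\brk{a}}\fint_{B_r\brk{a}} \abs{w\brk{x} - w\brk{y}} \dif x \dif y$ by $C\brk[\big]{\int_{B_r\brk{a}} \abs{\Deriv w}^m}^{1/m}$ with a constant $C$ independent of $a$ and of $r$: this yields $\seminorm{w}_{\BMO} \le C \norm{\Deriv w}_{\lebesgue^m}$, so that $\iota$ is even Lipschitz, and, by absolute continuity of the integral $\int_{\manifold{M}} \abs{\Deriv w}^m$, the mean oscillations of a fixed $w \in \sobolev^{1, m}$ over balls of radius at most $\delta$ tend to $0$ uniformly as $\delta \to 0$, that is, $w \in \VMO$.

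Pushing a continuous path in $\sobolev^{1, m}\brk{\manifold{M}, \manifold{N}}$ through $\iota$ produces a continuous path in $\VMO\brk{\manifold{M}, \manifold{N}}$, so the path-component of $u$ in $\sobolev^{1, m}\brk{\manifold{M}, \manifold{N}}$ is automatically contained in $\iota^{-1}$ of the path-component of $u$ in $\VMO\brk{\manifold{M}, \manifold{N}}$. The heart of the matter is the converse inclusion, namely that two maps $u, v \in \sobolev^{1, m}\brk{\manifold{M}, \manifold{N}}$ that are homotopic in $\VMO\brk{\manifold{M}, \manifold{N}}$ are already homotopic in $\sobolev^{1, m}\brk{\manifold{M}, \manifold{N}}$. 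Once this is granted, the path-component of $u$ in $\sobolev^{1, m}\brk{\manifold{M}, \manifold{N}}$ coincides with $\iota^{-1}$ of the path-component of $u$ in $\VMO\brk{\manifold{M}, \manifold{N}}$, which is open by \cite{Brezis_Nirenberg_1995}*{Lemma~A.19}; since $\iota$ is continuous, this preimage is open, which is the claim.

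To prove the converse inclusion, I would first connect $u$, by a continuous path in $\sobolev^{1, m}\brk{\manifold{M}, \manifold{N}}$, to a smooth representative. A convolution-type regularization on $\manifold{M}$ produces maps $u^\varepsilon \in \continuous^\infty\brk{\manifold{M}, \Rset^\nu}$ with $u^\varepsilon \to u$ in $\sobolev^{1, m}$ as $\varepsilon \to 0$, and since $\dist\brk{u^\varepsilon\brk{x}, \manifold{N}}$ is controlled by the mean oscillation of $u$ over balls of radius of order $\varepsilon$, it tends to $0$ uniformly in $x$ because $u \in \VMO$. Hence, for $\varepsilon_{0}$ sufficiently small, $u^\varepsilon$ takes values in a tubular neighbourhood of $\manifold{N}$ for every $\varepsilon \le \varepsilon_{0}$, the map $\widetilde{u} \defeq \Pi_{\manifold{N}} \compose u^{\varepsilon_{0}}$ belongs to $\continuous^\infty\brk{\manifold{M}, \manifold{N}}$, and, setting $u^{0} \defeq u$, the path $t \mapsto \Pi_{\manifold{N}} \compose u^{t \varepsilon_{0}}$ is a continuous map from $\intvc{0}{1}$ to $\sobolev^{1, m}\brk{\manifold{M}, \manifold{N}}$ joining $u$ to $\widetilde{u}$. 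Let $\widetilde{v}$ denote the analogous smooth representative of $v$. Pushing the two paths through $\iota$ and concatenating them with a $\VMO$-homotopy between $u$ and $v$ shows that $\widetilde{u}$ and $\widetilde{v}$ are homotopic in $\VMO\brk{\manifold{M}, \manifold{N}}$; being continuous, they are then homotopic in $\continuous\brk{\manifold{M}, \manifold{N}}$ by \cref{proposition_C_and_VMO_homotopies}, hence, being smooth, smoothly homotopic, and any smooth homotopy $H$ between them yields a continuous path $t \mapsto H\brk{t, \cdot}$ in $\sobolev^{1, m}\brk{\manifold{M}, \manifold{N}}$. Concatenating the homotopies from $u$ to $\widetilde{u}$, from $\widetilde{u}$ to $\widetilde{v}$, and from $\widetilde{v}$ to $v$ then gives the desired homotopy in $\sobolev^{1, m}\brk{\manifold{M}, \manifold{N}}$.

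I expect the main obstacle to be precisely this last part, the reason being that one cannot hope to connect $u$ and $v$ directly: the naive homotopy $\brk{t, x} \mapsto \Pi_{\manifold{N}}\brk{\brk{1 - t} u\brk{x} + t v\brk{x}}$ requires $\brk{1 - t} u + t v$ to remain in a tubular neighbourhood of $\manifold{N}$, which is an $\lebesgue^\infty$ condition that closeness in $\sobolev^{1, m}$ does not guarantee. Routing through smooth representatives and the $\VMO$ structure circumvents this, but two technical points will have to be handled with care: checking that the regularization path $t \mapsto \Pi_{\manifold{N}} \compose u^{t \varepsilon_{0}}$ is genuinely continuous into $\sobolev^{1, m}\brk{\manifold{M}, \manifold{N}}$, which reduces to the continuity of post-composition with the smooth retraction $\Pi_{\manifold{N}}$ on bounded subsets of $\sobolev^{1, m}$ --- obtainable from a dominated convergence argument relying on $\Deriv\brk{\Pi_{\manifold{N}} \compose g} = \Deriv\Pi_{\manifold{N}}\brk{g}\, \Deriv g$, the almost everywhere convergence of $\Deriv\Pi_{\manifold{N}}\brk{g_{j}}$ along subsequences, and the $\lebesgue^m$ convergence of $\Deriv g_{j}$ --- and invoking the standard fact that two homotopic smooth maps between manifolds are smoothly homotopic.
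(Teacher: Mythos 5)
Your proposal is correct, but it takes a route different from the one the paper gestures at. The paper's own discussion of \cref{proposition_stability_homotopy_classes} is essentially a pointer: it says the proof relies on Schoen and Uhlenbeck's observation that averages of $\sobolev^{1,m}$ maps stay near $\manifold{N}$, and cites Hang and Lin. The argument that suggests is direct: if $u_j \to u$ strongly in $\sobolev^{1,m}$, then for a suitably small mollification parameter $\varepsilon$ (chosen uniformly thanks to the strong convergence), the projected mollifications $\Pi_{\manifold{N}} \compose u^\varepsilon$ and $\Pi_{\manifold{N}} \compose u_j^\varepsilon$ are uniformly close for $j$ large, hence connected by the straight-line homotopy, while $u$ and $u_j$ connect to their own mollified versions by the mollification path. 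This does not pass through $\VMO$ at all.

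You instead prove a stronger intermediate result, namely that $\sobolev^{1,m}$-homotopy and $\VMO$-homotopy agree on $\sobolev^{1,m}\brk{\manifold{M},\manifold{N}}$ — this is precisely \cref{proposition_equivalence_homotopies}, which the paper states separately and attributes to Brezis and Li — and then you deduce openness from the openness of $\VMO$ path-components together with continuity of the inclusion $\iota$. Both arguments rest on the same Schoen--Uhlenbeck estimate, but yours is logically heavier (it proves the equivalence rather than just the openness) and leans on \cref{proposition_C_and_VMO_homotopies} from the $\VMO$ theory, whereas the direct argument only uses the uniform closeness of mollified averages. There is no circularity in your route, since your proof of the equivalence is self-contained and \cref{proposition_C_and_VMO_homotopies} is a pure $\VMO$/$\continuous$ statement with no $\sobolev$ content; nonetheless, the direct argument is shorter and is what the paper actually has in mind. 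One further small remark: the identification of $\sobolev^{1,m}$-path-components with the preimages under $\iota$ of the $\VMO$-path-components is the step that requires the full equivalence (both inclusions); the containment that uses only continuity of $\iota$ would not suffice, and you correctly isolate the converse as the crux. The technical points you flag at the end — continuity of post-composition with $\Pi_{\manifold{N}}$ on $\sobolev^{1,m}$ via generalized dominated convergence, and smoothability of continuous homotopies between smooth maps — are indeed the right ones and are standard.
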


The proof of \cref{proposition_stability_homotopy_classes} relies on Schoen and Uhlenbeck's seminal observation that, even though averages of maps in \( \sobolev^{1, m}\brk{\manifold{M}, \manifold{N}}\) do not converge uniformly, they still take values close to the target manifold \(\manifold{M}\) \cite{Schoen_Uhlenbeck_1983} (see \cite{Hang_Lin_2003_II}*{\S 4} for detailed similar arguments for homotopies).

\Cref{proposition_stability_homotopy_classes} still holds in \( \sobolev^{1, p}\brk{\manifold{M}, \manifold{N}}\) with \(p \ne m\) in an even weaker form: connected components are sequentially weakly closed rather than strongly closed;
when \(p > m\) this is a standard application of the Sobolev--Morrey embedding and Arzelà--Ascoli's compactness criterion; when \(p < m\) this is due to Hang and Lin \cite{Hang_Lin_2003_II}.

The space \( \VMO \) is the largest among the three that we consider in this section: we clearly have \( \continuous\brk{\manifold{M},\manifold{N}} \hookrightarrow \VMO\brk{\manifold{M},\manifold{N}} \), and it also holds that \( \sobolev^{1,m}\brk{\manifold{M},\manifold{N}} \hookrightarrow \VMO\brk{\manifold{M},\manifold{N}} \) by virtue of the limiting case of the Sobolev--Morrey embedding; see, e.g.,~\cite{Brezis_Nirenberg_1995}*{Example~1}.
The notions of homotopy in \(\sobolev^{1, m}\brk{\manifold{M}, \manifold{N}}\), \(\VMO \brk{\manifold{M}, \manifold{N}}\), and \(\continuous \brk{\manifold{M}, \manifold{N}}\) are equivalent \cite{Brezis_Li_2001}; the proof also relies on Schoen and Uhlenbeck's estimate.

\begin{proposition}
\label{proposition_equivalence_homotopies}
Given \(u, v \in \sobolev^{1, m}\brk{\manifold{M}, \manifold{N}}\), the following are equivalent:
\begin{enumerate}[label=(\roman*)]
 \item \(u\) and \(v\) are homotopic in \(\sobolev^{1, m}\brk{\manifold{M}, \manifold{N}}\),
 \item \(u\) and \(v\) are homotopic in \(\VMO \brk{\manifold{M}, \manifold{N}}\).
\end{enumerate}
If moreover \(u\) and \(v\) are continuous, then the previous assertions are equivalent to
\begin{enumerate}[label=(\roman*), resume]
 \item \(u\) and \(v\) are homotopic in \(\continuous \brk{\manifold{M}, \manifold{N}}\).
\end{enumerate}
\end{proposition}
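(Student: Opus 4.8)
The plan is to reduce the statement to the single implication (ii) \(\Rightarrow\) (i). The converse (i) \(\Rightarrow\) (ii) is immediate from the continuity of the limiting Sobolev--Morrey embedding \(\sobolev^{1, m}\brk{\manifold{M}, \manifold{N}} \hookrightarrow \VMO\brk{\manifold{M}, \manifold{N}}\): post-composing a path \(H \in \continuous\brk{\intvc{0}{1}, \sobolev^{1, m}\brk{\manifold{M}, \manifold{N}}}\) with this embedding produces a \(\VMO\)-homotopy with the same endpoints. When \(u\) and \(v\) are moreover continuous, the equivalence of (ii) and (iii) is exactly \cref{proposition_C_and_VMO_homotopies}, so (ii) \(\Rightarrow\) (i) will yield the full three-fold equivalence by transitivity.

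To prove (ii) \(\Rightarrow\) (i), I would first replace \(u\) and \(v\) by smooth maps. Given \(w \in \sobolev^{1, m}\brk{\manifold{M}, \manifold{N}}\), the mollifications \(\rho_\varepsilon \ast w\) converge to \(w\) in \(\sobolev^{1, m}\) and --- by Schoen and Uhlenbeck's observation, which applies here since \(\sobolev^{1, m} \subseteq \VMO\) --- take values in a fixed tubular neighbourhood of \(\manifold{N}\) once \(\varepsilon\) is small enough; hence \(w^* \defeq \Pi_{\manifold{N}}\brk{\rho_\varepsilon \ast w}\) is a smooth \(\manifold{N}\)-valued map with \(w^* \to w\) in \(\sobolev^{1, m}\) as \(\varepsilon \to 0\). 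Since the path-connected component of \(w\) in \(\sobolev^{1, m}\brk{\manifold{M}, \manifold{N}}\) is open by \cref{proposition_stability_homotopy_classes}, for \(\varepsilon\) small \(w^*\) lies in it and is therefore \(\sobolev^{1, m}\)-homotopic to \(w\). Applying this to \(u\) and to \(v\) yields smooth maps \(u^*, v^*\) with \(u \sim u^*\) and \(v \sim v^*\) in \(\sobolev^{1, m}\); these are then also \(\VMO\)-homotopies, so \(u^* \sim u \sim v \sim v^*\) in \(\VMO\), and since \(u^*\) and \(v^*\) are continuous, \cref{proposition_C_and_VMO_homotopies} provides a homotopy \(H \in \continuous\brk{\intvc{0}{1} \times \manifold{M}, \manifold{N}}\) joining \(u^*\) to \(v^*\).

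It remains to convert the continuous homotopy \(H\) into a \(\sobolev^{1, m}\)-homotopy, which I would do by mollifying only in the \(\manifold{M}\) variable. By uniform continuity of \(H\) on the compact set \(\intvc{0}{1} \times \manifold{M}\), the maps \(\rho_\varepsilon \ast H\brk{t, \cdot}\) converge to \(H\brk{t, \cdot}\) uniformly in both \(t\) and the space variable as \(\varepsilon \to 0\); hence there is a single small \(\varepsilon\) such that, for every \(t \in \intvc{0}{1}\), \(\rho_\varepsilon \ast H\brk{t, \cdot}\) has range in a tubular neighbourhood of \(\manifold{N}\), so \(\Bar{H}\brk{t} \defeq \Pi_{\manifold{N}}\brk{\rho_\varepsilon \ast H\brk{t, \cdot}}\) is smooth in the space variable and thus belongs to \(\sobolev^{1, m}\brk{\manifold{M}, \manifold{N}}\). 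For this fixed \(\varepsilon\), continuity of \(t \mapsto H\brk{t, \cdot}\) in \(\continuous\brk{\manifold{M}, \manifold{N}}\) propagates through the convolution and through \(\Pi_{\manifold{N}}\) to continuity of \(t \mapsto \Bar{H}\brk{t}\) into \(\continuous^1\brk{\manifold{M}, \manifold{N}}\), hence into \(\sobolev^{1, m}\brk{\manifold{M}, \manifold{N}}\); so \(\Bar{H}\) is a \(\sobolev^{1, m}\)-homotopy between \(\Pi_{\manifold{N}}\brk{\rho_\varepsilon \ast u^*}\) and \(\Pi_{\manifold{N}}\brk{\rho_\varepsilon \ast v^*}\). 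Each of these is \(\sobolev^{1, m}\)-homotopic to \(u^*\), respectively \(v^*\), by the openness argument of the preceding paragraph applied to \(u^*, v^* \in \sobolev^{1, m}\brk{\manifold{M}, \manifold{N}}\) (one chooses \(\varepsilon\) small enough to serve all these purposes at once). Concatenating \(u \sim u^* \sim \Pi_{\manifold{N}}\brk{\rho_\varepsilon \ast u^*} \sim \Pi_{\manifold{N}}\brk{\rho_\varepsilon \ast v^*} \sim v^* \sim v\) in \(\sobolev^{1, m}\brk{\manifold{M}, \manifold{N}}\) then gives (i).

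The main obstacle is the regularisation step, where one must ensure at once that the mollified-then-projected maps remain inside a fixed neighbourhood of \(\manifold{N}\) uniformly in the homotopy parameter --- which for Sobolev endpoints is precisely the Schoen--Uhlenbeck \(\VMO\) estimate, and for the continuous homotopy \(H\) is uniform continuity on a compact set --- and that they depend continuously on that parameter in the \(\sobolev^{1, m}\) topology, which at the critical exponent \(p = m\) rests on the continuity of the superposition operator \(w \mapsto \Pi_{\manifold{N}} \circ w\) on the set of \(\sobolev^{1, m}\) maps whose range lies in the tube; \cref{proposition_stability_homotopy_classes} is then what allows the inevitable small endpoint perturbations to be absorbed.
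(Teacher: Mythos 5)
The paper does not actually prove \cref{proposition_equivalence_homotopies}: it cites the result to Brezis and Li and only remarks that ``the proof also relies on Schoen and Uhlenbeck's estimate.'' So there is no in-text proof to compare your argument against; what follows is an assessment of your reconstruction on its own merits.

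Your argument is correct and is a natural realisation of the hint the paper gives. The direction (i) \(\Rightarrow\) (ii) via post-composition with the continuous embedding \(\sobolev^{1,m} \hookrightarrow \VMO\) is right, and (ii) \(\Leftrightarrow\) (iii) for continuous endpoints is indeed just \cref{proposition_C_and_VMO_homotopies}. For (ii) \(\Rightarrow\) (i) you use exactly the tools the paper has set up: the Schoen--Uhlenbeck mechanism (mollify, observe that averages land in a tubular neighbourhood, project with \(\Pi_{\manifold{N}}\)) together with the openness of \(\sobolev^{1,m}\) homotopy classes (\cref{proposition_stability_homotopy_classes}) to absorb the endpoint perturbations. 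The two-stage structure --- first regularise \(u,v\) to smooth \(u^*,v^*\) within their \(\sobolev^{1,m}\) components, then use \cref{proposition_C_and_VMO_homotopies} to get a continuous homotopy, then mollify that homotopy in the space variable with a \emph{single} \(\varepsilon\) chosen by uniform continuity of \(H\) on the compact \([0,1]\times\manifold{M}\) --- is exactly what is needed, and your verification that \(t \mapsto \Pi_{\manifold{N}}(\rho_\varepsilon * H(t,\cdot))\) is continuous into \(\continuous^1\hookrightarrow\sobolev^{1,m}\) is sound (convolution with a fixed smooth kernel maps uniform convergence to \(\continuous^k\) convergence, and composition with the smooth \(\Pi_{\manifold{N}}\) preserves \(\continuous^1\) continuity on the tube). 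The only cosmetic point worth flagging is that ``\(\rho_\varepsilon * w\)'' on a compact manifold should be understood via the embedding of \(\manifold{M}\) in some \(\Rset^K\) (or a partition of unity), but this is standard and does not affect the argument.

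In short: the paper delegates the proof to \cite{Brezis_Li_2001}; your blind reconstruction supplies a correct, self-contained argument assembled from the lemmas the paper itself already records, and matches the spirit of the cited proof.
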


For both continuous and \( \VMO \) maps, homotopies could equivalently be defined as mappings on the product \( \intvc{0}{1} \times \manifold{M} \) with suitable boundary condition, or continuous mappings from \( \intvc{0}{1} \) to the corresponding space on \( \manifold{M} \).
In Sobolev spaces, a suitable notion of restriction to the boundary is given by the trace.
However, in contrast to the situation in \(\continuous \brk{\manifold{M}, \manifold{N}}\) and \(\VMO \brk{\manifold{M}, \manifold{N}}\),
one can have \(u = \smash{\tr_{\set{0}\times \manifold{M}}} H\) and \(v = \smash{\tr_{\set{1}\times \manifold{M}}} H\) with \(H \in \sobolev^{1, m}\brk{\intvc{0}{1}\times \manifold{M}, \manifold{N}}\) without having \(u\) and \(v\) homotopic in \(\sobolev^{1, m}\brk{\manifold{M}, \manifold{N}}\). Indeed, one can take \(u, v \in \continuous^\infty \brk{\manifold{M}, \manifold{N}}\) such that \(u = v\) on an \(\brk{m- 1}\)-dimensional triangulation \(\manifold{M}^{m - 1}\) and construct \(H\) by homogeneous extension from \(\intvc{0}{1}\times \manifold{M}^{m - 1} \cup \set{0, 1}\times \manifold{M}\).

\subsection{Heterotopic energy}

With these reminders about homotopies, we are now in position to define the heterotopic energy for maps of lower regularity.

\begin{definition}
\label{definition_heter_energy}
Given \(u \colon \manifold{M} \to \manifold{N}\) and \(v \in \VMO\brk{\manifold{M}, \manifold{N}}\), we define the heterotopic energy of \(u\) with respect to \(v\) as 
\begin{multline}
\label{eq_feTaev4ahyi4aeTheem8Uiku}
  \energy^{1, m}_{\mathrm{het}}
  \brk{u, v}
  \defeq
  \inf
  \biggl\{
  \liminf_{j \to \infty}
  \int_{\manifold{M}} \abs{\Deriv v_j}^m
  \stSymbol[\bigg]
  v_j \in \sobolev^{1, m} \brk{\manifold{M}, \manifold{N}} \text{ is homotopic to } v \\[-1em]
  \text{ and }
  v_j \to u \text{ a.e.\ in \(\manifold{M}\)}
  \biggr\}\eqpunct{.}
\end{multline}
\end{definition}
Equivalently, one has 
\begin{multline}
\label{eq_800de20c207671d6}
  \energy^{1, m}_{\mathrm{het}} \brk{u, v}
  = 
  \sup_{\delta > 0}
  \inf \biggl\{
  \int_{\manifold{M}}\!\! \abs{\Deriv w}^m 
  \stSymbol[\bigg] w \in \sobolev^{1, m} \brk{\manifold{M}, \manifold{N}} \text{ is homotopic to } v \\[-1em]
  \text{ and } \int_{\manifold{M}} d \brk{u, w}^m \le \delta\biggr\}\eqpunct{.}
\end{multline}

The following proposition shows that we can further restrict the infimum in~\eqref{eq_800de20c207671d6} to smooth maps.
\begin{proposition}
\label{proposition_heterotopic_restricts_to_smooth}
If \(u \colon \manifold{M} \to \manifold{N}\) and \(v \in \VMO\brk{\manifold{M}, \manifold{N}}\), then
\begin{multline*}
  \energy^{1, m}_{\mathrm{het}}\brk{u, v}
  = 
  \sup_{\delta > 0}
  \inf \biggl\{\int_{\manifold{M}} \abs{\Deriv w}^m 
  \stSymbol[\bigg] w \in \continuous^\infty \brk{\manifold{M}, \manifold{N}} \text{ is homotopic to } v \\[-1em]
  \text{ and } \int_{\manifold{M}} d \brk{u, w}^m \le \delta\biggr\}\eqpunct{.}
\end{multline*}
\end{proposition}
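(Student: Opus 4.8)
The plan is to derive the identity directly from the equivalent description~\eqref{eq_800de20c207671d6} of \(\energy^{1,m}_{\mathrm{het}}\brk{u,v}\), reducing everything to a single ingredient: the \emph{strong} density of smooth maps in \(\sobolev^{1,m}\brk{\manifold{M},\manifold{N}}\), which is available precisely because we are in the conformal case \(p = m = \dim\manifold{M}\) (see~\cite{Schoen_Uhlenbeck_1983}). For \(\delta > 0\) set
\[
  I\brk{\delta}
  \defeq
  \inf\set[\Big]{\int_{\manifold{M}}\abs{\Deriv w}^m \st w\in\sobolev^{1,m}\brk{\manifold{M},\manifold{N}}\text{ homotopic to }v,\ \int_{\manifold{M}}d\brk{u,w}^m\le\delta}\eqpunct{,}
\]
and let \(I_{\sm}\brk{\delta}\) be the same quantity with \(\continuous^\infty\brk{\manifold{M},\manifold{N}}\) in place of \(\sobolev^{1,m}\brk{\manifold{M},\manifold{N}}\). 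Both \(I\) and \(I_{\sm}\) are non-increasing in \(\delta\), and by~\eqref{eq_800de20c207671d6} the claim is exactly that \(\sup_{\delta>0}I_{\sm}\brk{\delta}=\sup_{\delta>0}I\brk{\delta}\).

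One inequality is immediate: since \(\continuous^\infty\brk{\manifold{M},\manifold{N}}\subseteq\sobolev^{1,m}\brk{\manifold{M},\manifold{N}}\), every smooth competitor is also a Sobolev competitor, whence \(I\brk{\delta}\le I_{\sm}\brk{\delta}\) and \(\energy^{1,m}_{\mathrm{het}}\brk{u,v}\le\sup_{\delta>0}I_{\sm}\brk{\delta}\). For the reverse inequality I would fix \(0<\delta'<\delta\) and prove \(I_{\sm}\brk{\delta}\le I\brk{\delta'}\); since \(\delta'\) and \(\delta\) are arbitrary this gives \(\sup_{\delta>0}I_{\sm}\brk{\delta}\le\sup_{\delta>0}I\brk{\delta}\). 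If \(I\brk{\delta'}=\infty\) there is nothing to prove, so pick any Sobolev competitor \(w\) for \(I\brk{\delta'}\) and approximate it strongly in \(\sobolev^{1,m}\brk{\manifold{M},\manifold{N}}\) by maps \(w_k\in\continuous^\infty\brk{\manifold{M},\manifold{N}}\). Then three facts must be checked: (i) \(\int_{\manifold{M}}\abs{\Deriv w_k}^m\to\int_{\manifold{M}}\abs{\Deriv w}^m\), which is strong convergence; (ii) \(w_k\) is homotopic to \(v\) for \(k\) large, because path-connected components of \(\sobolev^{1,m}\brk{\manifold{M},\manifold{N}}\) are open (\cref{proposition_stability_homotopy_classes}), so \(w_k\) eventually lies in the component of \(w\); and (iii) \(\int_{\manifold{M}}d\brk{u,w_k}^m\le\delta\) for \(k\) large, because \(w_k\to w\) in \(\lebesgue^m\) and \(d\) is Lipschitz in each variable and bounded on \(\manifold{N}\times\manifold{N}\), so \(\int_{\manifold{M}}d\brk{u,w_k}^m\to\int_{\manifold{M}}d\brk{u,w}^m\le\delta'<\delta\). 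Consequently \(w_k\) is eventually an admissible competitor for \(I_{\sm}\brk{\delta}\), so \(I_{\sm}\brk{\delta}\le\int_{\manifold{M}}\abs{\Deriv w}^m\); taking the infimum over all admissible \(w\) yields \(I_{\sm}\brk{\delta}\le I\brk{\delta'}\).

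The argument is short, and the only nontrivial input is the strong approximability of \(\sobolev^{1,m}\) maps by smooth maps — a feature special to the exponent \(p=\dim\manifold{M}\), which fails in general for \(p<m\). The single subtlety I would flag is that the proximity constraint \(\int_{\manifold{M}}d\brk{u,\cdot}^m\le\delta\) is of open type and need not be preserved under strong limits; this is why one passes to a strictly smaller \(\delta'\) and invokes the monotonicity of \(I\) and \(I_{\sm}\) in \(\delta\), rather than attempting to approximate at a fixed value of \(\delta\).
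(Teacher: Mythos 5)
Your proof is correct and follows essentially the same route as the paper's: both reduce to the sup-inf formula~\eqref{eq_800de20c207671d6} and combine the strong density of smooth maps in \(\sobolev^{1,m}\brk{\manifold{M},\manifold{N}}\) with the openness of homotopy classes from \cref{proposition_stability_homotopy_classes}. The only difference is that you spell out the details, in particular the device of passing from \(\delta\) to a strictly smaller \(\delta'\) to cope with the non-open constraint \(\int_{\manifold{M}} d\brk{u,w}^m\le\delta\), a subtlety the paper's one-line proof leaves implicit.
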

\begin{proof}
This readily follows from \eqref{eq_800de20c207671d6}, the density of smooth maps in \(\sobolev^{1, m} \brk{\manifold{M}, \manifold{N}}\)~\cite{Schoen_Uhlenbeck_1983}, and the stability of homotopy classes in \(\sobolev^{1, m} \brk{\manifold{M}, \manifold{N}}\) (\cref{proposition_stability_homotopy_classes}).
\end{proof}

We next state a straightforward lower bound on the heterotopic energy, given by the Sobolev energy of the map itself.
In particular, it implies that a map \( u \colon \manifold{M} \to \manifold{N} \) can have a finite heterotopic energy only if it belongs to \( \sobolev^{1,m} \brk{\manifold{M}, \manifold{N}} \).
\begin{proposition}
\label{proposition_heter_energy_controls_energy}
For every \(u \colon \manifold{M} \to \manifold{N}\) and \(v \in \VMO\brk{\manifold{M}, \manifold{N}}\),
\[
\int_{\manifold{M}} \abs{\Deriv u}^m
\le 
 \energy^{1, m}_{\mathrm{het}}
  \brk{u, v}\eqpunct{.}
\]
In particular, if \( \energy^{1, m}_{\mathrm{het}}
  \brk{u, v} < \infty\), then \(u \in \sobolev^{1, m} \brk{\manifold{M}, \manifold{N}}\subseteq \mathrm{VMO} \brk{\manifold{M}, \manifold{N}}\).
\end{proposition}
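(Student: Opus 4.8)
The plan is a routine weak lower semicontinuity argument, and I expect no serious obstacle. If $\energy^{1,m}_{\mathrm{het}}\brk{u, v} = \infty$ there is nothing to prove, so I assume it is finite. Using the definition~\eqref{eq_feTaev4ahyi4aeTheem8Uiku}, I fix a competitor sequence $\brk{v_j}_{j \in \Nset}$ in $\sobolev^{1,m}\brk{\manifold{M}, \manifold{N}}$ with each $v_j$ homotopic to $v$, with $v_j \to u$ almost everywhere in $\manifold{M}$, and with $\liminf_{j \to \infty} \int_{\manifold{M}} \abs{\Deriv v_j}^m < \infty$; after passing to a subsequence I may assume that this $\liminf$ is a genuine limit and that $\sup_j \int_{\manifold{M}} \abs{\Deriv v_j}^m < \infty$.

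The first step is to recognise $u$ as the weak $\sobolev^{1,m}$ limit of the $v_j$. Since the $v_j$ take values in the compact manifold $\manifold{N} \subseteq \Rset^\nu$, the sequence is bounded in $\lebesgue^\infty$, hence bounded in $\sobolev^{1,m}\brk{\manifold{M}, \Rset^\nu}$; extracting a further subsequence, $v_j \weakto w$ weakly in $\sobolev^{1,m}\brk{\manifold{M}, \Rset^\nu}$ for some $w$, and by the Rellich--Kondrachov theorem $v_j \to w$ in $\lebesgue^m$, hence almost everywhere along yet a further subsequence. Comparing with the hypothesis $v_j \to u$ almost everywhere yields $u = w \in \sobolev^{1,m}\brk{\manifold{M}, \manifold{N}}$.

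The second step is weak lower semicontinuity: the functional $w \mapsto \int_{\manifold{M}} \abs{\Deriv w}^m$ is convex and strongly lower semicontinuous on $\sobolev^{1,m}\brk{\manifold{M}, \Rset^\nu}$, hence weakly sequentially lower semicontinuous, so
\[
  \int_{\manifold{M}} \abs{\Deriv u}^m \le \liminf_{j \to \infty} \int_{\manifold{M}} \abs{\Deriv v_j}^m\eqpunct{,}
\]
and taking the infimum over all admissible sequences gives the asserted inequality. The ``in particular'' clause then follows at once from the limiting case of the Sobolev--Morrey embedding $\sobolev^{1,m}\brk{\manifold{M}, \manifold{N}} \hookrightarrow \VMO\brk{\manifold{M}, \manifold{N}}$ recalled above. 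The only point demanding a little care is the bookkeeping of the nested extractions -- one must first pass to a subsequence realising the $\liminf$, and only then extract a weakly convergent one -- together with the observation that the homotopy constraint on the $v_j$ is irrelevant for this lower bound, since it merely shrinks the set of competitors and can thus only raise the infimum.
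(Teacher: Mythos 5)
Your proof is correct and follows essentially the same route as the paper's: both hinge on the lower semicontinuity of the $m$-energy along an almost-everywhere convergent sequence with bounded $\sobolev^{1,m}$ norm. You spell out the standard weak-compactness/Rellich/identification-of-the-limit argument that the paper compresses into the phrase ``by lower semicontinuity,'' but there is no difference in substance.
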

\begin{proof}
If \(\energy^{1, m}_{\mathrm{het}}
  \brk{u, v} < \infty \), then by definition in \eqref{eq_feTaev4ahyi4aeTheem8Uiku} there exists a sequence \(\brk{v_j}_{j \in \Nset}\) in \( \sobolev^{1,m}\brk{\manifold{M,\manifold{N}}} \) such that 
  \[
    \limsup_{j \to \infty} \int_{\manifold{M}} \abs{\Deriv v_j}^m
    \le 
    \energy^{1, m}_{\mathrm{het}}
  \brk{u, v}
  \]
  and \(v_j \to u\) almost everywhere in \(\manifold{M}\); 
  by lower semicontinuity we then get that \(u\) is weakly differentiable and 
  \[
   \int_{\manifold{M}} \abs{\Deriv u}^m \le  \liminf_{j \to \infty} \int_{\manifold{M}} \abs{\Deriv v_j}^m\eqpunct{.}\qedhere
  \]
\end{proof}

The following proposition is nothing else but a lower semicontinuity property of the heterotopic energy with respect to the convergence in measure.
\begin{proposition}
\label{proposition_Ehet_lsc}
If \(\brk{u_j}_{j \in \Nset}\) is a sequence of measurable mappings from \(\manifold{M}\) to \(\manifold{N}\) converging to  \(u \colon \manifold{M}\to \manifold{N}\) in measure, then 
\[
 \energy^{1, m}_{\mathrm{het}}
  \brk{u, v}
  \le \liminf_{j \to \infty} \energy^{1, m}_{\mathrm{het}}
  \brk{u_j, v}\eqpunct{.}
\]
\end{proposition}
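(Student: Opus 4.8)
The plan is to deduce the statement from the reformulation~\eqref{eq_800de20c207671d6} of the heterotopic energy. Write $L \defeq \liminf_{j \to \infty} \energy^{1, m}_{\mathrm{het}}\brk{u_j, v}$; one may assume $L < \infty$, since otherwise there is nothing to prove, and, after passing to a subsequence (which still converges in measure to $u$), one may also assume that $\energy^{1, m}_{\mathrm{het}}\brk{u_j, v} \to L$.

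The first step is the elementary observation that, since $\manifold{N}$ is compact, the distance $d$ is bounded on $\manifold{N} \times \manifold{N}$; as $\manifold{M}$ is compact and $u_j \to u$ in measure, the dominated convergence theorem --- in its version for convergence in measure --- yields $\int_{\manifold{M}} d\brk{u, u_j}^m \to 0$. Next, I would fix $\delta > 0$ and $\varepsilon > 0$ and choose $j \in \Nset$ large enough that simultaneously $\energy^{1, m}_{\mathrm{het}}\brk{u_j, v} \le L + \varepsilon$ and $\int_{\manifold{M}} d\brk{u, u_j}^m \le \delta / 2^m$. Applying~\eqref{eq_800de20c207671d6} to the map $u_j$ with the parameter $\delta / 2^m$ in place of $\delta$ produces a map $w \in \sobolev^{1, m}\brk{\manifold{M}, \manifold{N}}$ homotopic to $v$ with $\int_{\manifold{M}} d\brk{u_j, w}^m \le \delta / 2^m$ and $\int_{\manifold{M}} \abs{\Deriv w}^m \le L + 2\varepsilon$.

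The last step is to verify that this $w$ is an admissible competitor for $u$ in~\eqref{eq_800de20c207671d6} at the level $\delta$: by the triangle inequality and the convexity of $t \mapsto t^m$,
\[
  \int_{\manifold{M}} d\brk{u, w}^m
  \le 2^{m-1} \brk*{\int_{\manifold{M}} d\brk{u, u_j}^m + \int_{\manifold{M}} d\brk{u_j, w}^m}
  \le 2^{m-1} \brk*{\frac{\delta}{2^m} + \frac{\delta}{2^m}} = \delta \eqpunct{.}
\]
Hence the infimum appearing in~\eqref{eq_800de20c207671d6} for $u$ at the level $\delta$ is at most $L + 2\varepsilon$; letting $\varepsilon \to 0$ and then taking the supremum over $\delta > 0$ gives $\energy^{1, m}_{\mathrm{het}}\brk{u, v} \le L$, which is the claim.

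I do not expect any serious obstacle: this is essentially a soft diagonal argument, and the only points deserving attention are the bookkeeping of the distance thresholds through the triangle inequality --- so that a competitor built for $u_j$ remains admissible for $u$ --- and the remark that convergence in measure is enough here, with no $\sobolev^{1, m}$-bound on $\brk{u_j}_{j \in \Nset}$ required, because $d$ is bounded on the compact target $\manifold{N}$. One could alternatively argue straight from the definition~\eqref{eq_feTaev4ahyi4aeTheem8Uiku}, but extracting a suitable diagonal sequence converging almost everywhere to $u$ while controlling the energy is more delicate, so passing through~\eqref{eq_800de20c207671d6} is the cleaner route.
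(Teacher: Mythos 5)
Your proof is correct. The paper's own proof is a single sentence — ``this follows from the definition of heterotopic energy (\cref{definition_heter_energy}) and a diagonal argument'' — that is, it extracts, for each \(u_j\), an approximating sequence from the sequential definition~\eqref{eq_feTaev4ahyi4aeTheem8Uiku} and diagonalizes, passing to a further subsequence to recover almost everywhere convergence to \(u\). You instead route through the sup--inf reformulation~\eqref{eq_800de20c207671d6}, which sidesteps the diagonal extraction: for each pair \(\brk{\delta, \varepsilon}\) you produce a \emph{single} competitor \(w\) and transport its admissibility from \(u_j\) to \(u\) via the triangle inequality and convexity of \(t \mapsto t^m\). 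This is a genuinely cleaner write-up of the same fact — it avoids the mild subtlety you mention (extracting a subsequence that converges a.e.\ to \(u\) while preserving the \(\liminf\) of the energies), and the remark that compactness of \(\manifold{N}\) and finiteness of \(\abs{\manifold{M}}\) upgrade convergence in measure to \(\lebesgue^m\)-convergence of distances is exactly the right observation to make the reformulation applicable. The thresholds \(\delta/2^m\) and the factor \(2^{m-1}\) are handled correctly, and the order of limits (\(\varepsilon \to 0\), then \(\sup_{\delta>0}\)) is also correct.
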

\begin{proof}
This follows from the definition of heterotopic energy \( \energy^{1, m}_{\mathrm{het}} \brk{u_j, v}\) (\cref{definition_heter_energy}) and a diagonal argument.
\end{proof}

We conclude this section with a straightforward exact computation of the heterotopic energy of a map with respect to another map in the same homotopy class.
\begin{proposition}
If \(u \in \VMO \brk{\manifold{M}, \manifold{N}}\) and \(v \in \VMO\brk{\manifold{M}, \manifold{N}}\) is homotopic to \(u\) in \( \VMO \brk{\manifold{M}, \manifold{N}} \), then 
\begin{equation}
\label{eq_aiwuuMael2chaiya7uloh1mi}
 \energy^{1, m}_{\mathrm{het}}
  \brk{u, v}
  = \int_{\manifold{M}} \abs{\Deriv u}^m\eqpunct{.}
\end{equation}
\end{proposition}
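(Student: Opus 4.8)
The strategy is to prove the two inequalities in \eqref{eq_aiwuuMael2chaiya7uloh1mi} separately; only one of them carries any content. The bound $\int_{\manifold{M}} \abs{\Deriv u}^m \le \energy^{1, m}_{\mathrm{het}}\brk{u, v}$ is exactly \cref{proposition_heter_energy_controls_energy}, so nothing new is needed there. Observe moreover that if $u \notin \sobolev^{1, m}\brk{\manifold{M}, \manifold{N}}$ — so that, by convention, $\int_{\manifold{M}} \abs{\Deriv u}^m = +\infty$ — the same proposition forces $\energy^{1, m}_{\mathrm{het}}\brk{u, v} = +\infty$, and \eqref{eq_aiwuuMael2chaiya7uloh1mi} holds trivially. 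Hence we may assume $u \in \sobolev^{1, m}\brk{\manifold{M}, \manifold{N}}$, and it remains to show $\energy^{1, m}_{\mathrm{het}}\brk{u, v} \le \int_{\manifold{M}} \abs{\Deriv u}^m$.

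For this, the idea is simply to use $u$ itself as the approximating sequence in \cref{definition_heter_energy}: take $v_j \defeq u$ for every $j \in \Nset$. Since $v$ is a $\VMO$ map, being homotopic to $v$ in \eqref{eq_feTaev4ahyi4aeTheem8Uiku} means being homotopic to $v$ in $\VMO\brk{\manifold{M}, \manifold{N}}$; by hypothesis and symmetry of the homotopy relation, $u$ is homotopic to $v$ in $\VMO\brk{\manifold{M}, \manifold{N}}$. As moreover $u \in \sobolev^{1, m}\brk{\manifold{M}, \manifold{N}}$ and the constant sequence $\brk{v_j}_{j \in \Nset}$ trivially converges to $u$ almost everywhere in $\manifold{M}$, this sequence is admissible in the infimum defining $\energy^{1, m}_{\mathrm{het}}\brk{u, v}$, whence
\[
  \energy^{1, m}_{\mathrm{het}}\brk{u, v}
  \le \liminf_{j \to \infty} \int_{\manifold{M}} \abs{\Deriv v_j}^m
  = \int_{\manifold{M}} \abs{\Deriv u}^m\eqpunct{.}
\]
Combined with \cref{proposition_heter_energy_controls_energy}, this yields \eqref{eq_aiwuuMael2chaiya7uloh1mi}.

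There is no genuine obstacle here: the statement is essentially a tautology once the preceding propositions are available, the only points deserving a word being the degenerate case $u \notin \sobolev^{1, m}\brk{\manifold{M}, \manifold{N}}$ and the interpretation of ``homotopic to $v$'' for a merely $\VMO$ map $v$. If one wishes the competitors to be smooth, in the spirit of \cref{proposition_heterotopic_restricts_to_smooth}, one can instead take $v_j$ to be smooth maps converging strongly to $u$ in $\sobolev^{1, m}\brk{\manifold{M}, \manifold{N}}$, which exist by density \cite{Schoen_Uhlenbeck_1983}; for $j$ large, \cref{proposition_stability_homotopy_classes} places $v_j$ in the path-connected component of $u$ in $\sobolev^{1, m}\brk{\manifold{M}, \manifold{N}}$, hence — through the continuous embedding $\sobolev^{1, m}\brk{\manifold{M}, \manifold{N}} \hookrightarrow \VMO\brk{\manifold{M}, \manifold{N}}$ and transitivity — in that of $v$ in $\VMO\brk{\manifold{M}, \manifold{N}}$, while strong convergence gives $\int_{\manifold{M}} \abs{\Deriv v_j}^m \to \int_{\manifold{M}} \abs{\Deriv u}^m$, and the same conclusion follows.
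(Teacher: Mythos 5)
Your proof is correct and follows essentially the same route as the paper's: the lower bound is \cref{proposition_heter_energy_controls_energy}, and the upper bound comes from using $u$ itself as a competitor once one has reduced (again via \cref{proposition_heter_energy_controls_energy}) to the case $u \in \sobolev^{1,m}\brk{\manifold{M},\manifold{N}}$. The only cosmetic difference is that the paper feeds $u$ into the equivalent formulation \eqref{eq_800de20c207671d6} while you feed the constant sequence $v_j = u$ into \eqref{eq_feTaev4ahyi4aeTheem8Uiku}; these are interchangeable and neither adds content over the other.
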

\begin{proof}
The lower bound on \( \energy^{1, m}_{\mathrm{het}}\brk{u, v} \) follows directly from~\cref{proposition_heter_energy_controls_energy}.
Moreover, in view of \cref{proposition_heter_energy_controls_energy}, we can assume that \(u \in \sobolev^{1, m}\brk{\manifold{M}, \manifold{N}}\), or equivalently, that the right-hand side of \eqref{eq_aiwuuMael2chaiya7uloh1mi} is finite.
Therefore, we can use \( u \) itself as a competitor in the equivalent definition~\eqref{eq_800de20c207671d6}, which immediately gives the upper bound and concludes the proof.
\end{proof}

\section{Finiteness criterion}

The goal of this section is to prove that the heterotopic energy can only be finite for mappings that are homotopic on a suitable skeleton.

We assume that we are given a \emph{triangulation} of our domain manifold \(\manifold{M}\) once for all, that is, we have a finite \(m\)-dimensional simplicial complex and a homeomorphism between the realisation of this complex and \( \manifold{M} \) whose restriction to any closed simplex of the complex is a smooth diffeomorphism on its image in \(\manifold{M}\).
Since our domain manifold \( \manifold{M} \) is smooth, such a triangulation always exists~\cite{Cairns_1935}.
For every \(\ell \in \set{0, \dotsc, m}\), we let \(\manifold{M}^\ell\) denote the \(\ell\)-dimensional component of \(\manifold{M}\) defined as the union of the images of the closed \(\ell\)-dimensional simplices of the simplicial complex defining our triangulation.

\begin{theorem}
\label{theorem_Ehet_finite}
Let \(u \colon \manifold{M} \to \manifold{N}\)
and let \(v \in \continuous \brk{\manifold{M}, \manifold{N}}\).
The following are equivalent:
\begin{enumerate}[label=(\roman*)]
\item 
\label{it_eKai9ahmoes3fe1biquah2pe}
\(
 \energy^{1, m}_{\mathrm{het}}
  \brk{u, v} < \infty\),
\item 
\label{it_eiPh1baisahth4zieFee8xoh}
\(u \in \sobolev^{1, m}\brk{\manifold{M}, \manifold{N}}\) and 
\(u\) is homotopic in \(\VMO \brk{\manifold{M}, \manifold{N}}\) to some \(w \in \continuous \brk{\manifold{M}, \manifold{N}}\) such that 
\(w \restr{\manifold{M}^{m - 1}} = v \restr{\manifold{M}^{m - 1}}\),
\item 
\label{it_aitoo2kePh0Ue2Teitaechae}
\(u \in \sobolev^{1, m}\brk{\manifold{M}, \manifold{N}}\) and
\(\tr_{\manifold{M}^{m- 1}} u\) is homotopic in \(\VMO \brk{\manifold{M}^{m - 1}, \manifold{N}}\) to \(v \restr{\manifold{M}^{m - 1}}\).
\end{enumerate}
\end{theorem}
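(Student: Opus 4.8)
The plan is to prove the cycle of implications \ref{it_aitoo2kePh0Ue2Teitaechae}$\Rightarrow$\ref{it_eiPh1baisahth4zieFee8xoh}$\Rightarrow$\ref{it_eKai9ahmoes3fe1biquah2pe}$\Rightarrow$\ref{it_aitoo2kePh0Ue2Teitaechae}, writing $\manifold{K} \defeq \manifold{M}^{m-1}$ throughout. The topological workhorse is a \emph{trace lemma}: the restriction $u \mapsto \tr_{\manifold{K}} u$ maps $\sobolev^{1,m}(\manifold{M}, \manifold{N})$ continuously into $\VMO(\manifold{K}, \manifold{N})$ --- on each top face of $\manifold{K}$ the trace lands continuously in the fractional Sobolev space $\sobolev^{1 - 1/m, m}$, and since $(1 - \tfrac1m) m = m - 1 = \dim \manifold{K}$ is the critical exponent, $\sobolev^{1 - 1/m, m} \hookrightarrow \VMO$ continuously (cf.\ \cite{Brezis_Nirenberg_1995}*{Example~1}). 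Composing any $\sobolev^{1,m}$-homotopy with the trace therefore shows that \emph{if $f$ is homotopic to $g$ in $\sobolev^{1,m}(\manifold{M},\manifold{N})$, then $\tr_{\manifold{K}} f$ is homotopic to $\tr_{\manifold{K}} g$ in $\VMO(\manifold{K}, \manifold{N})$}. In particular, for $u \in \sobolev^{1,m}(\manifold{M}, \manifold{N})$ and a smooth approximation $u_k \to u$ in $\sobolev^{1,m}$ (\cite{Schoen_Uhlenbeck_1983}), for $k$ large $u_k$ is $\sobolev^{1,m}$-homotopic to $u$ (\cref{proposition_stability_homotopy_classes}) and $u_k \restr{\manifold{K}}$ is $\VMO(\manifold{K}, \manifold{N})$-homotopic to $\tr_{\manifold{K}} u$.

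Assume \ref{it_aitoo2kePh0Ue2Teitaechae}, and take such a $u_k$. Then $u_k \restr{\manifold{K}}$ is $\VMO(\manifold{K}, \manifold{N})$-homotopic to $v\restr{\manifold{K}}$; both being continuous, they are homotopic in $\continuous(\manifold{K}, \manifold{N})$ (the counterpart of \cref{proposition_C_and_VMO_homotopies} over $\manifold{K}$). By the homotopy extension property of the pair $(\manifold{M}, \manifold{K})$ there is then a continuous $w$ with $w\restr{\manifold{K}} = v \restr{\manifold{K}}$ that is homotopic to $u_k$ in $\continuous(\manifold{M}, \manifold{N})$, hence in $\VMO(\manifold{M}, \manifold{N})$, hence --- as $u_k \to u$ in $\sobolev^{1,m} \subseteq \VMO$ --- homotopic to $u$ in $\VMO(\manifold{M}, \manifold{N})$; together with $u \in \sobolev^{1,m}$ this gives \ref{it_eiPh1baisahth4zieFee8xoh}. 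Carrying out the extension on smooth data, one may moreover take $w$ smooth, and we keep such a $w$ fixed.

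Assume \ref{it_eiPh1baisahth4zieFee8xoh}, with a smooth $w$, $w\restr{\manifold{K}} = v\restr{\manifold{K}}$, homotopic to $u$. On each closed $m$-simplex $\sigma_\alpha$ of the triangulation, $w\restr{\sigma_\alpha}$ and $v\restr{\sigma_\alpha}$ agree on $\partial \sigma_\alpha$, so there is a class $\gamma_\alpha \in \pi_m(\manifold{N})$ (based at a point of $\partial \sigma_\alpha$), the \emph{defect of the cell}, such that inserting a $\gamma_\alpha$-bubble into $w\restr{\sigma_\alpha}$ produces a map homotopic to $v\restr{\sigma_\alpha}$ relative to $\partial \sigma_\alpha$. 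Pick an interior Lebesgue point $p_\alpha \in \sigma_\alpha$ of $u$ and of $\abs{\Deriv u}^m$. Since $\int_0^{\rho} \bigl( \int_{\partial B_r(p_\alpha)} \abs{\Deriv u}^m \bigr) \dif r = \int_{B_\rho(p_\alpha)} \abs{\Deriv u}^m < \infty$, there are radii $r_j \downarrow 0$, common to all $\alpha$, along which $r_j \int_{\partial B_{r_j}(p_\alpha)} \abs{\Deriv u}^m \to 0$; equivalently, the rescaled restrictions of $u$ to $\partial B_{r_j}(p_\alpha)$ have $m$-energy tending to $0$, hence --- by the Sobolev embedding on $\Sset^{m-1}$ --- oscillation tending to $0$, so they can be deformed to a constant over the shell $B_{r_j}(p_\alpha) \setminus B_{r_j/2}(p_\alpha)$ with energy $o(1)$; into $B_{r_j/2}(p_\alpha)$ we transplant, using conformal invariance of the $m$-energy, a fixed smooth representative of $\gamma_\alpha$, of finite energy $E_\alpha$. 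The resulting $v_j$ equals $u$ outside $\bigcup_\alpha B_{r_j}(p_\alpha)$, so $v_j \to u$ almost everywhere and $\int_{\manifold{M}} \abs{\Deriv v_j}^m \to \int_{\manifold{M}} \abs{\Deriv u}^m + \sum_\alpha E_\alpha < \infty$. Now $v_j$ is, by construction, ``$u$ with a $\gamma_\alpha$-bubble inserted in each $\sigma_\alpha$''; as $u$ is $\VMO$-homotopic to $w$ and bubble insertion at fixed points passes to homotopy classes, $v_j$ is homotopic to ``$w$ with a $\gamma_\alpha$-bubble inserted in each $\sigma_\alpha$'', which --- by the choice of the $\gamma_\alpha$ cell by cell together with $w\restr{\manifold{K}} = v\restr{\manifold{K}}$ --- is homotopic to $v$. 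Hence $(v_j)$ is admissible in \eqref{eq_feTaev4ahyi4aeTheem8Uiku} and $\energy^{1,m}_{\mathrm{het}}(u, v) < \infty$.

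Assume \ref{it_eKai9ahmoes3fe1biquah2pe}. By \cref{proposition_heter_energy_controls_energy}, $u \in \sobolev^{1,m}(\manifold{M}, \manifold{N})$, and there are maps $v_j \to u$ almost everywhere with each $v_j$ homotopic to $v$ in $\sobolev^{1,m}$ and $\sup_j \int_{\manifold{M}} \abs{\Deriv v_j}^m < \infty$; since $\manifold{N}$ is bounded, $v_j \to u$ in $\lebesgue^q$ for every $q < \infty$, whence $v_j \weakto u$ in $\sobolev^{1,m}$. By the trace lemma each $\tr_{\manifold{K}} v_j$ is $\VMO(\manifold{K}, \manifold{N})$-homotopic to $v\restr{\manifold{K}}$, and by the weak stability of the homotopy type on the codimension-one skeleton (\cite{Hang_Lin_2003_III}*{Theorem~6.1}; see also \cite{White_1988}*{Theorem~2.1} and \cite{Hang_Lin_2003_II}*{Theorem~4.1}) the weak limit $u$ shares this property, which is \ref{it_aitoo2kePh0Ue2Teitaechae}. (One can also sidestep that citation by a Fubini/slicing argument producing a generic codimension-one skeleton along which the $v_j$ stay bounded in $\sobolev^{1,m}$, hence converge uniformly to $u$ there by the compact Sobolev embedding, and running the same homotopy chain on that skeleton before transporting it back to $\manifold{K}$.) I expect the main obstacle to be the constructive implication \ref{it_eiPh1baisahth4zieFee8xoh} $\Rightarrow$ \ref{it_eKai9ahmoes3fe1biquah2pe}: the competitors must converge almost everywhere to $u$ \emph{itself}, which forces the bubble insertion to take place on the possibly discontinuous map $u$ --- precisely what the $\sobolev^{1,m}$-slicing legitimizes --- while the identification of the right bubble class in each top simplex is exactly where the matching $w\restr{\manifold{K}} = v\restr{\manifold{K}}$ of \ref{it_eiPh1baisahth4zieFee8xoh} is used.
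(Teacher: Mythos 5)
The cycle of implications you prove is sound and matches the paper's in spirit, but the order and, more importantly, the crucial constructive step differ. In the paper, (i)$\Rightarrow$(ii) goes through a quantitative homotopy estimate on the $(m-1)$-skeleton (\cref{proposition_homotopy_skeleton_quantitative}, built on \cref{lemma_uniform_estimate_skeleton}), (ii)$\Rightarrow$(iii) is a trace-continuity argument, and (iii)$\Rightarrow$(i) is the constructive direction. Your directions (iii)$\Rightarrow$(ii), (ii)$\Rightarrow$(i), (i)$\Rightarrow$(iii) give the same cycle, and your (iii)$\Rightarrow$(ii) and (i)$\Rightarrow$(iii) are essentially equivalent to the paper's; for the latter you cite Hang--Lin for the weak skeletal stability, while the paper also names them for the necessity of the condition, but proves the implication self-containedly via Schoen--Uhlenbeck averaging.

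The genuine gap is in your (ii)$\Rightarrow$(i). You take a continuous $w$ homotopic to $u$ with $w\restr{\manifold{M}^{m-1}} = v\restr{\manifold{M}^{m-1}}$, define the defect $\gamma_\alpha$ of $w$ versus $v$ cell by cell, and then insert $\gamma_\alpha$-bubbles \emph{into $u$ itself} at interior Lebesgue points. The step you describe as ``bubble insertion at fixed points passes to homotopy classes'' hides a transport issue: when $\pi_1(\manifold{N})$ acts nontrivially on $\pi_m(\manifold{N})$, the homotopy from $u$ to $w$ moves the base value at $p_\alpha$ along a path $\zeta_\alpha$, so a bubble equal to $\gamma_\alpha$ attached at the value of $u$ near $p_\alpha$ becomes, after the homotopy, $(\zeta_\alpha)_*\gamma_\alpha$ rather than $\gamma_\alpha$; to land on $v$ you would need to insert $(\zeta_\alpha)^{-1}_*\gamma_\alpha$, and you would also need a canonical way to transport $\gamma_\alpha$ (defined at a point of $\partial\sigma_\alpha$) to the base value at $p_\alpha$. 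This is exactly the basepoint bookkeeping the paper's \S4.1 (\cref{lemma_moving_disparities}, \cref{lemma_discrepancy_homotopy_transport}) is set up to handle, and none of it is available at this point in your argument. The paper sidesteps the issue entirely: instead of matching $w$ to $v$ on the skeleton, \cref{lemma_equalising_on_M_m_1} (via the cylinder insertion of \cref{lemma_homotopy}) first produces a map $w$ homotopic to $v$ with $\tr_{\manifold{M}^{m-1}} w = \tr_{\manifold{M}^{m-1}} u$, and then \cref{lemma_bubble_from_2_disks} furnishes, on each $m$-cell, an explicit $\sobolev^{1,m}$-continuous path $t \mapsto U(t,\cdot)$ from $w$ at $t=1$, with $U(t,\cdot) \to u$ a.e.\ and energy uniformly bounded at the critical exponent $p=m$. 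Continuity of the path preserves the homotopy class automatically, so no bubble class is ever identified or transported. Your construction is the same in spirit --- build competitors by inserting the topological defect on shrinking balls --- but the missing equalising step and the unproven homotopy-commutation claim leave a real hole in the argument as written.
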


The space \(\VMO \brk{\manifold{M}^{m - 1}, \manifold{N}}\) is described similarly to \(\VMO \brk{\manifold{M}, \manifold{N}}\), using the measure by the Riemannian metric on \(\manifold{M}^{m - 1}\) and the intersections of geodesics balls with \(\manifold{M}^{m - 1}\).
Similarly, one can define \emph{fractional} Sobolev spaces \( \sobolev^{s,p} \brk{\manifold{M}^{m-1},\manifold{N}} \), with \( 0 < s < 1 \), using the definition via the Gagliardo seminorm relying on the Riemannian metric on \(\manifold{M}^{m - 1} \).
By the fractional limiting case of the Sobolev--Morrey embedding combined with Gagliardo's trace theorem, if \( u \in \sobolev^{1, m} \brk{\manifold{M}, \manifold{N}} \), then \( \tr_{\manifold{M}^{m  - 1}} u \in \VMO \brk{\manifold{M}^{m - 1}, \manifold{N}} \) and the corresponding mapping is continuous.
Let us note however that we are not defining integer order Sobolev spaces on \( \manifold{M}^{m-1} \).

The remaining part of this section is devoted to the proof of~\cref{theorem_Ehet_finite}.
Obtaining~\ref{it_aitoo2kePh0Ue2Teitaechae} from~\ref{it_eiPh1baisahth4zieFee8xoh} is readily done by a standard argument involving the continuity of traces.
To go from~\ref{it_eKai9ahmoes3fe1biquah2pe} to~\ref{it_eiPh1baisahth4zieFee8xoh}, the core of the argument is a standard homotopy result for maps that are sufficiently close in \( \lebesgue^{m} \) and have sufficiently small \( \sobolev^{1,m} \) energy.
Such a result is stated in~\cref{proposition_homotopy_skeleton_quantitative}, which itself relies on~\cref{lemma_uniform_estimate_skeleton}, and is in line with Schoen and Uhlenbeck's seminal estimate.

In order to obtain~\ref{it_eKai9ahmoes3fe1biquah2pe} from~\ref{it_aitoo2kePh0Ue2Teitaechae}, we proceed in two steps.
We first explain, when \( \tr_{\manifold{M}^{m  - 1}} u \) is homotopic to \( v \restr{\manifold{M}^{m - 1}} \), how to replace \( v \) with another map, with the same trace as \( u \) on \( \manifold{M}^{m - 1} \), and which is homotopic to \( v \) on the whole \( \manifold{M} \).
This task is carried out by~\cref{lemma_equalising_on_M_m_1} and~\cref{lemma_homotopy}, and relies on a cylinder insertion construction, to suitably modify the values of \( v \) near the skeleton \( \manifold{M}^{m - 1} \).
Thanks to this first step, one may assume that \( u \) and \( v \) have the same trace on \( \manifold{M}^{m - 1} \).
The second step, which is contained in~\cref{lemma_bubble_from_2_disks}, consists of inserting, on a small ball inside each simplex of the triangulation, a bubble to replace \( u \) with a map homotopic to \( v \).

Several statements below are written with the assumptions that the maps under consideration are Lipschitz, but would be valid under less restrictive regularity assumptions.
However, writing and proving such more general statements would require to define Sobolev mappings on skeletons, an additional layer of technicality that can be avoided here having in mind as a goal the proof of \cref{theorem_Ehet_finite}, where we can start with smoothing the maps under consideration, as we will see at the end of the section. 

We start with the two following lemmas, that are concerned with suitable modifications near the skeleton \( \manifold{M}^{m - 1} \) of maps whose restrictions on \( \manifold{M}^{m - 1} \) are homotopic, and which are essentially a variant of the classical homotopy extension property.

\begin{lemma}
	\label{lemma_equalising_on_M_m_1}
	Let \(u \in \Lip \brk{\manifold{M}, \manifold{N}}\) and \(v \in \continuous \brk{\manifold{M}, \manifold{N}}\).
	If \( u\restr{\manifold{M}^{m - 1}} \) and \(v \restr{\manifold{M}^{m - 1}}\) are homotopic in \( \continuous \brk{\manifold{M}^{m - 1}, \manifold{N}}\), then there exists a mapping \(w \in \Lip \brk{\manifold{M}, \manifold{N}}\) such that \( w\restr{\manifold{M}^{m - 1}} = u\restr{\manifold{M}^{m - 1}} \) on \(\manifold{M}^{m - 1}\) and \(v\) and \(w\) are homotopic in \( \continuous \brk{\manifold{M}, \manifold{N}}\).
\end{lemma}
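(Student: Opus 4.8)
The plan is to produce \(w\) from \(v\) by a \emph{cylinder insertion} localised near the skeleton \(\manifold{M}^{m-1}\): away from a thin neighbourhood of \(\manifold{M}^{m-1}\), the map \(w\) coincides with \(v\); near \(\manifold{M}^{m-1}\) it interpolates, in the normal direction, from \(v\) to a map whose trace on \(\manifold{M}^{m-1}\) is \(\tr_{\manifold{M}^{m-1}} u\). As a preliminary reduction I would replace \(v\) by a smooth map: composing a mollification of \(v\) with the nearest point retraction \(\Pi_{\manifold{N}}\) yields \(\hat v \in \continuous^\infty \brk{\manifold{M},\manifold{N}}\) uniformly close to \(v\), hence homotopic to \(v\) in \(\continuous \brk{\manifold{M},\manifold{N}}\), so that \(\hat v \restr{\manifold{M}^{m-1}}\) is still homotopic to \(\tr_{\manifold{M}^{m-1}} u\) in \(\VMO \brk{\manifold{M}^{m-1},\manifold{N}}\) and it suffices to build \(w \in \sobolev^{1,m}\brk{\manifold{M},\manifold{N}}\) homotopic to \(\hat v\). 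This reduction is needed precisely because \(w\) must be Sobolev and will agree with a reparametrisation of \(v\) on the bulk of \(\manifold{M}\).

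Next I would fix a thin collar of \(\manifold{M}^{m-1}\) that, inside each top-dimensional simplex \(\sigma \cong \overline{\Bset^m}\) of the triangulation, is the annulus \(\set{x \in \sigma \st 1 - \delta \le \abs{x} \le 1}\), so that the outer sphere \(\set{\abs{x} = 1}\) is \(\sigma \cap \manifold{M}^{m-1} = \partial\sigma\). On the complement of the collar set \(w \defeq v\). On the collar, writing \(x = s\theta\), define \(w\) as the concatenation of: on the inner part, the radial homotopy \(s \mapsto v\brk{\lambda\brk{s}\theta}\) carrying \(v\) (at \(\abs{x} = 1-\delta\)) to the boundary value \(v \restr{\partial\sigma}\); and on the outer part, a homotopy \(H_\sigma\) from \(v \restr{\partial\sigma}\) to \(\tr_{\partial\sigma} u\), which exists since restricting the hypothesised \(\VMO\)-homotopy on \(\manifold{M}^{m-1}\) to the subcomplex \(\partial\sigma\) gives a \(\VMO\)-homotopy. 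With this choice \(\tr_{\manifold{M}^{m-1}} w = \tr_{\manifold{M}^{m-1}} u\), and the compatibility of the traces of \(w\) across faces of the triangulation is inherited from that of \(u\).

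The delicate point — and the one I expect to be the main obstacle — is ensuring \(w \in \sobolev^{1,m}\brk{\manifold{M},\manifold{N}}\). Off the collar this is clear (\(w = v\) is smooth), and on the inner radial part it is clear (\(w\) is a smooth reparametrisation of \(v\)); but the outer part of the collar is \(m\)-dimensional, and there \(w\) is the map \(\brk{s,\theta} \mapsto H_\sigma\brk{\mu(s)}\brk{\theta}\) obtained by sweeping out \(H_\sigma\), so \(w \in \sobolev^{1,m}\) there forces \(H_\sigma\) to be a map \emph{of class \(\sobolev^{1,m}\)} on \(\intvc{0}{1} \times \partial\sigma\); a merely \(\VMO\)-continuous homotopy does not suffice. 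To upgrade the topological homotopy to one with \(\sobolev^{1,m}\) control I would use that \(u\) itself supplies a free \(\sobolev^{1,m}\) extension of its trace: on each top simplex, the restriction of \(u\) to a slightly smaller collar \(\set{1 - \delta' \le \abs{x} \le 1}\) is a \(\sobolev^{1,m}\) map interpolating between \(\tr_{\partial\sigma} u\) and \(u\) on \(\set{\abs{x} = 1 - \delta'}\), and by Fubini the latter is, for almost every \(\delta'\), a genuine \emph{continuous} map \(g_\sigma \in \sobolev^{1,m}\brk{\Sset^{m-1},\manifold{N}} \subseteq \continuous\brk{\Sset^{m-1},\manifold{N}}\) by the supercritical Sobolev embedding on the \(\brk{m-1}\)-sphere. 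One then connects \(v \restr{\partial\sigma}\) to \(g_\sigma\) by a homotopy that is \(\sobolev^{1,m}\) on the cylinder: both are continuous and they are homotopic (each being \(\VMO\)-homotopic to \(\tr_{\partial\sigma} u\), hence homotopic in \(\continuous\) by \cref{proposition_equivalence_homotopies}), and the supercritical homotopy theory on \(\Sset^{m-1}\), together with the Schoen--Uhlenbeck type estimate of \cref{lemma_uniform_estimate_skeleton} and \cref{proposition_homotopy_skeleton_quantitative}, allows one to realise this homotopy with controlled \(\sobolev^{1,m}\) energy; this step is plausibly isolated as the companion homotopy lemma. Concatenating the radial piece, this \(\sobolev^{1,m}\) homotopy, and the interpolation supplied by \(u\) gives \(H_\sigma\), and hence \(w \in \sobolev^{1,m}\brk{\manifold{M},\manifold{N}}\). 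The degeneration of the collar near the lower-dimensional strata of \(\manifold{M}^{m-1}\) is handled by working in a regular neighbourhood of \(\manifold{M}^{m-1}\) with its conical structure over the lower skeleton, or equivalently simplex by simplex with matching traces along shared faces.

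Finally, \(w\) is homotopic to \(v\) in \(\VMO\brk{\manifold{M},\manifold{N}}\) because the cylinder insertion is reversible: the homotopy that on the collar realises, at time \(t\), the radial homotopy of \(v\) over \(\intvc{1-\delta}{c_t}\) followed by the first \(s_t\)-fraction of \(H_\sigma\) over \(\intvc{c_t}{1}\), with \(c_t \to 1\) and \(s_t \to 0\) as \(t \to 1\), and which is the identity off the collar, connects \(w\) at \(t = 0\) to \(v\) at \(t = 1\) (at \(t = 1\) the radial homotopy spread over the full annulus is just \(v\) itself); its \(\VMO\)-continuity follows from the \(\VMO\)-continuity of \(H_\sigma\) and of the radial homotopy of the continuous map \(v\). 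Since \(v\) was chosen homotopic to the original continuous datum, this yields \(w\) homotopic to it in \(\VMO\brk{\manifold{M},\manifold{N}}\), and combined with the previous paragraphs this gives the statement.
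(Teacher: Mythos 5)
Your overall strategy is the same as the paper's: build a $\sobolev^{1,m}$ homotopy on the cylinder $\intvc{0}{1}\times\manifold{M}^{m-1}$ between $\tr_{\manifold{M}^{m-1}} u$ and $v\restr{\manifold{M}^{m-1}}$, then graft it in near the skeleton so that only the trace of $v$ on $\manifold{M}^{m-1}$ gets changed. You also correctly isolate the genuinely delicate point — the cylinder homotopy must have $\sobolev^{1,m}$ regularity as a map of $m$ variables, not merely be a $\VMO$-continuous path — and you correctly note that the insertion itself must be reversible so that the resulting $w$ is still homotopic to $v$; the last paragraph of your proposal is essentially a rederivation of \cref{lemma_homotopy}, which the paper isolates precisely for this purpose.

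Where the two proofs part ways is in how the $\sobolev^{1,m}$ cylinder homotopy is produced. The paper builds it globally on $\intvc{0}{1}\times\manifold{M}^{m-1}$: first a Sobolev extension $U$ of $\tr_{\manifold{M}^{m-1}} u$, regularizing to $\continuous^1$ away from $\set{0}\times\manifold{M}^{m-1}$, and then uses the hypothesis to replace $U\restr{\intvc{1/2}{1}\times\manifold{M}^{m-1}}$ by a smoothed continuous homotopy terminating at $v\restr{\manifold{M}^{m-1}}$, which lies in $\sobolev^{1,m}$ because $\manifold{M}^{m-1}$ is supercritical for $\sobolev^{1,m}$. Working globally on the skeleton makes compatibility across faces automatic. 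You instead take a good slice of $u$ near $\partial\sigma$ (fine, and the supercritical embedding makes it continuous), and then connect it to $v\restr{\partial\sigma}$; but the tool you invoke for that connection is misdirected. \cref{lemma_uniform_estimate_skeleton} and \cref{proposition_homotopy_skeleton_quantitative} are homotopy \emph{criteria}: they tell you that two maps are homotopic when they are close in a certain Sobolev sense, but they do not manufacture a homotopy with $\sobolev^{1,m}$ control on the cylinder, which is what you need here. What actually closes this step is the same observation that the paper uses: the homotopy given by the hypothesis lives in $\VMO\brk{\manifold{M}^{m-1},\manifold{N}}$, hence by \cref{proposition_C_and_VMO_homotopies} it can be taken continuous, and by a smoothing argument even $\continuous^1$; since $\dim\manifold{M}^{m-1} = m-1 < m$, any $\continuous^1$ map of $\intvc{0}{1}\times\manifold{M}^{m-1}$ is automatically in $\sobolev^{1,m}$. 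Swapping the appeal to \cref{lemma_uniform_estimate_skeleton}/\cref{proposition_homotopy_skeleton_quantitative} for this smoothing-in-the-supercritical-regime argument would repair the gap; once that is done, your per-simplex construction does glue, since the only trace constraints across shared faces are determined by $\tr u$ and the global continuous homotopy, just as in the paper.
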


The key ingredient in the proof of~\cref{lemma_equalising_on_M_m_1} is the following cylinder insertion construction.

\begin{lemma}
	\label{lemma_homotopy}
	Let \(U \in \Lip \brk{\intvc{0}{1}\times \partial \Bset^m, \manifold{N}}\) and \(v \in \Lip \brk{\Bset^m, \manifold{N}}\).
	If 
	\[
	U\restr{\set{1} \times \partial \Bset^m}
	= v\restr{\partial \Bset^m}
	\]
	and if \(W \colon \intvc{0}{1} \times \Bset^m \to \manifold{N} \) is defined by
	\[
	W \brk{t,x}
	\defeq 
	\begin{cases}
		v \brk{\frac{2 x}{1+t}} & \text{if \(2\abs{x} \le 1 + t\),}\\
		U \brk[\Big]{\frac{1+t}{\abs{x}}-1, \frac{x}{\abs{x}}} & \text{if \(2\abs{x} \ge 1 + t\),}
	\end{cases}
	\]
	then
	\(W \in   \Lip \brk{\intvc{0}{1}\times \Bset^m, \manifold{N}}\),
	the map
	\[
	t \in \intvc{0}{1} \mapsto W \brk{t, \cdot}\in \mathrm{W}^{1, 1} \brk{\Bset^m, \manifold{N}}
	\]
	is continuous, for every \(t \in \intvc{0}{1}\),
	\[
	W \brk{t, \cdot}\restr{\partial \Bset^m} = U \brk{t, \cdot}\eqpunct{,}
	\]
	and 
	\[
	W \brk{1, \cdot} = v\eqpunct{.}
	\]
\end{lemma}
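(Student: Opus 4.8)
The plan is to verify the three asserted properties of $W$ by carefully analysing the two regions $\set{2\abs{x} \le 1+t}$ and $\set{2\abs{x}\ge 1+t}$ and checking that the definition is consistent along the interface $\set{2\abs{x}=1+t}$. First I would observe that on the interface $\abs{x} = (1+t)/2$, the first branch gives $v(2x/(1+t)) = v(x/\abs{x})$, while the second branch gives $U(0, x/\abs{x})$; these agree precisely because of the hypothesis $\tr_{\set{1}\times\partial\Bset^m}U = \tr_{\partial\Bset^m}v$ (noting that $U(0,\cdot)$ here plays the role of the ``outer'' value and the normalisation in the lemma is set up so that the $U$-variable ranges over $\intvc{0}{1}$ with the matching happening at the endpoint corresponding to $v$). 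So $W(t,\cdot)$ is well-defined as a map on $\Bset^m$ for each fixed $t$, and the two pieces glue in the Sobolev sense because they have matching traces on the interface sphere. The boundary trace $\tr_{\partial\Bset^m}W(t,\cdot) = U(t,\cdot)$ is immediate from the second branch evaluated at $\abs{x}=1$, which gives $U((1+t)-1, x/\abs{x}) = U(t, x/\abs{x})$; and $W(1,x) = v(2x/2) = v(x)$ for $2\abs{x}\le 2$, i.e.\ on all of $\Bset^m$, giving $W(1,\cdot)=v$.

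The substantive point is the continuity of $t\mapsto W(t,\cdot)$ as a map into $\sobolev^{1,p}(\Bset^m,\manifold{N})$. I would split $W(t,\cdot)$ into its inner part $v(2x/(1+t))$ on the shrinking ball $B_{(1+t)/2}$ and its outer part, a map constant along radii and equal to $U((1+t)/\abs{x}-1, x/\abs{x})$ on the annulus $\set{(1+t)/2 \le \abs{x}\le 1}$. For the inner part, continuity in $t$ follows from the continuity of dilations on $\sobolev^{1,p}$: the map $t\mapsto v(2\cdot/(1+t))$ is continuous from $\intvc{0}{1}$ into $\sobolev^{1,p}$ of the unit ball (extending $v$ suitably, or working on the fixed ball via change of variables), a standard fact since translations and dilations act continuously on $\lebesgue^p$ and on $\sobolev^{1,p}$. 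For the outer part, I would change variables to ``cylindrical'' coordinates $(\abs{x}, x/\abs{x})$ and recognise the outer map as essentially a reparametrisation of $U$; continuity in $t$ then reduces to continuity of the reparametrisation $s\mapsto (1+t)/\abs{x}-1$ in the $t$ variable, together with the continuity of $t\mapsto U(t,\cdot)$ which is part of the hypothesis $U\in\sobolev^{1,p}(\intvc{0}{1}\times\partial\Bset^m,\manifold{N})$ (via Fubini, $U$ restricted to almost every radial slice is $\sobolev^{1,p}$ and depends continuously on the slice parameter). One must also track the moving interface $\abs{x}=(1+t)/2$: since the inner and outer values agree there, no jump is created, and the $\lebesgue^p$-norm of $\Deriv W(t,\cdot)$ over a thin shell around the interface tends to zero, so the contributions from the region where the two formulas ``swap'' are negligible as $t'\to t$.

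The main obstacle I expect is precisely this continuity-in-$t$ statement near the moving interface and near $x=0$: one needs uniform (in $t$) integrability of $\abs{\Deriv W(t,\cdot)}^p$ to pass to the limit, and the inner part $v(2x/(1+t))$ has derivative blowing up like $1/(1+t)$ times $\abs{\Deriv v}$ — which is harmless since $1+t\in\intvc{1}{2}$ is bounded below — while the outer part has derivative controlled by $\abs{\Deriv_{\mathrm{tan}}U}/\abs{x}$ plus $\abs{\partial_t U}\cdot(1+t)/\abs{x}^2$, and one checks in cylindrical coordinates that the Jacobian factor $\abs{x}^{m-1}$ exactly compensates these singular factors so that the total energy is comparable to $\int\abs{\Deriv U}^p + \int\abs{\Deriv v}^p$, uniformly in $t$. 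Once this uniform energy bound is in hand, continuity follows by a routine density/cutoff argument: approximate $U$ and $v$ by smooth maps, for which continuity in $t$ is elementary, and pass to the limit using the uniform bounds. I would present the homogeneous-degree-zero structure of the outer piece as the conceptual reason the construction works, and relegate the change-of-variables bookkeeping to a short computation.
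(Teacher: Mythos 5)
Your proposal is correct and takes essentially the same approach as the paper's proof, which is a one-liner observing that the claim reduces to the continuity in Sobolev spaces of suitable families of diffeomorphisms; you simply spell out the dilation on the inner ball $B_{(1+t)/2}$ and the radial reparametrisation on the outer annulus. Two minor points of hygiene: at the interface $\abs{x}=(1+t)/2$ the second branch evaluates to $U(1,x/\abs{x})$, not $U(0,x/\abs{x})$ (since $(1+t)/\abs{x}-1 = 2-1 = 1$ there), which your parenthetical already correctly identifies as the endpoint matching $v$; and the concern about $1/\abs{x}$-type singularities near $x=0$ or needing the spherical Jacobian $\abs{x}^{m-1}$ to "compensate" is unfounded, since the outer formula only applies for $\abs{x}\ge(1+t)/2\ge 1/2$, so all such factors are bounded by $2$ uniformly in $t$, while $x=0$ lies in the inner region where the derivative of $v(2x/(1+t))$ is controlled by $2\abs{\Deriv v}$.
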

\begin{proof}%
	[Proof of \cref{lemma_homotopy}]
	The proof follows from the continuity in Sobolev spaces of suitable families of diffeomorphisms.
\end{proof}

\begin{proof}%
	[Proof of \cref{lemma_equalising_on_M_m_1}]
	By a classical smoothing by convolution argument, which preserves the homotopy classes of both \( v \) and \( v\restr{\manifold{M}^{m-1}}\), we can assume that \(v \in \Lip \brk{\manifold{M}, \manifold{N}} \).
	
	On the other hand, by a classical extension by convolution argument, there is some map \(U \in \Lip \brk{\intvc{0}{1}\times \manifold{M}, \manifold{N}} \) such that \( U\restr{\set{0}\times \manifold{M}} = u \).
	We will now focus on \( U \) on \( \set{0} \times \manifold{M}^{m-1} \).
	By the transitivity of homotopies, \(U \brk{1, \cdot}\restr{\manifold{M}^{m - 1}}\) is homotopic to \(v \restr{\manifold{M}^{m - 1}}\) in \(\continuous \brk{\manifold{M}^{m-1}, \manifold{N}}\); upon replacing this continuous homotopy with a Lipschitz one and then using this regularised homotopy to modify appropriately the values of \( U \) on \( \intvc{1/2}{1} \times \manifold{M}^{m - 1} \), we can assume further that \( U \brk{1, \cdot}\restr{\manifold{M}^{m - 1}} = v\restr{\manifold{M}^{m - 1}}\), while preserving the fact that \( U \in \Lip \brk{\intvc{0}{1}\times \manifold{M}^{m - 1}, \manifold{N}} \).
	
	We conclude by applying \cref{lemma_homotopy} to \(\sigma \times \intvc{0}{1}\) for every \(m\)-dimensional simplex \( \sigma \) of the triangulation \(\manifold{M}^m\) thanks to a suitable bi-Lipschitz homeomorphism between the simplex \(\sigma\) and the ball \(\Bset^m\).
\end{proof}

We now turn to homotopy results for maps that are sufficiently close in \( \lebesgue^m \) and have sufficiently small \( \sobolev^{1,m} \) energy, starting with the following oscillation estimate on a skeleton.

\begin{lemma}
\label{lemma_uniform_estimate_skeleton}
Given \(\ell \in \set{1, \dotsc, m - 1}\) satisfying \(\ell < p\), there exists a constant such that if \(u, v \in \Lip \brk{\manifold{M}^{\ell}, \manifold{N}}\),
then for almost every \(x \in \manifold{M}^\ell\),
\begin{equation*}
   d \brk{u \brk{x}, v \brk{x}}^p
   \le C
   \brk[\bigg]{\int_{\manifold{M}^{\ell}} \brk[\big]{\abs{\Deriv u}^p + \abs{\Deriv v}^p + d \brk{u, v}^p}}^{\frac{\ell}{p}}
   \brk[\bigg]{\int_{\manifold{M}^{\ell}} d \brk{u, v}^p}^{1 -\frac{\ell}{p}}\eqpunct{.}
\end{equation*}
\end{lemma}
Estimates like in \cref{lemma_uniform_estimate_skeleton} are classical in the study of Sobolev mappings \citelist{\cite{Luckhaus_1993}*{p. 453}\cite{White_1988}*{Th.\ 1.1}\cite{Hang_Lin_2003_II}*{(3.6)}}.
Its core ingredient is the Sobolev--Morrey embedding.

\begin{proof}[Proof of \cref{lemma_uniform_estimate_skeleton}]
Since \(\manifold{M}^\ell\) is a finite union of diffeomorphic images of simplices, it is sufficient to establish the estimate on an \(\ell\)-dimensional simplex, or equivalently on the unit ball \(\Bset^\ell\).
In this case, the Lipschitz regularity assumption implies that \( u \) and \( v \) are in particular Sobolev maps.

Assuming that \(u, v \in \sobolev^{1, p} \brk{\Bset^{\ell}, \manifold{N}}\), for almost every \(x \in \Bset^\ell\) and every \(r \in \intvl{0}{1}\), we have 
\begin{equation}
\label{eq_fahmaeGhahm4noevixuzooh2}
 d \brk{u \brk{x}, v \brk{x}}
 \le \smashoperator[r]{\fint_{\Bset^\ell \cap B_{r}\brk{x}}} d \brk{u \brk{x}, u \brk{y}} \dif y
 + \smashoperator[r]{\fint_{\Bset^\ell \cap B_{r}\brk{x}}} d \brk{u \brk{y}, v \brk{y}} \dif y
 +\smashoperator[r]{\fint_{\Bset^\ell \cap B_{r}\brk{x}}} d \brk{v \brk{y}, v \brk{x}} \dif y\eqpunct{.}
\end{equation}
We have then by the Sobolev--Morrey embedding 
\begin{equation}
\label{eq_gai5Ohh5ve2iash5ahRupho9}
 \smashoperator[r]{\fint_{\Bset^\ell \cap B_{r}\brk{x}}} d \brk{u \brk{x}, u \brk{y}} \dif y
 \le \Cl{cst_aey3lie0hep7Wae8auteush9} r^{1 - \frac{\ell}{p}}
 \brk[\bigg]{\int_{\Bset^\ell} \abs{\Deriv u}^p}^\frac{1}{p}
\end{equation}
and 
\begin{equation}
\label{eq_yiPhoh2iaRohCa8iel8Quuu4}
 \smashoperator[r]{\fint_{\Bset^\ell \cap B_{r}\brk{x}}} d \brk{v \brk{x}, v \brk{y}} \dif y
 \le \Cr{cst_aey3lie0hep7Wae8auteush9} r^{1 - \frac{\ell}{p}}
 \brk[\bigg]{\int_{\Bset^\ell} \abs{\Deriv v}^p}^\frac{1}{p},
\end{equation}
whereas by Jensen's inequality
\begin{equation}
\label{eq_eeY3quaex2nairohwai5yowu}
 \smashoperator[r]{\fint_{\Bset^\ell \cap B_{r}\brk{x}}} d \brk{u \brk{y}, v \brk{y}} \dif y
 \le \C r^{-\frac{\ell}{p}}\brk[\bigg]{\smashoperator[r]{\int_{\Bset^\ell}} d \brk{u, v}^{p}}^{\frac{1}{p}}.
\end{equation}
Defining \(t \in \intvo{0}{\infty}\) such that  
\[
 t^p \int_{\Bset^\ell} \abs{\Deriv u}^p + \abs{\Deriv v}^p
 = \int_{\Bset^\ell} d \brk{u, v}^{p}\eqpunct{,}
\]
the conclusion follows from \eqref{eq_fahmaeGhahm4noevixuzooh2}, \eqref{eq_gai5Ohh5ve2iash5ahRupho9}, \eqref{eq_yiPhoh2iaRohCa8iel8Quuu4}, and \eqref{eq_eeY3quaex2nairohwai5yowu}, with \(r \defeq  \min \brk{1, t}\).
\resetconstant
\end{proof}

With~\cref{lemma_uniform_estimate_skeleton} at hand, we are in position to state the following homotopy result for maps that are sufficiently close and have controlled \( \sobolev^{1,m} \) energy, and which is a variant of \cite{White_1988}*{Theorem~2.1}.

\begin{proposition}
\label{proposition_homotopy_skeleton_quantitative}
Assume that \( \manifold{N} \) is connected.
There exists \(\eta \in \intvo{0}{\infty}\) such that if \(u, v \in \Lip \brk{\manifold{M}, \manifold{N}}\) and if 
\[
   \brk[\bigg]{\int_{\manifold{M}} \abs{\Deriv u}^m + \abs{\Deriv v}^m}^{1-\frac{1}{m}}
   \brk[\bigg]{\int_{\manifold{M}} d \brk{u, v}^m}^{\frac{1}{m}} \le \eta\eqpunct{,}
\]
then
\begin{enumerate}[label=(\roman*)]
 \item 
 \label{it_kifoxoophah9Ushati9PhiaJ}
 \( u\restr{\manifold{M}^{m- 1}} \) and \( v\restr{\manifold{M}^{m- 1}} \) are homotopic in \(\continuous \brk{\manifold{M}^{m - 1}, \manifold{N}}\),
 \item 
 \label{it_oiRoov5aigi9aoMo2ziequ4I}
 there exists \( w \in \Lip \brk{\manifold{M}, \manifold{N}}\) that is homotopic in \(\continuous \brk{\manifold{M}, \manifold{N}}\) to \(v\), and satisfies \( u\restr{\manifold{M}^{m - 1}} = w\restr{\manifold{M}^{m - 1}}\).
\end{enumerate}
\end{proposition}

\begin{remark}
By Young's inequality, if 
\[
\int_{\manifold{M}} \abs{\Deriv u}^m + \abs{\Deriv v}^m + \frac{d \brk{u, v}^m}{\varepsilon^m}
\le \frac{\eta}{\varepsilon}\eqpunct{,}
\]
then the assumptions -- and hence the conclusion -- of \cref{proposition_homotopy_skeleton_quantitative} hold.
\end{remark}

\begin{proof}[Proof of \cref{proposition_homotopy_skeleton_quantitative}]
By the homotopy theory in \(\sobolev^{1, m}\brk{\manifold{M}, \manifold{N}}\), there exists \(\eta_0\) such that if 
\[ 
  \int_{\manifold{M}} \abs{\Deriv u}^m + \abs{\Deriv v}^m\le \eta_0\eqpunct{,}
\]
then \(u\) and \(v\) are both homotopic to a constant in \(\sobolev^{1, m} \brk{\manifold{M}, \manifold{N}}\) and the conclusion then follows from the connectedness of \( \manifold{N} \).

Otherwise, since the volume of \(\manifold{M}\) is finite and since \(\manifold{N}\) is compact, we have
\[
\int_{\manifold{M}} d \brk{u, v}^m 
\le \C \int_{\manifold{M}} \abs{\Deriv u}^m + \abs{\Deriv v}^m\eqpunct{.}
\]
Letting \(\Pi_{\manifold{M}}\) be the nearest point projection on \(\manifold{M} \subseteq \Rset^{\mu}\) and defining, for \(h \in \Rset^{\mu}\) with \(\abs{h}\le \delta\) and \(\delta\) sufficiently small,
\begin{align*}
 u_h &\defeq u \compose \Pi_{\manifold{M}} \brk{\cdot - h} \restr{\manifold{M}}&
 &\text{and}&
 v_h &\defeq v \compose \Pi_{\manifold{M}} \brk{\cdot - h} \restr{\manifold{M}}\eqpunct{,}
\end{align*}
we have, by
\begin{enumerate*}[label=(\(\textup{\roman*}\)), itemjoin={,\ }, itemjoin*={, and\ }]
	\item Hölder’s inequality
	\item Fubini's theorem combined with the fact that \( \Pi_{\manifold{M}} \brk{\cdot - h} \restr{\manifold{M}} \) is a diffeomorphism on \( \manifold{M} \) whenever \( h \) is sufficiently small and the change of variable
	\item the chain rule in Sobolev spaces,
\end{enumerate*}
\begin{equation}
\label{eq_waZ6Phuak4esheiGaeLeekoh}
\begin{split}
 &\int_{B_\delta}
 \brk[\bigg]{\int_{\manifold{M}^{m - 1}} \brk[\big]{\abs{\Deriv u_h}^m + \abs{\Deriv v_h}^m + d \brk{u_h, v_h}^m} }^{1 - \frac{1}{m}}
 \brk[\bigg]{\int_{\manifold{M}^{m - 1}} d \brk{u_h, v_h}^m }^{\frac{1}{m}} \dif h\\
 &\qquad\le 
 \brk[\bigg]{ \int_{B_\delta} \int_{\manifold{M}^{m - 1}} \brk[\big]{\abs{\Deriv u_h}^m + \abs{\Deriv v_h}^m + d \brk{u_h, v_h}^m} \dif h}^{1 - \frac{1}{m}}\\
 &\qquad \hspace{10em}\times
 \brk[\bigg]{ \int_{B_\delta} \int_{\manifold{M}^{m - 1}} d \brk{u_h, v_h}^m \dif h}^{\frac{1}{m}} \\
 &\qquad \le \C
 \brk[\bigg]{\int_{\manifold{M}} \brk[\big]{\abs{\Deriv u}^m + \abs{\Deriv v}^m + d \brk{u, v}^m} }^{1 - \frac{1}{m}}
 \brk[\bigg]{ \int_{\manifold{M}} d \brk{u, v}^m }^{\frac{1}{m}} \\
 &\qquad \le \C
 \brk[\bigg]{\int_{\manifold{M}} \brk[\big]{\abs{\Deriv u}^m + \abs{\Deriv v}^m}}^{1 - \frac{1}{m}}
 \brk[\bigg]{ \int_{\manifold{M}} d \brk{u, v}^m }^{\frac{1}{m}}\eqpunct{.}
\end{split}
\end{equation}
The exists thus \(h \in \Rset^\mu\) such that \(\abs{h}\le \delta\) and 
\begin{equation*}
\begin{split}
&\brk[\bigg]{\int_{\manifold{M}^{m - 1}} \brk[\big]{\abs{\Deriv u_h}^m + \abs{\Deriv v_h}^m + d \brk{u_h, v_h}^m} }^{1 - \frac{1}{m}}
 \brk[\bigg]{\int_{\manifold{M}^{m - 1}} d \brk{u_h, v_h}^m }^{\frac{1}{m}}\\
 &\qquad \le \C  \brk[\bigg]{\int_{\manifold{M}} \brk[\big]{\abs{\Deriv u}^m + \abs{\Deriv v}^m}}^{1 - \frac{1}{m}}
 \brk[\bigg]{ \int_{\manifold{M}} d \brk{u, v}^m }^{\frac{1}{m}} \dif h\eqpunct{.}
\end{split}
\end{equation*}
By \cref{lemma_uniform_estimate_skeleton}, we have 
\[
  \sup_{\manifold{M}^{m-1}} d \brk{u_h, v_h} \le \C \eta\eqpunct{,}
\]
and thus in particular, if \(\eta\) is chosen sufficiently small, then 
\( {u_h}\restr{\manifold{M}^{m - 1}}\) and \( {v_h}\restr{\manifold{M}^{m - 1}} \) are homotopic as continuous maps.
But now, the maps \( {u_h}\restr{\manifold{M}^{m - 1}}\) and \( {v_h}\restr{\manifold{M}^{m - 1}} \) are homotopic as continuous maps respectively to \( u\restr{\manifold{M}^{m - 1}} \) and \( v\restr{\manifold{M}^{m - 1}} \), by considering respectively the maps \( {u_{th}}\restr{\manifold{M}^{m - 1}}\) and \( {v_{th}}\restr{\manifold{M}^{m - 1}} \) for \( t \in \intvc{0}{1} \).
This concludes the proof of~\ref{it_kifoxoophah9Ushati9PhiaJ}.

The proof of \ref{it_oiRoov5aigi9aoMo2ziequ4I} essentially relies on the homotopy extension property, and is more precisely a direct consequence of \cref{lemma_equalising_on_M_m_1} above.
\resetconstant
\end{proof}

The last ingredient in the proof of~\cref{theorem_Ehet_finite} is the following insertion of bubble lemma.
 
\begin{lemma}
\label{lemma_bubble_from_2_disks}
Let \(u_0, u_1 \in \sobolev^{1, p} \brk{\Bset^m, \manifold{N}} \).
If
\[
\tr_{\partial \Bset^m} u_0 = \tr_{\partial \Bset^m} u_1\eqpunct{,}
\]
and if 
\[
 U 
 \brk{t, x}
 \defeq 
 \begin{cases}
   u_0 \brk{x} & \text{if \(\abs{x} \ge t\),}\\
  u_0 \brk{ t^2 x/\abs{x}^2} & \text{if \(t^2 \le \abs{x} \le t\),}\\
  u_1 \brk{x/t^2} & \text{if \(\abs{x} \le t^2\),}
 \end{cases}
\]
then
\begin{enumerate}[label=(\roman*)]
 \item\label{item:a1} the map 
 \(
  t \in \intvl{0}{1} \mapsto U \brk{t, \cdot} \in \sobolev^{1, p}\brk{\Bset^m, \manifold{N}}
 \) is continuous,
 \item\label{item:a2} for every \(t \in \intvl{0}{1}\), 
 \[
   \set{x \in \Bset^m \st U \brk{t, x}\ne u_0 \brk{x}}
   \subset B_t\eqpunct{,}
 \]
 \item\label{item:a3}
 for every \(t \in \intvl{0}{1}\),
 \[ \int_{\Bset^m} \abs{\Deriv U \brk{t, \cdot}}^p
 = \int_{\Bset^m \setminus B_t} \abs{\Deriv u_0\brk{x}}^p\brk[\bigg]{1 + \frac{t^{2 \brk{m - p}}}{\abs{x}^{2 \brk{m - p}}}} \dif x + t^{2 \brk{m - p}}\int_{\Bset^m} \abs{\Deriv u_1}^p\eqpunct{.}
 \]
 \end{enumerate}
\end{lemma}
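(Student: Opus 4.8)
The plan is to settle~\ref{item:a2} and~\ref{item:a3} by elementary geometric considerations and changes of variables, and to deduce~\ref{item:a1} from the continuity of composition with families of bi-Lipschitz maps in Sobolev spaces --- the same mechanism that underlies~\cref{lemma_homotopy} --- which is the only delicate point.

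Assertion~\ref{item:a2} is immediate, since for \(\abs{x} \ge t\) the first branch of the definition gives \(U\brk{t,x} = u_0\brk{x}\). For~\ref{item:a3}, fix \(t \in \intvl{0}{1}\), write \(\Phi_t\brk{x} \defeq t^2 x/\abs{x}^2\) for the inversion of radius \(t\) and \(\delta_t\brk{x} \defeq x/t^2\) for the rescaling, and decompose \(\Bset^m\) into \(\Bset^m \setminus B_t\), the neck \(B_t \setminus B_{t^2}\), and the core \(B_{t^2}\), on which \(U\brk{t,\cdot}\) equals respectively \(u_0\), \(u_0 \compose \Phi_t\), and \(u_1 \compose \delta_t\). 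As \(\Phi_t\) and \(\delta_t\) are smooth diffeomorphisms away from the origin, each piece is Sobolev on its region; their traces agree on the gluing spheres, because \(\Phi_t\) fixes \(\partial B_t\) pointwise while on \(\partial B_{t^2}\) both \(\Phi_t\) and \(\delta_t\) coincide with the map \(x \mapsto x/t^2\), which carries \(\partial B_{t^2}\) onto \(\partial \Bset^m\), where \(u_0\) and \(u_1\) agree by hypothesis; hence \(U\brk{t,\cdot} \in \sobolev^{1,p}\brk{\Bset^m, \manifold{N}}\). The region \(\Bset^m \setminus B_t\) contributes \(\int_{\Bset^m \setminus B_t}\abs{\Deriv u_0}^p\). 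On the core, \(\Deriv\brk{u_1 \compose \delta_t}\brk{x} = t^{-2}\Deriv u_1\brk{x/t^2}\), so the substitution \(y = x/t^2\) yields the contribution \(t^{2\brk{m - p}}\int_{\Bset^m}\abs{\Deriv u_1}^p\). On the neck, the conformality of the inversion gives \(\Deriv\Phi_t\brk{x} = \brk{t^2/\abs{x}^2}R\brk{x}\) with \(R\brk{x}\) orthogonal, so that \(\abs{\Deriv\brk{u_0 \compose \Phi_t}\brk{x}} = \brk{t^2/\abs{x}^2}\abs{\Deriv u_0\brk{\Phi_t\brk{x}}}\), while \(\abs{\det \Deriv\Phi_t\brk{x}} = \brk{t^2/\abs{x}^2}^m\); since \(\Phi_t\) is an involution carrying the neck onto \(\Bset^m \setminus B_t\), the substitution \(y = \Phi_t\brk{x}\) transforms \(\int_{B_t \setminus B_{t^2}}\brk{t^2/\abs{x}^2}^p\abs{\Deriv u_0\brk{\Phi_t\brk{x}}}^p \dif x\) into \(\int_{\Bset^m \setminus B_t}\brk{t^{2\brk{m - p}}/\abs{y}^{2\brk{m - p}}}\abs{\Deriv u_0\brk{y}}^p \dif y\). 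Adding the three contributions gives~\ref{item:a3}.

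It remains to prove~\ref{item:a1}, and here the key observation is that for \(t\) in a compact subset \(K\) of \(\intvl{0}{1}\) the gluing radii \(t\) and \(t^2\) stay bounded away from \(0\) and from one another, so that \(\Phi_t\), \(\delta_t\), and their inverses, as well as the weights \(t^{2\brk{m-p}}/\abs{x}^{2\brk{m-p}}\) entering the change of variables, are controlled uniformly in \(t \in K\). When \(u_0, u_1 \in \continuous^\infty\brk{\Bset^m, \manifold{N}}\) the map \(U\) is continuous and piecewise smooth on \(\intvl{0}{1}\times\Bset^m\), and this uniform control makes \(t \mapsto U\brk{t,\cdot}\) continuous from \(K\) into \(\sobolev^{1,p}\brk{\Bset^m, \manifold{N}}\) by a direct computation. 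For general \(u_0, u_1 \in \sobolev^{1,p}\brk{\Bset^m, \manifold{N}} \cap \continuous\brk{\Bset^m, \manifold{N}}\), one uses that, being continuous, they may be approximated in \(\sobolev^{1,p}\brk{\Bset^m, \manifold{N}}\) by smooth maps \(u_0^k, u_1^k\) with \(\tr_{\partial\Bset^m}u_0^k = \tr_{\partial\Bset^m}u_1^k\); denoting by \(U^k\) the associated maps, and noting that on each of the three regions \(U^k\brk{t,\cdot} - U\brk{t,\cdot}\) equals \(u_0^k - u_0\) or \(u_1^k - u_1\) precomposed with \(\Phi_t\) or \(\delta_t\), the changes of variables used for~\ref{item:a3} give, for every \(t \in K\),
\[
  \norm{U^k\brk{t,\cdot} - U\brk{t,\cdot}}_{\sobolev^{1,p}\brk{\Bset^m}}^p
  \le C\brk{K}\brk[\big]{\norm{u_0^k - u_0}_{\sobolev^{1,p}\brk{\Bset^m}}^p + \norm{u_1^k - u_1}_{\sobolev^{1,p}\brk{\Bset^m}}^p}\eqpunct{,}
\]
whose right-hand side is independent of \(t\) and tends to \(0\) as \(k \to \infty\); a three-\(\varepsilon\) argument then transfers continuity from the \(U^k\) to \(U\) on \(K\), and hence, \(K\) being an arbitrary compact subset of \(\intvl{0}{1}\), on all of \(\intvl{0}{1}\). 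The main obstacle is precisely this uniform handling of the reparametrisations for merely Sobolev data; note that it is the exclusion of \(t = 0\) that prevents the inner radius \(t^2\), and with it the weight on the neck, from degenerating, whereas the endpoint \(t = 1\) causes no trouble, the neck and outer regions then collapsing to a null set.
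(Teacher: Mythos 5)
Your proof is correct and takes essentially the same approach as the paper: assertion~\ref{item:a2} is read off from the definition, and assertion~\ref{item:a3} is obtained by splitting \(\Bset^m\) into the outer region, the neck, and the core, then using the conformality of the inversion \(x \mapsto t^2x/\abs{x}^2\) (together with the dilation on the core) to change variables — exactly the paper's computation, and your final change of variables on the neck even produces the correct sign in the exponent \(2\brk{m-p}\), whereas the paper's last displayed line has an inconsequential typo there. For~\ref{item:a1}, where the paper simply declares the claim ``straightforward'', you supply a complete and valid argument (matching-trace smooth approximation via doubling, a \(t\)-uniform estimate on compacts of \(\intvl{0}{1}\) inherited from the change-of-variables identity, and a three-\(\varepsilon\) argument), which is a legitimate instantiation of what the paper leaves to the reader.
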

\begin{proof}
Assertions~\ref{item:a1} and~\ref{item:a2} are straightforward.
For~\ref{item:a3}, one clearly has 
\[
 \int_{\Bset^m \setminus B_t} \abs{\Deriv U \brk{t, \cdot}}^p
 = \int_{\Bset^m \setminus B_t} \abs{\Deriv u_0}^m
\]
and 
\[
 \int_{B_{t^{2}}} \abs{\Deriv U \brk{t, \cdot}}^p
 = t^{2\brk{m-p}}\int_{\Bset^{m}} \abs{\Deriv u_1}^p\eqpunct{.}
\]
Moreover, since the transformation \(x \mapsto t^2 x/\abs{x}^{2}\) is conformal, we have
\[
\begin{split}
  \int_{B_t \setminus B_{t^2}} \abs{\Deriv U \brk{t, \cdot}}^p
  &= \int_{B_t \setminus B_{t^2}} \abs{\Deriv u_0 \brk{t^2 x/\abs{x}^2}}^p \frac{t^{2p}}{\abs{x}^{2 p}} \dif x \\
  &= \int_{\Bset^m \setminus B_{t}} \abs{\Deriv u_0 \brk{x}}^p
  \frac{t^{2\brk{p - m}}}{\abs{x}^{2 \brk{p - m}}}\dif x\eqpunct{.}
\end{split}
\]
This concludes the proof of item~\ref{item:a3}.
\end{proof}
\begin{proposition}
\label{proposition_Ehet_finite}
Let \(u, v  \in \Lip \brk{\manifold{M}, \manifold{N}}\).
If \( u\restr{\manifold{M}^{m- 1}}\) and \( v\restr{\manifold{M}^{m- 1}} \) are homotopic in \( \continuous \brk{\manifold{M}^{m - 1}, \manifold{N}}\), then 
\[
 \energy^{1, m}_{\mathrm{het}}
  \brk{u, v} < \infty\eqpunct{.}
\]
\end{proposition}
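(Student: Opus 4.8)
The plan is to turn the ``naive insertion strategy'' sketched in the introduction into an explicit family of competitors in the characterization \eqref{eq_800de20c207671d6} of the heterotopic energy, using the two preparatory lemmas.

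\emph{Reduction.}
Since \(\energy^{1, m}_{\mathrm{het}}\brk{u, \cdot}\) depends on its second argument only through its homotopy class (by \cref{definition_heter_energy} together with the equivalence of the various notions of homotopy), I may replace \(v\) by any homotopic map; by the density of smooth maps in \(\sobolev^{1, m}\brk{\manifold{M}, \manifold{N}}\)~\cite{Schoen_Uhlenbeck_1983} and the openness of homotopy classes (\cref{proposition_stability_homotopy_classes}), I take such a representative in \(\continuous^\infty\brk{\manifold{M}, \manifold{N}}\). By continuity of traces, its restriction to \(\manifold{M}^{m - 1}\) is still homotopic in \(\VMO\brk{\manifold{M}^{m - 1}, \manifold{N}}\) to \(\tr_{\manifold{M}^{m - 1}} u\), so \cref{lemma_equalising_on_M_m_1} produces \(w \in \sobolev^{1, m}\brk{\manifold{M}, \manifold{N}}\) with \(\tr_{\manifold{M}^{m - 1}} w = \tr_{\manifold{M}^{m - 1}} u\) and \(w\) homotopic to \(v\). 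It therefore suffices to exhibit, for each \(\delta > 0\), a map homotopic to \(w\), within \(\lebesgue^m\)-distance \(\delta\) of \(u\), and with \(\sobolev^{1, m}\) energy bounded by a constant independent of \(\delta\).

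\emph{Simplexwise bubbling.}
Fix once and for all a bi-Lipschitz identification of each \(m\)-dimensional simplex \(\sigma\) of the triangulation with \(\overline{\Bset^m}\). On \(\sigma\), the maps \(u\restr{\sigma}\) and \(w\restr{\sigma}\) have the same trace on \(\partial \sigma \subseteq \manifold{M}^{m - 1}\), so \cref{lemma_bubble_from_2_disks} applies with \(u_0 = u\restr{\sigma}\) and \(u_1 = w\restr{\sigma}\), yielding for every \(t \in \intvl{0}{1}\) a map \(U_\sigma\brk{t, \cdot}\) that coincides with \(u\restr{\sigma}\) outside \(B_t\), equals \(w\restr{\sigma}\) at \(t = 1\), and depends continuously on \(t\). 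Since \(U_\sigma\brk{t, \cdot} = u\restr{\sigma}\) near \(\partial \sigma\) for \(t < 1\) and \(w\) is globally defined at \(t = 1\), these pieces glue along \(\manifold{M}^{m - 1}\) into a well-defined \(W_t \in \sobolev^{1, m}\brk{\manifold{M}, \manifold{N}}\), with \(W_1 = w\), depending continuously on \(t \in \intvl{0}{1}\); hence every \(W_t\) is homotopic to \(w\), and therefore to \(v\). I then set \(w_\delta \defeq W_{t_\delta}\) for a small \(t_\delta\) to be fixed below.

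\emph{Estimates and conclusion.}
By \cref{lemma_bubble_from_2_disks}~\ref{item:a2}, \(W_t\) differs from \(u\) only on the union of the balls \(B_t\) inside the simplices, a set of measure \(O\brk{t^m}\); as \(\manifold{N}\) is bounded this gives \(\int_{\manifold{M}} d\brk{u, W_t}^m \le C t^m \to 0\), so one may pick \(t_\delta\) with \(\int_{\manifold{M}} d\brk{u, w_\delta}^m \le \delta\). For the energy, the crucial point is that at the critical exponent \(p = m\) all powers of \(t\) in \cref{lemma_bubble_from_2_disks}~\ref{item:a3} become trivial, so that \(\int_{\Bset^m} \abs{\Deriv U_\sigma\brk{t, \cdot}}^m \le 2 \int_{\Bset^m} \abs{\Deriv u_0}^m + \int_{\Bset^m} \abs{\Deriv u_1}^m\) uniformly in \(t\); summing over the finitely many simplices yields \(\int_{\manifold{M}} \abs{\Deriv w_\delta}^m \le C\brk[\big]{\int_{\manifold{M}} \abs{\Deriv u}^m + \int_{\manifold{M}} \abs{\Deriv w}^m}\), independently of \(\delta\). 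Inserting \(w_\delta\) into \eqref{eq_800de20c207671d6} and taking the supremum over \(\delta\) gives \(\energy^{1, m}_{\mathrm{het}}\brk{u, v} < \infty\).

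\emph{Main obstacle.}
The one genuinely delicate point is that \cref{lemma_bubble_from_2_disks} is stated for maps in \(\sobolev^{1, m} \cap \continuous\), whereas \(u\) and the map \(w\) provided by \cref{lemma_equalising_on_M_m_1} need not be continuous. This is harmless: the construction in \cref{lemma_bubble_from_2_disks} is built solely from compositions with fixed conformal inversions and dilations, so it extends verbatim to \(\sobolev^{1, m}\) maps whose \(\VMO\) traces on \(\partial \Bset^m\) agree, the energy identity \cref{lemma_bubble_from_2_disks}~\ref{item:a3} being in particular unaffected; alternatively, one first approximates \(u\) (and \(w\)) by smooth maps, absorbs the resulting trace mismatch through a low-energy cylinder insertion as in \cref{lemma_homotopy}, and passes to the limit using the lower semicontinuity of the heterotopic energy from \cref{proposition_Ehet_lsc} together with the uniformity of the bound obtained above.
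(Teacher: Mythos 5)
Your proof is correct and follows essentially the same route as the paper: reduce to a smooth representative of \(v\), apply \cref{lemma_equalising_on_M_m_1} to replace it by \(w\) with the same trace as \(u\) on \(\manifold{M}^{m-1}\), and then insert bubbles simplexwise via \cref{lemma_bubble_from_2_disks}. The paper's own proof is a one-line version of exactly this; you have correctly filled in the details (the \(\lebesgue^m\) convergence, the uniform energy bound from item~\ref{item:a3} with \(p = m\), the homotopy via continuity in \(t\), and the gluing along \(\manifold{M}^{m-1}\)), and you rightly flag — and adequately resolve — the only genuine subtlety, namely that \cref{lemma_bubble_from_2_disks} is stated for continuous Sobolev maps whereas \(u\) and the map \(w\) from \cref{lemma_equalising_on_M_m_1} are a priori only \(\sobolev^{1,m}\); either of your two proposed fixes (observing that the construction extends to Sobolev maps with matching traces, or approximating and using \cref{proposition_Ehet_lsc}) is sound.
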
 
\begin{proof}
In view of \cref{lemma_equalising_on_M_m_1}, we can assume that \( u\restr{\manifold{M}^{m- 1}} = v\restr{\manifold{M}^{m- 1}} \).
We then conclude by applying \cref{lemma_bubble_from_2_disks} to any \(m\)-dimensional simplex \(\sigma\) of the triangulation \(\manifold{M}\), up to a standard bi-Lipschitz equivalence between such a simplex and \(\Bset^m\).
More precisely, on each simplex \( \sigma \), we take \( u_{0} = u\restr{\sigma} \) and \( u_{1} = v\restr{\sigma} \) in \cref{lemma_bubble_from_2_disks}, and we let \( w_{t} \) be the map obtained by gluing the corresponding maps \( U \) on each \( \sigma \).
Taking \( p = m \), we deduce from~\ref{item:a3} that \( \int_{\manifold{M}} \abs{\Deriv w_{t}}^{p} \) remains bounded uniformly with respect to \( t \in \intvl{0}{1} \).
Meanwhile, it is clear from the construction of \( U \) in \cref{lemma_bubble_from_2_disks} that \( w_{t} \) is homotopic to \( v \) for every \( t \), while item~\ref{item:a2} ensures that \( w_{t} \to u \) a.e.\ as \( t \to 0 \).
The conclusion therefore follows by letting \( t \to 0 \).
\end{proof}

\begin{proof}[Proof of \cref{theorem_Ehet_finite}]
We first assume that~\ref{it_eKai9ahmoes3fe1biquah2pe} holds.
From \cref{proposition_heter_energy_controls_energy}, we deduce that \( u \in \sobolev^{1,m} \brk{\manifold{M},\manifold{N}} \).
We first observe that, by approximation in \( \sobolev^{1,m} \brk{\manifold{M}, \manifold{N}} \) combined with \cref{proposition_stability_homotopy_classes}, we can find a map \( \tilde{u} \in \continuous^{\infty} \brk{\manifold{M}, \manifold{N}} \) arbitrarily close to \( u \) with respect to the \( \sobolev^{1,m} \) norm and homotopic to \( u \) in \( \sobolev^{1,m} \brk{\manifold{M}, \manifold{N}} \) and hence in \( \VMO \brk{\manifold{M}, \manifold{N}} \).
Meanwhile, we deduce from \cref{proposition_heterotopic_restricts_to_smooth} that there exists \( \tilde{v} \in \continuous^{\infty} \brk{\manifold{M}, \manifold{N}} \), homotopic to \( v \), and arbitrarily close to \( u \) in \( \lebesgue^{m} \brk{\manifold{M}} \).
Invoking now \cref{proposition_homotopy_skeleton_quantitative}~\ref{it_oiRoov5aigi9aoMo2ziequ4I} -- as both continuous and \( \sobolev^{1,m} \) maps send connected sets to connected sets, by restricting to connected components of \( \manifold{M} \), we may assume without of generality \( \manifold{N} \) to be connected -- we obtain a map \( \tilde{w} \in \Lip\brk{\manifold{M}, \manifold{N}} \), homotopic to \( \tilde{v} \) and hence to \( v \), and satisfying \( \tilde{w}\restr{\manifold{M}^{m-1}} = \tilde{u}\restr{\manifold{M}^{m-1}} \).
We conclude by applying the homotopy extension property: the map \( \tilde{u} \), well-defined and continuous on the whole \( \manifold{M} \), and is continuously homotopic to \( v \) when restricted to \( \manifold{M}^{m-1} \), so extending this homotopy to \( \manifold{M} \) yields a map \( w \in \continuous \brk{\manifold{M}, \manifold{N}} \) that coincides with \( v \) on \( \manifold{M}^{m-1} \) and is homotopic to \( \tilde{u} \) in \( \continuous\brk{\manifold{M}, \manifold{N}} \), and hence to \( u \) in \( \VMO \brk{\manifold{M}, \manifold{N}} \).
This concludes the proof of~\ref{it_eiPh1baisahth4zieFee8xoh}.

If \ref{it_eiPh1baisahth4zieFee8xoh} holds,
by a standard approximation argument applied to the map \( w \) -- and using \cref{proposition_equivalence_homotopies} -- we can construct a mapping \(\Tilde{w} \in \continuous^{\infty} \brk{\manifold{M}, \manifold{N}} \subseteq \sobolev^{1, m} \brk{\manifold{M}, \manifold{N}}\)
such that \(u\) and \(\Tilde{w}\) are homotopic in \(\sobolev^{1, m}\brk{\manifold{M}, \manifold{N}}\) and such that \( \Tilde{w} \restr{\manifold{M}^{m-1}} \) is homotopic to \( v\restr{\manifold{M}^{m-1}} \) in \( \continuous \brk{\manifold{M}^{m-1}, \manifold{N}} \), and thus in \( \VMO \brk{\manifold{M}^{m-1}, \manifold{N}} \).
By continuity of the traces,
\(\smash{\tr_{\manifold{M}^{m-1}} u}\) and \(\smash{\tr_{\manifold{M}^{m  - 1}} \Tilde{w}} \) are homotopic in \( \smash{\sobolev^{1 - 1/m, m} \brk{\manifold{M}^{m - 1}, \manifold{N}}}\), and thus in \(\VMO \brk{\manifold{M}^{m - 1}} \), so that \ref{it_aitoo2kePh0Ue2Teitaechae} holds.

Conversely, assume that \ref{it_aitoo2kePh0Ue2Teitaechae} holds.
By an approximation argument, we may assume both \( u \) and \( v \) to be smooth.
For \( v \), as it is continuous, this is a straightforward argument.
Concerning \( u \), we notice that, by continuity of the trace, if a sequence \( \brk{u_{j}}_{j \in \Nset}\) of maps in \(\continuous^{\infty} \brk{\manifold{M}, \manifold{N}} \) converges to \( u \) in \( \sobolev^{1,m} \brk{\manifold{M}, \manifold{N}} \), then the sequence \( \brk{{u_{j}}\restr{\manifold{M}^{m-1}}}_{j \in \Nset} \) converges to \( u\restr{\manifold{M}^{m-1}} \) in \( \sobolev^{1-1/m,m}\brk{\manifold{M}^{m-1}, \manifold{N}} \), and hence in \( \VMO \brk{\manifold{M}^{m-1}, \manifold{N}} \).
But this implies that, for \( j \) sufficiently large, \( {u_{j}}\restr{\manifold{M}^{m-1}} \) and \( u\restr{\manifold{M}^{m-1}} \) are homotopic in \( \VMO \brk{\manifold{M}^{m-1}, \manifold{N}} \) -- stricly speaking, this is not included in Brezis and Nirenberg~\cite{Brezis_Nirenberg_1995} which work on smooth manifolds; we refer the reader to \cite{Detaille_Mironescu_Xiao_2025} for a statement and proof on doubling metric measure spaces, which includes our setting here. 
But then, \ref{it_eKai9ahmoes3fe1biquah2pe} is a consequence of \cref{proposition_Ehet_finite}.
\end{proof}

\section{Improved upper estimate}
\subsection{Topological disparities}
\label{section_topological_disparity}
This section is devoted to an estimate on the heterotopic energy, which in particular implies the upper estimate in~\cref{theorem_intro_heterotopic_disparity}.
We start by introducing the concept of \emph{topological disparity}, and collecting some of its basic properties.

Given two mappings \(u, v \in \continuous \brk{\overline{\Bset^m}, \manifold{N}}\) such that \(u = v\) on \(\partial \Bset^m\), we define the topological disparity
\[
 \sqb{u, v, \Bset^m}
 \in \pi_{m} \brk{\manifold{N}, u \brk{0}}
\]
as the homotopy class of the mapping
\(w \colon \overline{\Bset^m} \to \manifold{N}\) defined as
\[
 w \brk{x}
 \defeq
 \begin{cases}
  u \brk{4 \brk{1 - \abs{x}} x} &
  \text{if \(\abs{x} \ge 1/2\),}\\
  v \brk{2 x} &\text{if \(\abs{x} \le 1/2\),}
 \end{cases}
\]
relatively to \(\partial \Bset^m\). (The definition ensures that \(w\restr{\partial \Bset^m} = u \brk{0}\).)

If \(\rho\) is sufficiently small, if we fix a diffeomorphism mapping \(a\) to \(0\) and \(\Bar{B}_{\rho} \brk{a}\) to \(\overline{\Bset^m}\), we can define for \(u, v \in \continuous \brk{\manifold{M}, \manifold{N}}\) such that \(u \restr{\partial B_\rho\brk{a}} = v \restr{\partial B_\rho\brk{a}}\),
\[
 \sqb{u, v, B_{\rho} \brk{a}} \in \pi_{m} \brk{\manifold{N}, u \brk{a}}
\]
accordingly, and it is a well-defined homotopy class (depending on the orientation of the diffeomorphism).

We first prove that having zero topological disparity is a sufficient condition for being homotopic.

\begin{lemma}
\label{lemma_disparity_trivial_homotopic}
Let \(u, v\in \continuous \brk{\manifold{M}, \manifold{N}}\).
If \(u = v\) on \(\manifold{M} \setminus
B_{\rho} \brk{a}\) and if 
\begin{align*}
 \sqb{u, v, B_\rho \brk{a}}
&= 0&
&\text{in }\pi_m \brk{\manifold{N}, u \brk{a}}\eqpunct{,}
\end{align*}
then \(u\) and \(v\) are homotopic relatively to \(\manifold{M}\setminus B_\rho\brk{a}\).
\end{lemma}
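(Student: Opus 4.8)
The statement says: if $u = v$ outside a small ball $B_\rho(a)$ and the topological disparity $[u,v,B_\rho(a)]$ vanishes in $\pi_m(\manifold{N}, u(a))$, then $u \simeq v$ rel $\manifold{M} \setminus B_\rho(a)$. The plan is to reduce everything to the model ball $\overline{\Bset^m}$ via the fixed diffeomorphism, so that we must show: for $u, v \in \continuous(\overline{\Bset^m}, \manifold{N})$ agreeing on $\partial \Bset^m$, if the glued map $w$ (as in the definition preceding the lemma) is null-homotopic rel $\partial \Bset^m$, then $u$ and $v$ are homotopic rel $\partial \Bset^m$. I would first observe that the map $w$ is, by an explicit reparametrisation (collapsing the annulus $\{1/2 \le |x| \le 1\}$ where it traces $u$ back to the basepoint), homotopic rel $\partial \Bset^m$ to the ``connected-sum'' type map that on the northern hemisphere follows $u$ and on the southern follows $v$, with the common boundary value $u = v$ on $\partial \Bset^m$. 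In other words, under the standard identification $\Sset^m \cong \overline{\Bset^m} \cup_{\partial} \overline{\Bset^m}$, the class $[w]$ is exactly the class of the map $g \colon \Sset^m \to \manifold{N}$ equal to $u$ on one hemisphere and $v$ on the other. (This is the content already hinted at in the paragraph defining $[u,v,B_\rho(a)]$.)

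The key step is then the following elementary homotopy-theoretic fact: for two maps $u, v$ of a disk into $\manifold{N}$ agreeing on the boundary, the map $g$ obtained by gluing $u$ and $\bar v$ (the reflection of $v$) along $\partial \Bset^m$ is null-homotopic in $\continuous(\Sset^m, \manifold{N})$ — more precisely, rel the basepoint, it represents $[u] - [v]$ in the relevant set, and it is null-homotopic rel $\partial \Bset^m$ if and only if $u$ and $v$ are homotopic rel $\partial \Bset^m$. The ``if'' direction is trivial. For the direction we need, I would argue directly: a null-homotopy of $g$ rel $\partial \Bset^m$, reparametrised, gives a map $G \colon \overline{\Bset^m} \times [0,1] \to \manifold{N}$ with $G(\cdot, 0) = g$, $G(\cdot, 1) = $ const, and $G = u|_{\partial} = v|_{\partial}$ on $\partial \Bset^m \times [0,1]$. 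Cutting the cylinder $\overline{\Bset^m} \times [0,1]$ along the ``equatorial'' disk $\{|x| \le 1/2\} \times \{0\}$ that separates the $u$-part from the $v$-part, and using that $(\overline{\Bset^m}\times[0,1], \partial \Bset^m \times [0,1])$ is homeomorphic as a pair to $(\overline{\Bset^m}, \text{a disk in }\partial \Bset^m)$, one extracts from $G$ a homotopy rel $\partial \Bset^m$ between $u$ and $v$. Equivalently — and this is probably the cleanest way to write it — one invokes that the operation $(u,v) \mapsto [g]$ is precisely the obstruction in $\pi_m(\manifold{N}, u(a))$ (via the action of $\pi_1$, which here is handled by the fact that the boundary data of $u$ and $v$ literally coincide, not just up to homotopy) to $u$ and $v$ being homotopic rel $\partial \Bset^m$, a standard fact about the action of relative homotopy; its vanishing gives the conclusion.

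Finally I transport the homotopy rel $\partial \Bset^m$ back through the fixed diffeomorphism $\overline{\Bset^m} \to \overline{B_\rho(a)}$ and extend it by the stationary homotopy on $\manifold{M} \setminus B_\rho(a)$, where $u = v$; continuity at $\partial B_\rho(a)$ is automatic since both pieces agree there for all times. This yields a homotopy between $u$ and $v$ in $\continuous(\manifold{M}, \manifold{N})$ that is constant outside $B_\rho(a)$, i.e.\ rel $\manifold{M} \setminus B_\rho(a)$, as claimed. I expect the main obstacle to be the first step: carefully checking that the explicit glued map $w$ of the definition is homotopic rel $\partial \Bset^m$ to the clean hemisphere-gluing $g$, and that the basepoint/$\pi_1$-action bookkeeping causes no trouble here precisely because $u$ and $v$ have \emph{identical} (not merely homotopic) boundary values — everything else is either the cylinder-cutting manipulation above or a routine transport argument.
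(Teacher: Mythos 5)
Your proposal follows essentially the same route as the paper's proof: conclude from the vanishing of $\sqb{u,v,B_\rho\brk{a}}$ that $u$ and $v$ are homotopic relatively to $\partial B_\rho\brk{a}$, then extend that homotopy by the stationary one outside the ball, where $u=v$. The paper compresses the first step into a single appeal to ``the definition of the topological disparity''; you supply the content of that step (namely, that the class of the glued map $w$ is the torsor difference $\sqb{u}-\sqb{v}$ of the relative homotopy classes with fixed boundary data, so its vanishing forces $u\simeq v$ rel $\partial\Bset^m$), which is exactly the argument the authors are implicitly invoking.

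One minor remark on your write-up rather than on the approach: in the cylinder-cutting sketch you speak of a null-homotopy of $g$ ``rel $\partial\Bset^m$'' with boundary data $u\restr{\partial}=v\restr{\partial}$, but what the hypothesis actually gives you is a null-homotopy of $w$ with constant boundary value $u\brk{a}$ (since $w\restr{\partial\Bset^m}\equiv u\brk{a}$ by construction). To get a homotopy rel boundary between the original $u$ and $v$ with their genuine boundary values, you still need the reparametrisation step that unwinds the collapsed outer annulus of $w$; it works, but the bookkeeping of which boundary data the homotopy fixes needs a touch more care than the sketch suggests. Your alternative obstruction-theoretic phrasing sidesteps this and is cleaner, and, as you note, the identical (not merely homotopic) boundary values are precisely what makes the $\pi_1$-action irrelevant here.
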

\begin{proof}
Since \(\sqb{u, v, B_\rho \brk{a}} = 0\), it follows from the definition of the topological disparity that \(u\) and \(v\) are homotopic relatively to \(\partial B_\rho \brk{a}\);
since they coincide outside of \( B_{\rho} \brk{a} \), they are homotopic relatively to \(\manifold{M}\setminus B_\rho\brk{a}\).
To obtain the homotopy relatively to \( \partial B_{\rho} \brk{a} \), one may argue as follows.
First slightly deform \( u \) inside \( B_{\rho} \brk{a} \) to make it constantly equal to \( u\brk{0} \) near \( a \).
By definition of the topological disparity as the homotopy class of the map \( w \) defined as above on \( \overline{\Bset^m} \) relatively to \( \partial\Bset^{m} \), we can now perform a further homotopy in this constancy region to replace it by a scaled copy of \( w \).
But now, the map obtained through this process is homotopic to \( v \) relatively to \( \partial B_{\rho} \brk{a} \), by cancelling the inverted copy of \( u \) coming from \( w \) with \( u \) and dilating accordingly the copy of \( v \) coming from \( w \) to fill in \( B_{\rho} \brk{a} \).
\end{proof}

Without imposing \( u \) and \( v \) being homotopic \emph{relatively to \(\manifold{M}\setminus B_\rho\brk{a}\)}, the converse of \cref{lemma_disparity_trivial_homotopic} fails in general. 
For example, let us take the projective space \(\manifold{M} = \RPset^{2n}\) and the sphere \(\manifold{N} = \Sset^{2n}\), and assume that \(u = v\) on \(\manifold{M} \setminus B_{\rho} \brk{a}\) and that \(\sqb{u, v, B_{\rho}\brk{a}}\) is a map of Brouwer degree \(2\).
Then, obviously \(\sqb{u, v, B_{\rho}\brk{a}} \neq 0 \). 
On the other hand, the maps obtained from \( u \) respectively by making it constantly equal to \( u \brk{0} \) around \( a \) and by inserting a copy of \( w \) near \( a \) along the lines of the proof of \cref{lemma_disparity_trivial_homotopic} -- the first map being homotopic to \( u \) and the second to \( v \), as we saw in the aforementioned proof -- \emph{are homotopic} to each other.
This can be seen by first splitting \( w \) into two maps of Brouwer degree \(1\) -- this can be done inside \( B_{\rho} \brk{a} \) -- and then moving one along an orientation reversing loop in \( \RPset^{2n} \) to cancel it with the other one -- this requires to leave \( B_{\rho} \brk{a} \). 
This shows that \( u \) and \( v \) are homotopic in \( \manifold{M} \), while \(\sqb{u, v, B_{\rho}\brk{a}} \neq 0 \).

A straightforward property of the disparity energy is that it does not depend on the choice of a ball outside of which \( u \) and \( v \) coincide -- provided that one makes consistent orientation choices.

\begin{lemma}
\label{lemma_disparity_change_radius}
Let \(u, v\in \continuous \brk{\manifold{M}, \manifold{N}}\).
If \( \rho < \sigma \) and if \(u = v\) on \(\Bar{B}_{\sigma}\brk{a} \setminus
B_{\rho} \brk{a}\), then 
\begin{align*}
 \sqb{u, v, B_\rho \brk{a}}
&= \sqb{u, v, B_\sigma \brk{a}}&
&\text{in }\pi_m \brk{\manifold{N}, u \brk{a}}\eqpunct{,}
\end{align*}
provided the orientations on \(\Bar{B}_{\rho} \brk{a}\) and \(\Bar{B}_\sigma \brk{a}\) are  compatible with the inclusion.
\end{lemma}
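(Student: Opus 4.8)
The plan is to realise both topological disparities as the two endpoints of a single path in \(\pi_m \brk{\manifold{N}, u \brk{a}}\) produced by a radial rescaling, and to observe that this path is constant because the maps along it depend continuously on the rescaling parameter.

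First I would normalise the coordinates. Relying on the hypothesis that the orientations chosen on \(\Bar{B}_\rho \brk{a}\) and \(\Bar{B}_\sigma \brk{a}\) are compatible with the inclusion, I would identify \(\Bar{B}_\sigma \brk{a}\) with \(\overline{\Bset^m}\) through an orientation-preserving diffeomorphism sending \(a\) to \(0\) and \(\Bar{B}_\rho \brk{a}\) to the concentric closed ball of radius \(\lambda \defeq \rho / \sigma \in \intvo{0}{1}\); such a diffeomorphism exists, for instance, by composing the exponential chart at \(a\) with a dilation. Writing \(F, G \in \continuous \brk{\overline{\Bset^m}, \manifold{N}}\) for \(u\) and \(v\) read in these coordinates, the hypothesis \(u = v\) on \(\Bar{B}_\sigma \brk{a} \setminus B_\rho \brk{a}\) becomes \(F = G\) on \(\overline{\Bset^m} \setminus B_\lambda\). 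By the definition of the topological disparity, \(\sqb{u, v, B_\sigma \brk{a}}\) is then the class, relatively to \(\partial \Bset^m\), of the map built from the pair \(\brk{F, G}\); and, since the composite chart on \(\Bar{B}_\rho \brk{a}\) obtained by rescaling is again orientation-preserving and compatible with the inclusion, \(\sqb{u, v, B_\rho \brk{a}}\) is the class of the map built from the pair \(\brk{F_\lambda, G_\lambda}\), where \(F_s \brk{y} \defeq F \brk{s y}\) and \(G_s \brk{y} \defeq G \brk{s y}\) for \(s \in \intvc{\lambda}{1}\) and \(y \in \overline{\Bset^m}\).

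Next I would run the rescaling homotopy. For each \(s \in \intvc{\lambda}{1}\), since \(\lambda \le s\) one has \(\set{y \in \overline{\Bset^m} \st \abs{y} = s} \subseteq \overline{\Bset^m} \setminus B_\lambda\), so \(F_s = G_s\) on \(\partial \Bset^m\); moreover \(F_s \brk{0} = F \brk{0} = u \brk{a}\). Hence the pair \(\brk{F_s, G_s}\) enters the definition of the topological disparity, producing a map \(w_s \in \continuous \brk{\overline{\Bset^m}, \manifold{N}}\) that is constantly equal to \(u \brk{a}\) on \(\partial \Bset^m\), whose homotopy class relatively to \(\partial \Bset^m\) defines an element of \(\pi_m \brk{\manifold{N}, u \brk{a}}\), equal to \(\sqb{u, v, B_\sigma \brk{a}}\) when \(s = 1\) and to \(\sqb{u, v, B_\rho \brk{a}}\) when \(s = \lambda\). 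The map \(w_s\) is obtained from \(F\) and \(G\) by the fixed reparametrisation appearing in the definition of the disparity composed with the dilation \(y \mapsto s y\); hence, \(F\) and \(G\) being uniformly continuous, the map \(s \in \intvc{\lambda}{1} \mapsto w_s \in \continuous \brk{\overline{\Bset^m}, \manifold{N}}\) is continuous for the uniform distance. Since two maps from \(\overline{\Bset^m}\) to \(\manifold{N}\) that are uniformly close and both equal to \(u \brk{a}\) on \(\partial \Bset^m\) are homotopic relatively to \(\partial \Bset^m\) -- linearly interpolate and apply the nearest-point retraction \(\Pi_{\manifold{N}}\) -- the resulting map from \(\intvc{\lambda}{1}\) to \(\pi_m \brk{\manifold{N}, u \brk{a}}\) is locally constant, hence constant; evaluating at \(s = 1\) and \(s = \lambda\) then yields \(\sqb{u, v, B_\sigma \brk{a}} = \sqb{u, v, B_\rho \brk{a}}\).

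I expect the only genuinely delicate point to be the normalisation of the coordinates, namely checking that the compatibility of the orientations really permits the simultaneous orientation-preserving identification of \(\Bar{B}_\sigma \brk{a}\) with \(\overline{\Bset^m}\) and of \(\Bar{B}_\rho \brk{a}\) with a concentric ball, so that the representative of \(\sqb{u, v, B_\rho \brk{a}}\) is indeed the one coming from \(\brk{F_\lambda, G_\lambda}\). Once this is in place, the rest is the elementary observation that a radial rescaling is a homotopy.
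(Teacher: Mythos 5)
Your proof is correct and takes essentially the same route as the paper: the paper's (very terse) argument also rests on exhibiting a homotopy between the identification diffeomorphisms for the two balls, which your radial rescaling \(s \mapsto \brk{F_s, G_s}\) makes explicit, with the continuity and the constancy of the homotopy class along this path argued as you do.
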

\begin{proof}
In view of the inclusion \(\Bar{B}_{\rho} \brk{a} \subseteq \Bar{B}_{\sigma}\brk{a}\), there is a homotopy between the diffeomorphism between \(\overline{\Bset^m}\) and \(\Bar{B}_\rho\brk{a}\) and the one between \(\overline{\Bset^m}\) and \(\Bar{B}_\sigma\brk{a}\) that can be used to construct a suitable homotopy.
\end{proof}

\cref{lemma_disparity_change_radius} essentially asserts that we may enlarge the ball on which we consider the topological disparity.
As a converse, we show that can always work with arbitrarily smaller balls up to a suitable homotopy -- which is not permitted by \cref{lemma_disparity_change_radius} as it requires that the maps \( u \) and \( v \) coincide outside of the ball under consideration.

\begin{lemma}
\label{lemma_disparity_modify_radius}
Let \(u, v\in \continuous \brk{\Bar{B}_{\rho} \brk{a}, \manifold{N}}\).
If \(u = v\) on \(\partial B_{\rho} \brk{a}\) and if \( \sigma < \rho\), then there exists \(w \in \continuous \brk{\Bar{B}_{\rho}\brk{a}, \manifold{N}}\) homotopic to \( v \) relatively to \( \partial B_{\rho} \brk{a} \) such that \( w = u \) in \( \Bar{B}_{\rho} \brk{a} \setminus B_\sigma \brk{a}\) and
\begin{align*}
\sqb{u, w, B_\sigma \brk{a}}
 &= \sqb{u, v, B_\rho \brk{a}} &
&\text{in }\pi_m \brk{\manifold{N}, u \brk{a}}\eqpunct{,}
\end{align*}
provided the orientations on \(\Bar{B}_{\rho} \brk{a}\) and \(\Bar{B}_\sigma \brk{a}\) are  compatible with the inclusion.
\end{lemma}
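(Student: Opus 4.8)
The key observation is that it suffices to produce, after reducing to the ball, \emph{any} map $w \in \continuous \brk{\overline{\Bset^m}, \manifold{N}}$ enjoying the two properties that $w = u$ on $\overline{\Bset^m} \setminus B_s$ for the appropriate radius $s \in \intvo{0}{1}$, and that $w$ is homotopic to $v$ relatively to $\partial \Bset^m$. Indeed, given such a $w$, the first property allows us to invoke \cref{lemma_disparity_change_radius} with inner ball $B_s$, outer ball $\Bset^m$, and the compatible orientations, to get $\sqb{u, w, B_s} = \sqb{u, w, \Bset^m}$; the second property gives $\sqb{u, w, \Bset^m} = \sqb{u, v, \Bset^m}$, since a homotopy from $v$ to $w$ relative to $\partial \Bset^m$, inserted in place of $v$ in the defining formula of the topological disparity, produces a homotopy relative to $\partial \Bset^m$ between the two maps representing $\sqb{u, v, \Bset^m}$ and $\sqb{u, w, \Bset^m}$ (the gluing condition on $\set{\abs{x} = 1/2}$ persisting throughout, precisely because all the intermediate maps agree with $u$ on $\partial \Bset^m$). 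Combining these two identities and undoing the reduction yields the announced equality in $\pi_m \brk{\manifold{N}, u \brk{a}}$.

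To construct $w$, I would first use the fixed diffeomorphisms to reduce to $\overline{B_\rho \brk{a}} = \overline{\Bset^m}$, $a = 0$, with $\overline{B_\sigma \brk{a}}$ a concentric ball $\overline{B_s}$, $s \in \intvo{0}{1}$, the orientation compatibility becoming the standard one for concentric balls. I would then apply the bubble-insertion construction of \cref{lemma_bubble_from_2_disks} with $u_0 \defeq u$ and $u_1 \defeq v$, which agree on $\partial \Bset^m$ by hypothesis, and set $w \defeq U \brk{s, \cdot}$; the first required property is then exactly item~\ref{item:a2} of that lemma. For the second property, I would use the path $t \in \intvc{s}{1} \mapsto U \brk{t, \cdot}$: since $U \brk{t, \cdot} = u$ on $\set{\abs{x} \ge t}$ one has $U \brk{t, \cdot} = u$ on $\partial \Bset^m$ for every $t$, and $U \brk{1, \cdot} = v$, so this path is a homotopy of $w$ to $v$ relative to $\partial \Bset^m$. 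Because $u$ and $v$ are here only continuous rather than Sobolev, I would not quote \cref{lemma_bubble_from_2_disks} verbatim but rather note that both its construction and the $\continuous^0$-continuity of $t \mapsto U \brk{t, \cdot}$ carry over to merely continuous maps.

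The construction and the algebra are light, so I expect the only delicate points to be of a bookkeeping nature: keeping track of the diffeomorphisms and orientations in the reduction and in the application of \cref{lemma_disparity_change_radius}, and checking the continuity of $t \mapsto U \brk{t, \cdot}$ at the endpoint $t = 1$. There the ``inversion shell'' $\set{t^2 \le \abs{x} \le t}$ does not shrink to a point but degenerates onto $\partial \Bset^m$, and one must use precisely the hypothesis $u = v$ on $\partial \Bset^m$ to conclude that $U \brk{t, \cdot} \to v$ uniformly as $t \to 1$; the continuity on $\intvo{s}{1}$ is a routine consequence of the uniform continuity of $u$ and $v$. Everything else follows from \cref{lemma_disparity_change_radius} and the homotopy invariance of the topological disparity discussed above.
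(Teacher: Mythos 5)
Your proof is correct and essentially matches the paper's: the paper writes down the same pull-in map $w$ (with a generic nonincreasing reparametrization $\lambda$ on an annulus $\bar B_\sigma \setminus B_{\sigma'}$ in place of your specific inversion $x \mapsto s^2 x/\abs{x}^2$ borrowed from \cref{lemma_bubble_from_2_disks}) and simply asserts that the required properties ``can readily be checked''. Your careful treatment of the $\continuous^0$ continuity of $t \mapsto U\brk{t,\cdot}$ at $t = 1$, and your explicit reduction of the disparity identity to \cref{lemma_disparity_change_radius} together with homotopy invariance of the topological disparity, fill in exactly what the paper leaves to the reader.
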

\begin{proof}
Choose \( 0 < \sigma' < \sigma \), and define 
\[
	w\brk{x} = 
	\begin{cases}
		u \brk{x} & \text{if \( x \in \Bar{B}_{\rho} \brk{a} \setminus B_\sigma \brk{a} \),} \\
		u \brk{\lambda\brk{\abs{x}}x} & \text{if \( x \in \Bar{B}_{\sigma} \brk{a} \setminus B_{\sigma'} \brk{a} \),}\\
		v \brk{\rho x/\sigma'} & \text{if \( x \in \Bar{B}_{\sigma'} \brk{a} \),}
	\end{cases}
\]
where \( \lambda \colon \intvc{\sigma'}{\sigma} \to \Rset \) is a nonincreasing function such that \( \lambda\brk{\sigma'} = \rho/\sigma' \) and \( \lambda\brk{\sigma} = 1 \); the map \( w \) can readily be checked to satisfy the required properties.
\end{proof}

We now turn to a series of basic but useful properties related to moving, adding, or merging topological disparities.
For this purpose, we recall the definition of the action of a path on a homotopy class.

\begin{definition}
\label{definition_action_path_homotopy}
	If \( f \colon \overline{\Bset^m}\to \manifold{N} \) is a continuous map constantly equal to \( b \) on \( \partial\Bset^{m} \) and \( \zeta \colon \intvc{0}{1} \to \manifold{N} \) is a continuous path such that \( \zeta\brk{1} = b \), then the class \( \zeta_*\sqb{f} \in \pi_{m}\brk{\manifold{N}, \zeta \brk{0}} \) is the homotopy class of the map
	\[
	\overline{\Bset^m} \ni x \mapsto 
	\begin{cases}
		f\brk{2x} & \text{if \( \abs{x} \leq 1/2 \),}\\
		\zeta\brk{2-2\abs{x}} & \text{if \( \abs{x} \geq 1/2 \).}
	\end{cases}
	\]
\end{definition}
This corresponds to adding the path \( \zeta \) radially around the map \( f \).

\begin{lemma}[Adding topological disparities]
\label{lemma_superpose_disparities}
If \(u, v, w \in \continuous \brk{\Bar{B}_\rho\brk{a}, \manifold{N}}\), if  \(u = v = w \) on \(\partial
B_{\rho} \brk{a}\), if \(\gamma \in \continuous \brk{\intvc{0}{1}, \Bar{B}_\rho\brk{a}}\)
with \(\gamma \brk{1} = a\) and \(\gamma \brk{0} \in \partial B_\rho \brk{a}\),
then
\begin{align*}
 \brk{u \compose \gamma}_* \sqb{u, w, B_\rho \brk{a}}
 &= \brk{u \compose \gamma}_* \sqb{u, v, B_\rho \brk{a}}
 + \brk{v \compose \gamma}_* \sqb{v, w, B_\rho \brk{a}}
 &\text{in }\pi_m \brk{\manifold{N}, u \brk{\gamma\brk{0}}} \eqpunct{.}
\end{align*}
\end{lemma}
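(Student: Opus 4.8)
The plan is to reduce everything to the unit ball and then to exhibit an explicit homotopy relative to $\partial\Bset^m$ between a representative of the left-hand side and a representative of the right-hand side, obtained by inserting two canceling ``radial shells'' and regrouping.

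First I would use the fixed diffeomorphism to replace $\Bar{B}_\rho\brk{a}$ by $\overline{\Bset^m}$, with $a$ corresponding to $0$, so that $\gamma$ becomes a path in $\overline{\Bset^m}$ from $\gamma\brk{0}\in\partial\Bset^m$ to $0$. Since $u=v=w$ on $\partial B_\rho\brk{a}$, the point $b\defeq u\brk{\gamma\brk{0}}=v\brk{\gamma\brk{0}}=w\brk{\gamma\brk{0}}$ is well defined, and $\alpha\defeq u\compose\gamma$, $\beta\defeq v\compose\gamma$ are paths in $\manifold N$ from $b$ to $u\brk{0}$ and from $b$ to $v\brk{0}$ respectively; this is exactly why both $\brk{u\compose\gamma}_*$ and $\brk{v\compose\gamma}_*$ land in $\pi_m\brk{\manifold N,b}$. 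Unwinding the definitions of the topological disparity and of the action of a path, one checks that $\brk{u\compose\gamma}_*\sqb{u,w,B_\rho\brk{a}}$ is represented by the map $G_{uw}\colon\overline{\Bset^m}\to\manifold N$ which, read along the radius from $\partial\Bset^m$ inwards, consists of a spherical collar sweeping the path $\alpha$ from $b$ to $u\brk{0}$, then a spherical shell carrying a radially reparametrized copy of $u$ run from its center value $u\brk{0}$ out to its boundary values, then a ball carrying a radially reparametrized copy of $w$; consecutive pieces match because the shells and balls are glued along the common boundary map of $u$, $v$, $w$ and the collars along the relevant path endpoints. In the same way $\brk{u\compose\gamma}_*\sqb{u,v,B_\rho\brk{a}}$ is represented by a map $G_{uv}$ made of the $\alpha$-collar, the $u$-shell, and a ball carrying $v$, and $\brk{v\compose\gamma}_*\sqb{v,w,B_\rho\brk{a}}$ by the analogous map $G_{vw}$ made of a $\beta$-collar, a shell carrying $v$, and a ball carrying $w$.

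I would then deform $G_{uw}$ relatively to $\partial\Bset^m$ in two steps. Between the $u$-shell and the $w$-ball, whose common value at the interface is the boundary map of $u$, insert a shell carrying $v$ followed by its radial reverse; the concatenation of a shell with its reverse is homotopic, relatively to its two bounding spheres, to the trivial shell, so this does not change the homotopy class. The two inserted $v$-shells now meet along a sphere of constant value $v\brk{0}$; there, insert the reverse of $\beta$ followed by $\beta$, which is again null since $\bar\beta$ concatenated with $\beta$ is a contractible loop at $v\brk{0}$. The sphere separating these last two collars has constant value $b$; splitting $\overline{\Bset^m}$ along it presents the resulting map as an outer spherical shell $P$, both of whose bounding spheres carry the constant value $b$, wrapped around a ball that carries exactly (a radial reparametrization of) $G_{vw}$. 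By the standard ``nested'' model of the sum in $\pi_m\brk{\manifold N,b}$, legitimate because $m\ge2$, such a configuration represents $\sqb{\overline P}+\sqb{G_{vw}}$, where $\overline P$ is the disk map obtained from $P$ by collapsing its inner bounding sphere to a point; and by construction $\overline P$ coincides with $G_{uv}$ except that a ball carrying $v$ has been replaced by a shell carrying $v$ followed by a $\bar\beta$-collar and a central ball of constant value $b$. A last homotopy -- shrinking this appended collar and ball toward the center, where the remaining germ factors through an interval and is therefore null-homotopic relatively to its boundary -- identifies $\sqb{\overline P}$ with $\brk{u\compose\gamma}_*\sqb{u,v,B_\rho\brk{a}}$. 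Putting the pieces together gives $\sqb{G_{uw}}=\sqb{G_{uv}}+\sqb{G_{vw}}$, which is the claimed identity in $\pi_m\brk{\manifold N,u\brk{\gamma\brk{0}}}$.

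\textbf{Main obstacle.} The whole content of the argument is the bookkeeping: tracking the several radial reversals and, above all, the basepoints, so as to identify correctly which sphere to collapse in the regrouping step and to ensure that the regrouping reproduces the group law of $\pi_m\brk{\manifold N,b}$ rather than a twisted variant of it -- this is where $m\ge2$, which makes $\pi_m$ abelian and the several models of the sum interchangeable, is used. The elementary homotopy facts invoked along the way (a shell concatenated with its radial reverse is trivial relative to its boundary, a collar concatenated with its reverse is trivial, and a map that is constant on $\partial\Bset^m$ and factors through an interval represents $0$ in $\pi_m$), together with the explicit descriptions of $G_{uw}$, $G_{uv}$, and $G_{vw}$, form the routine part; alternatively one could package the three deformations above into a single explicit map on $\intvc{0}{1}\times\overline{\Bset^m}$.
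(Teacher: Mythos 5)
Your proof is correct and essentially spells out, in full detail, the paper's one-line remark that the identity "follows from the definition of the sum of homotopy classes and a homotopy argument." The insertion-and-regrouping strategy you carry out — canceling $v$-shells, canceling $\bar\beta/\beta$ collars, splitting along the constant-$b$ sphere, and invoking the nested model of the sum in $\pi_m$ for $m\ge 2$ — is exactly the homotopy argument the authors have in mind, and the bookkeeping you flag (basepoints matching because $u=v=w$ on $\partial B_\rho(a)$, and $m\ge2$ making the regrouping well defined) is the right thing to watch.
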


In the statement of the above lemma, the role of the assumption that \( \gamma \brk{0} \in \partial B_\rho \brk{a} \) is to ensure that the boundary values of the different homotopy classes coincide.

If \(u, v, w \in \continuous \brk{\manifold{M}, \manifold{N}}\), the conclusion still holds when \(u \compose \gamma = v \compose \gamma\) on \(\gamma^{-1} \brk{\manifold{M} \setminus B_\rho\brk{a}}\).

\begin{proof}[Proof of \cref{lemma_superpose_disparities}]
This follows from the definition of the sum of homotopy classes and a homotopy argument.
\end{proof}

\begin{lemma}[Reciprocal topological disparities]
	\label{lemma_disparity_reciprocal}
	Given \(u, v \in \continuous \brk{\Bar{B}_\rho\brk{a}, \manifold{N}}\),
	if \(u = v\) on \(\partial
	B_{\rho} \brk{a}\), and \(\gamma \in \continuous \brk{\intvc{0}{1}, \Bar{B}_\rho\brk{a}}\)
	with \(\gamma \brk{1} = a\) and \(\gamma \brk{0} \in \partial B_\rho \brk{a}\), then
	\begin{align*}
		\brk{u \compose \gamma}_* \sqb{u, v, B_\rho \brk{a}}
		&= - \brk{v \compose \gamma}_* \sqb{v, u, B_\rho \brk{a}}
		&\text{in }\pi_m \brk{\manifold{N}, u \brk{\gamma \brk{0}}} \eqpunct{.}
	\end{align*}
\end{lemma}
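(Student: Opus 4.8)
The plan is to deduce this from \cref{lemma_superpose_disparities} by choosing the middle map to be $u$ itself. Concretely, I would apply \cref{lemma_superpose_disparities} with the three maps taken as $u$, $v$, $u$ (in that order, playing the roles of ``$u$'', ``$v$'', ``$w$'' in that lemma): all three agree on $\partial B_\rho\brk{a}$ by hypothesis, and $\gamma$ satisfies the required endpoint conditions. The conclusion of \cref{lemma_superpose_disparities} then reads
\[
 \brk{u \compose \gamma}_* \sqb{u, u, B_\rho \brk{a}}
 = \brk{u \compose \gamma}_* \sqb{u, v, B_\rho \brk{a}}
 + \brk{v \compose \gamma}_* \sqb{v, u, B_\rho \brk{a}}
 \quad\text{in }\pi_m \brk{\manifold{N}, u \brk{\gamma \brk{0}}} \eqpunct{.}
\]
So it remains to observe that $\sqb{u, u, B_\rho\brk{a}} = 0$ in $\pi_m\brk{\manifold{N}, u\brk{a}}$, since then the left-hand side is the trivial class (the action of a path sends $0$ to $0$), and rearranging gives exactly the asserted identity.

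The step $\sqb{u, u, B_\rho\brk{a}} = 0$ is where the small verification lies. Unwinding the definition of the topological disparity (after transporting to $\overline{\Bset^m}$ via the fixed diffeomorphism), the representative $w$ is the map equal to $u\brk{4\brk{1-\abs{x}}x}$ on $\set{\abs{x}\ge 1/2}$ and $u\brk{2x}$ on $\set{\abs{x}\le 1/2}$, which one recognizes as the standard way of building a sphere-valued map by pinching $\partial\Bset^m$ to the point $u\brk{0}$; but with \emph{both} hemispheres given by the \emph{same} map $u$, this is homotopic relatively to $\partial\Bset^m$ to the constant $u\brk{0}$. One clean way to see it: the map $w$ factors through the map $\overline{\Bset^m}\to\overline{\Bset^m}$ that collapses the annulus $\set{\abs{x}\ge 1/2}$ to the sphere and then applies $u$ after a radial rescaling, and the two ``copies'' of $u$ cancel because the gluing reverses orientation on one hemisphere — equivalently, one contracts $\overline{\Bset^m}$ to its center while keeping $\partial\Bset^m$ fixed, which is possible precisely because $w$ is already defined on the full ball (not just the sphere) and restricts to $u\brk{0}$ on the boundary. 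Since $w\colon\overline{\Bset^m}\to\manifold{N}$ extends continuously to the ball with boundary value the constant $u\brk{0}$, its class in $\pi_m\brk{\manifold{N}, u\brk{0}}$ is trivial.

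I expect the main (and only) obstacle to be making the bookkeeping of basepoints and path-actions precise: one must check that the instance of \cref{lemma_superpose_disparities} is legitimately applicable (all boundary values equal, $\gamma$ admissible), and that the identity is being read in the correct group $\pi_m\brk{\manifold{N}, u\brk{\gamma\brk{0}}}$ throughout, so that subtracting $\brk{u\compose\gamma}_*\sqb{u, v, B_\rho\brk{a}}$ from both sides is meaningful. None of this is substantive once \cref{lemma_superpose_disparities} is in hand; the content is entirely that $\sqb{u,u,B_\rho\brk{a}}=0$, which is immediate from the definition. Hence the proof is short: invoke \cref{lemma_superpose_disparities} with $w$ replaced by $u$, note $\sqb{u,u,B_\rho\brk{a}}=0$, and rearrange.
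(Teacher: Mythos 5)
Your proposal is correct and is exactly the paper's argument: apply \cref{lemma_superpose_disparities} with the third map chosen to be \(u\), use \(\sqb{u,u,B_\rho\brk{a}}=0\), and rearrange. The extra discussion of why \(\sqb{u,u,B_\rho\brk{a}}=0\) and of basepoint bookkeeping is sound but elaborates on steps the paper leaves implicit.
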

\begin{proof}
	We apply \cref{lemma_superpose_disparities} above with \( w = u \).
\end{proof}

\begin{lemma}[Moving topological disparities]
\label{lemma_moving_disparities}
Given \(\gamma \in \continuous \brk{\intvc{0}{1}, \manifold{M}}\) and \(u, v \in \continuous \brk{\manifold{M}, \manifold{N}}\),
if \(u = v \) on \( \manifold{M} \setminus B_{\rho} \brk{\gamma\brk{0}}\),
then there exists \( w  \in \continuous \brk{\manifold{M}, \manifold{N}} \) such that \(w = u \) on \(\manifold{M} \setminus B_{\rho} \brk{\gamma \brk{1}}\), \( w \) is homotopic to \( v \),
and
\begin{align*}
 \sqb{u, v, B_\rho\brk{\gamma \brk{0}}}
 &= \brk{u \compose \gamma}_*
 \sqb{u, w, B_\rho\brk{\gamma \brk{1}}}
 &
&\text{in }\pi_m \brk{\manifold{N}, u \brk{\gamma \brk{0}}}\eqpunct{,}
\end{align*}
where the orientation of \(\Bar{B}_{\rho} \brk{\gamma\brk{0}}\) is transported from  \(\Bar{B}_{\rho}\brk{\gamma\brk{1}}\) through \(\gamma\).
\end{lemma}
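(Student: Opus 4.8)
The plan is to reduce to the case where the defect of $v$ with respect to $u$ is localised in an arbitrarily small ball and $\gamma$ is a short straight segment inside a single coordinate chart, and then to transport that small defect across $\gamma$ by an ambient isotopy coming from a compactly supported vector field. For the reductions: applying \cref{lemma_disparity_modify_radius} on $\Bar B_\rho\brk{\gamma\brk{0}}$ (extending the new map by $v$ outside the ball) and then \cref{lemma_disparity_change_radius}, one may replace $v$ by a homotopic map coinciding with $u$ off a ball $B_\sigma\brk{\gamma\brk{0}}$ with $\sigma$ as small as desired, without changing $\sqb{u,v,B_\rho\brk{\gamma\brk{0}}}$; symmetrically, once $w$ is built coinciding with $u$ off a small ball about $\gamma\brk{1}$ contained in $B_\rho\brk{\gamma\brk{1}}$, \cref{lemma_disparity_change_radius} rephrases its disparity on $B_\rho\brk{\gamma\brk{1}}$. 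Next, by the uniform continuity of $\gamma$ and a Lebesgue-number argument, one partitions $0=t_0<\dots<t_K=1$ so finely that each arc $\gamma\brk{\intvc{t_{k-1}}{t_k}}$ lies in a convex coordinate ball $V_k$ with $\overline{V_k}\subseteq B_\rho\brk{\gamma\brk{s}}$ for all $s\in\intvc{t_{k-1}}{t_k}$. Since the action of a path on $\pi_m$ and the transported orientation depend only on the homotopy class of the path relative to its endpoints, and $\brk{u\compose\gamma}_*=\brk{u\compose\gamma\restr{\intvc{t_0}{t_1}}}_*\compose\dots\compose\brk{u\compose\gamma\restr{\intvc{t_{K-1}}{t_K}}}_*$, it suffices to treat one piece $\gamma\restr{\intvc{t_{k-1}}{t_k}}$: applying the statement successively, each time to $u$ and to the map produced at the previous step, and composing the resulting identities reconstitutes the statement for $\gamma$. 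Working inside $V_k$ identified with a Euclidean ball and homotoping the piece rel endpoints to the straight segment, I am reduced to proving: if $\gamma\brk{t}=\brk{1-t}b_0+tb_1$ with $b_0,b_1,\gamma\brk{\intvc{0}{1}}\subseteq V$ and $u=v$ on $\manifold{M}\setminus B_\sigma\brk{b_0}$ with $\Bar B_\sigma\brk{b_0}\subseteq V$, then there is $w$ with $w=u$ off $B_\rho\brk{b_1}$, $w$ homotopic to $v$, and $\sqb{u,v,B_\rho\brk{b_0}}=\brk{u\compose\gamma}_*\sqb{u,w,B_\rho\brk{b_1}}$.

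For the construction, set $\tau\defeq b_1-b_0$ and $N\defeq\bigcup_{s\in\intvc{0}{1}}\Bar B_\sigma\brk{b_0+s\tau}$, which lies in $V\cap\bigcap_{s}B_\rho\brk{\gamma\brk{s}}$ once $\sigma$ is small. Choose $\beta\in\continuous^\infty_c\brk{V,\intvc{0}{1}}$ equal to $1$ on a neighbourhood of $N$ with $\operatorname{supp}\beta$ compactly contained in $V$, and let $\brk{\Phi_s}_{s\in\Rset}$ be the flow of the vector field $x\mapsto\beta\brk{x}\tau$, extended by the identity to a diffeomorphism of $\manifold{M}$. Then $\Phi_0=\id$, $\Phi_s$ is the identity off $\operatorname{supp}\beta$, and $\Phi_s\brk{x}=x+s\tau$ for $x\in N$, so $\Phi_s\brk{\Bar B_\sigma\brk{b_0}}=\Bar B_\sigma\brk{\gamma\brk{s}}$ and $\Phi_s\brk{b_0}=\gamma\brk{s}$. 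Setting $H_s\defeq v\compose\Phi_s^{-1}$ and $w\defeq H_1$, the map $s\mapsto H_s$ is continuous, $H_0=v$, and $H_s=u$ off $\operatorname{supp}\beta\cup B_\sigma\brk{b_0}\subseteq B_\rho\brk{\gamma\brk{s}}$; this immediately gives $w=u$ off $B_\rho\brk{\gamma\brk{1}}$ and $w$ homotopic to $v$.

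It remains to establish the disparity identity, which is where the real work lies. For each $s$ the class $\sqb{u,H_s,B_\rho\brk{\gamma\brk{s}}}\in\pi_m\brk{\manifold{N},u\brk{\gamma\brk{s}}}$ is well defined, since $H_s=u$ on $\partial B_\rho\brk{\gamma\brk{s}}$. Choosing the diffeomorphisms $\overline{\Bset^m}\to\Bar B_\rho\brk{\gamma\brk{s}}$ entering the disparity construction to depend continuously on $s$ by translation, sending $0$ to $\gamma\brk{s}$ with orientation carried along $\gamma$ by $\Phi_s$ — precisely the orientation convention in the statement — the resulting representatives $D_s\colon\overline{\Bset^m}\to\manifold{N}$ of $\sqb{u,H_s,B_\rho\brk{\gamma\brk{s}}}$ depend continuously on $s$ and satisfy $D_s\restr{\partial\Bset^m}\equiv u\brk{\gamma\brk{s}}$. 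Thus $\brk{D_s}_{s\in\intvc{0}{1}}$ is a homotopy whose constant boundary value runs along the path $u\compose\gamma$, and from the description of the action of a path on $\pi_m$ recalled just before \cref{lemma_superpose_disparities}, together with a homotopy argument inserting the collar $u\compose\gamma$, one gets $\sqb{D_0}=\brk{u\compose\gamma}_*\sqb{D_1}$, that is, $\sqb{u,v,B_\rho\brk{b_0}}=\brk{u\compose\gamma}_*\sqb{u,w,B_\rho\brk{b_1}}$; combined with the reductions this is the claim. The hard part is exactly this last step — showing that a continuous family of disparity classes over the path $u\compose\gamma$ relates its endpoints by the corresponding $\pi_1$-action on $\pi_m$ — which requires unwinding the definition of that action through an explicit homotopy rel $\partial\Bset^m$; the remaining ingredients (the two reductions, the flow $\Phi_s$, and the continuity assertions) are routine.
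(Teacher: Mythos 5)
Your proof is correct and the core idea is the same as the paper's -- drag the defect along $\gamma$ by a continuous family of ambient motions, obtain a continuous family of disparity classes, and conclude by the standard basepoint-change fact -- but the implementation is genuinely different in a way worth noting. The paper works globally: using parallel transport and the exponential map it produces a single continuous family of homeomorphisms $K\brk{t,\cdot}\colon\overline{\Bset^m}\to\Bar{B}_\rho\brk{\gamma\brk{t}}$ valid along all of $\gamma$, applies the homotopy extension property to get a family $W\brk{t,\cdot}$ with the correct moving boundary values $u\compose K\brk{t,\cdot}\restr{\partial\Bset^m}$, glues with $u$ to define $V\brk{t,\cdot}$, and sets $w\defeq V\brk{1,\cdot}$. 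This avoids both of your reductions: there is no need to shrink the defect to a small ball, and no subdivision of $\gamma$ into chart-sized pieces. Your route instead pays for the use of a compactly supported Euclidean flow $\Phi_s$ with the two reductions (shrinking the defect via \cref{lemma_disparity_modify_radius} and \cref{lemma_disparity_change_radius}, and a Lebesgue-number subdivision of $\gamma$), which is why it is longer; in exchange, you avoid the homotopy extension property entirely (since with the defect shrunk, $H_s=v\compose\Phi_s^{-1}$ automatically agrees with $u$ outside $\operatorname{supp}\beta\subseteq B_\rho\brk{\gamma\brk{s}}$), and you make the final ``continuous family of representatives $\Rightarrow$ $\sqb{D_0}=\brk{u\compose\gamma}_*\sqb{D_1}$'' step explicit, whereas the paper compresses this verification into ``$w\defeq V\brk{1,\cdot}$ satisfies the conclusion''. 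One small point worth making explicit in your write-up: when you iterate over the pieces of $\gamma$, the output $w_k$ of the $k$-th step equals $u$ only off $\operatorname{supp}\beta_k$, not off a small ball; so you must re-apply the small-defect reduction at the start of each piece before invoking the flow construction again. With that caveat made explicit, the argument goes through.
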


In the above lemma, if \( \gamma \) is smooth, then the transport of the orientation is defined via the parallel transport of a basis along \( \gamma \).
The definition is then extended to continuous curves by a standard approximation procedure.

In the particular case where \(\manifold{M}\) is not orientable, \(\gamma\brk{0} = \gamma\brk{1} = a\), and \(\gamma\) reverses the orientation, one should understand that opposite orientations are taken on both sides, and thus 
\begin{align*}
  \sqb{u, v, B_\rho\brk{a}}
  &= - \sqb{u, w, B_\rho\brk{a}}
  &\text{in }\pi_m \brk{\manifold{N}, u \brk{a}}
  \eqpunct{.}
\end{align*}

This sort of transport can be observed (without the orientation part) in \cite{Olum_1950}. In \cite{Baues_1977}*{Ch.\ 4}, a similar work is done with the universal coverings of \(\manifold{M}\) and \(\manifold{N}\): in other words, one writes \(\gamma = \pi_{\manifold{M}} \compose \lifting{\gamma}\), and then the orientation is managed through the orientation induced from \(\lifting{\gamma} \brk{0}\), since the universal covering space \(\lifting{\manifold{M}}\) is simply connected and thus orientable.

\begin{proof}[Proof of \cref{lemma_moving_disparities}]
By parallel transport and properties of the exponential map, there is a mapping 
\(K \in \continuous \brk{\intvc{0}{1}\times \overline{\Bset^m}, \manifold{M}}\) such that 
\(K \brk{\cdot, 0} = \gamma\) and for every \(t \in \intvc{0}{1}\), \(K \brk{t, \cdot}\) is a homeomorphism to \(\Bar{B}_{\rho}\brk{\gamma\brk{t}}\).
By the homotopy extension property, there exists a map \(W \in \continuous\brk{\intvc{0}{1}\times \overline{\Bset^m}, \manifold{N}}\)
such that 
\(W \brk{0, \cdot} = v \compose K \brk{0, \cdot}\) and for every \(t \in \intvc{0}{1}\)
\(
 W \brk{t, \cdot}\restr{\partial \Bset^m}
 = u \compose K \brk{t, \cdot}\restr{\partial \Bset^m}
\).
We define now 
\[
 V \brk{t, x} = 
 \begin{cases}
  W \brk{t, K \brk{t, \cdot}^{-1}\brk{x}} &\text{if \(x \in B_{\rho}\brk{\gamma \brk{t}}\),}\\
  u \brk{x} & \text{otherwise.}
 \end{cases}
\]
By construction, we have \(V \brk{0, \cdot} = v\) and \(w \defeq V \brk{1, \cdot}\) satisfies the conclusion.
\end{proof}

\begin{lemma}[Homotopy variations of disparities]
\label{lemma_discrepancy_homotopy_transport}
Let \(u_0, v_0 \in\continuous \brk{\manifold{M}, \manifold{N}}\).
If \(u_0 = v_0\) in \(\manifold{M}\setminus \bigcup_{i = 1}^I B_{\rho}\brk{a_i}\)
 and if \(u_1 \in \continuous \brk{\manifold{M}, \manifold{N}}\) is homotopic to \(u_0\), then there exist \(v_1 \in \continuous \brk{\manifold{M}, \manifold{N}}\) homotopic to \(v_0\) such that \(u_1 = v_1\) in \(\manifold{M} \setminus \bigcup_{i = 1}^I B_\rho \brk{a_i}\), and mappings \(\zeta_i \in \continuous \brk{\intvc{0}{1}, \manifold{N}}\) such that \(\zeta_i \brk{0} = u_0 \brk{a_i}\), \(\zeta_i \brk{1} = u_1 \brk{a_i}\), and
\begin{align*}
 \sqb{u_0, v_0, B_\rho \brk{a_i}}
 &= \brk{\zeta_i}_* \sqb{u_1, v_1, B_\rho \brk{a_i}}&\text{in }\pi_m \brk{\manifold{N}, u_0 \brk{a_i}} \eqpunct{.}
\end{align*}
\end{lemma}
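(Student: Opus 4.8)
The plan is to transport $v_0$ along a homotopy from $u_0$ to $u_1$ while leaving it frozen outside the balls, and then to recognise the effect of this transport on each disparity as the action of the corresponding trace path.

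First I would fix a homotopy $H \in \continuous\brk{\intvc{0}{1}\times\manifold{M},\manifold{N}}$ with $H\brk{0,\cdot} = u_0$ and $H\brk{1,\cdot} = u_1$. Since $u_0 = v_0$ on the closed set $\manifold{M}\setminus\bigcup_{i=1}^I B_\rho\brk{a_i}$, prescribing $v_0$ on $\set{0}\times\manifold{M}$ and $H$ on $\intvc{0}{1}\times\brk{\manifold{M}\setminus\bigcup_{i=1}^I B_\rho\brk{a_i}}$ defines a continuous map on a closed subset of $\intvc{0}{1}\times\manifold{M}$. By the homotopy extension property --- equivalently, by extending $v_0$ over each closed ball $\Bar{B}_\rho\brk{a_i}$ so that it matches $H$ on the lateral boundary $\intvc{0}{1}\times\partial B_\rho\brk{a_i}$ --- we obtain $G\in\continuous\brk{\intvc{0}{1}\times\manifold{M},\manifold{N}}$ with $G\brk{0,\cdot} = v_0$ and $G = H$ on $\intvc{0}{1}\times\brk{\manifold{M}\setminus\bigcup_{i=1}^I B_\rho\brk{a_i}}$. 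I then set $v_1 \defeq G\brk{1,\cdot}$ and $\zeta_i\brk{t}\defeq H\brk{t,a_i}$, so that $v_1$ is homotopic to $v_0$ through $G$, that $v_1 = u_1$ on $\manifold{M}\setminus\bigcup_{i=1}^I B_\rho\brk{a_i}$, and that $\zeta_i$ is a path in $\manifold{N}$ from $u_0\brk{a_i}$ to $u_1\brk{a_i}$.

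To prove the disparity identity, fix $i \in \set{1,\dotsc,I}$ and let $\iota\colon\overline{\Bset^m}\to\Bar{B}_\rho\brk{a_i}$ be the inverse of the diffeomorphism fixed in the definition of the topological disparity on $B_\rho\brk{a_i}$, so that $\iota\brk{0} = a_i$. Imitating the construction of the representative of a topological disparity, but now letting it depend on time, I would consider
\[
 W\brk{t,x} \defeq
 \begin{cases}
  G\brk{t,\iota\brk{2x}} & \text{if \(\abs{x}\le 1/2\),}\\
  H\brk{t,\iota\brk{4\brk{1-\abs{x}}x}} & \text{if \(\abs{x}\ge 1/2\),}
 \end{cases}
\]
for $\brk{t,x}\in\intvc{0}{1}\times\overline{\Bset^m}$. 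This map is well defined and continuous: when $\abs{x}=1/2$ one has $\iota\brk{2x}\in\partial B_\rho\brk{a_i}$, where $G$ and $H$ agree; and $W\brk{t,x} = H\brk{t,a_i} = \zeta_i\brk{t}$ whenever $\abs{x}=1$, so that $W\brk{t,\cdot}$ is constant on $\partial\Bset^m$ for every $t$. By inspection of the formulas, $W\brk{0,\cdot}$ is the map whose relative homotopy class defines $\sqb{u_0,v_0,B_\rho\brk{a_i}} \in \pi_m\brk{\manifold{N},u_0\brk{a_i}}$, while $W\brk{1,\cdot}$ is the map whose relative homotopy class defines $\sqb{u_1,v_1,B_\rho\brk{a_i}} \in \pi_m\brk{\manifold{N},u_1\brk{a_i}}$. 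Since $W$ is a homotopy between these two maps whose restriction to $\partial\Bset^m$ at time $t$ is the constant $\zeta_i\brk{t}$, the definition of the action of a path on a homotopy class recalled just before \cref{lemma_superpose_disparities}, together with a standard change-of-basepoint argument, yields $\sqb{u_0,v_0,B_\rho\brk{a_i}} = \brk{\zeta_i}_*\sqb{u_1,v_1,B_\rho\brk{a_i}}$ in $\pi_m\brk{\manifold{N},u_0\brk{a_i}}$, as required.

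The conceptual content is modest; the points demanding care are the verification that $W\brk{0,\cdot}$ and $W\brk{1,\cdot}$ are genuinely the representatives used to define the two disparities --- with the correct basepoints $u_0\brk{a_i}$ and $u_1\brk{a_i}$ respectively --- and the fact that all the balls can be handled simultaneously when applying the homotopy extension property, for which one uses that the inclusion in $\manifold{M}$ of the complement of the closed balls is a cofibration, or simply extends $v_0$ one ball at a time.
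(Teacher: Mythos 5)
Your proof is correct and follows essentially the same route as the paper's: fix a homotopy from \(u_0\) to \(u_1\), use the homotopy extension property to propagate \(v_0\) alongside it while agreeing with the homotopy outside the balls, and read off \(v_1\) and \(\zeta_i\) from the resulting data. The paper stops after defining these objects and leaves the disparity identity implicit; your explicit homotopy \(W\) and the change-of-basepoint argument simply spell out that last step, which is a welcome clarification rather than a departure.
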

\begin{proof}
By assumption, there is some \(U \in \continuous \brk{\intvc{0}{1} \times \manifold{M}, \manifold{N}}\) such that \(U\brk{0, \cdot} = u_0\) and \(U \brk{1, \cdot} = u_1\). Since \(U \brk{0, \cdot} = v_0\) in \(\manifold{M}\setminus \bigcup_{i = 1}^I B_{\rho}\brk{a_i}\), applying the homotopy extension property to the map \( v_{0} \) and the homotopy \[ U\restr{\intvc{0}{1} \times \manifold{M}\setminus \bigcup_{i = 1}^I B_{\rho}\brk{a_i}} \eqpunct,\] we find a map \(V\in \continuous \brk{\manifold{M}, \manifold{N}}\) such that \(V = U\) on \(\intvc{0}{1}\times \manifold{M}\setminus \bigcup_{i = 1}^IB_\rho \brk{a_i}\) and \(V \brk{0, \cdot} = v_0\).
We let then \(v_1\defeq V \brk{1, \cdot}\) and \(\zeta_i \defeq U \brk{\cdot, a_i} \).
\end{proof}

\begin{lemma}[Merging topological disparities]
\label{label_merging}
Given \(\gamma_i \in \continuous \brk{\intvc{0}{1}, \manifold{M}}\), we set \(\gamma_i \brk{0} = a_0 \) and \(\gamma_i \brk{1} = a_i\).
Given \( u, v \in \continuous \brk{\manifold{M}, \manifold{N}} \),
if \(u = v\) on \(\manifold{M}\setminus \bigcup_{i = 1}^{I} B_{\rho} \brk{a_{i}}\),
and if the balls \(\brk{B_\rho\brk{a_i}}_{1 \le i \le I}\) are disjoint,
then there exists \(w \in \continuous \brk{\manifold{M}, \manifold{N}}\) such that \( w = u \) on \(\manifold{M} \setminus B_{\rho}\brk{a_{0}}\),
\(w\) is homotopic to \(v\), and
\begin{align*}
 \sqb{u, w, B_\rho\brk{a_0}}
& = \sum_{i = 1}^I \brk{u \compose \gamma_i}_*
 \sqb{u, v, B_{\rho}\brk{a_i}}&
 &\text{in }\pi_m \brk{\manifold{N}, u \brk{a_0}} \eqpunct{,}
\end{align*}
where the orientation of \(\Bar{B}_{\rho} \brk{a_i}\) corresponds to the one transported from  \(\Bar{B}_{\rho}\brk{a_0}\) through \(\gamma_i\).
\end{lemma}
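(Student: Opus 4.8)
The strategy is to gather the $I$ ``bubbles'' of $v$ --- the maps agreeing with $v$ on one ball $\Bar{B}_\rho\brk{a_i}$ and with $u$ elsewhere --- into $I$ pairwise disjoint small balls inside $B_\rho\brk{a_0}$, by relocating them one at a time along the curves $\gamma_i$ by means of \cref{lemma_moving_disparities}, patching the resulting maps together, and then summing the transported disparities by iterating \cref{lemma_superpose_disparities}. (We read the displayed conclusion with $a = a_0$.)

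First I would set things up. Invoking \cref{lemma_disparity_change_radius}, which lets us replace $\rho$ by any smaller radius without affecting any of the disparities involved or the conclusion, I may assume that $\Bar{B}_\rho\brk{a_0}, \Bar{B}_\rho\brk{a_1}, \dotsc, \Bar{B}_\rho\brk{a_I}$ are pairwise disjoint. I choose pairwise distinct points $b_1, \dotsc, b_I \in B_\rho\brk{a_0} \setminus \set{a_0}$, a radius $\rho' \ll \rho$ for which the balls $\Bar{B}_{\rho'}\brk{b_i}$ are pairwise disjoint and contained in $B_\rho\brk{a_0}$, and paths $\theta_i$ from $a_0$ to $b_i$ inside $B_\rho\brk{a_0}$. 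For each $i$, I pick a path $\delta_i$ from $a_i$ to $b_i$ homotopic, relatively to its endpoints, to the concatenation of the reversal of $\gamma_i$ with $\theta_i$; since $\dim \manifold{M} = m \ge 2$, I may further assume, after a homotopy relatively to the endpoints (which alters neither $\brk{u \compose \delta_i}_*$ nor the parallel transport of orientations along $\delta_i$), that a tube $\Omega_i$ of radius $\rho'$ around $\delta_i$ avoids $\Bar{B}_\rho\brk{a_j}$ for $j \ne i$, stays inside $B_\rho\brk{a_0}$ once it has entered it, and avoids $\Bar{B}_{\rho'}\brk{b_l}$ for $l \ne i$.

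Next I would relocate the bubbles successively. Set $v^{\brk{0}} \defeq v$, and assume inductively that $v^{\brk{i - 1}} \in \continuous \brk{\manifold{M}, \manifold{N}}$ is homotopic to $v$, agrees with $v$ on $\Bar{B}_\rho\brk{a_i} \cup \dotsb \cup \Bar{B}_\rho\brk{a_I}$, agrees with $u$ outside $\Bar{B}_{\rho'}\brk{b_1} \cup \dotsb \cup \Bar{B}_{\rho'}\brk{b_{i - 1}} \cup B_\rho\brk{a_i} \cup \dotsb \cup B_\rho\brk{a_I}$, and satisfies $\sqb{u, v^{\brk{i - 1}}, B_{\rho'}\brk{b_k}} = \brk{u \compose \delta_k}_*^{-1} \sqb{u, v, B_\rho\brk{a_k}}$ for $k < i$. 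Let $v_i$ be the map equal to $v$ on $\Bar{B}_\rho\brk{a_i}$ and to $u$ elsewhere; using \cref{lemma_disparity_modify_radius} I compress its bubble into $B_{\rho'}\brk{a_i}$, and then \cref{lemma_moving_disparities} applied along $\delta_i$ (with radius $\rho'$) produces a map $w_i$ equal to $u$ outside $B_{\rho'}\brk{b_i}$, homotopic to $v_i$ through a homotopy equal to $u$ outside $\Bar{B}_\rho\brk{a_i} \cup \Omega_i$ --- here I use that, as in its proof, the homotopy of \cref{lemma_moving_disparities} can be taken equal to $u$ off a thin tube around the moving curve --- and with $\sqb{u, w_i, B_{\rho'}\brk{b_i}} = \brk{u \compose \delta_i}_*^{-1} \sqb{u, v, B_\rho\brk{a_i}}$. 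Since $\Bar{B}_\rho\brk{a_i} \cup \Omega_i$ meets none of the balls where $v^{\brk{i - 1}}$ differs from $u$ except $B_\rho\brk{a_i}$, transplanting this homotopy into $v^{\brk{i - 1}}$ --- it leaves everything else untouched --- shows that the map $v^{\brk{i}}$ equal to $w_i$ on $\Bar{B}_\rho\brk{a_i} \cup \Bar{\Omega}_i$ and to $v^{\brk{i - 1}}$ elsewhere is homotopic to $v^{\brk{i - 1}}$, hence to $v$, and $v^{\brk{i}}$ inherits the inductive structure.

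Finally, set $w \defeq v^{\brk{I}}$: it equals $u$ outside $\bigcup_{i = 1}^I B_{\rho'}\brk{b_i} \subseteq B_\rho\brk{a_0}$, is homotopic to $v$, and carries the disparities $\sqb{u, w, B_{\rho'}\brk{b_i}} = \brk{u \compose \delta_i}_*^{-1} \sqb{u, v, B_\rho\brk{a_i}}$. Iterating \cref{lemma_superpose_disparities} along a path in $\Bar{B}_\rho\brk{a_0}$ from $\partial B_\rho\brk{a_0}$ to $a_0$ running successively through $b_1, \dotsc, b_I$ along the $\theta_i$ and peeling off one sub-bubble at each step yields
\[
 \sqb{u, w, B_\rho\brk{a_0}}
 = \sum_{i = 1}^I \brk{u \compose \theta_i}_* \sqb{u, w, B_{\rho'}\brk{b_i}}
 = \sum_{i = 1}^I \brk{u \compose \theta_i}_* \brk{u \compose \delta_i}_*^{-1} \sqb{u, v, B_\rho\brk{a_i}}\eqpunct{,}
\]
and since $\delta_i$ is homotopic relatively to its endpoints to the reversal of $\gamma_i$ followed by $\theta_i$, the composition $\brk{u \compose \theta_i}_* \brk{u \compose \delta_i}_*^{-1}$ equals $\brk{u \compose \gamma_i}_*$; the same cancellation of the paths $\theta_i$ occurs in the orientation transport, which gives the announced identity. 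The step I expect to be the main obstacle is the control of the supports of the homotopies --- making precise that \cref{lemma_moving_disparities} and \cref{lemma_disparity_modify_radius} are realised by homotopies concentrated in a prescribed thin tube, so that the $I$ relocations do not interfere and patch into a single map homotopic to $v$ --- together with the consistent bookkeeping of basepoints and orientation transport along the auxiliary paths $\theta_i$ and $\delta_i$.
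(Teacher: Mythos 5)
Your proof is correct and rests on the same ingredients as the paper's: relocating the disparities one at a time along the curves using \cref{lemma_moving_disparities}, controlling their supports via \cref{lemma_disparity_modify_radius}, and combining them with \cref{lemma_superpose_disparities}. The only organizational difference is that you first gather all the bubbles into pairwise disjoint small balls inside \(B_\rho\brk{a_0}\) and peel them off at the very end, whereas the paper merges them incrementally into a growing accumulated bubble at \(a_0\), invoking \cref{lemma_superpose_disparities} at each step of the induction and tracking the running total. One small slip in your first paragraph: the initial reduction to pairwise-disjoint balls cannot rest on \cref{lemma_disparity_change_radius} alone, since shrinking the source balls \(B_\rho\brk{a_i}\) destroys the hypothesis that \(u = v\) outside their union and so \(\sqb{u, v, B_{\rho''}\brk{a_i}}\) is not even defined; you want \cref{lemma_disparity_modify_radius} (replacing \(v\) by a relatively homotopic map supported in smaller balls) for the \(a_i\)'s, with \cref{lemma_disparity_change_radius} reserved for the target ball \(B_\rho\brk{a_0}\) — you use the right lemma later in the inductive step, so the intent is clear and the argument stands.
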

\begin{proof}
Thanks to \cref{lemma_moving_disparities}, we can assume that \(B_{\rho}\brk{a_0}\) is also disjoint from the balls \(B_\rho\brk{a_1}, \dotsc, B_{\rho}\brk{a_I}\). Since \(m \ge 2\), we can assume, up to a suitable homotopy, that for every \(i, j \in \set{1, \dotsc, I}\) with \(i < j\) and for every \(t \in \intvc{0}{1}\),
\begin{equation}
\label{eq_phooch7Oi9aich9chiaYae3o}
\gamma_i \brk{t} \not \in B_{\rho}\brk{a_j}\eqpunct{.}
\end{equation}
We first define \(w_0 = v\).
For every \(i \in \set{1, \dotsc, I}\), given \(w_{i - 1} \in \continuous \brk{\manifold{M}, \manifold{N}}\) homotopic to \(v\) such that
\(
 w_{i - 1} = u
\) on \(\manifold{M}\setminus \brk{B_\rho\brk{a_0}\cup \bigcup_{j = i}^I B_{\rho}\brk{a_j}} \)
and \(w_{i - 1} = v\) on \(\bigcup_{j = i}^I B_{\rho}\brk{a_j}\), we define
\begin{equation}
\label{eq_heim0Deegh0fai7OhxeeveeS}
 u_i \defeq
 \begin{cases}
  w_{i - 1} &\text{in \(\manifold{M}\setminus B_{\rho} \brk{a_i}\),}\\
  u &\text{in \(B_{\rho} \brk{a_i}\),}
 \end{cases}
\end{equation}
so that
\(u_i = u\)  on \(\manifold{M}\setminus \brk{B_\rho\brk{a_0}\cup \bigcup_{j = i + 1}^I B_{\rho}\brk{a_j}} \) and \(u_i = v\) on \(\bigcup_{j = i + 1}^I B_{\rho}\brk{a_i}\).
We get from \cref{lemma_moving_disparities} applied to the reverse path to \(\gamma_i\) a mapping \(w_{i} \in \continuous \brk{\manifold{M}, \manifold{N}}\) such that
(i) \(w_i = u_i\) on \(\manifold{M}\setminus B_{\rho}\brk{a_0}\) and thus \(
 w_{i} = u
\) on \(\manifold{M}\setminus \brk{B_\rho\brk{a}\cup \bigcup_{j = i + 1}^I B_{\rho}\brk{a_j}} \)
and \(w_{i} = v\) on \(\bigcup_{j = i + 1}^I B_{\rho}\brk{a_j}\),
(ii) \(w_i\) is homotopic to \(w_{i - 1}\) and thus to \(v\),
and (iii)
\begin{align}
\label{eq_ne8oiTh1Ri7ePhohdamahShe}
   \sqb{u_i, w_{i}, B_\rho\brk{a_0}}
   &= \brk{u_i \compose \gamma_i}_*  \sqb{u_i, w_{i - 1}, B_\rho\brk{a_i}}&
   &\text{in } \pi_m \brk{\manifold{N}, u_i \brk{a_i}}
   \eqpunct{.}
\end{align}
Since \(u \compose \gamma_i = u_i \compose \gamma_i\) in \(\gamma_i^{-1}\brk{\manifold{M} \setminus B_\rho\brk{a}}\) in view of \eqref{eq_phooch7Oi9aich9chiaYae3o}, applying \cref{lemma_superpose_disparities}, we have
by \eqref{eq_heim0Deegh0fai7OhxeeveeS} and \eqref{eq_ne8oiTh1Ri7ePhohdamahShe}
\[
\begin{split}
 \brk{u \compose \gamma_i}_*^{-1}  \sqb{u, w_{i}, B_\rho\brk{a_0}}
 &=
 \brk{u \compose \gamma_i}_*^{-1} \sqb{u, u_{i}, B_\rho\brk{a_0}}
 +
 \brk{u_i \compose \gamma_i}_*^{-1}  \sqb{u_i, w_{i}, B_\rho\brk{a_0}}\\
 &= \brk{u \compose \gamma_i}_*^{-1}  \sqb{u, u_{i}, B_\rho\brk{a_0}}
 + \sqb{u_i, w_{i - 1}, B_\rho\brk{a_i}}\\
 & = \brk{u \compose \gamma_i}_*^{-1}  \sqb{u, w_{i - 1}, B_\rho\brk{a_0}}
 + \sqb{u, v, B_\rho\brk{a_i}}\eqpunct{,}
\end{split}
\]
and thus
\[
 \sqb{u, w_{i}, B_\rho\brk{a_{0}}}
 =  \sqb{u, w_{i - 1}, B_\rho\brk{a_{0}}}
 + \brk{u \compose \gamma_i}_* \sqb{u, v, B_\rho\brk{a_i}}\eqpunct{.}
\]
By induction, this implies that
\[
	\sqb{u, w_{I}, B_\rho\brk{a_{0}}}
	=
	\sqb{u, w_{0}, B_\rho\brk{a_{0}}} + \sum_{i = 1}^I \brk{u \compose \gamma_i}_*
	\sqb{u, v, B_{\rho}\brk{a_i}}\eqpunct{.}
\]
We conclude then by letting \( w \defeq w_{I} \), and using the fact that \( u = v \) on \( B_{\rho} \brk{a_{0}} \) as we ensured that \(B_{\rho}\brk{a_0}\) is also disjoint from the balls \(B_\rho\brk{a_1}, \dotsc, B_{\rho}\brk{a_I}\), so that \( \sqb{u, w_{0}, B_\rho\brk{a_{0}}} = \sqb{u, v, B_\rho\brk{a_{0}}} = 0 \).
\end{proof}

\subsection{The topological energy}
\label{section_topological_energy}

Our next aim is to define the energy associated with topological disparities.
For this purpose, we first define the energy of a homotopy class.
We define, for \(\sigma  \in \pi_{m} \brk{\manifold{N}, b}\), the \emph{topological energy}
\begin{equation}
\label{eq_of7vae7eiquooDooh8nohnai}
   \energy^{1, m}_{\mathrm{top}} \brk{\sigma}
   \defeq 
   \inf
   \set[\bigg]{\int_{\Bset^m} \abs{\Deriv f}^m \st f \in \sobolev^{1, m}\brk{\Bset^m, \manifold{N}} \text{ and } f  \in \sigma}\eqpunct{.}
\end{equation}

When \(\manifold{N} = \Sset^m\), the topological energy is related to  \(\deg \colon \pi_{m}\brk{\Sset^m, b} \to \Zset\), defined as the Brouwer degree, through an exact formula:

\begin{proposition}
\label{proposition_Etop_sphere}
	For every \( \sigma \in \pi_{m} \brk{\Sset^{m},b} \),
	\[
		\energy^{1, m}_{\mathrm{top}} \brk{\sigma}
		=
		m^{m/2}\abs{\Sset^{m}} \abs{\deg{\sigma}} \eqpunct{.}
	\]
\end{proposition}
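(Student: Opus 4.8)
The plan is to prove the two inequalities separately. For the lower bound, the key tool is the pointwise inequality $\abs{\jac f} \le m^{-m/2}\abs{\Deriv f}^m$ coming from the arithmetico-geometric inequality applied to the singular values of $\Deriv f$ (equivalently, $\abs{\det \Deriv f} \le (\abs{\Deriv f}^2/m)^{m/2}$ for an $m\times m$ matrix). If $f \in \sobolev^{1,m}\brk{\Bset^m, \Sset^m}$ represents $\sigma$, one first uses the fact that the degree of $f$ (defined on $\Bset^m$ with the boundary datum that collapses $\partial \Bset^m$ to a point, so that $f$ descends to a map $\Sset^m \to \Sset^m$) equals $\deg \sigma$, and that the degree admits the integral representation
\[
 \deg \sigma \cdot \abs{\Sset^m}
 = \int_{\Bset^m} f^\# \omega
 = \int_{\Bset^m} \jac f
\]
for a suitably normalised volume form $\omega$ on $\Sset^m$ with $\int_{\Sset^m}\omega = \abs{\Sset^m}$; this identity is valid for $\sobolev^{1,m}$ maps since the Jacobian is integrable and the degree is continuous under $\sobolev^{1,m}$ convergence (or one simply approximates $f$ by smooth maps in the same homotopy class, using density and stability of homotopy classes). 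Combining,
\[
 m^{m/2}\abs{\Sset^m}\abs{\deg\sigma}
 = m^{m/2}\Abs{\int_{\Bset^m}\jac f}
 \le m^{m/2}\int_{\Bset^m}\abs{\jac f}
 \le \int_{\Bset^m}\abs{\Deriv f}^m,
\]
and taking the infimum over all competitors $f$ gives $m^{m/2}\abs{\Sset^m}\abs{\deg\sigma}\le \energy^{1,m}_{\mathrm{top}}\brk{\sigma}$.

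For the upper bound, I would exhibit a near-optimal competitor realising equality in the arithmetico-geometric step, i.e. a (weakly) conformal map. Fix $k = \deg\sigma$. Using stereographic projection $\Bset^m \to \Sset^m$ (an explicit conformal diffeomorphism off the boundary sphere, which is sent to the north pole), one gets a map of degree $\pm 1$ whose energy equals $m^{m/2}$ times the volume of $\Sset^m$ precisely because the stereographic projection is conformal so that $\abs{\Deriv f}^m = m^{m/2}\abs{\jac f}$ pointwise. To get degree $k$ with $\abs{k}>1$, one glues together $\abs{k}$ such conformal pieces supported on $\abs{k}$ disjoint small balls inside $\Bset^m$ (each ball mapped conformally onto $\Sset^m$ minus a point, and constant elsewhere), each contributing exactly $m^{m/2}\abs{\Sset^m}$; the resulting map has degree $\pm\abs{k} = k$, lies in the homotopy class $\sigma$ (since for sphere targets the degree classifies $\pi_m\brk{\Sset^m}$), and has total energy $m^{m/2}\abs{\Sset^m}\abs{k}$. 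Hence $\energy^{1,m}_{\mathrm{top}}\brk{\sigma}\le m^{m/2}\abs{\Sset^m}\abs{\deg\sigma}$. (For $k=0$ a constant map gives energy $0$.)

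Combining the two inequalities yields the claimed identity. The main obstacle, and the point deserving the most care, is justifying the integral formula for the degree in the $\sobolev^{1,m}$ (rather than smooth) category and ensuring the competitors built from stereographic pieces genuinely have the right \emph{free} homotopy class with the prescribed basepoint $b$ — but for $\manifold{N}=\Sset^m$ the $\pi_1$-action is trivial and free homotopy classes coincide with $\pi_m\brk{\Sset^m,b}\cong\Zset$ via the degree, so this reduces to a bookkeeping check. A secondary technical point is the boundary behaviour: the competitors in \eqref{eq_of7vae7eiquooDooh8nohnai} are maps on $\Bset^m$ whose trace on $\partial\Bset^m$ need not be constant, so one should either note that collapsing $\partial\Bset^m$ changes neither the infimum nor the degree up to the identifications already fixed in the paper, or argue directly with the relative homotopy class — both are routine given the conventions established before the statement.
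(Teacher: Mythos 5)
Your proof follows the paper's strategy almost exactly: the lower bound via the arithmetico--geometric inequality combined with the Kronecker integral formula for the degree is identical, and the upper bound via stereographic projection and gluing of bubbles is the same idea the paper uses.

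One technical point deserves a correction. You describe the upper-bound competitor as ``an explicit conformal diffeomorphism'' from \(\Bset^m\) onto \(\Sset^m\) minus a point, with energy exactly \(m^{m/2}\abs{\Sset^m}\). No such map exists: stereographic projection is conformal from \(\Rset^m\), not from a ball, and \(\Bset^m\) is not conformally equivalent to \(\Sset^m\setminus\set{pt}\) (for \(m\ge 3\) this is blocked by Liouville's theorem; for \(m=2\) by uniformisation, since the disk and the plane are not biholomorphic). Consequently the bound is not attained by a single competitor. The correct statement, which is what the paper uses, is that a family of truncated and rescaled stereographic projections produces competitors with energy \(m^{m/2}\abs{\Sset^m}+\varepsilon\), with \(\varepsilon\to 0\); since \(\energy^{1,m}_{\mathrm{top}}\) is an infimum, this suffices. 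The same \(\varepsilon\)-approximation is inherited by your gluing construction for \(\abs{k}>1\). With this adjustment the argument is sound and coincides with the paper's; your remarks about the degree formula in \(\sobolev^{1,m}\), about free versus based homotopy for \(\manifold{N}=\Sset^m\), and about the boundary trace are all handled correctly.
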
 

This formula is well-known, but we provide an argument for the sake of completeness.
We also refer to~\cite{Lemaire_1978}*{\S 8} for the case where \( m = 2 \), and~\cite{Mucci_2012}*{Proposition~7.1} for a closely related result and argument in any dimension.

\begin{proof}[Proof of \cref{proposition_Etop_sphere}]
	Let \( f \in \sobolev^{1, m}\brk{\Bset^m, \Sset^{m}} \) be any map such that \( f \in \sigma \).
	From the arithmetico-geometric inequality, we deduce the pointwise estimate
	\begin{align*}
		\jac{f} = \abs{\det \Deriv f}
		&\leq
		\frac{\abs{\Deriv f}^{m}}{m^{m/2}}&
		&\text{in \(\Bset^m\)}\eqpunct{.}
	\end{align*}
	On the other hand, from the Kronecker integral formula for the degree 
	(see for example \cite{Bourgain_Brezis_Mironescu_2005}*{Remark 0.7} or~\cite{Dinca_Mawhin_2021}), it holds that
	\[
		\int_{\Bset^{m}} \jac{f}
		\geq
		\abs{\Sset^{m}} \abs{\deg{\sigma}}\eqpunct{.}
	\]
	Taking the infimum over all such \( f \in \sigma \), we deduce the lower bound
	\begin{equation}
	\label{eq_aa7a8d4e964a043b}
		\energy^{1, m}_{\mathrm{top}} \brk{\sigma}
		\geq
		m^{m/2}\, \abs{\Sset^{m}} \, \abs{\deg{\sigma}}\eqpunct{.}
	\end{equation}
	
	To prove the upper bound, we start with the case where \( \deg{\sigma} = 1 \).
	In this case, given \( \varepsilon > 0 \) and letting \( f \colon \Bset^{m} \to \Sset^{m} \) be a an almost conformal equivalence between \( \Bset^{m} \) and \( \Sset^{m} \) -- this can be achieved by the means of a truncation of a rescaled stereographical projection from \(\Rset^m\) to \(\Sset^m\) -- we obtain
	\[
		\int_{\Bset^{m}} \abs{\Deriv f}^{m}
		=
		m^{m/2} \abs{\Sset^{m}} + \varepsilon \eqpunct{,}
	\]
	showing that~\eqref{eq_aa7a8d4e964a043b} is actually an equality.
	
	The case of an arbitrary degree then follows via~\eqref{eq_thuav3woo1thie4eeNgeiNei} below, the idea being to construct almost minimizing competitors by patching together scaled copies of the above map.
\end{proof}

As similar formula holds for the projective plane; see~\cite{Mucci_2012_Projective}*{Proposition~3.8}

\begin{proposition}
\label{proposition_Etop_projective}
	For every \( \sigma \in \pi_{m} \brk{\RPset^{m},b} \),
	\[
		\energy^{1, m}_{\mathrm{top}} \brk{\sigma}
		=
		m^{m/2}\,2\abs{\RPset^{m}}\, \abs{\deg{\sigma}}\eqpunct{,}
	\]
	where \(\deg\) denotes the degree as a map to \(\RPset^m\), defined as the degree of the lifting when \(m\) is odd and its absolute value otherwise \cite{Mucci_2012_Projective}.
\end{proposition}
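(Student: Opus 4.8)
The plan is to mirror the proof of \cref{proposition_Etop_sphere}, transporting every estimate through the Riemannian double cover \(\pi \colon \Sset^m \to \RPset^m\). Equipping \(\RPset^m\) with the quotient of the round metric on \(\Sset^m\), the map \(\pi\) is a local isometry, so that \(\abs{\Deriv \brk{\pi \compose g}} = \abs{\Deriv g}\) wherever \(g\) is differentiable, while \(\abs{\Sset^m} = 2\abs{\RPset^m}\); moreover \(\pi\) induces an isomorphism \(\pi_* \colon \pi_m \brk{\Sset^m, \Tilde{b}} \to \pi_m \brk{\RPset^m, b}\) for any \(\Tilde{b} \in \pi^{-1}\brk{b}\), since passing to a covering preserves higher homotopy groups.

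For the lower bound, I would take an arbitrary \(f \in \sobolev^{1, m}\brk{\Bset^m, \RPset^m}\) representing \(\sigma\). Since \(\Bset^m\) is simply connected and \(\sobolev^{1, m} \hookrightarrow \VMO\), the map \(f\) admits a lift \(\Tilde{f} \in \sobolev^{1, m}\brk{\Bset^m, \Sset^m}\); its trace on \(\partial \Bset^m\) is constant, equal to one of the two preimages of \(b\), so \(\Tilde{f}\) represents a class in \(\pi_m\brk{\Sset^m}\), and a short computation with the antipodal map — the two lifts of \(f\) differ by it, and their degrees by the sign \(\brk{-1}^{m + 1}\) — shows that \(\abs{\deg \Tilde{f}} = \abs{\deg \sigma}\). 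Since \(\pi\) is a local isometry, \(\int_{\Bset^m} \abs{\Deriv f}^m = \int_{\Bset^m} \abs{\Deriv \Tilde{f}}^m\), and the arithmetico-geometric inequality together with the Kronecker integral formula for the degree — exactly as in the proof of \cref{proposition_Etop_sphere} — gives \(\int_{\Bset^m} \abs{\Deriv \Tilde{f}}^m \ge m^{m/2} \abs{\Sset^m} \abs{\deg \sigma}\). Taking the infimum over all such \(f\) yields \(\energy^{1, m}_{\mathrm{top}} \brk{\sigma} \ge m^{m/2}\, 2\abs{\RPset^m}\, \abs{\deg \sigma}\).

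For the matching upper bound, let \(\Tilde{\sigma} \defeq \pi_*^{-1} \sigma \in \pi_m \brk{\Sset^m, \Tilde{b}}\); by the very definition of the degree of a class into \(\RPset^m\), one has \(\abs{\deg \Tilde{\sigma}} = \abs{\deg \sigma}\). By \cref{proposition_Etop_sphere}, for every \(\varepsilon > 0\) there is \(g \in \Tilde{\sigma}\) with \(\int_{\Bset^m} \abs{\Deriv g}^m \le m^{m/2} \abs{\Sset^m} \abs{\deg \sigma} + \varepsilon\). Then \(\pi \compose g \in \sobolev^{1, m}\brk{\Bset^m, \RPset^m}\) represents \(\pi_* \Tilde{\sigma} = \sigma\), is constant equal to \(b\) on \(\partial \Bset^m\), and \(\int_{\Bset^m} \abs{\Deriv \brk{\pi \compose g}}^m = \int_{\Bset^m} \abs{\Deriv g}^m\). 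Letting \(\varepsilon \to 0\) gives \(\energy^{1, m}_{\mathrm{top}} \brk{\sigma} \le m^{m/2}\, 2\abs{\RPset^m}\, \abs{\deg \sigma}\), which combined with the lower bound proves the proposition.

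Apart from this bookkeeping, the one genuinely delicate point is the rigorous construction of the Sobolev lift \(\Tilde{f}\) of a merely \(\sobolev^{1, m}\) (hence \(\VMO\)) map \(f\) through the covering \(\pi\), together with the verification that the lift stays in \(\sobolev^{1, m}\) with the same energy; this rests on lifting theory for \(\VMO\) maps on simply connected domains, and one could alternatively reduce first to smooth \(f\) by density of smooth maps and stability of homotopy classes. The other thing to keep track of throughout is the parity of \(m\) in the definition of \(\deg \sigma\): it is precisely the ambiguity between the two \(\pi\)-lifts of a map, whose degrees differ by the sign \(\brk{-1}^{m + 1}\), so that the honest invariant preserved at every step is \(\abs{\deg \sigma}\).
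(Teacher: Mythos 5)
Your proof is correct and takes essentially the same approach as the paper's own (very brief) sketch: lift through the double cover \(\Sset^m \to \RPset^m\), run the arithmetico-geometric/Kronecker argument from \cref{proposition_Etop_sphere} upstairs, and use \(\abs{\Sset^m} = 2\abs{\RPset^m}\) to produce the extra factor of \(2\). Your parity bookkeeping for \(\deg\sigma\) and the suggestion to reduce to smooth representatives by density and stability of homotopy classes, rather than invoking a Sobolev/VMO lifting theorem directly, are both appropriate.
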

The proof of~\cref{proposition_Etop_projective} follows the same lines as the proof of~\cref{proposition_Etop_sphere}.
The extra factor \( 2 \) comes from the use of a lifting \( \Tilde{f} \colon \Bset^{m} \to \Sset^{m} \) of a map \( f \colon \Bset^{m} \to \RPset^{m} \), as each point in \( \RPset^{m} \) is covered twice by the covering map \( \Sset^{m} \to \RPset^{m} \).

As we already mentioned in the introduction, there is a more general phenomenon connecting the topological energy to the minimal area needed to realize the corresponding homotopy class.
We first recall the definition of the area of a map, and define accordingly the area of a homotopy class.

\begin{definition}
\label{definition_area_map}
	Let \( f \colon \overline{\Bset^{m}} \to \manifold{N} \) be a Lipschitz map.
	The area of \( f \) is defined as
	\[
		\area{f} = \int_{\manifold{N}} N\brk{f\vert y} \dif y\text{,}
	\]
	where \( N\brk{f \vert y} \) is the multiplicity of \( f \) at \( y \), defined as the number of elements of \( f^{-1}\brk{\set{y}} \), possibly infinite.
	The area of a homotopy class \( \sigma \in \pi_{m}\brk{\manifold{N}, b} \) is defined as
	\[
		\area{\sigma} = \inf\set{\area{f} \st \text{\( f \colon \overline{\Bset^{m}} \to \manifold{N} \) is Lipschitz, \( f \in \sigma \)}}\eqpunct{.}
	\]
\end{definition}

We are now in position to state the following result.

\begin{proposition}
\label{proposition_Etop_area}
	Assume that \( m \leq \dim{\manifold{N}} \).
	For every \( \sigma \in \pi_{m} \brk{\manifold{N},b} \),
	\[
		\energy^{1, m}_{\mathrm{top}} \brk{\sigma}
		\geq
		m^{m/2}\area{\sigma}\eqpunct{,}
	\]
	with equality if \( m = 2 \).
\end{proposition}

In the case where \( \manifold{N} \) is of dimension \( m \) and \( \sigma \) also corresponds to a \emph{homology cycle} \( \cycle{\sigma} \) -- which will be the case whenever the \emph{Hurewicz homomorphism} is an isomorphism -- then \( \area{\sigma} \) corresponds exactly to the area of the unique simplicial complex in the class of \( \cycle{\sigma} \), computed by summing the area of all the simplexes that it contains, counted with their multiplicity.

\begin{proof}[Proof of \cref{proposition_Etop_area}]
	The proof of the lower bound follows from the exact same argument as above, combining the arithmetico-geometric inequality with the area formula.
	We note that we may reduce to Lipschitz -- and even smooth -- maps in the computation of \( \energy^{1, m}_{\mathrm{top}} \brk{\sigma} \) by a straightforward approximation argument.
	
	To deduce the upper bound when \( m = 2 \), for every \( \varepsilon > 0 \), we apply Morrey's \( \varepsilon \)-conformality theorem~\cite{Morrey_1948} (see also~\cite{Fitzi_Wenger_2020}) to obtain a map \( f \in \sigma \) such that
	\[
		\int_{\Bset^{m}} \abs{\Deriv f}^{2}
		\leq
		2\area{\sigma} + \varepsilon\eqpunct{.}
	\]
	The conclusion follows by letting \( \varepsilon \to 0 \).
\end{proof}

What the proof actually shows is that it always holds that
\[
	\energy^{1, m}_{\mathrm{top}} \brk{\sigma}
	\geq
	m^{m/2}\inf\set[\bigg]{\int_{\Bset^{m}} \jac f \st f \in \sigma}
\]
with equality if \( m = 2\), hence relating the minimal Jacobian integral needed to realise a homotopy class with its topological energy, without assuming that \( m \leq \dim{\manifold{N}} \).
This Jacobian integral is in turn related to the area of \( f \) under the assumption \( m \leq \dim{\manifold{N}} \) via the area formula.
If on the contrary \( m > \dim{\manifold{N}} \), we instead have the relation
\[
	\int_{\Bset^{m}} \jac f
	=
	\int_{\manifold{N}} \mathcal{H}^{m-\dim{\manifold{N}}}\brk{f^{-1}\brk{\set{y}}}\dif y\eqpunct{,}
\]
relating the Jacobian integral with the \emph{coarea} of the map, which is less directly related to the intuitive idea of the area of the homotopy class represented by \( f \).

In \cref{proposition_Etop_sphere} and \cref{proposition_Etop_projective}, the topological energy grows linearly with respect to the relevant degree.
However, such a rate of growth is not universal: there are situations where the growth is governed by a power law with exponent less than \( 1 \).
The model case is the Hopf degree, which was first studied by Rivière~\cite{Riviere_1998} (see also~\citelist{\cite{Schikorra_VanSchaftingen_2020}\cite{Grzela_Mazowiecka}}).
We note however that the energy growth is always sublinear; see~\eqref{eq_thuav3woo1thie4eeNgeiNei} below.
In our framework, we have the following two-sided estimate for the topological energy with respect to the Hopf degree.

\begin{proposition}
\label{proposition_Etop_Hopf}
For every \( \sigma \in \pi_{4n - 1} \brk{\Sset^{2n},b} \),
	\[
		c \abs{\deg_{\mathrm{H}} \sigma}^{1 - \frac{1}{4n}}\le \energy^{1, 4n - 1}_{\mathrm{top}} \brk{\sigma}
		\le C \abs{\deg_{\mathrm{H}} \sigma}^{1 - \frac{1}{4n}}\eqpunct{,}
	\]
	where \(\deg_{\mathrm{H}}\) is the Hopf invariant.
\end{proposition}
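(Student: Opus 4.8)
The plan is to first reduce the topological energy \( \energy^{1, 4n - 1}_{\mathrm{top}}\brk{\sigma} \) to the minimal conformal energy of honest maps \( \Sset^{4n - 1} \to \Sset^{2n} \) in the homotopy class \( \sigma \), and then to prove the two estimates by quite different means. For the reduction I would identify \( \overline{\Bset^{4n - 1}}/\partial \Bset^{4n - 1} \) with \( \Sset^{4n - 1} \) and invoke the density of smooth maps in \( \sobolev^{1, 4n - 1}\brk{\Sset^{4n - 1}, \Sset^{2n}} \) in the critical case (Schoen and Uhlenbeck) together with the stability of homotopy classes; the one point needing care is that a smooth representative can be deformed, with arbitrarily small additional energy, to one that is constant near a prescribed point, since the \( \sobolev^{1, 4n - 1} \)-energy of an interpolation across a thin spherical shell is controlled by the oscillation of the map there, which is small by continuity. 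Writing \( Q \defeq \deg_{\mathrm{H}} \sigma \), it then suffices to estimate \( E_Q \defeq \inf \set[\big]{\int_{\Sset^{4n - 1}} \abs{\Deriv f}^{4n - 1} \st f \in \continuous^\infty\brk{\Sset^{4n - 1}, \Sset^{2n}} \text{ and } \sqb{f} = \sigma} \) from above and below by constant multiples of \( \abs{Q}^{1 - 1/(4n)} \).

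For the lower bound I would combine the Whitehead integral formula for the Hopf invariant with elliptic estimates. Given a smooth \( f \) and the normalised volume form \( \omega \) of \( \Sset^{2n} \), the pulled-back form \( f^*\omega \) is closed and, since \( H^{2n}_{\mathrm{dR}}\brk{\Sset^{4n - 1}} = \set{0} \), exact; by Hodge theory and the Calderón--Zygmund inequality there is a primitive \( \eta \) with \( \Deriv \eta = f^*\omega \) and \( \norm{\eta}_{\sobolev^{1, (4n - 1)/(2n)}} \le C\norm{f^*\omega}_{\lebesgue^{(4n - 1)/(2n)}} \), whence, by the subcritical Sobolev embedding \( \sobolev^{1, (4n - 1)/(2n)} \hookrightarrow \lebesgue^{(4n - 1)/(2n - 1)} \), also \( \norm{\eta}_{\lebesgue^{(4n - 1)/(2n - 1)}} \le C\norm{f^*\omega}_{\lebesgue^{(4n - 1)/(2n)}} \). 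Using the pointwise bound \( \abs{f^*\omega} \le C\abs{\Deriv f}^{2n} \), Hölder's inequality applied to \( Q = \int_{\Sset^{4n - 1}} \eta \wedge \Deriv \eta \) then gives
\[
  \abs{Q} \le \norm{\eta}_{\lebesgue^{(4n - 1)/(2n - 1)}}\, \norm{f^*\omega}_{\lebesgue^{(4n - 1)/(2n)}} \le C\brk[\bigg]{\int_{\Sset^{4n - 1}} \abs{\Deriv f}^{4n - 1}}^{\frac{4n}{4n - 1}}\eqpunct{,}
\]
and rearranging and passing to the infimum over \( f \in \sigma \) yields \( E_Q \ge c\abs{Q}^{1 - 1/(4n)} \).

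For the upper bound I would exhibit, for each \( Q \), an explicit representative of \( \sigma \) of conformal energy at most \( C\abs{Q}^{1 - 1/(4n)} \). This amounts to the sharpness of the previous estimate, and it is genuinely a construction: for \( n = 1 \) it is carried out by Rivière~\cite{Riviere_1998}, who assembles suitably rescaled copies of a fixed building block so as to accumulate Hopf invariant \( Q \) at the optimal conformal cost \( O\brk{\abs{Q}^{1 - 1/(4n)}} \); the higher-dimensional analogues are available in~\citelist{\cite{Schikorra_VanSchaftingen_2020}\cite{Grzela_Mazowiecka}}. Heuristically, the optimal competitor is a map that is \( O\brk{\abs{Q}^{1/(4n)}} \)-Lipschitz --- which is exactly what the sharpness of the estimate of the previous paragraph predicts --- and for such a map \( f \) one has
\[
  \int_{\Sset^{4n - 1}} \abs{\Deriv f}^{4n - 1} \le \brk{\mathrm{Lip}\, f}^{4n - 1}\abs{\Sset^{4n - 1}} \le C\abs{Q}^{1 - 1/(4n)}\eqpunct{.}
\]

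I expect the upper bound to be the main obstacle. The reduction step and the lower bound only require assembling standard ingredients (critical-case density, Hodge and Calderón--Zygmund estimates, the Whitehead formula), whereas realising a prescribed Hopf invariant with a near-optimal Lipschitz constant is a delicate quantitative-topology construction, tailored to \( \pi_3\brk{\Sset^2} \) in Rivière's original work and requiring additional care in higher dimensions.
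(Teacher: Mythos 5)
Your argument is correct and arrives at the right estimates. For the lower bound you use the Whitehead integral formula for the Hopf invariant, write \( f^*\omega = d\eta \) with \( \eta = d^*G(f^*\omega) \) the coexact primitive, and combine the Calder\'on--Zygmund estimate \( \norm{\eta}_{\sobolev^{1,(4n-1)/(2n)}} \lesssim \norm{f^*\omega}_{\lebesgue^{(4n-1)/(2n)}} \) with the subcritical Sobolev embedding into \( \lebesgue^{(4n-1)/(2n-1)} \); the exponents \( (4n-1)/(2n) \) and \( (4n-1)/(2n-1) \) are H\"older-conjugate and the pointwise bound \( \abs{f^*\omega} \lesssim \abs{\Deriv f}^{2n} \) closes the estimate with the sharp power \( 4n/(4n-1) \). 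This is indeed the standard argument, and it works uniformly in \( n \). The paper's proof is a pure citation: for \( n = 1 \) it points to Rivi\`ere's paper for both bounds, while for \( n \geq 2 \) it cites the adaptation of Rivi\`ere's construction for the upper bound and Schikorra--Van Schaftingen (via Whitehead products) for the lower bound. So your lower bound takes a somewhat different route from what the paper cites for \( n \geq 2 \): you extend the analytic Whitehead-integral argument directly rather than routing through Whitehead products; the latter is necessary in the fractional Sobolev setting treated in the cited reference, where there is no direct integral representation, but in the integer case \( \sobolev^{1,4n-1} \) your direct approach is simpler and unified across all \( n \). The upper bound in your proposal, like the paper's, is deferred to the references, which is appropriate given that the explicit assemblage construction achieving the optimal Lipschitz constant \( O(\abs{Q}^{1/(4n)}) \) is genuinely the hard part.

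One minor remark on the reduction step: identifying the infimum over \( \sobolev^{1,4n-1}\brk{\Bset^{4n-1},\Sset^{2n}} \) with boundary trace \( b \) and the infimum over maps \( \Sset^{4n-1} \to \Sset^{2n} \) is indeed needed, and it is essentially the same almost-conformal truncation device the paper uses in the proof of \cref{proposition_Etop_sphere}; your sketch (making the representative constant in a thin shell, with negligible energy cost) is the right idea.
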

\begin{proof}
When \(n = 1\), this follows from Rivière's sharp estimate on the Hopf invariant \cite{Riviere_1998};
when \(n \ge 2\), Rivière’s argument adapts straightforwardly to get the upper bound and a lower bound can be obtained thanks to Whitehead products \cite{Schikorra_VanSchaftingen_2020}.
\end{proof}

A similar phenomenon occurs for more involved homotopical quantities, connected to rational homotopy theory; we refer the reader to~\citelist{\cite{Hardt_Riviere_2008}\cite{Park_Schikorra_2023}} for more details.

After this review of model computations of \( \energy^{1, m}_{\mathrm{top}} \), and before we move to the definition of the disparity energy, we collect some fundamental properties of the topological energy.

\begin{proposition}[Norm properties of the topological energy]
\label{proposition_properties_Etop}
The quantity \(\energy^{1, m}_{\mathrm{top}}\) has  the following properties:
 \begin{enumerate}[label=(\roman*)]
  \item 
  \label{it_Etop_nonnegative}
  For every \(\sigma \in \pi_{m} \brk{\manifold{N}, b}\), one has \[
  \energy^{1, m}_{\mathrm{top}} \brk{\sigma} \ge 0\eqpunct{.}
  \]

  \item 
  \label{it_Etop_gap0}
  There exists \(\eta \in \intvo{0}{\infty}\) such that if \(\sigma \in \pi_{m} \brk{\manifold{N}, b}\) and 
  \[
  \energy^{1, m}_{\mathrm{top}} \brk{\sigma} < \eta\eqpunct{,}
  \]
  then \(\sigma = 0\).
\item 
\label{it_Etop_symmetric}
For every \(\sigma \in \pi_{m} \brk{\manifold{N}, b}\), one has 
\[
 \energy^{1, m}_{\mathrm{top}} \brk{-\sigma}
 = \energy^{1, m}_{\mathrm{top}} \brk{\sigma}\eqpunct{.}
\]
\item 
\label{it_Etop_sublinear}
For every \(\sigma, \tau \in \pi_{m} \brk{\manifold{N}, b}\), one has 
\[
 \energy^{1, m}_{\mathrm{top}} \brk{\sigma + \tau}
 \le \energy^{1, m}_{\mathrm{top}} \brk{\sigma}
 +
 \energy^{1, m}_{\mathrm{top}} \brk{\tau}\eqpunct{.}
\]
\item
\label{it_Etop_action} If \(\zeta \in \continuous \brk{\intvc{0}{1}, \manifold{N}}\) and \(\sigma \in \pi_m \brk{\manifold{N}, \zeta \brk{1}}\), then
\[
 \energy^{1, m}_{\mathrm{top}} \brk{\zeta_* \sigma}
 =\energy^{1, m}_{\mathrm{top}} \brk{\sigma}\eqpunct{.}
\]
 \item \label{it_Etop_discrete} 
 The set 
 \[
  \set{\energy^{1, m}_{\mathrm{top}} \brk{\sigma} \st \sigma \in \pi_{m}\brk{\manifold{N}, b}}
 \]
 is discrete.
 \end{enumerate}
\end{proposition}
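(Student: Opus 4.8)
The plan is to establish the six items in turn; the first five are soft consequences of the definition~\eqref{eq_of7vae7eiquooDooh8nohnai} together with the conformal invariance of the $m$-energy in dimension $m$, while the discreteness~\ref{it_Etop_discrete} is the substantive point. Item~\ref{it_Etop_nonnegative} is immediate, since $\energy^{1, m}_{\mathrm{top}}\brk{\sigma}$ is an infimum of nonnegative numbers. For the gap~\ref{it_Etop_gap0}, given a competitor $f \in \sigma$ with $\int_{\Bset^m} \abs{\Deriv f}^m$ small, I would glue $f$ on one hemisphere of $\Sset^m$ and the constant value $b$ on the other, obtaining a map $g \in \sobolev^{1, m}\brk{\Sset^m, \manifold{N}}$ with the same energy and representing $\sigma$; by the quantitative limiting Sobolev--Morrey embedding $\sobolev^{1, m}\brk{\Sset^m} \hookrightarrow \VMO\brk{\Sset^m}$, the map $g$ lies in an arbitrarily small $\VMO$-neighbourhood of a constant once the threshold is small enough, hence in the path-component of a constant in $\VMO\brk{\Sset^m, \manifold{N}}$; so $g$ is freely null-homotopic, and therefore $\sigma = 0$, since the only class in $\pi_m\brk{\manifold{N}, b}$ freely homotopic to a constant is $0$. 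For~\ref{it_Etop_symmetric}, composing a competitor $f \in \sigma$ with an orientation-reversing isometry of $\overline{\Bset^m}$ yields a competitor for $-\sigma$ with the same energy, so $\energy^{1, m}_{\mathrm{top}}\brk{-\sigma} \le \energy^{1, m}_{\mathrm{top}}\brk{\sigma}$, and the reverse inequality follows by symmetry.

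For the subadditivity~\ref{it_Etop_sublinear}, given near-minimizers $f \in \sigma$ and $g \in \tau$, I would place affinely rescaled copies of $f$ and $g$ on two disjoint balls inside $\Bset^m$ and extend by the constant $b$; the conformal invariance of the $m$-energy in dimension $m$ makes the total energy equal to $\int_{\Bset^m} \abs{\Deriv f}^m + \int_{\Bset^m} \abs{\Deriv g}^m$, while this two-bubble map represents $\sigma + \tau$, and letting the near-minimization errors tend to zero gives the claim. For the action invariance~\ref{it_Etop_action}, after replacing $\zeta$ by a Lipschitz representative of its homotopy class relative to endpoints, I would build from a near-minimizer $f \in \sigma$ a competitor $f' \in \zeta_* \sigma$ by putting a rescaled copy of $f$ on a small ball $B_\rho$ and tracing $\zeta$ radially, but \emph{spread out logarithmically}, on the annulus $\Bset^m \setminus B_\rho$ --- that is, $f'\brk{x} = \zeta\brk{\theta\brk{\abs{x}}}$ with $\abs{\theta'\brk{r}} \le \varepsilon/r$ and $\log\brk{1/\rho} \approx 1/\varepsilon$ --- so that the annular contribution to the energy is $O\brk{\varepsilon^{m - 1}}$, which tends to $0$ since $m \ge 2$. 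This gives $\energy^{1, m}_{\mathrm{top}}\brk{\zeta_* \sigma} \le \energy^{1, m}_{\mathrm{top}}\brk{\sigma}$, and applying the same construction to the reversed path yields equality. This logarithmic cut-off, which lets one attach a path at the cost of negligible $m$-energy, is the one genuinely technical ingredient among~\ref{it_Etop_nonnegative}--\ref{it_Etop_action}, but it is classical.

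The discreteness~\ref{it_Etop_discrete} is the main obstacle. By~\ref{it_Etop_action}, $\energy^{1, m}_{\mathrm{top}}$ descends to a function on the set of \emph{free} homotopy classes of maps $\Sset^m \to \manifold{N}$, so it suffices to show that for every $T < \infty$ only finitely many such free classes are represented by maps in $\sobolev^{1, m}\brk{\Sset^m, \manifold{N}}$ of energy at most $T$: this makes $\set{\energy^{1, m}_{\mathrm{top}}\brk{\sigma} \st \energy^{1, m}_{\mathrm{top}}\brk{\sigma} \le T}$ finite, and hence the whole value set discrete. I would prove this finiteness by a concentration-compactness argument: given a sequence $\brk{g_j}_{j \in \Nset}$ with $\int_{\Sset^m} \abs{\Deriv g_j}^m \le T$, pass to a subsequence along which $\abs{\Deriv g_j}^m \dif x \weakto \abs{\Deriv g_\infty}^m \dif x + \sum_{i = 1}^I \mu_i \delta_{p_i}$ and $g_j \to g_\infty$ strongly in $\sobolev^{1, m}_{\mathrm{loc}}\brk{\Sset^m \setminus \set{p_1, \dotsc, p_I}}$; strong local convergence forces, through the $\VMO$ control, the free homotopy class of $g_j$ away from small balls around the $p_i$ to stabilize, while near each $p_i$ one rescales and iterates. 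The energy gap~\ref{it_Etop_gap0} is what makes this iteration terminate and bounds its complexity --- any region carrying less than $\eta$ energy contributes trivially to the topology --- so the resulting bubble tree has at most $T/\eta$ nodes, and the free homotopy class of $g_j$ is, for $j$ large, a fixed function of this finite tree, hence eventually constant, contradicting the existence of infinitely many free classes below the threshold. The delicate parts are the bookkeeping in the iterated rescaling, namely how the relative-boundary classes on the small balls recombine with the class on the complement, and the fact that strong $\sobolev^{1, m}$ convergence determines homotopy classes; both rest on the Sobolev--Morrey/$\VMO$ machinery already invoked for~\ref{it_Etop_gap0} together with the gap.
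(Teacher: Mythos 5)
For items~\ref{it_Etop_nonnegative}--\ref{it_Etop_action} your argument matches the paper's in substance: nonnegativity from the definition, composition with an orientation-reversing isometry for~\ref{it_Etop_symmetric}, two rescaled bubbles on disjoint balls for~\ref{it_Etop_sublinear}, and a radial, logarithmically spread-out tracing of $\zeta$ costing $O(\varepsilon^{m-1})$ for~\ref{it_Etop_action} (the paper writes the explicit profile $f_\varepsilon(x) = \zeta(2-2|x|^\varepsilon)$ on the outer annulus, which is exactly your cut-off). Your treatment of the gap~\ref{it_Etop_gap0} is organized slightly differently: you glue with a constant to form a sphere map, pass to the VMO topology, conclude free null-homotopy, and then use that the only class in $\pi_m(\manifold{N},b)$ whose $\pi_1$-orbit is trivial is $0$; the paper instead invokes the stability of homotopy classes together with the Poincar\'e inequality directly on $\Bset^m$ with the fixed constant trace, which is a shorter route to the same conclusion and avoids the detour through free homotopy of sphere maps. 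Both are fine.

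The real divergence is at~\ref{it_Etop_discrete}. The paper treats this as a citation, deducing discreteness from the free-homotopy decomposition of bounded-energy critical Sobolev maps (Duzaar--Kuwert, Schoen--Wolfson, Van Schaftingen 2020) combined with~\ref{it_Etop_action}. You instead sketch the proof of that decomposition result: concentration-compactness, bubble-tree induction controlled by the gap $\eta$, and stability of homotopy classes under strong $\sobolev^{1,m}$ convergence. This is morally the same argument as the one in the references, not a different route, but note that your sketch leaves the hardest part implicit: you would still need to show that each bubble in the tree represents, for $j$ large, a class drawn from a $j$-independent finite set (or at least that the class of $g_j$ stabilizes along a subsequence), which is precisely what the cited works establish and requires more than ``the tree has at most $T/\eta$ nodes.'' So your strategy is sound, but as written it restates the cited theorem rather than proving it, and the paper's one-line deduction from the references is the cleaner way to make the step rigorous.
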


In other words, \(\energy^{1, m}_{\mathrm{top}}\) is a norm on the group \(\pi_m \brk{\manifold{N}, b}\) for which the isomorphisms induced by the action of the fundamental groupoid \(\Pi_1\brk{\manifold{N}}\) are isometries.

It follows from \cref{proposition_properties_Etop} \ref{it_Etop_symmetric} and \ref{it_Etop_sublinear} that \begin{equation}
\label{eq_thuav3woo1thie4eeNgeiNei}
  \energy^{1, m}_{\mathrm{top}} \brk{k\sigma}          
  \le \abs{k} \, \energy^{1, m}_{\mathrm{top}} \brk{\sigma}\eqpunct{.}                   
\end{equation}
An important phenomenon to mention is that, in general, the reverse inequality to \eqref{eq_thuav3woo1thie4eeNgeiNei} does not hold; this is the case for example when \(m = 4n - 1\) and \(\manifold{N} = \Sset^{2n}\), one has
\( \smash{\energy^{1, 4n - 1}_{\mathrm{top}}} \brk{k\sigma}\simeq \abs{k}^{1 - 1/(4n)}\) (\cref{proposition_Etop_Hopf}).
The question of determining the optimal growth of \( \smash{\energy^{1, m}_{\mathrm{top}}} \brk{k\sigma} \) with respect to \( k \) (which is \( \abs{k} \) for the Brouwer degree by virtue of~\cref{proposition_Etop_sphere} and \(\abs{k}^{1 - 1/(4n)}\) for the Hopf degree according to \cref{proposition_Etop_Hopf}) is a challenging problem, which remains open in full generality; see for instance~\cite{Hardt_Riviere_2008} and the references therein for more details and other partial results.

In particular, \ref{it_Etop_symmetric} implies that the quantity 
\[
   \energy^{1, m}_{\mathrm{top}} \brk{\sqb{u, v, B_{\rho} \brk{a}}}
\]
is well-defined and \ref{it_Etop_gap0} that it vanishes if and only if \(u\) and \(v\) are homotopic relatively to \(\partial B_\rho \brk{a}\).
It follows from \cref{lemma_superpose_disparities} and \cref{proposition_properties_Etop} \ref{it_Etop_sublinear} and~\ref{it_Etop_action} that 
\[
   \energy^{1, m}_{\mathrm{top}} \brk{\sqb{u, w, B_{\rho} \brk{a}}}
  \le \energy^{1, m}_{\mathrm{top}} \brk{\sqb{u, v, B_{\rho} \brk{a}}}
  + \energy^{1, m}_{\mathrm{top}} \brk{\sqb{v, w, B_{\rho} \brk{a}}}\eqpunct{.}
\]

The assertion \ref{it_Etop_action} implies that 
\[
\energy^{1, m}_{\mathrm{top}}\brk[\big]{
  \brk{u \compose \gamma}_* \sqb{u, v, B_\rho \brk{\gamma \brk{1}}}}
  = \energy^{1, m}_{\mathrm{top}}\brk[\big]{\sqb{u, v, B_\rho \brk{\gamma \brk{0}}}}\eqpunct{.}
\]

\begin{proof}[Proof of \cref{proposition_properties_Etop}]
The nonnegativity in \ref{it_Etop_nonnegative} follows from the definition.

By \cref{proposition_stability_homotopy_classes} and the Poincaré inequality, there exists \(\eta \in \intvo{0}{\infty}\) such that, if 
\(f \in \sobolev^{1, m} \brk{\Bset^m, \manifold{M}}\), \(\tr_{\partial \Bset^m} f = b\), and 
\[
 \int_{\Bset^m} \abs{\Deriv f }^m < \eta\eqpunct{,}
\]
then \(f\) is homotopic to constant.
Thus, if \(f \in \sigma\), then \(\sigma = 0\), and \ref{it_Etop_gap0} follows.

For \ref{it_Etop_symmetric}, if \(\Phi \colon \Bset^m \to \Bset^m\) is an orientation reversing isometry, given \(f \in \sobolev^{1, m} \brk{\Bset^m, \manifold{N}}\) and \(f \in \sigma\), we have \(f \compose \Phi \in -\sigma\) and 
\[
  \int_{\Bset^m} \abs{\Deriv \brk{f \compose \Phi}}^m
  = \int_{\Bset^m} \abs{\Deriv f }^m\eqpunct{,}
\]
so that \(\energy^{1, m}_{\mathrm{top}} \brk{-\sigma}
 \le \energy^{1, m}_{\mathrm{top}} \brk{\sigma}\).

For \ref{it_Etop_sublinear}, assume that \(f, g \in \sobolev^{1, m}\brk{\Bset^m, \manifold{N}}\) and that \(f \in \sigma\) and \(g \in \tau\).
Taking \(c, d \in \Bset^m\) and \(\delta \in \intvo{0}{1}\) such that \(B_\delta \brk{c}\) and \(B_\delta \brk{d}\) are disjoint subsets of \(\Bset^m\) and defining 
\[
 h \brk{x}
 \defeq 
 \begin{cases}
  f \brk{\frac{x - c}{\delta}} & \text{if \(x \in B_\delta \brk{c}\),}\\
  g \brk{\frac{x - d}{\delta}} & \text{if \(x \in B_\delta \brk{d}\),}\\
  b & \text{otherwise,}
 \end{cases}
\]
we have \(h \in \sigma + \tau\) and 
\[
 \int_{\Bset^m} \abs{\Deriv h}^m
 = \int_{\Bset^m} \abs{\Deriv f}^m + \int_{\Bset^m} \abs{\Deriv g}^m\eqpunct{.}
\]
The conclusion then follows from \eqref{eq_of7vae7eiquooDooh8nohnai}.

In order to prove \ref{it_Etop_action}, without loss of generality we can assume that \(\zeta \in \continuous^1 \brk{\intvc{0}{1}, \manifold{N}}\).
Given \(f \in \sobolev^{1, m}\brk{\Bset^m, \manifold{N}}\) such that \(f \in \sigma\), we define for every \(\varepsilon \in \intvo{0}{\infty}\)
\[
  f_\varepsilon =
  \begin{cases}
    \zeta \brk{2 - 2\abs{x}^\varepsilon} & \text{if \(\abs{x}^\varepsilon \ge 1/2\),}\\
    f \brk{2^{1/\varepsilon} x} &\text{otherwise.}
  \end{cases}
\]
We compute then 
\[
\begin{split}
   \int_{\Bset^m} \abs{\Deriv f_\varepsilon}^m
   &=  \int_{\Bset^m} \abs{\Deriv f}^m + 
   \int_{\abs{x}^\varepsilon \ge 1/2} \frac{ \varepsilon^m  \abs{\zeta' \brk{2 - 2\abs{x}^\varepsilon}}^m}{\abs{x}^{m \brk{1 - \varepsilon}}} \dif x\\
   &\le \int_{\Bset^m} \abs{\Deriv f}^m + \C \varepsilon^{m - 1}\eqpunct{,}
  \end{split}
\]
proving thus that
\[
 \energy^{1, m}_{\mathrm{top}} \brk{\zeta_* \sigma}
 \le \energy^{1, m}_{\mathrm{top}} \brk{\sigma}\eqpunct{.}
\]
Considering the reverse path \(\check{\zeta}\) defined by \(\check{\zeta}\brk{t} = \zeta \brk{1 - t}\), we get, by homotopy invariance,
\[
 \energy^{1, m}_{\mathrm{top}} \brk{\sigma}
 = \energy^{1, m}_{\mathrm{top}} \brk{\check{\zeta}_* \zeta_* \sigma}
 \le \energy^{1, m}_{\mathrm{top}} \brk{\zeta_* \sigma}\eqpunct{,}
\]
and the conclusion follows.

Finally, \ref{it_Etop_discrete} follows from the decomposition into a bounded number of maps taken in a finite set that are glued together through the action of \(\Pi_{1}\brk{\manifold{N}}\) \citelist{\cite{Duzaar_Kuwert_1998}*{th.\ 4}\cite{Schoen_Wolfson_2001}*{lem.\ 5.2}\cite{VanSchaftingen_2020}} and from \ref{it_Etop_action}.
\resetconstant
\end{proof}

\begin{proposition}
\label{proposition_estimate_E1mtop_u_v}
Let \(u, v\in \sobolev^{1, m} \brk{\Bset^m, \manifold{N}}\cap \continuous \brk{\overline{\Bset^m}, \manifold{N}}\).
If \(u = v\) on \(\partial \Bset^m\), then 
\[
 \energy^{1, m}_{\mathrm{top}}
 \brk{\sqb{u, v, \Bset^m}}
 \le \int_{\Bset^m}\abs{\Deriv u}^m + \int_{\Bset^m} \abs{\Deriv v}^m\eqpunct{.}
\]
\end{proposition}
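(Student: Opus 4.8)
The plan is to produce a near‑optimal competitor by gluing \( u \) and \( v \), along the boundary values they share, into a single map on a sphere, and then to exploit the conformal invariance in dimension \( m \) of the \( m \)-Dirichlet energy \( f \mapsto \int \abs{\Deriv f}^m \). Note that the funnel map \( w \) used to define the disparity is itself a poor competitor: collapsing its outer sphere forces a non-conformal, hence energy-increasing, reparametrisation of \( u \); one should instead use the conformal structure of the round sphere, on which both closed hemispheres are genuine conformal balls.

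Concretely, I would realise \( \Sset^m \) as the double of \( \overline{\Bset^m} \), writing \( \Sset^m = \overline{D_+} \cup_E \overline{D_-} \) with \( \overline{D_+}, \overline{D_-} \) the two closed hemispheres glued along the equator \( E \cong \Sset^{m-1} \). Suitably normalised stereographic projections from the two poles furnish conformal diffeomorphisms \( \psi_+ \colon \overline{D_+} \to \overline{\Bset^m} \) and \( \psi_- \colon \overline{D_-} \to \overline{\Bset^m} \), smooth up to the boundary, whose restrictions to \( E \) both equal the canonical identification of \( E \) with \( \partial \Bset^m \). Since \( u = v \) on \( \partial \Bset^m \), the map \( g \colon \Sset^m \to \manifold{N} \) equal to \( u \compose \psi_+ \) on \( \overline{D_+} \) and to \( v \compose \psi_- \) on \( \overline{D_-} \) is well defined, continuous, and belongs to \( \sobolev^{1,m} \brk{\Sset^m, \manifold{N}} \), and by conformal invariance of the \( m \)-energy
\[
  \int_{\Sset^m} \abs{\Deriv g}^m = \int_{\Bset^m} \abs{\Deriv u}^m + \int_{\Bset^m} \abs{\Deriv v}^m \eqpunct{.}
\]
Unwinding the definition of the topological disparity from Section~\ref{section_topological_disparity}, and using that any two identifications of \( \overline{\Bset^m} \) with a hemisphere that agree on \( E \) are homotopic relatively to \( E \) by Alexander's trick, one checks that \( g \) represents \( \pm\sqb{u, v, \Bset^m} \) in \( \pi_m \brk{\manifold{N}} \); the sign is immaterial by \cref{proposition_properties_Etop} \ref{it_Etop_symmetric}, so it remains only to estimate \( \energy^{1,m}_{\mathrm{top}} \) of the homotopy class of \( g \).

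To finish I would transplant \( g \) from \( \Sset^m \) to \( \Bset^m \). Fix a point \( q \in \Sset^m \); continuity of \( g \) makes \( g \restr{\partial B_\rho \brk{q}} \) homotopic to the constant \( g \brk{q} \) once \( \rho \) is small, so a standard capping argument — selecting a good radius \( \rho \) by Fubini's theorem — produces a map that coincides with \( g \) outside \( B_\rho \brk{q} \), is constant near \( q \), has energy exceeding \( \int_{\Sset^m} \abs{\Deriv g}^m \) by at most \( C\!\int_{B_\rho\brk{q}} \abs{\Deriv g}^m \to 0 \), and stays in the same homotopy class (the discrepancy it introduces has arbitrarily small energy, hence is null-homotopic). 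Deleting a small ball around \( q \) and mapping the remaining closed geodesic ball conformally onto \( \overline{\Bset^m} \) then gives a competitor in \( \sobolev^{1,m} \brk{\Bset^m, \manifold{N}} \) with constant trace and the right class, of energy at most \( \int_{\Bset^m} \abs{\Deriv u}^m + \int_{\Bset^m} \abs{\Deriv v}^m + o\brk{1} \); letting \( \rho \to 0 \) concludes. I expect this passage from the round sphere to the flat ball to be the only genuine obstacle, and it disappears altogether if one records, as in the introduction, that the topological energy of a class is unaffected by allowing competitors defined on \( \Sset^m \) rather than on \( \Bset^m \), in which case \( g \) itself is admissible and the estimate is immediate.
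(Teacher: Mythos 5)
Your proposal is correct and takes essentially the same approach as the paper: both exploit a conformal gluing of \(u\) and \(v\) along their common boundary, which by conformal invariance of the \(m\)-energy yields a competitor whose energy is exactly the sum of the two. The paper realises this gluing directly on \(\Bset^m\) --- inverting \(u\) onto the outer annulus via \(x \mapsto \rho x/\abs{x}^2\) and scaling \(v\) into the inner ball, after first making \(u\) constant near the origin by an approximation argument --- while your assembly on the two hemispheres of \(\Sset^m\) followed by the cap-and-transplant step is the same construction viewed through stereographic projection, with your capping near \(q\) playing precisely the role of the paper's ``make \(u\) constant near the origin'' step.
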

\begin{proof}
By an approximation argument, we can assume that \(u = b\) on \(\Bar{B}_\rho\).
We define then 
\[
 w \brk{x}
 \defeq 
 \begin{cases}
  v \brk{\frac{x}{\rho}} & \text{if \(\abs{x}\le \rho\),}\\
  u \brk{\frac{\rho x}{\abs{x}^2}} &\text{if \(\abs{x} \ge \rho\).}
 \end{cases}
\]
Since \(v = u\) on \(\partial \Bset^m\), we have \(w \in \sobolev^{1, m} \brk{\Bset^m, \manifold{N}} \cap \continuous \brk{\overline{\Bset^m}, \manifold{N}}\), while it is straightforward to observe that \( w \in \sqb{u, v, \Bset^m} \).
By our additional assumption on \(u\), \(w\restr{\partial \Bset^m} = b\).
Since the change of variable in the definition of \(w\) is conformal, we have 
\[
 \int_{\Bset^m}\abs{\Deriv w}^m
 = \int_{\Bset^m}\abs{\Deriv u}^m + \int_{\Bset^m} \abs{\Deriv v}^m\eqpunct{,}
\]
and the conclusion follows.
\end{proof}

\subsection{The disparity energy}
\label{section_disparity_energy}
We are finally in position to define the energy associated with topological disparities.
We define the \emph{disparity energy}
\begin{multline}
\label{eq_idei2jeej5luop9AiB8aeph0}
 \energy^{1, m}_{\mathrm{disp}} \brk{u, v}
 \defeq 
 \inf \Bigl\{\energy^{1, m}_{\mathrm{top}} \brk{\sqb{u, w, B_{\rho} \brk{a}}} \stSymbol[\Big] w \in \continuous \brk{\manifold{M}, \manifold{N}} \text{ homotopic to \(v\) }\\
 \text{ and } u = w \text{ in \(\manifold{M} \setminus B_\rho\brk{a}\)}\Bigr\}\eqpunct{.}
\end{multline}

The infimum above runs over all balls \( B_{\rho} \brk{a} \subset \manifold{M} \) with \( \rho \) sufficiently small, and all corresponding maps \( w \).
However,~\cref{lemma_moving_disparities} and \cref{proposition_properties_Etop}~\ref{it_Etop_action} imply that we could as well have fixed the point \( a \), and that the quantity \(
  \energy^{1, m}_{\mathrm{disp}} \brk{u, v}
\) would not depend on this choice.
By \cref{proposition_properties_Etop} \ref{it_Etop_discrete}, the infimum in \eqref{eq_idei2jeej5luop9AiB8aeph0} is actually a minimum.

\begin{proposition}
\label{proposition_disparity_distance}
The function \(\energy^{1, m}_{\mathrm{disp}}\) is a distance on homotopy classes:
\begin{enumerate}[label=(\roman*)]
 \item 
 \label{it_Edis_nonnegative}
 For every \(u, v \in \continuous \brk{\manifold{M}, \manifold{N}}\), 
 \[
  \energy^{1, m}_{\mathrm{disp}} \brk{u, v} \ge 0\eqpunct{.}
 \]
 \item \label{it_Edis_discrete}
 There exists \(\eta \in \intvo{0}{\infty}\) such that if \(u, v \in \continuous \brk{\manifold{M}, \manifold{N}}\) and 
 \[
   \energy^{1, m}_{\mathrm{disp}} \brk{u, v} \le \eta\eqpunct{,}
 \]
 then \(u\) and \(v\) are homotopic.
 \item 
 \label{it_Edis_symmetric}
 For every \(u, v \in \continuous \brk{\manifold{M}, \manifold{N}}\), 
 \[
   \energy^{1, m}_{\mathrm{disp}} \brk{u, v}
   = \energy^{1, m}_{\mathrm{disp}} \brk{v, u}\eqpunct{.}
 \]
 \item 
  \label{it_Edis_triangle}
For every \(u, v, w \in \continuous \brk{\manifold{M}, \manifold{N}}\), we have
\[
 \energy^{1, m}_{\mathrm{disp}} \brk{u, w}
 \le \energy^{1, m}_{\mathrm{disp}} \brk{u, v} + \energy^{1, m}_{\mathrm{disp}} \brk{v, w}\eqpunct{.}
\]
\end{enumerate}
\end{proposition}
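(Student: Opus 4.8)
The plan is to verify the four defining properties of a distance in turn. The nonnegativity~\ref{it_Edis_nonnegative} is immediate, since $\energy^{1, m}_{\mathrm{disp}}\brk{u, v}$ is by definition an infimum of values of $\energy^{1, m}_{\mathrm{top}}$, which are nonnegative by \cref{proposition_properties_Etop}~\ref{it_Etop_nonnegative}. For~\ref{it_Edis_discrete} I would use that, by \cref{proposition_properties_Etop}~\ref{it_Etop_discrete}, the infimum in~\eqref{eq_idei2jeej5luop9AiB8aeph0} is attained: if $\energy^{1, m}_{\mathrm{disp}}\brk{u, v} \le \eta$, there is $w \in \continuous\brk{\manifold{M}, \manifold{N}}$ homotopic to $v$ with $u = w$ in $\manifold{M}\setminus B_\rho\brk{a}$ and $\energy^{1, m}_{\mathrm{top}}\brk{\sqb{u, w, B_\rho\brk{a}}} \le \eta$; taking $\eta$ strictly smaller than the gap constant of \cref{proposition_properties_Etop}~\ref{it_Etop_gap0} forces $\sqb{u, w, B_\rho\brk{a}} = 0$, so that $u$ and $w$, and hence $u$ and $v$, are homotopic by \cref{lemma_disparity_trivial_homotopic}.

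The symmetry~\ref{it_Edis_symmetric} amounts to interchanging the two arguments of $\energy^{1, m}_{\mathrm{disp}}$, which play asymmetric roles in~\eqref{eq_idei2jeej5luop9AiB8aeph0} (one appears literally outside a ball, the other only up to homotopy). Given any competitor $w$ for $\energy^{1, m}_{\mathrm{disp}}\brk{u, v}$, I would apply \cref{lemma_discrepancy_homotopy_transport} to the pair $\brk{w, u}$, which coincide outside $B_\rho\brk{a}$, and to the map $v$, which is homotopic to $w$: this produces $v_1 \in \continuous\brk{\manifold{M}, \manifold{N}}$ homotopic to $u$ with $v = v_1$ in $\manifold{M}\setminus B_\rho\brk{a}$ and a path $\zeta$ such that $\sqb{w, u, B_\rho\brk{a}} = \zeta_*\sqb{v, v_1, B_\rho\brk{a}}$. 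Then $v_1$ is a competitor for $\energy^{1, m}_{\mathrm{disp}}\brk{v, u}$, and combining \cref{proposition_properties_Etop}~\ref{it_Etop_action} with \cref{lemma_disparity_reciprocal} and \cref{proposition_properties_Etop}~\ref{it_Etop_symmetric} gives $\energy^{1, m}_{\mathrm{top}}\brk{\sqb{v, v_1, B_\rho\brk{a}}} = \energy^{1, m}_{\mathrm{top}}\brk{\sqb{u, w, B_\rho\brk{a}}}$. Taking the infimum over $w$ yields $\energy^{1, m}_{\mathrm{disp}}\brk{v, u} \le \energy^{1, m}_{\mathrm{disp}}\brk{u, v}$, and exchanging the roles of $u$ and $v$ gives equality.

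For the triangle inequality~\ref{it_Edis_triangle}, I would take optimal competitors $v'$ for $\energy^{1, m}_{\mathrm{disp}}\brk{u, v}$ (so $v'$ is homotopic to $v$ and $u = v'$ outside a ball) and $w'$ for $\energy^{1, m}_{\mathrm{disp}}\brk{v, w}$ (so $w'$ is homotopic to $w$ and $v = w'$ outside a ball); shrinking the radii via \cref{lemma_disparity_modify_radius} and relocating one ball via \cref{lemma_moving_disparities}, both of which leave the corresponding topological energy unchanged, I may assume the two balls are $B_\rho\brk{a}$ and $B_\rho\brk{b}$ with disjoint closures. Transporting the second disparity from the base map $v$ to the homotopic base map $v'$ by \cref{lemma_discrepancy_homotopy_transport} applied to $\brk{v, w'}$ and $v'$ yields $w'' \in \continuous\brk{\manifold{M}, \manifold{N}}$ homotopic to $w$ with $v' = w''$ in $\manifold{M}\setminus B_\rho\brk{b}$ and $\energy^{1, m}_{\mathrm{top}}\brk{\sqb{v', w'', B_\rho\brk{b}}} = \energy^{1, m}_{\mathrm{top}}\brk{\sqb{v, w', B_\rho\brk{b}}}$. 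Since the balls have disjoint closures and the topological disparity on a ball depends only on the restrictions of the two maps to that ball, $u = w''$ outside $B_\rho\brk{a}\cup B_\rho\brk{b}$, while $\sqb{u, w'', B_\rho\brk{a}} = \sqb{u, v', B_\rho\brk{a}}$ and $\sqb{u, w'', B_\rho\brk{b}} = \sqb{v', w'', B_\rho\brk{b}}$. Applying \cref{label_merging} to $u$ and $w''$ then consolidates the two bubbles into a single ball $B_\rho\brk{a_0}$, producing $\Tilde{w} \in \continuous\brk{\manifold{M}, \manifold{N}}$ homotopic to $w$ with $u = \Tilde{w}$ outside $B_\rho\brk{a_0}$ and $\sqb{u, \Tilde{w}, B_\rho\brk{a_0}}$ a sum of fundamental-groupoid transports of $\sqb{u, v', B_\rho\brk{a}}$ and $\sqb{v', w'', B_\rho\brk{b}}$. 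Since $\Tilde{w}$ is then a competitor for $\energy^{1, m}_{\mathrm{disp}}\brk{u, w}$, \cref{proposition_properties_Etop}~\ref{it_Etop_sublinear} and~\ref{it_Etop_action} give
\begin{align*}
 \energy^{1, m}_{\mathrm{disp}}\brk{u, w}
 &\le \energy^{1, m}_{\mathrm{top}}\brk{\sqb{u, \Tilde{w}, B_\rho\brk{a_0}}}\\
 &\le \energy^{1, m}_{\mathrm{top}}\brk{\sqb{u, v', B_\rho\brk{a}}}
 + \energy^{1, m}_{\mathrm{top}}\brk{\sqb{v', w'', B_\rho\brk{b}}}\\
 &= \energy^{1, m}_{\mathrm{disp}}\brk{u, v} + \energy^{1, m}_{\mathrm{disp}}\brk{v, w}\eqpunct{,}
\end{align*}
where the last line uses the optimality of $v'$ and $w'$ together with the fact that the transport preserves $\energy^{1, m}_{\mathrm{top}}$.

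The step I expect to demand the most care is the triangle inequality, precisely the bookkeeping of basepoints and of the induced $\pi_1\brk{\manifold{N}}$-actions: one must track, through each transport and merging, where the maps coincide and which basepoint the homotopy classes refer to. This is exactly what \cref{lemma_discrepancy_homotopy_transport}, \cref{lemma_moving_disparities}, and \cref{label_merging} are built to handle, while the $\pi_1\brk{\manifold{N}}$-actions that arise are absorbed by the isometry property \cref{proposition_properties_Etop}~\ref{it_Etop_action}; so no ingredient beyond \S\ref{section_topological_disparity} and \S\ref{section_topological_energy} should be needed.
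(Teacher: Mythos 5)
Your proofs of~\ref{it_Edis_nonnegative}, \ref{it_Edis_discrete}, and~\ref{it_Edis_symmetric} coincide with the paper's argument up to renaming; in particular, your treatment of symmetry applies \cref{lemma_discrepancy_homotopy_transport} to the pair $(w,u)$ and $v$ and then combines \cref{proposition_properties_Etop}~\ref{it_Etop_action}, \cref{lemma_disparity_reciprocal}, and \cref{proposition_properties_Etop}~\ref{it_Etop_symmetric} exactly as the paper does.

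For the triangle inequality~\ref{it_Edis_triangle} your argument is correct, but it takes a genuinely different route. The paper puts \emph{both} competitors on the \emph{same} ball $B_\rho\brk{a}$ from the outset (which is no loss of generality, as noted after~\eqref{eq_idei2jeej5luop9AiB8aeph0}); writing $\Tilde{v}$ homotopic to $v$ with $\Tilde{v}=u$ outside $B_\rho\brk{a}$ and $\Tilde{w}$ homotopic to $w$ with $\Tilde{w}=v$ outside $B_\rho\brk{a}$, it transports $\Tilde{w}$ along the homotopy from $v$ to $\Tilde{v}$ (via \cref{lemma_discrepancy_homotopy_transport}) to produce $\Tilde{\Tilde{w}}$ with $\Tilde{\Tilde{w}}=\Tilde{v}=u$ outside the \emph{same} ball, so that $u$, $\Tilde{v}$, $\Tilde{\Tilde{w}}$ all agree on $\partial B_\rho\brk{a}$ and \cref{lemma_superpose_disparities} applies directly, giving the additive splitting inside a single ball; \cref{proposition_properties_Etop}~\ref{it_Etop_sublinear} and~\ref{it_Etop_action} then finish. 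You instead place the two competitors on \emph{disjoint} balls $B_\rho\brk{a}$ and $B_\rho\brk{b}$, transport the second bubble onto the map $v'$, and then invoke the merging lemma (\cref{label_merging}) to coalesce the two bubbles into one. Both arguments land on the same estimate; yours pays for the disjoint-ball normalisation with the extra use of \cref{lemma_disparity_modify_radius}, \cref{lemma_moving_disparities}, and \cref{label_merging}, while the paper's single-ball normalisation bypasses the merging step entirely. (The merging lemma is in fact proved by repeated use of \cref{lemma_superpose_disparities} and \cref{lemma_moving_disparities}, so at bottom the two arguments rely on the same toolbox, but the paper's is the shorter path.)
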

\begin{proof}
Given \(u, v \in \continuous \brk{\manifold{M}, \manifold{N}}\), the quantity \(\energy^{1, m}_{\mathrm{disp}} \brk{u, v}\) is clearly nonnegative, so that \ref{it_Edis_nonnegative} holds.

If moreover \(\energy^{1, m}_{\mathrm{disp}} \brk{u, v} < \eta\), then there exists \(w \in \continuous \brk{\manifold{M}, \manifold{N}}\) such that \(u = w\) in \(\manifold{M} \setminus B_\rho \brk{a}\), \(v\) and \(w\) are homotopic, and 
\[
 \energy^{1, m}_{\mathrm{top}} \brk{\sqb{u, w, B_{\rho} \brk{a}}} <\eta\eqpunct{.}
\]
By \cref{proposition_properties_Etop} \ref{it_Etop_gap0} and \cref{lemma_disparity_trivial_homotopic}, \(u\) and \(w\) are homotopic relatively to \( \manifold{M} \setminus B_\rho \brk{a}\), proving \ref{it_Edis_discrete}.

Let \(u, v \in \continuous \brk{\manifold{M}, \manifold{N}}\). Assuming that \(\Tilde{v} \in \continuous \brk{\manifold{M}, \manifold{N}}\) is homotopic to \(v\) and \(\Tilde{v} = u\) in \(\manifold{M} \setminus B_{\rho}\brk{a}\),
we let \(\Tilde{u} \in \continuous \brk{\manifold{M}, \manifold{N}}\) and \(\zeta \in \continuous \brk{\intvc{0}{1}, \manifold{N}}\) be given by \cref{lemma_discrepancy_homotopy_transport}.
We compute, with the aid of~\cref{proposition_properties_Etop} \ref{it_Etop_action},
\[
\begin{split}
 \energy^{1, m}_{\mathrm{top}} \brk{\sqb{\Tilde{v}, u, B_{\rho} \brk{a}}}
 &= \energy^{1, m}_{\mathrm{top}} \brk{\zeta_* \sqb{v, \Tilde{u}, B_{\rho} \brk{a}}}\\
 &= \energy^{1, m}_{\mathrm{top}} \brk{\sqb{v, \Tilde{u}, B_{\rho} \brk{a}}}\eqpunct{.}
\end{split}
\]
To conclude, we let \(\gamma \in \continuous \brk{\intvc{0}{1}, \manifold{M}}\) be such that \(\gamma\brk{1} = a\) and \(\gamma \brk{0}\not \in B_\rho \brk{a}\), and we rely on~\cref{proposition_properties_Etop} \ref{it_Etop_action}  and~\cref{lemma_disparity_reciprocal} to obtain
\[
\begin{split}
	\energy^{1, m}_{\mathrm{top}} \brk{\sqb{\Tilde{v}, u, B_{\rho} \brk{a}}}
	&= \energy^{1, m}_{\mathrm{top}} \brk{\brk{\Tilde{v} \compose \gamma}_* \sqb{\Tilde{v}, u, B_{\rho} \brk{a}}} \\
	&= \energy^{1, m}_{\mathrm{top}} \brk{-\brk{u \compose \gamma}_* \sqb{u, \Tilde{v}, B_{\rho} \brk{a}}}= \energy^{1, m}_{\mathrm{top}} \brk{\sqb{u, \Tilde{v}, B_{\rho} \brk{a}}}\eqpunct{,}
\end{split}
\]
so that \ref{it_Edis_symmetric} follows using~\cref{proposition_properties_Etop} \ref{it_Etop_symmetric}.

If \(u, v, w \in \continuous \brk{\manifold{M}, \manifold{N}}\),
assume that \(\Tilde{v}\in \continuous \brk{\manifold{M}, \manifold{N}}\) is homotopic to \(v\) and \(\Tilde{v} = u\) on \(\manifold{M}\setminus B_\rho\brk{a}\), and that \(\Tilde{w}\in \continuous \brk{\manifold{M}, \manifold{N}}\) is homotopic to \(w\) and \(\Tilde{w} = v\) on \(\manifold{M}\setminus B_\rho\brk{a}\).
By \cref{lemma_discrepancy_homotopy_transport}, there exist \(\smash{\Tilde{\Tilde{w}}} \in \continuous \brk{\manifold{M}, \manifold{N}}\) and \(\zeta \in \continuous \brk{\intvc{0}{1}, \manifold{N}}\) such that \(\smash{\Tilde{\Tilde{w}}}\) is homotopic to \(\Tilde{w}\) and \(w\), \(\smash{\Tilde{\Tilde{w}}} = \Tilde{v} = u\) in \(\manifold{M} \setminus B_{\rho}\brk{a}\), and
\begin{align*}
 \sqb{v, \Tilde{w}, B_{\rho}\brk{a}}
 &= \zeta_* \sqb{\Tilde{v}, \Tilde{\Tilde{w}}, B_{\rho}\brk{a}}&
 &\text{in } \pi_m \brk{\manifold{N}, v \brk{a}}\eqpunct{.}
\end{align*}
Fixing \(\gamma \in \continuous \brk{\intvc{0}{1}, \manifold{M}}\) such that \(\gamma\brk{1} = a\) and \(\gamma \brk{0}\not \in B_\rho \brk{a}\), by \cref{proposition_properties_Etop} \ref{it_Etop_action}, \cref{lemma_superpose_disparities}, and \cref{proposition_properties_Etop} \ref{it_Etop_sublinear}, we have 
\[
\begin{split}
 \energy^{1, m}_{\mathrm{top}} \brk{\sqb{u, \Tilde{\Tilde{w}}, B_{\rho} \brk{a}}}
 &= \energy^{1, m}_{\mathrm{top}} \brk{\brk{u \compose \gamma}_*\sqb{u, \Tilde{\Tilde{w}}, B_{\rho} \brk{a}}}\\
 &= \energy^{1, m}_{\mathrm{top}} \brk{\brk{u \compose \gamma}_*\sqb{u, \Tilde{v}, B_{\rho} \brk{a}} + \brk{\Tilde{v} \compose \gamma}_*\sqb{\Tilde{v}, \Tilde{\Tilde{w}}, B_{\rho} \brk{a}}}\\
 &\le \energy^{1, m}_{\mathrm{top}} \brk{\brk{u \compose \gamma}_*\sqb{u, \Tilde{v}, B_{\rho} \brk{a}}} + \energy^{1, m}_{\mathrm{top}} \brk{\brk{\Tilde{v} \compose \gamma}_*\sqb{\Tilde{v}, \Tilde{\Tilde{w}}, B_{\rho} \brk{a}}}\\
 &= \energy^{1, m}_{\mathrm{top}} \brk{\brk{u \compose \gamma}_*\sqb{u, \Tilde{v}, B_{\rho} \brk{a}}} + \energy^{1, m}_{\mathrm{top}} \brk{\zeta_*\sqb{\Tilde{v}, \Tilde{\Tilde{w}}, B_{\rho} \brk{a}}}\\
 &= \energy^{1, m}_{\mathrm{top}} \brk{\sqb{u, \Tilde{v}, B_{\rho} \brk{a}}} + \energy^{1, m}_{\mathrm{top}} \brk{\sqb{v, \Tilde{w}, B_{\rho} \brk{a}}}\eqpunct{,}
\end{split}
\]
so that \ref{it_Edis_triangle} follows. 
\end{proof}

Another important property of the disparity energy is its continuity with respect to the strong \( \sobolev^{1,m} \) convergence.

\begin{proposition}
\label{proposition_continuity_disparity}
	If \( u \in \sobolev^{1,m} \brk{\manifold{M}, \manifold{N}} \cap \continuous \brk{\manifold{M}, \manifold{N}} \), \( v \in \continuous \brk{\manifold{M}, \manifold{N}} \), and \( \brk{u_j}_{j \in \Nset} \) is a sequence in \( \sobolev^{1,m} \brk{\manifold{M}, \manifold{N}} \cap \continuous \brk{\manifold{M}, \manifold{N}} \) converging strongly to \( u \) in \( \sobolev^{1,m} \), then
	\[
		\lim_{j \to \infty} \energy^{1,m}_{\mathrm{disp}} \brk{u_j,v} = \energy^{1,m}_{\mathrm{disp}} \brk{u,v}\eqpunct{.}
	\]
\end{proposition}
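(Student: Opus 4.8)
The proof will rest on the observation that \(\energy^{1,m}_{\mathrm{disp}} \brk{u,v}\) is a \emph{purely topological} quantity: both the topological disparity \(\sqb{u, w, B_\rho \brk{a}}\) and the topological energy \(\energy^{1,m}_{\mathrm{top}}\) depend only on homotopy classes, so the single analytic ingredient needed is the stability of homotopy classes under strong \(\sobolev^{1,m}\) convergence. Concretely, the plan is to prove the stronger statement that \(\energy^{1,m}_{\mathrm{disp}} \brk{u_j, v} = \energy^{1,m}_{\mathrm{disp}} \brk{u, v}\) for all \(j\) large enough, which makes the convergence trivial.

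First, I would use that homotopy classes are open in \(\sobolev^{1,m} \brk{\manifold{M}, \manifold{N}}\) by \cref{proposition_stability_homotopy_classes}: since \(u_j \to u\) strongly in \(\sobolev^{1,m}\), there is \(J \in \Nset\) such that \(u_j\) is homotopic to \(u\) in \(\sobolev^{1,m} \brk{\manifold{M}, \manifold{N}}\) whenever \(j \ge J\), and, as \(u_j\) and \(u\) are continuous, they are then homotopic in \(\continuous \brk{\manifold{M}, \manifold{N}}\) by \cref{proposition_equivalence_homotopies}.

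Second, I would establish that \(\energy^{1,m}_{\mathrm{disp}} \brk{\cdot, v}\) is invariant under homotopies of its first argument — this is in fact implicit in \cref{proposition_disparity_distance}, but it can be seen directly. Given \(u', u'' \in \continuous \brk{\manifold{M}, \manifold{N}}\) homotopic and an arbitrary competitor \(w\) for \(\energy^{1,m}_{\mathrm{disp}} \brk{u'', v}\) (so \(w \in \continuous \brk{\manifold{M}, \manifold{N}}\) is homotopic to \(v\) and \(u'' = w\) on \(\manifold{M} \setminus B_\rho \brk{a}\)), applying \cref{lemma_discrepancy_homotopy_transport} with \(u_0 = u''\), \(v_0 = w\), \(u_1 = u'\) and a single ball produces \(w' \in \continuous \brk{\manifold{M}, \manifold{N}}\) homotopic to \(w\), hence to \(v\), with \(u' = w'\) on \(\manifold{M} \setminus B_\rho \brk{a}\), together with a path \(\zeta \in \continuous \brk{\intvc{0}{1}, \manifold{N}}\) such that \(\sqb{u'', w, B_\rho \brk{a}} = \zeta_* \sqb{u', w', B_\rho \brk{a}}\). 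By \cref{proposition_properties_Etop}~\ref{it_Etop_action},
\[
  \energy^{1,m}_{\mathrm{top}} \brk{\sqb{u'', w, B_\rho \brk{a}}}
  = \energy^{1,m}_{\mathrm{top}} \brk{\sqb{u', w', B_\rho \brk{a}}}
  \ge \energy^{1,m}_{\mathrm{disp}} \brk{u', v}\eqpunct{,}
\]
since \(w'\) is an admissible competitor for \(\energy^{1,m}_{\mathrm{disp}} \brk{u', v}\); taking the infimum over all competitors \(w\) yields \(\energy^{1,m}_{\mathrm{disp}} \brk{u'', v} \ge \energy^{1,m}_{\mathrm{disp}} \brk{u', v}\), and exchanging the roles of \(u'\) and \(u''\) gives the reverse inequality, hence equality.

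Combining the two steps, \(\energy^{1,m}_{\mathrm{disp}} \brk{u_j, v} = \energy^{1,m}_{\mathrm{disp}} \brk{u, v}\) for all \(j \ge J\), which is the announced limit. I do not expect a genuine obstacle here: the heart of the argument is simply the recognition that the disparity energy is a homotopy invariant, so there is no analytic difficulty; the only points requiring a little care are checking that the single-ball instance of \cref{lemma_discrepancy_homotopy_transport} applies and keeps the competitor continuous (both immediate), and invoking \cref{proposition_equivalence_homotopies} to pass from \(\sobolev^{1,m}\)- to \(\continuous\)-homotopies. (One could alternatively argue via the discreteness of the range of \(\energy^{1,m}_{\mathrm{top}}\) from \cref{proposition_properties_Etop}~\ref{it_Etop_discrete} together with lower and upper semicontinuity, but the eventual-equality route is cleaner.)
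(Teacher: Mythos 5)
Your proof is correct. Both you and the paper reduce the statement to two ingredients: (a) the openness of $\sobolev^{1,m}$-homotopy classes (\cref{proposition_stability_homotopy_classes}), which forces $u_j$ into the homotopy class of $u$ for $j$ large, and (b) the fact that $\energy^{1,m}_{\mathrm{disp}}$ is a homotopy invariant. The difference is how (b) is handled. The paper's proof is shorter: it invokes the symmetry $\energy^{1,m}_{\mathrm{disp}}(u_j,v) = \energy^{1,m}_{\mathrm{disp}}(v,u_j)$ from \cref{proposition_disparity_distance}~\ref{it_Edis_symmetric}, and then uses that the definition \eqref{eq_idei2jeej5luop9AiB8aeph0} is \emph{manifestly} invariant under replacing its second argument by a homotopic map, since the competitor class ``$w$ homotopic to the second argument'' depends only on the homotopy class. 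You instead establish the invariance in the \emph{first} argument directly, by applying \cref{lemma_discrepancy_homotopy_transport} to a competitor $w$ and then \cref{proposition_properties_Etop}~\ref{it_Etop_action}; this is, in effect, unrolling part of the argument that the paper already made inside the proof of \ref{it_Edis_symmetric}. Both routes are sound and rely on the same topological machinery; the paper's is shorter because it reuses a lemma already proved, yours is more self-contained and makes the homotopy-invariance mechanism explicit. (Your observation that discreteness plus semicontinuity would also work is correct, but unnecessary once you have eventual equality.)
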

\begin{proof}
	This follows from \cref{proposition_stability_homotopy_classes} and \cref{proposition_disparity_distance}~\ref{it_Edis_symmetric}.
\end{proof}

At the heart of our definition of the disparity energy is the following problem: given two maps \( u \) and \( v \), find the optimal way of modifying \( u \) inside one ball to obtain a map homotopic to \( v \), in order to minimize the energy of the resulting map.
The next proposition essentially encodes the fact that one cannot gain by allowing instead to modify \( u \) on \emph{several balls}.

\begin{proposition}
\label{proposition_Edis_controlled_several_bubbles}
If \(u = v \) on \(\manifold{M} \setminus \bigcup_{i = 1}^{I} B_{\rho} \brk{a_i}\) with \(B_\rho \brk{a_1}, \dotsc, B_\rho\brk{a_I}\) disjoint, then
\[
 \energy^{1, m}_{\mathrm{disp}} \brk{u, v}
 \le \sum_{i = 1}^{I} \energy^{1, m}_{\mathrm{top}}\brk[\big]{\sqb{u, v, B_\rho \brk{a_i}}}\eqpunct{.}
\]
\end{proposition}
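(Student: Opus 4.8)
The plan is to obtain this inequality as a direct consequence of the merging construction of \cref{label_merging}, combined with the norm-like properties of the topological energy gathered in \cref{proposition_properties_Etop}. First I would fix an auxiliary point \(a_0 \in \manifold{M}\) together with, for each \(i \in \set{1, \dotsc, I}\), a continuous path \(\gamma_i \in \continuous\brk{\intvc{0}{1}, \manifold{M}}\) with \(\gamma_i\brk{0} = a_0\) and \(\gamma_i\brk{1} = a_i\); this is where one uses that the centres \(a_i\) can be joined inside \(\manifold{M}\). Applying \cref{label_merging} to \(u\), \(v\), and these paths then yields a single map \(w \in \continuous\brk{\manifold{M}, \manifold{N}}\) which is homotopic to \(v\), coincides with \(u\) on \(\manifold{M}\setminus B_\rho\brk{a_0}\), and for which
\[
  \sqb{u, w, B_\rho\brk{a_0}} = \sum_{i=1}^I \brk{u \compose \gamma_i}_* \sqb{u, v, B_\rho\brk{a_i}}
  \qquad\text{in } \pi_m\brk{\manifold{N}, u\brk{a_0}},
\]
the orientation of \(\Bar{B}_\rho\brk{a_i}\) being transported from \(\Bar{B}_\rho\brk{a_0}\) along \(\gamma_i\).

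Next, since such a \(w\) is an admissible competitor in the definition \eqref{eq_idei2jeej5luop9AiB8aeph0} of the disparity energy, I would start from \(\energy^{1, m}_{\mathrm{disp}}\brk{u,v} \le \energy^{1, m}_{\mathrm{top}}\brk{\sqb{u,w,B_\rho\brk{a_0}}}\), substitute the merging identity, and then peel the right-hand side apart one term at a time: subadditivity of \(\energy^{1, m}_{\mathrm{top}}\) (\cref{proposition_properties_Etop}~\ref{it_Etop_sublinear}) splits the sum, invariance under the action of the fundamental groupoid (\cref{proposition_properties_Etop}~\ref{it_Etop_action}) discards the path actions \(\brk{u\compose\gamma_i}_*\), and the symmetry \(\energy^{1, m}_{\mathrm{top}}\brk{-\sigma} = \energy^{1, m}_{\mathrm{top}}\brk{\sigma}\) (\cref{proposition_properties_Etop}~\ref{it_Etop_symmetric}) absorbs any reversal of the orientation of \(\Bar{B}_\rho\brk{a_i}\) that \cref{label_merging} may introduce. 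The outcome is exactly \(\energy^{1, m}_{\mathrm{disp}}\brk{u,v} \le \sum_{i=1}^I \energy^{1, m}_{\mathrm{top}}\brk{\sqb{u,v,B_\rho\brk{a_i}}}\).

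I do not expect any genuine obstacle here: once \cref{label_merging} is available, the statement is pure bookkeeping. The two points that do require attention — arranging that all the homotopy classes involved live in the same group \(\pi_m\brk{\manifold{N}, u\brk{a_0}}\) so that their sum is meaningful, and keeping track of the orientations of the transported balls — are precisely what the statement of \cref{label_merging} and the symmetry and groupoid-invariance clauses of \cref{proposition_properties_Etop} are designed to handle, so no new difficulty is introduced.
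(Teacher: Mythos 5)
Your proposal is correct and takes essentially the same approach as the paper: fix a reference point, apply the merging lemma \cref{label_merging} to produce a single competitor \(w\), use \(w\) in the definition \eqref{eq_idei2jeej5luop9AiB8aeph0}, and then conclude via the subadditivity and groupoid-invariance properties of \cref{proposition_properties_Etop}. The only cosmetic difference is that you also explicitly invoke the symmetry property \ref{it_Etop_symmetric} to absorb possible orientation reversals introduced by the transport along \(\gamma_i\); the paper leaves that implicit since, as noted after \cref{proposition_properties_Etop}, \(\energy^{1,m}_{\mathrm{top}}\brk{\sqb{\cdot,\cdot,\cdot}}\) is orientation-independent by construction.
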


The idea of the proof is to use the merging tool provided by~\cref{label_merging} to gather all the bubbles contained in the balls \( B_{\rho}\brk{a_i} \) inside a common ball \( B_{\rho}\brk{a} \).

\begin{proof}[Proof of \cref{proposition_Edis_controlled_several_bubbles}]
 Choosing \(\gamma_i \in \continuous \brk{\intvc{0}{1}, \manifold{M}}\) such that \(\gamma_i \brk{0} = a \) and \(\gamma_i \brk{1} = a_i \), with \( B_{\rho}\brk{a} \cap \bigcup_{i = 1}^{I} B_{\rho} \brk{a_i} = \varnothing \), and applying \cref{label_merging}, we get a mapping \(w \in \continuous \brk{\manifold{M}, \manifold{N}}\) such that  
\(u = w \) in \(\manifold{M} \setminus B_{\rho}\brk{a}\), \( w \) is homotopic to \( v \), and 
\begin{align*}
 \sqb{u, w, B_\rho\brk{a}}
 &= \sum_{i = 1}^I \brk{u \compose \gamma_i}_*
 \sqb{u, v, B_\rho\brk{a_i}}&
 &\text{in } \pi_m \brk{\manifold{N}, u \brk{a}}\eqpunct{.}
\end{align*}
Hence, by \cref{proposition_properties_Etop} \ref{it_Etop_sublinear} and \ref{it_Etop_action}, we get  
\[
\begin{split}
  \energy^{1, m}_{\mathrm{top}} \brk{\sqb{u, w, B_\rho\brk{a}}}
   &\le \sum_{i = 1}^I \energy^{1, m}_{\mathrm{top}} \brk{\brk{u \compose \gamma_i}_*\sqb{u, v, B_\rho\brk{a_i}}}\\
   &= \sum_{i = 1}^I \energy^{1, m}_{\mathrm{top}} \brk{\sqb{u, v, B_\rho\brk{a_i}}}\eqpunct{,}
\end{split}
\]
and the conclusion follows.
\end{proof}

At the core of the proof of \cref{proposition_Edis_controlled_several_bubbles} lies the estimate
\[
 \energy^{1, m}_{\mathrm{disp}} \brk{u, v}
 \le 
 \energy^{1, m}_{\mathrm{top}}\brk[\Big]{\sum_{i = 1}^I \brk{u \compose \gamma_i}_* \sqb{u, v, B_\rho \brk{a_i}}}\eqpunct{.}
\]
When \(\pi_1 \brk{\manifold{N}}\simeq \set{0}\), then the quantity on the right-hand side is independent on the choice of the paths \(\gamma_1, \dotsc, \gamma_k\).

In general however, it depends on \(\gamma_1, \dotsc, \gamma_k\).
For example, if we take the projective space \(\manifold{M} = \RPset^{2n}\) and the sphere \(\manifold{N} = \Sset^{2n}\), if \(u = v\) on \(\manifold{M} \setminus \bigcup_{i = 1}^{2} B_{\rho} \brk{a_i}\)
and \(\sqb{u, v, B_{\rho}\brk{a_i}}\) is a map of Brouwer degree \(d_i\), then depending on the way \(\gamma_1\) and \(\gamma_2\) transport the orientation, either \( \brk{u \compose \gamma_1}_*\sqb{u, v, B_{\rho}\brk{a_1}} + \brk{u \compose \gamma_2}_*\sqb{u, v, B_{\rho}\brk{a_2}}\) will have degree \(\pm d_1 \pm d_2\), leading to different values of the energy.

Even when \(\manifold{M}\) is orientable, a non-trivial action of \(\pi_{1}\brk{\manifold{M}}\) can also create such phenomena. 
Indeed, if \(\manifold{M} = \Sset^{m - 1}\times \Sset^1\), if we take \(u, v\) such that \(\zeta = u \compose \Bar{\gamma}\) is nontrivial for \(\Bar{\gamma}\) a loop along \(\Sset^1\), we see that we have to consider (at least) all the quantities 
\[
 \energy^{1, m}_{\mathrm{disp}} \brk{u, v}
 \le 
 \energy^{1, m}_{\mathrm{top}}\brk[\Big]{\sum_{i = 1}^I \brk{u \compose \gamma_i}_* \zeta_*^{k_i}\sqb{u, v, B_\rho \brk{a_i}}}\eqpunct{,}
\]
for \(k_i \in \Zset\), that have no a priori reason of being equal.

The process of changing \(\gamma_i\) only revolves around mappings that are homotopic to \(v\) relatively to \(\manifold{M}^{m - 2}\) (we can assume that the paths and the balls never intersect \(\manifold{M}^{m - 2}\)); it turns out that maps can be freely homotopic without being relatively homotopic.

\begin{proposition}
\label{proposition_free_vs_relative_homotopy_RPn}
Let \(n \in \Nset \setminus \set{0}\).
\begin{enumerate}[label=(\roman*)]
 \item If \(n\) is odd, if \(f, g \in \continuous \brk{\RPset^n, \RPset^n}\) are freely homotopic, and if \(f \brk{a} = g \brk{a}\), then \(f\) and \(g\) are homotopic relatively to \(\set{a}\).
 
 \item If  \(n\) is even and \(a \in \RPset^{n - 1}\subseteq \RPset^n\), then there exist \(f, g \in \continuous \brk{\RPset^n, \RPset^n}\) such that \(f= g\) on \(\RPset^{n - 1}\) and \(f\) and \(g\) are freely homotopic, but \(f\) and \(g\) are not homotopic relatively to \(\set{a}\).
\end{enumerate}
\end{proposition}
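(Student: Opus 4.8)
The plan is to reduce everything to the action of the fundamental group of the target on based homotopy classes. A homotopy of $f$ and $g$ relatively to $\set{a}$ is exactly a based homotopy (with basepoint $a \mapsto f(a) = g(a)$), and the forgetful map from based to free homotopy classes $[\RPset^n, \RPset^n]_* \to [\RPset^n, \RPset^n]$ has fibres the orbits of the action of $\pi_1(\RPset^n, f(a))$ on $[\RPset^n, \RPset^n]_*$; thus two maps with the same value at $a$ and in the same free homotopy class are homotopic relatively to $\set{a}$ if and only if they lie in the same $\pi_1$-orbit. The relevant computation is the action on $\pi_n(\RPset^n)$: the universal cover is $\Sset^n$ with deck transformation the antipodal map $A$, of Brouwer degree $(-1)^{n+1}$, so the generator of $\pi_1(\RPset^n) \simeq \Zset/2$ acts on $\pi_n(\RPset^n) \simeq \Zset$ by $(-1)^{n+1}$ — trivially for $n$ odd, by $-1$ for $n$ even. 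This dichotomy is the whole content of the proposition.

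For~(i), if $n$ is odd then $\deg A = +1$, so $A \simeq \id_{\Sset^n}$, whence $A$ acts trivially on every $\pi_k(\Sset^n) = \pi_k(\RPset^n)$; hence $\RPset^n$ is a simple space. For a simple target and a CW source the $\pi_1$-action on based homotopy classes is trivial — the standard argument runs skeleton by skeleton, the difference between a based map and its twist by a loop being measured over each cell by a coefficient in a homotopy group of the target on which $\pi_1$ acts trivially. Thus $[\RPset^n, \RPset^n]_* \to [\RPset^n, \RPset^n]$ is injective, which is exactly~(i): freely homotopic maps with $f(a) = g(a)$ represent the same based class, i.e.\ are homotopic relatively to $\set{a}$.

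For~(ii), with $n$ even I would produce the example from topological disparities in the spirit of Section~4. Fix a small ball $B$ in the interior of the top cell of $\RPset^n$, disjoint from $\RPset^{n-1}$ and from $a$; let $f$ be the identity altered inside $B$ so that $\sqb{\id, f, B}$ is a generator of $\pi_n(\RPset^n) \simeq \Zset$, and let $g$ be altered inside $B$ so that $\sqb{\id, g, B}$ is its opposite; then $f = g = \id$ off $B$, in particular $f\restr{\RPset^{n-1}} = g\restr{\RPset^{n-1}}$. They are freely homotopic: transporting $B$ along a loop generating $\pi_1(\RPset^n)$, which is orientation reversing since $\RPset^n$ is non-orientable, replaces $\sqb{\id, f, B}$ by its opposite by the non-orientable case of~\cref{lemma_moving_disparities}, so $f$ becomes freely homotopic to a map agreeing with $g$ off $B$ and with the same disparity, hence homotopic to $g$. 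The delicate step — and the one I expect to be the main obstacle — is ruling out a homotopy relatively to $\set{a}$, the point being that this is strictly weaker than homotopy relatively to $\RPset^{n-1}$, under which $f$ and $g$ are visibly distinct. I would argue by pinching off a ball $B' \supseteq B$ inside the top cell, which identifies $\RPset^n/\partial B'$ with $\RPset^n \vee \Sset^n$ ($\partial B'$ being an unknotted sphere) and writes $f$, $g$ as $(\id' \vee \beta_{\pm}) \circ c$ with $c$ the collapse, $\id' \simeq \id_{\RPset^n}$, and $\beta_{\pm} \colon \Sset^n \to \RPset^n$ representing the generator and its opposite; a homotopy from $f$ to $g$ fixing $a$ would then restrict through $c$ to a self-homotopy of $\id'$ fixing $a$ together with a homotopy from $\beta_+$ to $\beta_-$, linked at the image of $\partial B'$ by a common track, and since a self-homotopy of a map homotopic to $\id_{\RPset^n}$ fixing $a$ cannot drag an interior point around an orientation reversing loop (each slice has local degree $+1$ there) that track is null-homotopic in $\RPset^n$, forcing $\beta_+$ and $\beta_-$ to be based homotopic — absurd. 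The same computation explains why $f\restr{\RPset^{n-1}}$ must be taken to be the inclusion rather than a constant: collapsing $\RPset^{n-1}$ would introduce a degree two behaviour near $\RPset^{n-1}$ that lets a generator and its opposite coincide, so the naive constant choice produces no counterexample.
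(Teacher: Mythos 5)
Your reduction to the $\pi_1$-action on $\pi_n\brk{\RPset^n}$ identifies the correct mechanism, and your proof of part~(i) is sound: $A$ has degree $+1$ for $n$ odd, so $A \simeq \id_{\Sset^n}$ and $\RPset^n$ is a simple space, whence based and free homotopy classes coincide. The paper's argument for~(i) is more hands-on (it produces an explicit self-homotopy of $\id_{\RPset^n}$ whose track is the generator of $\pi_1$ and uses it, together with the homotopy extension property, to rectify the track of the given free homotopy), but the two are genuinely parallel.

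Part~(ii) has a genuine error in the construction. You take $f = \id + \sigma$ and $g = \id - \sigma$ and claim they are freely homotopic by transporting $B$ along an orientation-reversing loop via \cref{lemma_moving_disparities}. But that lemma gives $\sqb{\id, f, B} = -\brk{\id\compose\gamma}_* \sqb{\id, w, B}$; the remark following it, which drops the $\brk{u\compose\gamma}_*$ factor, is implicitly set in the situation where this factor is trivial (e.g.\ $\pi_1\brk{\manifold{N}} = 0$). Here $u = \id$ and $\gamma$ is a noncontractible loop, so $\brk{\id\compose\gamma}_* = \gamma_*$ acts on $\pi_n\brk{\RPset^n}$ by $-1$ for $n$ even; the two minus signs cancel, and moving the bubble along $\gamma$ returns a map with the \emph{same} disparity, not the opposite. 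Concretely, lifting to $\Sset^n$ (where the bubble in $B$ lifts to two antipodal bubbles, each contributing the same integer to the degree), your $f$ and $g$ have $\deg\lifting{f} = 3$ and $\deg\lifting{g} = -1$; since equivariant self-maps of $\Sset^n$ are classified up to equivariant homotopy by degree, and $\abs{3} \ne \abs{-1}$, $f$ and $g$ are not freely homotopic. The construction would work with $f = \id$ and $g = \id$ modified by a bubble whose lift has degree $-1$ (so $\deg\lifting{f} = 1$, $\deg\lifting{g} = -1$), or with the paper's explicit choice $\lifting{g}\brk{x',x''} = \brk{x', -x''}$, for which the free homotopy is an explicit rotation. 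Your argument for ruling out a homotopy relative to $\set{a}$ is also shaky: a homotopy rel $\set{a}$ has no reason to factor through the collapse $c \colon \RPset^n \to \RPset^n/\partial B'$, so the wedge decomposition does not apply to the hypothetical homotopy, only to $f$ and $g$ separately. The clean argument — and the one the paper uses — is to lift the alleged relative homotopy to the universal cover $\Sset^n$ (possible since $\Sset^n$ is simply connected for $n \ge 2$, with the lift pinned down by the basepoint condition at $a$), producing a homotopy between $\lifting{f}$ and $\lifting{g}$ and contradicting $\deg\lifting{f} \ne \deg\lifting{g}$. Finally, note that the paper's explicit $g$ only satisfies $g\brk{a} = f\brk{a}$, not $g = f$ on $\RPset^{n-1}$ (except when $n = 2$); the coincidence on $\RPset^{n-1}$ is obtained afterwards via the homotopy extension property, as remarked after the proposition, whereas your choice of $B$ disjoint from $\RPset^{n-1}$ would give this for free once the construction itself is repaired.
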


As a consequence of \cref{proposition_free_vs_relative_homotopy_RPn}, if
\(\manifold{N} = \RPset^n\) with \(n \in \Nset \setminus \set{0}\) even
and \(\manifold{M} = \RPset^n \times \manifold{M}''\) with \(\dim \manifold{M}'' = m - n\), then there are maps \(f, g \in \continuous \brk{\manifold{M}, \manifold{N}}\)
such that \(f\) and \(g\) are freely homotopic, \(f\restr{\manifold{M}^{m - 1}} = g \restr{\manifold{M}^{m - 1}}\), but \(f\) and \(g\) are not homotopic relatively to \(\manifold{M}^{m - n}\).
Indeed, the homotopy extension property shows that the maps in \cref{proposition_free_vs_relative_homotopy_RPn} can be taken to be equal outside an arbitrarily small ball of \(\RPset^n\) so that they coincide of the \(\brk{n-1}\)-component of its triangulation.

\begin{proof}[Proof of \cref{proposition_free_vs_relative_homotopy_RPn}]
We let \(\pi \colon \Sset^n \to \RPset^n\) be the universal covering of the projective space \(\RPset^n\) by \(\Sset^n\). 
Under the embedding as rank-one projections \(\RPset^n \subseteq \Rset^{\brk{n + 1}\times \brk{n + 1}}\), we have \(\pi \brk{x} = x \otimes x\).

If \(n = 2\ell + 1\) is odd, assume that \(f = H \brk{0, \cdot}\) and \(g = H \brk{1, \cdot}\) for some \(H \in \continuous \brk{\intvc{0}{1} \times \RPset^n, \RPset^n}\). If \(H \brk{\cdot, a}\) is homotopic to a constant, then an application of the homotopy extension property gives a homotopy relative to \(a\).
Otherwise, one can note that if we define \(\lifting{G} \in \continuous \brk{\intvc{0}{1}\times \Sset^n, \Sset^n}\) for \(x = \brk{x', x''} \in \Sset^{n} \subseteq \Rset^{\ell + 1} \times \Rset^{\ell + 1}\) by
\[
 \lifting{G} \brk{t, x} = \brk{x' \cos \brk{\pi t} - x'' \sin \brk{\pi t},x' \sin \brk{\pi t} + x'' \cos \brk{\pi t}}\eqpunct{,}
\]
and then, since \(\Tilde{G} \brk{t, -x} = - \Tilde{G}\brk{t, x}\), \(G \in \continuous \brk{\intvc{0}{1}\times \RPset^n, \RPset^n}\) for every \(x \in \Sset^n\) by
\[
  G \brk{t, \pi \brk{x}} = \pi \brk{\lifting{G} \brk{t,\brk{x}}}\eqpunct{,}
\]
we see that \(G \brk{0, \cdot} = G \brk{1, \cdot} = \operatorname{id}_{\RPset^n}\) while for every \(x \in \RPset^n\), \(G \brk{\cdot, x}\) is a generator of \(\pi_1 \brk{\RPset^n}\). Hence, \(F \brk{t, x} \defeq G \brk{t, f \brk{x}}\) defines a homotopy between \(f\) and itself such that the homotopy class of \(F \brk{\cdot, a} = G \brk{\cdot, f \brk{a}}\)  is a generator of \(\pi_1 \brk{\RPset^n}\) (and the unique nontrivial element when \(n \ge 2\)). By combining suitable homotopies, this brings us back to the case where \(H \brk{\cdot, a}\) is homotopic to a constant.

When \(n\) is even, we define
\(f \defeq \operatorname{id}_{\RPset^n}\). Taking \(\lifting{g} \brk{x} \defeq \brk{x', -x''}\) for \(\brk{x', x''} \in \Sset^n \subseteq \Rset^2 \times \Rset^{n - 1}\), we set, since \(\lifting{g} \brk{-x} = - \lifting{g} \brk{x}\),
\[
g \brk{\pi \brk{x}} = \pi \brk{\lifting{g} \brk{x}}\eqpunct{,}
\]
and we fix \(a = \pi \brk{\lifting{a}}\) with \(\lifting{a} = \brk{1, 0, \dotsc, 0}\).
Setting for \(t \in \intvc{0}{1}\) and \(x \in \Sset^n\)
\[
\lifting{K} \brk{t, x} = \brk{x_1 \cos \brk{\pi t} - x_2 \sin \brk{\pi t},x_1 \sin \brk{\pi t} + x_2 \cos \brk{\pi t}, x''}
\]
and, since \(\lifting{K}\brk{t, -x} = - \lifting{K}\brk{t, x}\),
\[
K \brk{t, \pi \brk{x}} = \pi \brk{\lifting{K} \brk{t,\brk{x}}}\eqpunct{,}
\]
we have \(K \brk{0, \cdot} = f\) and \(K \brk{1, \cdot} = g\), so that \(f\) and \(g\) are freely homotopic.
We assume now that there is some \(H \in \continuous \brk{\intvc{0}{1}\times \RPset^n, \RPset^n}\) such that \(H \brk{\cdot, a} = a\), \(H \brk{0, \cdot} = f\), and \(H \brk{1, \cdot} = g\). 
By the classical theory of lifting, since \(\Sset^n\) is simply connected for \(n \ge 2\), there exists \(\lifting{H} \in \continuous \brk{\intvc{0}{1}\times \Sset^n, \Sset^n}\) such that
for every \(\brk{t,x} \in \intvc{0}{1}\times \Sset^n\),
\[
\pi \brk{\lifting{H} \brk{t, x}} = H \brk{t, \pi \brk{x}}
\]
and \(\lifting{H} \brk{\cdot, a} = \lifting{a}\).
It follows then that 
\(\lifting{H}\brk{0, \cdot} = \operatorname{id}_{\Sset^n}\) and
\(\lifting{H}\brk{1, \cdot} = \lifting{g}\).
Since \(\deg \brk{\operatorname{id}_{\Sset^n}} = 1\) and \(\deg \brk{\lifting{g}} = \brk{-1}^{n - 1} = -1\), this cannot be the case.
\end{proof}

We now have at our disposal all the notions that are required to state and prove the upper bound on the heterotopic energy as stated in~\cref{theorem_intro_heterotopic_disparity}.

\begin{proposition}
\label{proposition_upper_estimate}
If \(u \in \sobolev^{1, m} \brk{\manifold{M}, \manifold{N}} \cap \continuous \brk{\manifold{M}, \manifold{N}}\) and \(v \in \continuous \brk{\manifold{M}, \manifold{N}}\), then
\[
 \energy^{1, m}_{\mathrm{het}}
  \brk{u, v} 
  \le \int_{\manifold{M}} \abs{\Deriv u}^m
  +  \energy^{1, m}_{\mathrm{disp}} \brk{u, v}\eqpunct{.}
\]
\end{proposition}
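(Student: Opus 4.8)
The plan is to build, for each small radius $r>0$ and each $\varepsilon>0$, an explicit competitor in \cref{definition_heter_energy} realising the right-hand side up to an error that vanishes as $r\to0$, and then to let $r\to0$ and $\varepsilon\to0$. We may assume $\energy^{1,m}_{\mathrm{disp}}(u,v)<\infty$, the inequality being trivial otherwise. Since the infimum in \eqref{eq_idei2jeej5luop9AiB8aeph0} is attained by \cref{proposition_properties_Etop}~\ref{it_Etop_discrete}, I would first fix $w\in\continuous\brk{\manifold{M},\manifold{N}}$ homotopic to $v$, a ball $B_\rho\brk{a}$ with $u=w$ on $\manifold{M}\setminus B_\rho\brk{a}$, and $\energy^{1,m}_{\mathrm{top}}\brk{\sqb{u,w,B_\rho\brk{a}}}=\energy^{1,m}_{\mathrm{disp}}(u,v)$. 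Setting $b\defeq u\brk{a}$, by the definition \eqref{eq_of7vae7eiquooDooh8nohnai} of $\energy^{1,m}_{\mathrm{top}}$ together with density of smooth maps I would choose $f\in\sobolev^{1,m}\brk{\Bset^m,\manifold{N}}\cap\continuous\brk{\overline{\Bset^m},\manifold{N}}$ in the homotopy class $\sqb{u,w,B_\rho\brk{a}}$, equal to $b$ near $\partial\Bset^m$ (a standard adjustment using the nearest-point retraction near $b$, which changes the energy by at most $\varepsilon$), and with $\int_{\Bset^m}\abs{\Deriv f}^m\le\energy^{1,m}_{\mathrm{disp}}(u,v)+\varepsilon$.

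Next I would invoke the opening procedure (\cref{lemma_opening}) applied to $u$ near $a$: for $r>0$ small it produces $u_r\in\sobolev^{1,m}\brk{\manifold{M},\manifold{N}}\cap\continuous\brk{\manifold{M},\manifold{N}}$ with $u_r=u$ on $\manifold{M}\setminus B_r\brk{a}$, $u_r\equiv b$ on $B_{r/2}\brk{a}$, homotopic to $u$ through a homotopy supported in $B_r\brk{a}$, and with $\int_{B_r\brk{a}}\abs{\Deriv u_r}^m\to0$ as $r\to0$ (this last point because $\int_{B_{2r}\brk{a}}\abs{\Deriv u}^m\to0$ by absolute continuity of the integral). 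I then define the competitor by $w_r\defeq u_r$ on $\manifold{M}\setminus B_{r/2}\brk{a}$ and, after identifying $\overline{B_{r/2}\brk{a}}$ with $\overline{\Bset^m}$, $w_r\defeq f$ rescaled on $B_{r/2}\brk{a}$; since $f=b$ near $\partial\Bset^m$ and $u_r\equiv b$ on $B_{r/2}\brk{a}$, the two definitions agree near $\partial B_{r/2}\brk{a}$, so $w_r\in\sobolev^{1,m}\brk{\manifold{M},\manifold{N}}\cap\continuous\brk{\manifold{M},\manifold{N}}$ and $w_r=u$ on $\manifold{M}\setminus B_r\brk{a}$. Using that the $\lebesgue^m$ norm of the gradient is invariant under conformal rescalings and that small geodesic balls are nearly conformally Euclidean,
\[
 \int_{\manifold{M}}\abs{\Deriv w_r}^m
 \le \int_{\manifold{M}}\abs{\Deriv u}^m
 + \int_{B_r\brk{a}}\abs{\Deriv u_r}^m
 + \brk{1+o_r(1)}\brk[\big]{\energy^{1,m}_{\mathrm{disp}}(u,v)+\varepsilon}
 = \int_{\manifold{M}}\abs{\Deriv u}^m + \energy^{1,m}_{\mathrm{disp}}(u,v) + \varepsilon + o_r(1)\eqpunct{.}
\]

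It then remains to check that $w_r$ is homotopic to $v$. Because $u_r\equiv b$ on $B_{r/2}\brk{a}$, the definition of the topological disparity gives $\sqb{u_r,w_r,B_{r/2}\brk{a}}=\sqb{f}=\sqb{u,w,B_\rho\brk{a}}$ in $\pi_m\brk{\manifold{N},b}$. Since $u=w_r$ on $\manifold{M}\setminus B_r\brk{a}$, the opening homotopy being supported in $B_r\brk{a}$ yields $\sqb{u,u_r,B_r\brk{a}}=0$, and then \cref{lemma_disparity_change_radius} and \cref{lemma_superpose_disparities} give $\sqb{u,w_r,B_\rho\brk{a}}=\sqb{u,w_r,B_r\brk{a}}=\sqb{u_r,w_r,B_r\brk{a}}=\sqb{u,w,B_\rho\brk{a}}$. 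Applying \cref{lemma_superpose_disparities} and \cref{lemma_disparity_reciprocal} along a path from a point of $\partial B_\rho\brk{a}$ to $a$, one deduces $\sqb{w,w_r,B_\rho\brk{a}}=0$, so $w$ and $w_r$ are homotopic by \cref{lemma_disparity_trivial_homotopic}, hence $w_r$ is homotopic to $v$. Choosing $r=r_j\downarrow0$ and $\varepsilon=\varepsilon_j\downarrow0$, the maps $w_{r_j}$ are homotopic to $v$, converge to $u$ almost everywhere (they coincide with $u$ off $B_{r_j}\brk{a}$), and $\liminf_{j\to\infty}\int_{\manifold{M}}\abs{\Deriv w_{r_j}}^m\le\int_{\manifold{M}}\abs{\Deriv u}^m+\energy^{1,m}_{\mathrm{disp}}(u,v)$, which is the claim by \eqref{eq_feTaev4ahyi4aeTheem8Uiku}.

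I expect two points to require the most care. The first is the opening step with its quantitative energy control: making $u$ locally constant at an asymptotically negligible cost is precisely \cref{lemma_opening}, and it is what makes this sharp upper bound — as opposed to the mere finiteness of \cref{proposition_Ehet_finite}, where the cruder \cref{lemma_bubble_from_2_disks} (which retraces $u_0$ and so roughly doubles its energy) suffices. The second, more delicate, point is the homotopy-class bookkeeping: $w$, $w_r$, and the inserted bubble $f$ need not share a basepoint, and $\pi_1\brk{\manifold{N}}$ may act nontrivially on $\pi_m\brk{\manifold{N}}$, so the identity $\sqb{w,w_r,B_\rho\brk{a}}=0$ cannot be obtained by naive subtraction in $\pi_m$ and must instead be pushed through the disparity calculus of \cref{lemma_disparity_change_radius}, \cref{lemma_superpose_disparities}, and \cref{lemma_disparity_reciprocal}.
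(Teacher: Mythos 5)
Your construction is, at heart, the paper's: open $u$ near a point $a$ to create a constant plateau at negligible energy cost, insert a near-minimiser $f$ for $\energy^{1,m}_{\mathrm{top}}\brk{\sqb{u,w,B_\rho\brk{a}}}$ there, and verify through the disparity calculus that the result is homotopic to $v$. Your homotopy-class bookkeeping is actually laid out in more detail than the paper's, and it is correct in spirit.

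There is, however, a genuine gap at the opening step. You invoke \cref{lemma_opening} for $u\in\sobolev^{1,m}\brk{\manifold{M},\manifold{N}}\cap\continuous\brk{\manifold{M},\manifold{N}}$ and assert that $\int_{B_r\brk{a}}\abs{\Deriv u_r}^m\to0$ as $r\to0$ ``by absolute continuity of the integral.'' This does not follow. Squashing $B_r\brk{a}\setminus B_{r/2}\brk{a}$ radially onto $B_r\brk{a}$ produces a distortion that is \emph{not} conformal: the factor by which the energy density is magnified after changing variables grows like $\brk{r/\abs{x-a}}^{m-1}$ near the centre, so the energy of $u_r$ on the opened annulus is controlled by the weighted quantity $\int_{B_r\brk{a}}\abs{\Deriv u}^m\,\brk{r/\abs{x-a}}^{m-1}\dif x$, not by $\int_{B_{2r}\brk{a}}\abs{\Deriv u}^m$. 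For a general $u\in\sobolev^{1,m}$ this weighted integral can be infinite -- that is exactly why \cref{lemma_opening} carries the hypothesis $\int\abs{\Deriv u}^m/\abs{x}^{m-1}<\infty$. Absolute continuity of the (unweighted) integral is irrelevant to this estimate. Without this control, your claimed bound on $\int_{\manifold{M}}\abs{\Deriv w_r}^m$ does not hold.

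The fix, and what the paper actually does, is a two-step reduction. First, assume $u\in\continuous^1\brk{\manifold{M},\manifold{N}}$. Then $\abs{\Deriv u}$ is bounded, so $\int_{B_\rho\brk{a}}\abs{\Deriv u}^m/\abs{x-a}^{m-1}\dif x<\infty$ and \cref{lemma_opening} applies; the rest of your argument then goes through. Second, for general $u\in\sobolev^{1,m}\brk{\manifold{M},\manifold{N}}\cap\continuous\brk{\manifold{M},\manifold{N}}$, take a sequence $\brk{u_j}_{j\in\Nset}$ in $\continuous^1\brk{\manifold{M},\manifold{N}}$ converging strongly in $\sobolev^{1,m}$ to $u$, apply the $\continuous^1$ case to each $u_j$, and pass to the limit using the lower semicontinuity of $\energy^{1,m}_{\mathrm{het}}\brk{\cdot,v}$ under convergence in measure (\cref{proposition_Ehet_lsc}) on the left-hand side and the continuity of $\energy^{1,m}_{\mathrm{disp}}\brk{\cdot,v}$ under strong $\sobolev^{1,m}$ convergence (\cref{proposition_continuity_disparity}) on the right-hand side. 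You should insert this reduction before your opening step; the remainder of your argument is sound.
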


Our main tool for the proof of \cref{proposition_upper_estimate} is the following \emph{opening lemma} (see \cite{Brezis_Li_2001}*{Lemma 2.1}).

\begin{lemma}
\label{lemma_opening}
We define
\[
 u_r \brk{x}
 \defeq
 \begin{cases}
 u \brk{x - r \frac{x}{\abs{x}}} & \text{if \( \abs{x} \ge r \)}\eqpunct{,}\\
 u \brk{0} & \text{if \(\abs{x} \le r\)}\eqpunct{.}
 \end{cases}
\]
If
\[
 \int_{\Bset^m} \frac{\abs{\Deriv u \brk{x}}^m}{\abs{x}^{m - 1}} \dif x < \infty\eqpunct{,}
\]
then
\[
 \lim_{r \to 0} \int_{\Bset^m} \abs{\Deriv u_r}^m 
 = \int_{\Bset^m} \abs{\Deriv u}^m\eqpunct{.}
\]
\end{lemma}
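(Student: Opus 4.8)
The plan is to write \(u_r = u \compose \Phi_r\), where \(\Phi_r \brk{x} \defeq \brk{1 - r/\abs{x}} x\) maps the annulus \(\Bset^m \setminus \Bar{B}_r\) diffeomorphically onto \(B_{1 - r}\) and \(u_r\) is constant equal to \(u \brk{0}\) on \(\Bar{B}_r\); throughout we assume that \(u\) is continuous at the origin --- which is the case in every application of this lemma --- so that \(u \brk{0}\) makes sense (for a merely Sobolev \(u\) satisfying the assumption, a slicing argument shows that \(u\) restricts to a map in \(\sobolev^{1, m} \brk{\partial B_\rho, \manifold{N}} \hookrightarrow \continuous \brk{\partial B_\rho, \manifold{N}}\) for almost every small \(\rho\), which allows one to recover \(u \brk{0}\)). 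We shall establish separately that \(\limsup_{r \to 0} \int_{\Bset^m} \abs{\Deriv u_r}^m \le \int_{\Bset^m} \abs{\Deriv u}^m\) and that \(\int_{\Bset^m} \abs{\Deriv u}^m \le \liminf_{r \to 0} \int_{\Bset^m} \abs{\Deriv u_r}^m\).

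For the upper bound, the point is the radial structure of \(\Phi_r\): at a point \(x\) with \(\abs{x} > r\), the differential \(\Deriv \Phi_r \brk{x}\) has eigenvalue \(1\) in the radial direction \(x/\abs{x}\) and eigenvalue \(\brk{\abs{x} - r}/\abs{x} < 1\) with multiplicity \(m - 1\) in the tangential directions. Decomposing \(\Deriv u \brk{\Phi_r \brk{x}}\) into its radial component (the radial direction at \(\Phi_r \brk{x}\) being again \(x/\abs{x}\)) and its tangential component and applying the chain rule gives the pointwise bound \(\abs{\Deriv u_r \brk{x}} \le \abs{\Deriv u \brk{\Phi_r \brk{x}}}\) on \(\Bset^m \setminus \Bar{B}_r\), whereas \(\Deriv u_r = 0\) on \(B_r\). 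The change of variables \(y = \Phi_r \brk{x}\), whose inverse has Jacobian \(\brk{\brk{\abs{y} + r}/\abs{y}}^{m - 1}\), then yields
\[
 \int_{\Bset^m} \abs{\Deriv u_r}^m
 \le \int_{\Bset^m} \abs{\Deriv u \brk{y}}^m \brk[\bigg]{\frac{\abs{y} + r}{\abs{y}}}^{m - 1} \dif y\eqpunct{.}
\]
As \(m \ge 2\) is an integer, a binomial expansion gives \(\brk{\abs{y} + r}^{m - 1} \le \abs{y}^{m - 1} + \C r\) whenever \(\abs{y}, r \le 1\), so that
\[
 \int_{\Bset^m} \abs{\Deriv u_r}^m
 \le \int_{\Bset^m} \abs{\Deriv u}^m
 + \C r \int_{\Bset^m} \frac{\abs{\Deriv u \brk{y}}^m}{\abs{y}^{m - 1}} \dif y\eqpunct{,}
\]
and the last term tends to \(0\) as \(r \to 0\) precisely by the assumption \(\int_{\Bset^m} \abs{\Deriv u}^m / \abs{x}^{m - 1} < \infty\).

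For the lower bound, one first checks that \(u_r \to u\) in \(\lebesgue^m \brk{\Bset^m}\): given \(\delta \in \intvo{0}{1}\), the contribution of \(B_\delta\) to \(\int_{\Bset^m} \abs{u_r - u}^m\) is at most \(\brk{\operatorname{diam} \manifold{N}}^m \abs{B_\delta}\) uniformly in \(r\), while on \(\Bset^m \setminus B_\delta\) one has \(u_r = u \compose \Phi_r\) with \(\Phi_r \to \id\) uniformly and with Jacobian bounded between two positive constants uniformly in \(r < \delta/2\), so that \(u \compose \Phi_r \to u\) in \(\lebesgue^m \brk{\Bset^m \setminus B_\delta}\) by the continuity of composition with the uniformly converging diffeomorphisms \(\Phi_r\) (approximating \(u\) in \(\lebesgue^m\) by continuous maps); letting \(\delta \to 0\) proves the claim. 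Since \(u_r\) is moreover bounded in \(\sobolev^{1, m} \brk{\Bset^m, \manifold{N}}\) uniformly in \(r\) by the estimate above, the weak lower semicontinuity of \(w \mapsto \int_{\Bset^m} \abs{\Deriv w}^m\) along a sequence that is bounded in \(\sobolev^{1, m}\) and converges in \(\lebesgue^m\) yields \(\int_{\Bset^m} \abs{\Deriv u}^m \le \liminf_{r \to 0} \int_{\Bset^m} \abs{\Deriv u_r}^m\). Combining the two inequalities concludes the proof.

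The argument is elementary --- it amounts to one change of variables --- and there is no real obstacle: the only thing to watch is that the distortion of \(\Phi_r\) near the origin produces exactly the error \(\C r \int_{\Bset^m} \abs{\Deriv u}^m / \abs{x}^{m - 1}\), which the hypothesis is designed to annihilate in the limit; the remaining, purely technical, subtlety is the meaning of \(u \brk{0}\) for a non-continuous Sobolev map, handled as indicated above.
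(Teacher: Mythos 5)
Your proof is correct and follows essentially the same strategy as the paper: a change of variables involving the radial map (the paper uses $\Psi_r(x) = x + rx/\abs{x}$, whose inverse is your $\Phi_r$) to get the key inequality $\int \abs{\Deriv u_r}^m \le \int \abs{\Deriv u}^m \brk{1 + r/\abs{x}}^{m-1}$. The paper's proof is terse and stops at that inequality with ``and the conclusion follows''; you flesh out what is left implicit, replacing dominated convergence by an explicit binomial error bound for the upper inequality, and supplying the lower inequality via $\lebesgue^m$-convergence of $u_r$ to $u$ together with weak lower semicontinuity.
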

\begin{proof}
Setting for \(r \in \intvo{0}{1}\) and \(x \in \Bset^m\), \(\Psi_r \brk{x} \defeq x + r x/\abs{x}\),
we have \(\Psi^{-1} \brk{x} = x - r x/\abs{x}\).
By the change of variable formula, we have
\[
 \int_{\Bset^m \setminus B_r} \abs{\Deriv \brk{u \compose \Psi_r^{-1}}}^m
 \le \int_{\Bset^m} \abs{\Deriv u}^m \abs{\brk{\Deriv \Psi_r}^{-1}}^m
  \jac \Psi_r
  \le \int_{\Bset^m}  \abs{\Deriv u \brk{x}}^m \brk[\bigg]{1 + \frac{r}{\abs{x}}}^{m - 1}
 \dif x\eqpunct{,}
\]
and the conclusion follows.
\end{proof}

\begin{proof}[Proof of \cref{proposition_upper_estimate}]
We first assume that \(u \in \continuous^1 \brk{\manifold{M}, \manifold{N}}\).
Given \( \varepsilon > 0 \), we fix \( a \in \manifold{M} \), \( \rho \in \intvo{0}{\infty} \), and \( w \in \continuous \brk{\manifold{M}, \manifold{N}} \) such that \( w \) is homotopic to \( v \), \( w = u \) in \( \manifold{M} \setminus B_{\rho} \brk{a} \), and 
\[
	\energy_{\mathrm{top}}^{1,m} \brk{\sqb{u,w,B_{\rho}\brk{a}}}
	\leq
	\energy^{1, m}_{\mathrm{disp}} \brk{u, v} + \varepsilon\eqpunct{.}
\]
By \cref{lemma_opening}, there exists \(u_{\varepsilon}\) constant on \( B_{\varepsilon} \brk{a} \) such that 
\(u_{\varepsilon}\) is homotopic to \(u\) relatively to \(\manifold{M}\setminus B_\rho\brk{a} \cup \set{a}\)
and 
\[
 \lim_{\varepsilon \to 0} \int_{\manifold{M}}\abs{\Deriv u_\varepsilon}^m
 \le \int_{\manifold{M}}\abs{\Deriv u}^m\eqpunct{.}
\]
By the homotopy properties, we have 
\begin{align*}
 \sqb{u_\varepsilon, w, B_\rho\brk{a}}
  &= \sqb{u, w, B_\rho\brk{a}}&
	&\text{in } \pi_m \brk{\manifold{N}, u \brk{a}} \eqpunct{.}
\end{align*}
Inserting an element of \(\sqb{u_\varepsilon, w, B_\rho\brk{a}}\) minimising for \eqref{eq_of7vae7eiquooDooh8nohnai}, we get 
\[
  \energy^{1, m}_{\mathrm{het}}
  \brk{u, v} 
  \le \int_{\manifold{M}} \abs{\Deriv u}^m
  +  \energy^{1, m}_{\mathrm{disp}} \brk{u, v} + \varepsilon\eqpunct{.}
\]
Since \( \varepsilon > 0 \) was arbitrary, the conclusion follows.

In the general case, if \(\brk{u_j}_{j \in \Nset}\) is a sequence in \( \continuous^{1} \brk{\manifold{M}, \manifold{N}} \) which converges to \(u\) in \(\sobolev^{1, m}\brk{\manifold{M}, \manifold{N}}\), we have by \cref{proposition_Ehet_lsc} and \cref{proposition_continuity_disparity}
\[
\begin{split}
  \energy^{1, m}_{\mathrm{het}}
  \brk{u, v} 
  &\le \liminf_{j \to \infty}
  \energy^{1, m}_{\mathrm{het}}
  \brk{u_j, v} \\
  &\le  \liminf_{j \to \infty }\,
  \brk[\bigg]{\int_{\manifold{M}} \abs{\Deriv u_j}^m
  +  \energy^{1, m}_{\mathrm{disp}} \brk{u_j, v}}
  = \int_{\manifold{M}} \abs{\Deriv u}^m
  +  \energy^{1, m}_{\mathrm{disp}} \brk{u, v}\eqpunct{,}
\end{split}
\]
which proves our claim.
\end{proof}

\section{Bubbling}

The core of this section is the following bubbling result, companion to~\cref{theorem_intro_heterotopic_concentration} as stated in the introduction, and which will be instrumental in the proof of the lower bound on the heterotopic energy.

\begin{theorem}
\label{theorem_concentration_measures}
Assume that \(v \in \continuous \brk{\manifold{M}, \manifold{N}}\).
If 
\begin{enumerate}[label=(\alph*)]
 \item for every \(j \in \Nset\), \(v_j \in \sobolev^{1, m} \brk{\manifold{M}, \manifold{N}}\) is homotopic to \( v \) in \(\VMO \brk{\manifold{M},\manifold{N}}\), 
\item\label{it_vj_to_u_L1} \(v_j \to u\) in \(\lebesgue^1 \brk{\manifold{M}, \manifold{N}}\), 
\item\label{it_Dvj_converges_measure} there exists a Radon measure \(\mu\) on \(\manifold{M}\)
such that for every \(\varphi \in \continuous \brk{\manifold{M}, \Rset}\),
\[
  \lim_{j \to \infty} \int_{\manifold{M}} \abs{\Deriv v_j}^m \varphi \dif x
  = \int_{\manifold{M}} \varphi \dif \mu\eqpunct{,}
\]
\end{enumerate}
then there exist points \( a_1, \dotsc, a_I \in \manifold{M} \) such that, for every \( \rho \in \intvo{0}{\infty} \) sufficiently small, there exists a map \(w \in \continuous \brk{\manifold{M}, \manifold{N}}\) homotopic to \(u\) such that
  \(w = v \) in \(\manifold{M} \setminus \bigcup_{i = 1}^I B_{\rho}\brk{a_i}\) and 
\begin{equation}
 \label{eq_CooF3fu9zahphohjahl2aelo}
\mu \ge \abs{\Deriv u}^m 
   + \sum_{i = 1}^I \mathfrak{E}^{1, m}_{\mathrm{top}}\brk{\sqb{w, v, B_\rho \brk{a_i}}}\, \delta_{a_i}\eqpunct{.}
\end{equation}
\end{theorem}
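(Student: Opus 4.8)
The strategy is the classical concentration--compactness scheme: localise the defect of strong convergence at finitely many points, read off at each of them the topological charge carried by the $v_j$, bound its topological energy by the mass that $\mu$ puts there, and finally assemble these charges into a single map $w$ via the transport and merging machinery of \cref{section_topological_disparity}.

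\emph{Setting up the atoms.} Passing to a subsequence, $v_j \to u$ almost everywhere; testing \ref{it_Dvj_converges_measure} with $\varphi \equiv 1$ gives $\mu(\manifold{M}) = \lim_j \int_{\manifold{M}}\abs{\Deriv v_j}^m < \infty$, so by weak lower semicontinuity of the Dirichlet energy $u \in \sobolev^{1,m}(\manifold{M},\manifold{N})$ and $\abs{\Deriv u}^m \dif x \le \mu$ as measures; moreover $\int_{\manifold{M}} d(v_j,u)^m \to 0$ since $\manifold{N}$ is compact. Let $\eta \in \intvo{0}{\infty}$ be the gap of \cref{proposition_properties_Etop}~\ref{it_Etop_gap0}, and fix $\eta_0 \in \intvo{0}{\eta}$ small enough that the quantitative arguments behind \cref{proposition_homotopy_skeleton_quantitative} and \cref{lemma_uniform_estimate_skeleton} apply on balls carrying $\mu$-mass at most a fixed multiple of $\eta_0$. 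Set $\set{a_1, \dotsc, a_I} \defeq \set{a \in \manifold{M} \st \mu(\set{a}) \ge \eta_0}$, which is finite because $\mu$ is finite; these are the concentration points. Since $\abs{\Deriv u}^m\dif x$ has no atoms, $\mu \ge \abs{\Deriv u}^m\dif x + \sum_{i=1}^I \mu(\set{a_i})\,\delta_{a_i}$. We henceforth let $\rho$ range over the full-measure set of small radii for which the balls $\Bar{B}_\rho(a_i)$ are pairwise disjoint, contained in charts, and satisfy $\mu(\partial B_\rho(a_i)) = 0$ and $\mu(\Bar{B}_\rho(a_i)\setminus\set{a_i}) < \eta_0$ (possible by continuity of $\mu$ from above); the general case reduces to this one by taking a slightly smaller good radius and applying \cref{lemma_disparity_change_radius}.

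\emph{No concentration away from the $a_i$, and the charge at each atom.} Fix such a $\rho$, and let $\rho' \le \rho$ be another such radius. Every atom of $\mu$ in $K_{\rho'} \defeq \manifold{M}\setminus\bigcup_i B_{\rho'}(a_i)$ has mass $< \eta_0$, so covering $K_{\rho'}$ by small balls of $\mu$-mass $< \eta_0$, running the arguments behind \cref{proposition_homotopy_skeleton_quantitative} on each such ball together with \cref{lemma_disparity_trivial_homotopic} and the gap \cref{proposition_properties_Etop}~\ref{it_Etop_gap0}, and patching over the nerve of the cover shows that, for $j$ large, $v_j\restr{K_{\rho'}}$ is homotopic to $u\restr{K_{\rho'}}$; by the homotopy extension property $v_j$ is then homotopic on $\manifold{M}$, hence so is $v$, to some $V_j \in \continuous(\manifold{M},\manifold{N})$ with $V_j = u$ on $K_{\rho'}$. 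In particular $v_j\restr{\partial B_{\rho'}(a_i)}$ and $u\restr{\partial B_{\rho'}(a_i)}$ are homotopic and, by \cref{lemma_uniform_estimate_skeleton}, uniformly $\mathrm{o}_j(1)$-close on $\partial B_{\rho'}(a_i)$, hence joined by the linear homotopy through the nearest-point retraction. Gluing $u\restr{\Bar{B}_{\rho'}(a_i)}$ and $v_j\restr{\Bar{B}_{\rho'}(a_i)}$ along $\partial B_{\rho'}(a_i)$ through a thin collar carrying that linear homotopy yields a map $\Sset^m \to \manifold{N}$ in the class $\sqb{u, V_j, B_{\rho'}(a_i)}$ of energy at most $\int_{B_{\rho'}(a_i)}(\abs{\Deriv u}^m + \abs{\Deriv v_j}^m)$ plus a collar term that, because $\norm{d(v_j,u)}_{\lebesgue^\infty(\partial B_{\rho'}(a_i))} \to 0$, is made arbitrarily small by first letting $j \to \infty$ for a fixed thin collar; hence by \eqref{eq_of7vae7eiquooDooh8nohnai}, letting $j \to \infty$ and using $\mu(\partial B_{\rho'}(a_i)) = 0$,
\[
 \mathfrak{E}^{1,m}_{\mathrm{top}}\brk{\sqb{u, V_j, B_{\rho'}(a_i)}} \le \mu\brk{\Bar{B}_{\rho'}(a_i)} + \int_{B_{\rho'}(a_i)}\abs{\Deriv u}^m \eqpunct{.}
\]
Building $V_j$ with $K_{\rho'}$ replaced by $K_\rho$ and invoking \cref{lemma_disparity_change_radius}, the fact that $v_j$ has energy $< \eta$ on the annuli $B_\rho(a_i) \setminus B_{\rho'}(a_i)$ (so that no topology sits there, by the gap), and the $\Pi_1(\manifold{N})$-invariance of $\mathfrak{E}^{1,m}_{\mathrm{top}}$ (\cref{proposition_properties_Etop}~\ref{it_Etop_action}), one checks that $\mathfrak{E}^{1,m}_{\mathrm{top}}(\sqb{u, V_j, B_\rho(a_i)})$ is independent of $\rho' \le \rho$; calling it $\mathfrak{E}^{1,m}_{\mathrm{top}}(\sigma_i)$ and letting $\rho' \to 0$ gives $\mathfrak{E}^{1,m}_{\mathrm{top}}(\sigma_i) \le \mu(\set{a_i})$.

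\emph{Construction of $w$ and conclusion.} Take $V_j$ as above with $K_\rho$, so $V_j = u$ on $\manifold{M}\setminus\bigcup_i B_\rho(a_i)$ and $V_j$ is homotopic to $v$. Apply \cref{lemma_discrepancy_homotopy_transport} with $V_j$ and $u$ as the pair agreeing outside $\bigcup_i B_\rho(a_i)$ and $v$ as the deformation of $V_j$: this produces $w \in \continuous(\manifold{M},\manifold{N})$ homotopic to $u$, with $w = v$ on $\manifold{M}\setminus\bigcup_i B_\rho(a_i)$, together with paths $\zeta_i$ such that $\sqb{V_j, u, B_\rho(a_i)} = (\zeta_i)_*\sqb{v, w, B_\rho(a_i)}$. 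By \cref{proposition_properties_Etop}~\ref{it_Etop_action} and~\ref{it_Etop_symmetric}, \cref{lemma_disparity_reciprocal}, and the previous paragraph, $\mathfrak{E}^{1,m}_{\mathrm{top}}(\sqb{w, v, B_\rho(a_i)}) = \mathfrak{E}^{1,m}_{\mathrm{top}}(\sigma_i) \le \mu(\set{a_i})$. Combining this with $\mu \ge \abs{\Deriv u}^m\dif x + \sum_i \mu(\set{a_i})\,\delta_{a_i}$ from the first paragraph yields
\[
 \mu \ge \abs{\Deriv u}^m + \sum_{i=1}^I \mathfrak{E}^{1,m}_{\mathrm{top}}\brk{\sqb{w, v, B_\rho(a_i)}}\,\delta_{a_i}\eqpunct{,}
\]
which is \eqref{eq_CooF3fu9zahphohjahl2aelo}.

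\emph{On the main difficulty.} The delicate point is the non-concentration step: that bounded $\sobolev^{1,m}$ energy plus $\lebesgue^1$ convergence forces $v_j$ to be homotopic to $u$ away from the finitely many heavy atoms, so that $V_j$ exists. This is exactly the Schoen--Uhlenbeck mechanism underlying \cref{proposition_homotopy_skeleton_quantitative}, but here it has to be run on the manifold with boundary $K_{\rho'}$ and globalised from small balls by a nerve/patching argument; the other source of bookkeeping is the careful tracking of basepoints and of the action of $\Pi_1(\manifold{N})$ in identifying the various topological disparities and in justifying that the collar contribution and the scale $\rho'$ can be made harmless, for which the isometry property \cref{proposition_properties_Etop}~\ref{it_Etop_action} is essential.
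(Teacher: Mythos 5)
Your strategy---localize the defect at finitely many atoms, read off the topological charge at each, then assemble via transport---matches the paper's at the conceptual level, but the intermediate route is genuinely different: the paper first replaces $\hat{v}_j$ inside the balls by a small-energy extension $\tilde{u}_j$ (\cref{lemma_eta_extension}), then runs \cref{proposition_relative_homotopy} on all of $\manifold{M}$ and reads off the topological energy directly from \cref{proposition_estimate_E1mtop_u_v}; you instead try to establish a homotopy $v_j\restr{K_{\rho'}} \sim u\restr{K_{\rho'}}$ on the manifold with boundary $K_{\rho'}$, extend by HEP, and estimate the charge by a gluing construction. Your route is plausible in spirit but, as written, has several gaps that the paper's avoids.

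First, a regularity gap. Throughout, you treat $u$, $v_j$, and the intermediate maps $V_j$ as though they were continuous; in particular you invoke the homotopy extension property and \cref{lemma_discrepancy_homotopy_transport} with $v_0 = u$, but $u$ and $v_j$ are only in $\sobolev^{1,m}$, and both tools require continuity. The theorem only assumes $v \in \continuous$, not $u$. The paper's proof spends its opening moves precisely on this: approximating $u$ and $v_j$ in $\sobolev^{1,m}$ by smooth $\hat{u}_j$, $\hat{v}_j$ preserving homotopy classes (\cref{proposition_stability_homotopy_classes}) so that all later topological moves are performed on genuinely continuous maps. Without this reduction your use of HEP and of \cref{lemma_discrepancy_homotopy_transport} is not justified.

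Second, the ``nerve of the cover'' claim for the homotopy $v_j\restr{K_{\rho'}} \sim u\restr{K_{\rho'}}$ is asserted but not argued. \cref{proposition_relative_homotopy} is a statement on the closed manifold $\manifold{M}$; re-proving it on a manifold with boundary $K_{\rho'}$ (with no boundary conditions) is doable but nontrivial, and the patching needs to respect basepoints to be compatible with the later disparity bookkeeping. The paper sidesteps this entirely: it modifies $\hat{v}_j$ to $\tilde{u}_j$ so that the small-energy, small-distance hypotheses of \cref{proposition_relative_homotopy} hold on \emph{every} ball of $\manifold{M}$ (using \cref{lemma_eta_extension} and \cref{lemma_mixed_poincare}), and then applies the proposition globally.

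Third, to apply \cref{lemma_uniform_estimate_skeleton} on $\partial B_{\rho'}(a_i)$ you need $v_j\restr{\partial B_{\rho'}(a_i)} \in \sobolev^{1,m}$ with controlled energy and $\int_{\partial B_{\rho'}(a_i)} d(v_j, u)^m \to 0$; neither holds for a generic radius $\rho'$. A Fubini/coarea selection of a good radius is required and appears only implicitly in your collar argument. The paper makes this selection explicit (the argument surrounding the estimate on $\rho\int_{\partial B_\rho(a_i)}|\Deriv\hat{v}_j|^m$).

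Finally, the passage to the sharp bound $\mathfrak{E}^{1,m}_{\mathrm{top}}(\sigma_i) \le \mu(\{a_i\})$ by ``letting $\rho'\to 0$'' silently presupposes that the class $\sigma_i$ is fixed; but the maps $V_j$ and the large-$j$ threshold both change with $\rho'$, so the class may a priori vary. The paper's cleaner resolution uses the discreteness of the range of $\mathfrak{E}^{1,m}_{\mathrm{top}}$ (\cref{proposition_properties_Etop}~\ref{it_Etop_discrete}) to round the estimate $\mu(\bar B_\rho(a_i)) + C(\rho+\varepsilon)$ down to $\mu(\{a_i\})$ for a single well-chosen $\rho$, without needing $\rho'$-stability of the class. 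Your argument either needs that stability justified (your appeal to the gap on annuli is a step in the right direction but incomplete, particularly about orientations and basepoints), or needs the discreteness invoked explicitly.

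In short: the skeleton of your proof is sound and captures all the right ideas, but it would require significant additional work---approximation by smooth maps, the $\eta$-extension replacement inside the balls (or an honest proof of the relative homotopy on $K_{\rho'}$), a Fubini selection of good radii, and the discreteness argument---before it is complete. The paper's route through \cref{lemma_eta_extension}, \cref{proposition_relative_homotopy}, and \cref{proposition_estimate_E1mtop_u_v} is designed precisely to package all of this.
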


In other words, \eqref{eq_CooF3fu9zahphohjahl2aelo} states that if \(\varphi \in \continuous \brk{\manifold{M}, \intvr{0}{\infty}}\), then
\[
  \lim_{j \to \infty} \int_{\manifold{M}} \abs{\Deriv v_j}^m \varphi
  \ge \int_{\manifold{M}} \abs{\Deriv u}^m \varphi
  +  \sum_{i = 1}^I \mathfrak{E}^{1, m}_{\mathrm{top}}\brk{\sqb{w, v, B_\rho \brk{a_i}}}\,\varphi \brk{a_i}\eqpunct{.}
\]

In particular, the assumptions of \cref{theorem_concentration_measures} imply that \( u \in \sobolev^{1,m} \brk{\manifold{M}, \manifold{N}} \), as the weak limit of the mappings \( v_{j} \).

We also draw the attention of the reader to the fact that the relationships of the map \( w \) with the maps \( u \) and \( v \) are swapped between \cref{theorem_intro_heterotopic_concentration} in the introduction and \cref{theorem_concentration_measures} above.
The statement in \cref{theorem_intro_heterotopic_concentration} is somewhat more natural regarding the definition of the heterotopic energy, but as the map \( w \) provided by the statement is continuous, so needs to be as well the map with which it coincides outside of balls.
But, while it is not a loss of generality to assume \( v \) to be continuous, assuming \( u \) to be continuous would be quite a strong restriction to the general framework of low regularity maps that is studied in the body of this text.

As we already explained in the introduction, bubbling statements such as the above are ubiquitous in the study of weak convergence phenomena for Sobolev mappings.
The main contribution here is to state a very precise and general result, valid for any weakly converging sequence and relating precisely the limiting measure to the topological defect between the converging sequence and the limiting map, and to provide a complete proof of it.

 In view of \cref{lemma_disparity_trivial_homotopic}, we can assume that \(\sqb{w, v, B_\rho \brk{a_i}} \ne 0\);
 one has then by \cref{proposition_properties_Etop} \ref{it_Etop_gap0} a bound on the number of points where the bubbling phenomenon occurs, given by 
 \[
  I \le \frac{\mu\brk{\manifold{M}}}{\eta}
  = \frac{1}{\eta} \lim_{j \to \infty}\int_{\manifold{M}} \abs{\Deriv v_j}^m\eqpunct{.}
 \]
 
 Roughly speaking, the key idea behind the proof of \cref{theorem_concentration_measures} is to construct the desired map \( w \) by removing from \( v_j \), for \( j \) sufficiently large, the bubbles where the energy concentration occurs.

 A first tool for the proof of \cref{theorem_concentration_measures} is the following criterion for homotopies (see for example \cite{Hang_Lin_2003_II}*{Proof of Lemma 4.4}),
 which we apply to ensure that the map \( w \) that we construct is indeed homotopic to \( u \).

\begin{proposition}
\label{proposition_relative_homotopy}
There exists \(\eta \in \intvo{0}{\infty}\) such that, if \(\rho \in \intvo{0}{\infty}\) is sufficiently small, and if \(u, v \in \continuous \brk{\manifold{M}, \manifold{N}} \cap \sobolev^{1, m}\brk{\manifold{M}, \manifold{N}}\) satisfy
\begin{enumerate}[label=(\alph*)]
\item
\label{it_relative_homotopy_energy_balls}
for every \(a \in \manifold{M}\),
\begin{equation}
  \int_{B_\rho \brk{a}} \abs{\Deriv u}^m + \abs{\Deriv v}^m \le \eta\eqpunct{,}
\end{equation}
\item 
\label{it_relative_homotopy_distance_ball}
for every \(a \in \manifold{M}\),
\[
  \fint_{B_\rho \brk{a}} d\brk{u,v}^m \le \eta\eqpunct{,}
\]
\end{enumerate}
then \(u\) and \(v\) are homotopic.
\end{proposition}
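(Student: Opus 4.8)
The plan is to run Schoen and Uhlenbeck's mollification argument together with the nearest-point projection onto \( \manifold{N} \), following the pattern of \cite{Hang_Lin_2003_II}*{Proof of Lemma 4.4}. I would fix isometric embeddings \( \manifold{M} \subseteq \Rset^\mu \) and \( \manifold{N} \subseteq \Rset^\nu \) with nearest-point retractions \( \Pi_{\manifold{M}} \), \( \Pi_{\manifold{N}} \) defined on tubular neighbourhoods of widths \( \delta_{\manifold{M}} \), \( \delta_{\manifold{N}} \). For \( f \colon \manifold{M} \to \Rset^\nu \) and a small scale \( t > 0 \), I would set \( f_t \defeq \brk{\brk{f \compose \Pi_{\manifold{M}}} * \varphi_t} \restr{\manifold{M}} \) with \( \varphi_t \) a standard mollifier supported in the ball of radius \( t \) in \( \Rset^\mu \); then \( f_t \in \continuous^\infty \), each value \( f_t(x) \) is a bounded weighted average of the values of \( f \) over the geodesic ball \( B_{Ct}(x) \subseteq \manifold{M} \), and if \( f \) is continuous then \( f_t \to f \) uniformly on \( \manifold{M} \) as \( t \to 0 \). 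The order of the argument would be: first choose \( \eta \) small (depending only on \( \manifold{M} \), \( \manifold{N} \)); then choose \( \rho \) small (again depending only on \( \manifold{M} \), \( \manifold{N} \)) so that geodesic \( \rho \)-balls are well behaved and the mollification above is legitimate at every scale \( t \le \rho \); then prove the implication for all \( u, v \) satisfying \ref{it_relative_homotopy_energy_balls} and \ref{it_relative_homotopy_distance_ball}.

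The \emph{central} step — the only one that is not routine bookkeeping — is Schoen and Uhlenbeck's observation that \ref{it_relative_homotopy_energy_balls} keeps \( u_t \) and \( v_t \) uniformly close to \( \manifold{N} \) at every scale \( t \in \intvl{0}{\rho} \). Using that \( u \) takes values in \( \manifold{N} \), the boundedness of the mollifier, the Poincaré inequality on balls, Jensen's inequality, \( \abs{B_{Ct}(x)} \simeq t^m \), and \ref{it_relative_homotopy_energy_balls} (noting that \( B_{Ct}(x) \) is covered by a bounded number of \( \rho \)-balls, hence \( \int_{B_{Ct}(x)} \abs{\Deriv u}^m \le C \eta \)), I would obtain
\[
 \dist\brk{u_t(x), \manifold{N}}
 \le \fint_{B_{Ct}(x)} \abs{u_t(x) - u}
 \le C \fint_{B_{Ct}(x)} \fint_{B_{Ct}(x)} \abs{u(y) - u(z)} \dif y \dif z
 \le C t \brk[\bigg]{\fint_{B_{Ct}(x)} \abs{\Deriv u}^m}^{\frac1m}
 \le C \eta^{\frac1m}\eqpunct{,}
\]
and likewise for \( v_t \). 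Choosing \( \eta \) so small that \( C\eta^{1/m} < \delta_{\manifold{N}} \), the maps \( \Pi_{\manifold{N}} \compose u_t \) and \( \Pi_{\manifold{N}} \compose v_t \) are then well-defined elements of \( \continuous(\manifold{M}, \manifold{N}) \) for every \( t \in \intvl{0}{\rho} \).

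The rest is three homotopies. First, since \( u \in \continuous(\manifold{M}, \manifold{N}) \) and \( \manifold{M} \) is compact, \( u_t \to u \) uniformly as \( t \to 0 \), hence \( \Pi_{\manifold{N}} \compose u_t \to u \) uniformly; as \( t \in \intvl{0}{\rho} \mapsto \Pi_{\manifold{N}} \compose u_t \in \continuous(\manifold{M}, \manifold{N}) \) is a continuous path for the uniform distance, it extends continuously to \( t = 0 \) with value \( u \), so \( u \) is homotopic to \( \Pi_{\manifold{N}} \compose u_\rho \); symmetrically \( v \) is homotopic to \( \Pi_{\manifold{N}} \compose v_\rho \). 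Second, Hölder's inequality and \ref{it_relative_homotopy_distance_ball} give, for every \( x \in \manifold{M} \),
\[
 \abs{u_\rho(x) - v_\rho(x)}
 \le C \fint_{B_{C\rho}(x)} \abs{u - v}
 \le C \brk[\bigg]{\fint_{B_{C\rho}(x)} \abs{u - v}^m}^{\frac1m}
 \le C \eta^{\frac1m}\eqpunct{,}
\]
so, \( \Pi_{\manifold{N}} \) being Lipschitz on its tubular neighbourhood, \( \Pi_{\manifold{N}} \compose u_\rho \) and \( \Pi_{\manifold{N}} \compose v_\rho \) are within uniform distance \( C\eta^{1/m} \); after a last shrinking of \( \eta \), the elementary fact that two maps into the compact manifold \( \manifold{N} \) that are everywhere within a distance depending only on \( \manifold{N} \) are homotopic (linear interpolation followed by \( \Pi_{\manifold{N}} \)) makes them homotopic. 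Concatenating the three homotopies gives that \( u \) and \( v \) are homotopic.

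The step I expect to be the main obstacle is the uniform bound \( \dist(u_t(x), \manifold{N}) \le C\eta^{1/m} \) valid over the whole range \( t \in \intvl{0}{\rho} \): this is exactly the critical-exponent phenomenon of Schoen and Uhlenbeck, and it is where \ref{it_relative_homotopy_energy_balls} is used essentially — at subcritical exponents one would instead get a pointwise bound directly, as in \cref{lemma_uniform_estimate_skeleton}. Everything downstream (the nearest-point projection, the uniform convergence of mollifications, and the linear homotopies) is then bookkeeping; the only genuine care is in choosing \( \rho \) small enough that geodesic \( \rho \)-balls and the tubular-neighbourhood mollification behave as in the Euclidean setting.
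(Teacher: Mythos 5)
Your proposal is correct and follows essentially the same route as the paper, which itself only gives a sketch citing the same Schoen--Uhlenbeck / Brezis--Nirenberg strategy: form averaged maps at small scales, use condition~\ref{it_relative_homotopy_energy_balls} together with the critical Poincar\'e inequality to keep those averages within the tubular neighbourhood of \(\manifold{N}\), use condition~\ref{it_relative_homotopy_distance_ball} at scale \(\rho\) to make the two projected averages uniformly close, and concatenate the resulting three homotopies. The only cosmetic difference is that you mollify in a tubular neighbourhood of \(\manifold{M}\subseteq\Rset^{\mu}\) whereas the paper averages directly over geodesic balls; this is a technical variant, not a different argument.
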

\begin{proof}[Proof of \cref{proposition_relative_homotopy}]
The  argument follows the classical strategy of proof that goes back to Schoen and Uhlenbeck \cite{Schoen_Uhlenbeck_1983} (see also \cite{Brezis_Nirenberg_1995}), and we therefore only give a sketch of it.
Defining \(u_r, v_r \colon \manifold{M} \to \Rset^\nu\) for \(r \in \intvo{0}{\infty}\) sufficiently small by, for \(x \in \manifold{M}\),
\begin{align*}
 u_r \brk{x} &\defeq \fint_{B_r \brk{x}} u&
 &\text{and}&
 v_r \brk{x} &\defeq \fint_{B_r \brk{x}} v\eqpunct{,}
\end{align*}
one observes that \(u_r\) and \(v_r\) take values in a small tubular neighbourhood of the target \( \manifold{N} \) when \( r \le \rho\) thanks to \ref{it_relative_homotopy_energy_balls}.
Moreover, the condition \ref{it_relative_homotopy_distance_ball} ensures that \(u_\rho\) and \(v_\rho\) are uniformly close.
This shows that \(u\) and \(v\) are homotopic.
\end{proof}

We also use the following extension property for Sobolev mappings, in order to remove the bubbles formed by the weak convergence of the maps \( v_j \).

\begin{lemma}
\label{lemma_eta_extension}
There exists \(\eta \in \intvo{0}{\infty}\) such that if \(u_0 \in \sobolev^{1, m} \brk{\partial \Bset^m, \manifold{N}}\) satisfies 
\[
	\int_{\partial \Bset^m} \abs{\Deriv u_0}^m \le \eta\eqpunct{,}
\]
then there exists \(u \in \sobolev^{1, m} \brk{\Bset^m, \manifold{N}} \cap \continuous \brk{\Bset^m, \manifold{N}}\) such that \(\tr_{\partial \Bset^m} u = u_0\) and
\[
\int_{\Bset^m}\abs{\Deriv u}^m
\le C
 \int_{\partial \Bset^m} \abs{\Deriv u_0}^m\eqpunct{.}
\]
\end{lemma}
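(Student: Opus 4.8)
The plan is to use the smallness hypothesis to reduce the construction to an almost trivial Euclidean extension problem, and then to project back onto \(\manifold{N}\) with the nearest-point retraction. First I would invoke the Sobolev--Morrey embedding on the \((m-1)\)-dimensional manifold \(\partial \Bset^m\): since \(m > m - 1\), one has \(\sobolev^{1, m} \brk{\partial \Bset^m, \Rset^\nu} \hookrightarrow \continuous^{0, 1/m} \brk{\partial \Bset^m, \Rset^\nu}\) with the scale-invariant bound
\[
  \abs{u_0 \brk{x} - u_0 \brk{y}}
  \le C \abs{x - y}^{1/m}
  \brk[\bigg]{\int_{\partial \Bset^m} \abs{\Deriv u_0}^m}^{1/m}
  \qquad \text{for all } x, y \in \partial \Bset^m \eqpunct{,}
\]
so that in particular \(\operatorname{diam} u_0 \brk{\partial \Bset^m} \le C \eta^{1/m}\). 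Fixing a tubular neighbourhood \(\mathcal{O} \subseteq \Rset^\nu\) of \(\manifold{N}\) on which the nearest-point retraction \(\Pi_{\manifold{N}}\) is defined and \(\Lambda\)-Lipschitz, I would then take \(\eta\) small enough that any closed Euclidean ball of radius \(C \eta^{1/m}\) centred at a point of \(\manifold{N}\) is contained in \(\mathcal{O}\); since \(u_0 \brk{\partial \Bset^m}\) lies in such a ball, so does its closed convex hull, and so does the average \(\Bar{u}_0 \defeq \fint_{\partial \Bset^m} u_0 \in \Rset^\nu\).

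Next I would write down an explicit continuous extension \(\lifting{u} \colon \overline{\Bset^m} \to \Rset^\nu\) of \(u_0\): on the annulus \(\set{1/2 \le \abs{x} \le 1}\) take the homogeneous (degree-zero) extension \(\lifting{u} \brk{x} \defeq u_0 \brk{x/\abs{x}}\), and on the ball \(\set{\abs{x} \le 1/2}\) set \(\lifting{u} \brk{x} \defeq \Bar{u}_0 + \chi \brk{\abs{x}} \brk{u_0 \brk{x/\abs{x}} - \Bar{u}_0}\), where \(\chi \colon \intvc{0}{1/2} \to \intvc{0}{1}\) is a fixed smooth cut-off equal to \(1\) near \(1/2\) and vanishing near the origin. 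Then \(\lifting{u}\) is continuous on \(\overline{\Bset^m}\), it agrees with \(u_0\) on \(\partial \Bset^m\), and it takes its values in the convex hull of \(u_0 \brk{\partial \Bset^m} \cup \set{\Bar{u}_0}\), hence in \(\mathcal{O}\). For the energy, passing to polar coordinates \(x = r \omega\) with \(\omega \in \partial \Bset^m\), the radial derivative contributes nothing on the outer annulus, where \(\abs{\Deriv \lifting{u} \brk{r\omega}}^m = r^{-m} \abs{\Deriv u_0 \brk{\omega}}^m\), so that \(\int_{1/2 \le \abs{x} \le 1} \abs{\Deriv \lifting{u}}^m = \brk*{\int_{1/2}^1 r^{-1} \dif r} \int_{\partial \Bset^m} \abs{\Deriv u_0}^m = \ln 2 \int_{\partial \Bset^m} \abs{\Deriv u_0}^m\); on the inner ball the choice of \(\chi\) vanishing near \(0\) confines the awkward factor \(\abs{x}^{-1}\) to a region where it is bounded, giving \(\abs{\Deriv \lifting{u} \brk{x}} \le C \brk{\abs{u_0 \brk{x/\abs{x}} - \Bar{u}_0} + \abs{\Deriv u_0 \brk{x/\abs{x}}}}\), and integrating in polar coordinates together with the Poincar\'e inequality \(\int_{\partial \Bset^m} \abs{u_0 - \Bar{u}_0}^m \le C \int_{\partial \Bset^m} \abs{\Deriv u_0}^m\) on \(\partial \Bset^m\) yields \(\int_{\abs{x} \le 1/2} \abs{\Deriv \lifting{u}}^m \le C \int_{\partial \Bset^m} \abs{\Deriv u_0}^m\). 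Altogether \(\int_{\Bset^m} \abs{\Deriv \lifting{u}}^m \le C \int_{\partial \Bset^m} \abs{\Deriv u_0}^m\).

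Finally I would set \(u \defeq \Pi_{\manifold{N}} \compose \lifting{u}\). Since \(\lifting{u}\) is continuous up to the boundary with image in \(\mathcal{O}\) and \(\Pi_{\manifold{N}}\) is Lipschitz there, \(u \in \sobolev^{1, m} \brk{\Bset^m, \manifold{N}} \cap \continuous \brk{\overline{\Bset^m}, \manifold{N}}\); as \(u_0\) already takes values in \(\manifold{N}\) and is continuous, its Sobolev trace is its boundary restriction and \(\tr_{\partial \Bset^m} u = \Pi_{\manifold{N}} \compose u_0 = u_0\); and by the chain rule in Sobolev spaces \(\int_{\Bset^m} \abs{\Deriv u}^m \le \Lambda^m \int_{\Bset^m} \abs{\Deriv \lifting{u}}^m \le C \int_{\partial \Bset^m} \abs{\Deriv u_0}^m\), which is the desired bound. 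I expect the only real subtlety to be the tension between continuity and the borderline energy estimate: the fractional trace-extension theorem does produce a \(\sobolev^{1, m}\) extension with controlled norm, but \(\sobolev^{1, m} \brk{\Bset^m}\) embeds only into \(\VMO\) and not into \(\continuous \brk{\overline{\Bset^m}}\), so such an extension need not be continuous; circumventing this forces the explicit homogeneous/cone-to-average construction, which is continuous by inspection, and the use of the nearest-point retraction, which is licit precisely because the smallness hypothesis places the relevant image in a convex Euclidean ball contained in a tubular neighbourhood of \(\manifold{N}\).
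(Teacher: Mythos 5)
Your proof is correct, and it takes a route genuinely different from the paper's. The paper obtains the Euclidean extension via a Poisson-type kernel (a hyperharmonic extension by averages, following Petrache--Van Schaftingen), and then shows that the image stays in a tubular neighbourhood of \(\manifold{N}\) via the pointwise estimate \(\dist\brk{v\brk{x}, \manifold{N}} \le C \brk{\int_{\Sset^{m-1}}\abs{\Deriv u_0}^{m-1}}^{1/(m-1)}\). You instead build the extension by hand: the degree-zero homogeneous extension on the annulus \(\set{1/2 \le \abs{x} \le 1}\), which costs only \(\ln 2\) times the boundary energy (and cleverly avoids the logarithmic divergence at the origin by stopping at \(\abs{x}=1/2\)), glued to a cone towards the average \(\Bar{u}_0\) on the inner ball, controlled via the Poincar\'e inequality on the sphere. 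For the ``image close to \(\manifold{N}\)'' part, you replace the paper's kernel estimate by the Sobolev--Morrey embedding \(\sobolev^{1,m}\brk{\partial\Bset^m} \hookrightarrow \continuous^{0,1/m}\brk{\partial\Bset^m}\), which gives a diameter bound on \(u_0\brk{\partial\Bset^m}\), and then use convexity to locate the whole image of your extension in a small Euclidean ball around a point of \(\manifold{N}\). Both approaches share the final step (retract with \(\Pi_{\manifold{N}}\) and apply the chain rule); what your version buys is an entirely elementary and self-contained construction whose continuity is visible by inspection, at the cost of being slightly longer, whereas the paper can be terse because it outsources the extension and the distance estimate to an existing reference. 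Your closing remark, that one cannot simply quote the abstract \(\sobolev^{1-1/m,m}\)-trace extension theorem because \(\sobolev^{1,m}\brk{\Bset^m}\) only embeds into \(\VMO\) and not into \(\continuous\), is exactly the subtlety that forces both proofs to give an explicit, visibly-continuous construction.
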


By a suitable scaling and exponential map construction, on every ball \( B_{\rho} \brk{a} \subset \manifold{M} \) of sufficiently small radius, \cref{lemma_eta_extension} shows that if
\[
 \rho\int_{\partial B_\rho \brk{a}} \abs{\Deriv u_0}^m \le \eta\eqpunct{,}
\]
then there exists \(u \in \sobolev^{1, m} \brk{B_\rho\brk{a}, \manifold{N}} \cap \continuous \brk{B_{\rho}\brk{a}, \manifold{N}}\) such that \(\tr_{\partial B_\rho\brk{a}} u = u_0\) and
 \[
 	 \int_{B_\rho \brk{a}} \abs{\Deriv u}^m
 	 \le C
 	 \rho\int_{\partial B_\rho \brk{a}} \abs{\Deriv u_0}^m
 	\eqpunct{,}
 \]
 with a constant \( C > 0 \) independent of \( \rho \).

\begin{proof}[Proof of \cref{lemma_eta_extension}]
 One first takes \(v \colon \Bset^m \to \Rset^\nu\) to be an extension of \(u_0\) by averages (for example a harmonic or hyperharmonic extension) and then applies a nearest-point projection.
For instance, following \cite{Petrache_VanSchaftingen_2017}*{\S 3}, one can set
 \[ 
 v \brk{x} \defeq  \brk{1 - \abs{x}^2}^{m - 1} \fint_{\Sset^{m - 1}} \frac{u_0 \brk{y}}{\abs{y - x}^{2 m - 2}} \dif y\eqpunct{;}
 \]
 and show that 
 \[
  \dist \brk{v \brk{x}, \manifold{N}}
  \le \C \brk[\Big]{\int_{\Sset^{m - 1}}\abs{\Deriv u_0}^{m - 1}}^\frac{1}{m - 1}
  \le \C \brk[\Big]{\int_{\Sset^{m - 1}}\abs{\Deriv u_0}^{m}}^\frac{1}{m}\eqpunct{.}
 \]
 One gets then the conclusion when \(\eta \in \intvo{0}{\infty}\) is sufficiently small.
 \resetconstant
 \end{proof}

 We rely on the following mixed Poincaré inequality to estimate the distance in \( \lebesgue^{m} \) between the map \( w \) that we construct and the map \( u \) on the balls where we perform the modification, in order to check that the assumptions of \cref{proposition_relative_homotopy} are satisfied.

 \begin{lemma}[Mixed Poincaré inequality]
 \label{lemma_mixed_poincare}
 If \(p \in \intvr{1}{\infty}\), then for every \(u \in \sobolev^{1, p}\brk{\Bset^m,\Rset^\nu}\), if \(\tr_{\partial \Bset^m} u = u\restr{\partial \Bset^m}\),
 one has 
 \[
  \smashoperator[r]{\iint_{\Bset^m \times \partial \Bset^m}}
  \abs{u \brk{x} - u \brk{y}}^p\dif y \dif x
  \le C \int_{\Bset^m} \abs{\Deriv u}^p\eqpunct{.}
 \]
 \end{lemma}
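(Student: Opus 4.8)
The plan is to prove the estimate
\[
  \iint_{\Bset^m \times \partial \Bset^m}
  \abs{u \brk{x} - u \brk{y}}^p\dif y \dif x
  \le C \int_{\Bset^m} \abs{\Deriv u}^p
\]
by combining a standard $\lebesgue^p$ Poincaré inequality on the ball with a trace inequality. First I would split, for $x \in \Bset^m$ and $y \in \partial \Bset^m$,
\[
  \abs{u\brk{x} - u\brk{y}}^p
  \le 2^{p-1}\brk[\big]{\abs{u\brk{x} - \bar u}^p + \abs{\bar u - u\brk{y}}^p}\eqpunct{,}
\]
where $\bar u \defeq \fint_{\Bset^m} u$ is the average of $u$ over the ball. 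Integrating in $x$ over $\Bset^m$ and in $y$ over $\partial\Bset^m$ and using that both domains have finite measure, the task reduces to bounding $\int_{\Bset^m}\abs{u - \bar u}^p$ and $\int_{\partial\Bset^m}\abs{u - \bar u}^p$ by $C\int_{\Bset^m}\abs{\Deriv u}^p$.

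The first of these is exactly the classical Poincaré--Wirtinger inequality on the unit ball, which gives $\int_{\Bset^m}\abs{u - \bar u}^p \le C\int_{\Bset^m}\abs{\Deriv u}^p$. For the boundary term, I would invoke the trace theorem $\sobolev^{1,p}\brk{\Bset^m,\Rset^\nu} \hookrightarrow \lebesgue^p\brk{\partial\Bset^m,\Rset^\nu}$, applied to the function $u - \bar u$: this yields
\[
  \int_{\partial\Bset^m}\abs{u - \bar u}^p
  \le C\brk[\bigg]{\int_{\Bset^m}\abs{u - \bar u}^p + \int_{\Bset^m}\abs{\Deriv u}^p}
  \le C\int_{\Bset^m}\abs{\Deriv u}^p\eqpunct{,}
\]
where the last step uses the Poincaré inequality again. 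Combining the two bounds and tracking constants gives the claimed inequality; the endpoint case $p = 1$ is covered by the same arguments since both the Poincaré--Wirtinger inequality and the trace embedding remain valid there.

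There is essentially no serious obstacle here: the only point requiring a modicum of care is that the trace inequality must be applied to the mean-zero function $u - \bar u$ rather than to $u$ directly, so that the right-hand side can be closed entirely in terms of the gradient, and that one keeps track of the fact that the trace operator is bounded with a constant depending only on $m$, $p$, and the geometry of $\Bset^m$. Everything else is a routine combination of standard Sobolev-space facts on a fixed bounded Lipschitz domain.
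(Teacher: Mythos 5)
Your proof is correct, but the route is genuinely different from the one in the paper. You decompose $\abs{u(x)-u(y)}$ through the mean $\bar u = \fint_{\Bset^m} u$, then control the interior term by the Poincaré--Wirtinger inequality and the boundary term by the trace embedding $\sobolev^{1,p}(\Bset^m,\Rset^\nu) \hookrightarrow \lebesgue^p(\partial\Bset^m,\Rset^\nu)$ applied to $u - \bar u$, closing the estimate with Poincaré again. The paper instead decomposes through the \emph{radial projection} $u(x/\abs{x})$ rather than the mean: it derives the elementary pointwise bound
\[
  \int_{\Bset^m} \abs{u(x/\abs{x}) - u(x)}^p\,\dif x
  \le \frac{1}{m}\int_{\Bset^m}\abs{x}\,\abs{\Deriv u(x)}^p\,\dif x
\]
by integrating the fundamental theorem of calculus along rays, then combines this with the usual two-point Poincaré inequality on $\Bset^m\times\Bset^m$ and the triangle inequality. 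What each approach buys: yours is shorter and modular, but treats the trace embedding as a black box (a result of essentially the same depth as the lemma itself); the paper's radial argument is entirely self-contained and elementary, producing the constant explicitly, which fits the authors' stated aim of providing complete, hands-on proofs for the bubbling toolkit. Both are valid; your application of the trace inequality to the mean-zero function $u - \bar u$ is the right move to avoid an $\norm{u}_{\lebesgue^p}$ term that could not be absorbed.
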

 By a straightforward scaling and local chart argument, on every ball \( B_{\rho} \brk{a} \subset \manifold{M} \) of sufficiently small radius,~\cref{lemma_mixed_poincare} implies that
 \[
 	\fint_{B_{\rho} \brk{a}}\fint_{\partial B_{\rho} \brk{a}}
 	\abs{u \brk{x} - u \brk{y}}^p\dif y \dif x
 	\le C \rho^{p-m} \int_{B_{\rho} \brk{a}} \abs{\Deriv u}^p\eqpunct{,}
 \]
 with a constant \( C > 0 \) independent of \( \rho \).
 \begin{proof}[Proof of \cref{lemma_mixed_poincare}]
 	Let us first note that it suffices to prove the statement for \( u \in \continuous^{\infty} \brk{\overline{\Bset^{m}}, \Rset^{\nu}} \).
 	Indeed, the general case then follows by approximation, relying on the continuity of the trace operator \( \sobolev^{1,p}\brk{\Bset^{m}} \to \lebesgue^{p}\brk{\partial\Bset^{m}} \).
 	Let us hence assume that \( u \in \continuous^{\infty} \brk{\overline{\Bset^{m}}, \Rset^{\nu}} \).
 	
	For a.e.\ \(y \in \partial \Bset^m\), combining the mean value inequality, Jensen's inequality, and Fubini's theorem, we have
	\[
	\begin{split}
	  \int_0^1 \abs{u \brk{y} - u \brk{ry}}^p r^{m - 1}\dif r 
	  &\le \int_0^1 \brk[\bigg]{\int_r^1 \abs{\nabla u \brk{sy}} \dif s}^pr^{m - 1}\dif r\\
	  &\le \int_0^1 \int_r^1 \abs{\nabla u \brk{sy}}^p \brk{1 - r}^{p - 1} r^{m - 1}\dif s \dif r\\
	  &\le \int_0^1  \abs{\nabla u \brk{sy}}^p \int_0^s \brk{1 - r}^{p - 1} r^{m - 1}\dif r \dif s\\
	  &\le \frac{1}{m} \int_0^1 \abs{\Deriv u \brk{r y}}^p r^m \dif r
	  \eqpunct{,}
	\end{split}
	\]
	and thus by spherical integration
	\begin{equation}
	\label{eq_aek2Oo8shaawei7Thoiz2ohC}
	 \int_{\Bset^m} \abs{u \brk{x/\abs{x}} - u\brk{x}}^p \dif x
	 \le \frac{1}{m}\int_{\Bset^m} \abs{x} \abs{\Deriv u\brk{x}}^p \dif x\eqpunct{.}
	\end{equation}
	On the other hand, by the Poincaré inequality we have 
	\begin{equation}
	\label{eq_ooreekiduCheshef3baithai}
	 \iint_{\Bset^m \times \Bset^m} \abs{u \brk{x} - u\brk{y}}^p \dif x \dif y
	 \le \C \int_{\Bset^m} \abs{\Deriv u}^p\eqpunct{.}
	\end{equation}
	Combining \eqref{eq_aek2Oo8shaawei7Thoiz2ohC} and \eqref{eq_ooreekiduCheshef3baithai} with the triangle inequality, we get 
	\[
	\begin{split}
	    &\smashoperator[r]{\iint_{\partial \Bset^m \times \Bset^m}}
	  \abs{u \brk{x} - u \brk{y}}^p\dif y \dif x\\
	  &\qquad = m
	  \smashoperator{\iint_{\Bset^m \times \Bset^m}}
	  \abs{u \brk{x/\abs{x}} - u \brk{y}}^p\dif y \dif x\\
	  &\qquad \le 
	  \C \brk[\bigg]{\smashoperator[r]{\iint_{\Bset^m \times \Bset^m}}
	  \abs{u \brk{x/\abs{x}} - u \brk{x}}^p\dif y \dif x+
	   \smashoperator{\iint_{\Bset^m \times \Bset^m}}
	  \abs{u \brk{x} - u \brk{y}}^p\dif y \dif x 
	  }\\
	  &\qquad \le \C \int_{\Bset^m} \abs{\Deriv u}^p\eqpunct{.}\qedhere
	\end{split}
	\]
\resetconstant
\end{proof}

A last tool is a measure-theoretical lemma that describes the concentration of measures on balls.
\begin{lemma}
\label{lemma_measure_concentration}
Let \(\mu\) be a finite Borel measure on \(\manifold{M}\), \(\eta \in \intvo{0}{\infty}\), and let
\[
 A\defeq \set{a \in X\st \mu \brk{a} \ge \eta}
 \eqpunct{.}
\]
If \(\rho \in \intvo{0}{\infty} \) is sufficiently small, then for every \(x \in \manifold{M}\setminus \bigcup_{a \in A} B_{\rho}\brk{a}\), one has
\[
 \mu \brk{\Bar{B}_{\rho/2}\brk{x}} < \eta
 \eqpunct{.}
\]
\end{lemma}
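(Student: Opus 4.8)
To prove \cref{lemma_measure_concentration}, the plan is to argue by contradiction, relying on the compactness of \(\manifold{M}\) together with continuity from above for the finite Radon measure \(\mu\). As a preliminary observation, since \(\manifold{M}\) is compact the measure \(\mu\) is finite, and hence the set \(A\) is finite, with at most \(\mu\brk{\manifold{M}}/\eta\) elements.

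Suppose the assertion fails. Then, negating the statement, I can find a sequence \(\brk{\rho_k}_{k \in \Nset}\) with \(\rho_k \to 0\) together with points \(x_k \in \manifold{M} \setminus \bigcup_{a \in A} B_{\rho_k}\brk{a}\) such that \(\mu\brk{\Bar{B}_{\rho_k/2}\brk{x_k}} \ge \eta\) for every \(k \in \Nset\). By compactness of \(\manifold{M}\), after extracting a subsequence I may assume that \(x_k \to x_\infty\) for some \(x_\infty \in \manifold{M}\). The first step would be to show that \(x_\infty \in A\): given any \(r \in \intvo{0}{\infty}\), for \(k\) large enough one has \(\rho_k/2 < r/2\) and \(d\brk{x_k, x_\infty} < r/2\), whence \(\Bar{B}_{\rho_k/2}\brk{x_k} \subseteq \Bar{B}_r\brk{x_\infty}\) and therefore \(\mu\brk{\Bar{B}_r\brk{x_\infty}} \ge \mu\brk{\Bar{B}_{\rho_k/2}\brk{x_k}} \ge \eta\); since this holds for every \(r\), letting \(r \to 0\) and using \(\bigcap_{r > 0} \Bar{B}_r\brk{x_\infty} = \set{x_\infty}\) with continuity from above gives \(\mu\brk{\set{x_\infty}} \ge \eta\), that is, \(x_\infty \in A\).

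It then remains to derive a contradiction. Since \(x_\infty \in A\) and \(x_k \notin B_{\rho_k}\brk{x_\infty}\), we have \(d\brk{x_k, x_\infty} \ge \rho_k > \rho_k/2\), so \(x_\infty \notin \Bar{B}_{\rho_k/2}\brk{x_k}\); moreover \(\Bar{B}_{\rho_k/2}\brk{x_k} \subseteq \Bar{B}_{r_k}\brk{x_\infty}\) with \(r_k \defeq d\brk{x_k, x_\infty} + \rho_k/2 \to 0\). As \(\set{x_\infty}\) and \(\Bar{B}_{\rho_k/2}\brk{x_k}\) are then disjoint subsets of \(\Bar{B}_{r_k}\brk{x_\infty}\), I would obtain \(\mu\brk{\Bar{B}_{r_k}\brk{x_\infty}} \ge \eta + \mu\brk{\set{x_\infty}}\) for every \(k\); letting \(k \to \infty\) and invoking continuity from above once more yields \(\mu\brk{\set{x_\infty}} \ge \eta + \mu\brk{\set{x_\infty}}\), which is impossible since \(\eta > 0\) and \(\mu\brk{\set{x_\infty}} < \infty\). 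The main (and rather minor) obstacle is simply the bookkeeping that separates, inside the shrinking ball \(\Bar{B}_{r_k}\brk{x_\infty}\), the mass carried by \(\Bar{B}_{\rho_k/2}\brk{x_k}\) from the mass of the atom at \(x_\infty\); once the inclusion and disjointness are set up correctly, the rest is entirely routine.
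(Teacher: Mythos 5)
Your proof is correct and shares its opening with the paper's: argue by contradiction, extract a convergent subsequence $x_k \to x_\infty$ by compactness, and show $\mu\brk{\set{x_\infty}}\ge\eta$ so that $x_\infty \in A$. From there the two arguments diverge. The paper passes to a further subsequence with geometric decay ($3\abs{x_{j+1}-a}<\abs{x_j-a}$), proves that the balls $\Bar B_{\rho_j/2}\brk{x_j}$ are \emph{pairwise disjoint}, and contradicts the finiteness of $\mu$ by summing $\mu\brk{\Bar B_{\rho_j/2}\brk{x_j}}\ge\eta$ over all $j$. You instead exploit the disjointness of each $\Bar B_{\rho_k/2}\brk{x_k}$ from the singleton $\set{x_\infty}$, place both inside the shrinking ball $\Bar B_{r_k}\brk{x_\infty}$ with $r_k = d\brk{x_k,x_\infty}+\rho_k/2\to 0$, and use continuity from above to reach $\mu\brk{\set{x_\infty}}\ge\eta+\mu\brk{\set{x_\infty}}$. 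Both work; yours is slightly more economical, avoiding the further subsequence and the summation over infinitely many balls, at the cost of being a little more careful about the (not necessarily monotone) radii $r_k$ — which you should handle explicitly, e.g.\ by noting that for any fixed $r>0$ one eventually has $\Bar B_{r_k}\brk{x_\infty}\subseteq\Bar B_r\brk{x_\infty}$, so $\mu\brk{\Bar B_r\brk{x_\infty}}\ge\eta+\mu\brk{\set{x_\infty}}$ for every $r>0$, and then let $r\to 0$.
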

\begin{proof}
Assume by contradiction that there is a sequence \(\brk{\rho_j}_{j \in \Nset}\) in \(\intvo{0}{\infty}\) converging to \(0\) and a sequence \(\brk{x_j}_{j \in \Nset}\) in \(\manifold{M}\) such that \(x_j \in \manifold{M}\setminus \bigcup_{a \in A} B_{\rho_j}\brk{a}\) and
\[
  \mu \brk{\Bar{B}_{\rho_j/2}\brk{x_j}} \ge \eta
  \eqpunct{.}
\]
Since \(\manifold{M}\) is compact, we can assume that \(\brk{x_j}_{j \in \Nset}\) converges to some \(a \in \manifold{M}\).

Given \(\delta > 0\), if \(j \in \Nset\) is sufficiently large,
\(\Bar{B}_{\rho_j/2} \brk{x_j} \subseteq \Bar{B}_{\delta}\brk{a}\), and thus
\[
 \mu \brk{\Bar{B}_{\delta}\brk{a}} \ge \mu \brk{\Bar{B}_{\rho_j/2}\brk{x_j}} \ge \eta\eqpunct{.}
\]
Letting \(\delta \to 0\), \(\mu \brk{\set{a}} \ge \eta \) and thus \(a \in A\).

By assumption, we have
\[
 \rho_j \le \abs{x_j - a}
 \eqpunct{.}
\]
Taking a further subsequence if necessary, we can assume that
\[
3 \abs{x_{j + 1} - a} < \abs{x_j - a}
\eqpunct{.}
\]
It follows then that if \(k > j\),
\[
\begin{split}
 \abs{x_j - x_{k}}
 &\ge \abs{x_j - a} - \abs{x_{k} - a}\\
 &= \frac{\abs{x_j - a} + \abs{x_{k} - a}}{2} + \frac{\abs{x_j - a} - 3  \abs{x_{k} - a}}{2}\\
 &> \frac{\rho_j + \rho_{k}}{2}\eqpunct{,}
 \end{split}
\]
and thus \(\Bar{B}_{\rho_j/2}\brk{x_j} \cap \Bar{B}_{\rho_{k}/2}\brk{x_{k}} = \varnothing\).
We have then
\[
 \mu \brk{\manifold{M}} \ge
 \sum_{j \in \Nset} \mu \brk{\Bar{B}_{\rho_j/2} \brk{x_j}}
 = \infty\eqpunct{,}
\]
in contradiction with the finiteness of the measure \(\mu\) on \(\manifold{M}\).
\end{proof}

We now have all the tools at our disposal in order to prove the main result of this section.

\begin{proof}[Proof of \cref{theorem_concentration_measures}]
By a classical lower semi-continuity argument, we have for every \(\varphi \in \continuous \brk{\manifold{M}, \intvr{0}{\infty}}\)
\[
 \int_{\manifold{M}} \varphi \dif \mu = 
 \lim_{j \to \infty} \int_{\manifold{M}} \abs{\Deriv v_{j}}^m \varphi
 \ge \int_{\manifold{M}} \abs{\Deriv u}^m \varphi\eqpunct{,}
\]
so that, as measures, 
\begin{equation}
\mu \ge \abs{\Deriv u}^m\eqpunct{.}
\end{equation}

We now apply an approximation argument, in order to be able to work instead with smooth maps.
More specifically, by the strong density of smooth maps in \(\sobolev^{1, m} \brk{\manifold{M}, \manifold{N}}\), there exist sequences \(\brk{\Hat{u}_j}_{j \in \Nset}\)
 and \(\brk{\Hat{v}_j}_{j \in \Nset}\) in \(\continuous^{\infty}\brk{\manifold{M}, \manifold{N}}\) such that 
\begin{equation}
\label{eq_7a050b9872b90f0e}
 \lim_{j \to \infty} \int_{\manifold{M}} \abs{\Deriv \Hat{u}_j - \Deriv u}^m + d\brk{\Hat{u}_j, u}^{m} = 0
\end{equation}
and 
\begin{equation}
\label{eq_d66944cda8ad35b8}
 \lim_{j \to \infty} \int_{\manifold{M}} \abs{\Deriv \Hat{v}_j - \Deriv v_j}^m + d\brk{\Hat{v}_j, v_j}^{m} = 0\eqpunct{.}
\end{equation}
It follows from \cref{proposition_stability_homotopy_classes} that for each \(j \in \Nset\) sufficiently large, the map \(\Hat{u}_j\) is homotopic to \(u\) in \(\VMO \brk{\manifold{M}, \manifold{N}}\) whereas the map \(\Hat{v}_j\) is homotopic to \( v_j \) in \(\continuous \brk{\manifold{M}, \manifold{N}}\).

After these preliminaries, we are at the heart of the proof of \cref{theorem_concentration_measures}.
We first study the points where energy concentration occurs.
For some \(\eta \in \intvo{0}{\infty}\) to be fixed, we define the set
\[
  \set{a_1, \dotsc, a_I} = \set{a \in \manifold{M} \st \mu \brk{\set{a}} \ge \eta}\eqpunct{.}
\]
If \(\rho \in \intvo{0}{\infty}\) is chosen sufficiently small according to \cref{lemma_measure_concentration}, then if \(j \in \Nset\) is sufficiently large, 
for every \(x \in \manifold{M}\setminus \bigcup_{i = 1}^I B_{\rho} \brk{a_i}\), we have
\[
 \int_{B_{\rho/2} \brk{x}} \abs{\Deriv v_j}^m < \eta\eqpunct{.}
\]
Indeed, assume that there is a sequence \(\brk{x_j}_{j \in \Nset}\) such that
\(x_j \in \manifold{M} \setminus \bigcup_{i = 1}^I B_{\rho} \brk{a_i}\) and such that
\[
 \int_{B_{\rho/2} \brk{x_j}} \abs{\Deriv v_j}^m \ge \eta\eqpunct{;}
\]
in view of the compactness of \(\manifold{M}\),
we can assume up to a subsequence that \(\brk{x_j}_{j \in \Nset}\) converges to some \(x_* \in \manifold{M}\setminus \smash{\bigcup_{i = 1}^I} B_{\rho} \brk{a_i}\);
if \(\sigma > \rho\), then
\[
 \mu \brk{\Bar{B}_{\sigma/2}\brk{x_*}}
 \ge \limsup_{j \to \infty} \int_{B_{\sigma/2} \brk{x_*}} \abs{\Deriv v_j}^m
 \ge \limsup_{j \to \infty} \int_{B_{\rho/2} \brk{x_j}} \abs{\Deriv v_j}^m
 \ge \eta\eqpunct{,}
\]
so that 
\[
\mu \brk{\Bar{B}_{\rho/2}\brk{x_*}} = \lim_{\sigma \underset{>}{\to} \rho} \mu \brk{\Bar{B}_{\sigma/2}\brk{x_*}}
 \ge \eta\eqpunct{.}
\]
In view of the definition of the set \(\set{a_1, \dotsc, a_I}\) and of \cref{lemma_measure_concentration}, this cannot hold when \(\rho\) is sufficiently small.

If \(j \in \Nset\) is sufficiently large, we also obtain from \eqref{eq_d66944cda8ad35b8} that for every \(x \in \manifold{M}\setminus \smash{\bigcup_{i = 1}^I} B_\rho \brk{a_i}\),
\begin{equation}
\label{eq_ba96089e7b05d55b}
  \int_{B_{\rho/2} \brk{x}} \abs{\Deriv \Hat{v}_j}^m < \eta\eqpunct{.}
\end{equation}
Concerning the energy around the concentration points, it is governed by the measure \( \mu \).
More specifically, given \(\varepsilon > 0\), using~\eqref{eq_d66944cda8ad35b8} and assumption~\ref{it_Dvj_converges_measure}, we may assume that \(j\) is sufficiently large so that
\begin{equation}
\label{eq_fee8Ech5fae2oph0ahMea6oh}
\int_{B_{\rho} \brk{a_i}} \abs{\Deriv \Hat{v}_j}^m
\le 
\mu \brk{\Bar{B}_{\rho}\brk{a_i}}
+ \varepsilon\eqpunct{.}
\end{equation}

On the contrary, concerning the maps \( \Hat{u}_j \), we may have a small energy estimate on \emph{every} ball.
Indeed, using~\eqref{eq_7a050b9872b90f0e} and Vitali's convergence theorem, if \( \rho \in \intvo{0}{\infty} \) is sufficiently small and \( j \in \Nset \) is sufficiently large, then for every \( x \in \manifold{M}\),
\begin{equation}
\label{eq_7732e2d014b435e5}
  \int_{B_{\rho} \brk{x}} \abs{\Deriv \Hat{u}_j}^m < \eta\eqpunct{.}
\end{equation}

We now wish to construct our desired map \( w \) by removing the bubbles on the balls \( B_{\rho} \brk{a_i} \) by using \cref{lemma_eta_extension}.
For this purpose, we first find a suitable radius \( \rho \) to work on.
By a Fubini-type argument, relying on the assumption~\ref{it_vj_to_u_L1} and using~\eqref{eq_7a050b9872b90f0e} and~\eqref{eq_d66944cda8ad35b8} again, we can assume that 
\begin{equation}
\label{eq_1685b15d400431fc}
\lim_{j \to \infty} \int_{\partial B_\rho\brk{a_i}} d\brk{\Hat{u}_j, \Hat{v}_j}^{m} = 0\eqpunct{.}
\end{equation}
Moreover, by Fatou’s lemma and Fubini’s theorem, it also holds that
\[
\begin{split}
 \int_{0}^{\rho}\brk[\Big]{\liminf_{j \to \infty}r \int_{\partial B_r \brk{a_i}} \abs{\Deriv \Hat{v}_j}^m} \frac{\dif r}{r}
 &\le \liminf_{j \to \infty}\int_{0}^{\rho}\brk[\Big]{\int_{\partial B_r \brk{a_i}} \abs{\Deriv \Hat{v}_j}^m} \dif r \\
 & \le \C \liminf_{j \to \infty} \int_{\manifold{M}} \abs{\Deriv v_j}^m < \infty\eqpunct{,}
\end{split}
\]
so that we can assume that
\[
  \liminf_{j \to \infty}   \rho \int_{\partial B_\rho \brk{a_i}} \abs{\Deriv \Hat{v}_j}^m \le \varepsilon \le \eta\eqpunct{.}
\]

Applying \cref{lemma_eta_extension} to every ball \(B_{\rho} \brk{a_i}\), we get a map \(\Tilde{u}_j \in \sobolev^{1, m} \brk{\manifold{M}, \manifold{N}}\cap \continuous \brk{\manifold{M}, \manifold{N}}\) such that
\(\Tilde{u}_j = \Hat{v}_j\) in \(\manifold{M}\setminus \smash{\bigcup_{i = 1}^I B_{\rho}\brk{a_i}}\) and
\begin{equation}
\label{eq_2d0a57078693cf23}
 \int_{B_\rho\brk{a_i}} \abs{\Deriv \Tilde{u}_j}^m
 \le \C \rho  \int_{\partial B_\rho\brk{a_i}} \abs{\Deriv \Hat{v}_j}^m
 \le \C \varepsilon\eqpunct{.}
\end{equation}
It follows then that for every \(a \in \manifold{M}\),
\begin{equation*}
 \int_{B_{\rho/2} \brk{a}} \abs{\Deriv \Tilde{u}_j}^m
 \le \C \eta\eqpunct{.}
\end{equation*}

By the mixed Poincaré inequality (\cref{lemma_mixed_poincare}), we have
\[
\begin{split}
 \fint_{B_\rho \brk{a_i}}
 d \brk{\Hat{u}_j, \Tilde{u}_j}^m
 &\le
 \C
 \brk[\bigg]{\fint_{\partial B_\rho \brk{a_i}} d \brk{\Hat{u}_j, \Tilde{u}_j}^m\\
 &\qquad\qquad
 + \fint_{B_\rho \brk{a_i}} \fint_{\partial B_\rho\brk{a_i}}
 d \brk{\Hat{u}_j \brk{x}, \Hat{u}_j \brk{y}}^m +
 d \brk{\Tilde{u}_j \brk{x}, \Tilde{u}_j \brk{y}}^m\dif x\dif y}\\
 &\le \C
 \brk[\Big]{\fint_{\partial B_\rho \brk{a_i}} d \brk{\Hat{u}_j, \Hat{v}_j}^m
 + \int_{B_\rho \brk{a_i}} \abs{\Deriv \Hat{u}_j}^m + \abs{\Deriv \Tilde{u}_j}^m}\eqpunct{.}
\end{split}
\]
If \(j\) is sufficiently large, relying on~\eqref{eq_1685b15d400431fc},~\eqref{eq_7732e2d014b435e5}, and~\eqref{eq_2d0a57078693cf23}, we have
\[
  \fint_{B_\rho \brk{a_i}}
 d \brk{\Hat{u}_j, \Tilde{u}_j}^m
 \le \C \eta\eqpunct{.}
\]
In addition, if \( j \) is sufficiently large (depending on \( \rho \)), then
\[
 \frac{1}{\rho^{m}}\int_{\manifold{M}\setminus \bigcup_{i = 1}^I B_{\rho}\brk{a_i}} d \brk{\Hat{u}_j, \Tilde{u}_j}^{m}
 =
 \frac{1}{\rho^{m}}\int_{\manifold{M}\setminus \bigcup_{i = 1}^I B_{\rho}\brk{a_i}} d \brk{\Hat{u}_j, \Hat{v}_j}^{m}
 \le \eta\eqpunct{.}
\]
We are therefore in position to apply \cref{proposition_relative_homotopy} and conclude that \(\Tilde{u}_j\) and \(\Hat{u}_j\) are homotopic.

We now turn to the estimate of the disparity energy on the balls \( B_{\rho} \brk{a_i} \).
By \cref{proposition_estimate_E1mtop_u_v}, we have then for every \(i \in I\), in view of \eqref{eq_fee8Ech5fae2oph0ahMea6oh} and~\eqref{eq_2d0a57078693cf23},
\begin{equation}
\label{eq_ieboosuquoo6sei4aziwoWae}
\begin{split}
\mathfrak{E}^{1, m}_{\mathrm{top}}\brk{\sqb{\Tilde{u}_j, \Hat{v}_j, B_\rho \brk{a_i}}}
  &\le \brk{1 + \Cl{cst_736bda0cd15cb00a} \rho} \int_{B_{\rho} \brk{a_i}} \abs{\Deriv \Hat{v}_j}^m + \abs{\Deriv \Tilde{u}_j}^m \\
  &\le \mu \brk{\Bar{B}_\rho\brk{a_i}} + \
  \C \brk{\rho + \varepsilon}
  \eqpunct{.}
  \end{split}
\end{equation}
The factor \( 1 + \Cr{cst_736bda0cd15cb00a}\rho \) in~\eqref{eq_ieboosuquoo6sei4aziwoWae} accounts for the distortion when identifying \( \Bar{B}_\rho \brk{a_i} \) with the Euclidean unit ball \( \overline{\Bset^{m}} \).
Since the range of \(\mathfrak{E}^{1, m}_{\mathrm{top}}\) is discrete (\cref{proposition_properties_Etop} \ref{it_Etop_discrete}), we have for some \(\rho\) sufficiently small
\[
 \mathfrak{E}^{1, m}_{\mathrm{top}}\brk{\sqb{\Tilde{u}_j, \Hat{v}_j, B_\rho \brk{a_i}}}
 \le \mu \brk{a_i}\eqpunct{.}
\]

The map \(w\) is then defined thanks to \cref{lemma_discrepancy_homotopy_transport}, using that \( \Hat{v}_j = \Tilde{u}_j \) in \(\manifold{M}\setminus \bigcup_{i = 1}^I B_{\rho}\brk{a_i}\) and that \( v \) is homotopic to \( \Hat{v}_j \) for \( j \) sufficiently large; it satisfies the conclusion in view of \cref{proposition_properties_Etop} \ref{it_Etop_action}.
\resetconstant
\end{proof}

\begin{proof}[Proof of \cref{theorem_intro_heterotopic_concentration}]
We apply \cref{theorem_concentration_measures}; relying on \cref{lemma_discrepancy_homotopy_transport}, we may assume that instead \(w = u\) in \(\manifold{M} \setminus \bigcup_{i = 1}^I B_\rho\brk{a_i}\), \(w\) is homotopic to \(v\), and
\begin{equation}
\label{eq_CooF3fu9zahphohjahl2aelobis}
	\mu \ge \abs{\Deriv u}^m 
	+ \sum_{i = 1}^I \mathfrak{E}^{1, m}_{\mathrm{top}}\brk{\sqb{u, w, B_\rho \brk{a_i}}}\,\delta_{a_i}\eqpunct{,}
\end{equation}
where we have used \cref{proposition_properties_Etop}~\ref{it_Etop_action} and \cref{lemma_disparity_reciprocal}.
A standard smoothing argument allows to obtain the additional regularity \( w \in \continuous^{\infty} \brk{\manifold{M}, \manifold{N}} \).

The inequality \eqref{eq_iasooYu9pohhuf7Jai0Eghoo} follows from \eqref{eq_CooF3fu9zahphohjahl2aelobis} and properties of the convergence of measures;  \eqref{eq_quivu5Dai3Oongis9ohPooch} follows then from \cref{proposition_Edis_controlled_several_bubbles}.
\end{proof}

Let us observe that the above proof shows that the conclusion of \cref{theorem_intro_heterotopic_concentration} holds under the weaker assumption that \( u \in \sobolev^{1,m} \brk{\manifold{M}, \manifold{N}} \cap \continuous \brk{\manifold{M}, \manifold{N}} \) and \( v \in \continuous \brk{\manifold{M}, \manifold{N}} \); in this case, the map \( w \) that we obtain is only continuous, not smooth.

We are now in position to conclude the proof of \cref{theorem_intro_heterotopic_disparity} -- and even a low regularity version of it -- by proving the lower bound on the heterotopic energy.

\begin{theorem}
\label{theorem_characterisation_Heten}
If \(u \in \sobolev^{1, m}\brk{\manifold{M}, \manifold{N}} \cap \continuous\brk{\manifold{M}, \manifold{N}} \) and \(v \in \continuous \brk{\manifold{M}, \manifold{N}}\), then 
\[
 \energy^{1, m}_{\mathrm{het}}
  \brk{u, v}
  = \int_{\manifold{M}} \abs{\Deriv u}^m + \energy^{1, m}_{\mathrm{disp}} \brk{u, v}\eqpunct{.}
\]
\end{theorem}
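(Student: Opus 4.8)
The inequality \(\energy^{1, m}_{\mathrm{het}}(u, v) \le \int_{\manifold{M}} \abs{\Deriv u}^m + \energy^{1, m}_{\mathrm{disp}}(u, v)\) is exactly \cref{proposition_upper_estimate}, so the whole content of the statement is the reverse inequality. The plan is to feed a nearly optimal sequence of competitors for \(\energy^{1, m}_{\mathrm{het}}(u, v)\) into the bubbling result \cref{theorem_concentration_measures}, and then read off the disparity energy from the atoms of the limiting measure.

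If \(\energy^{1, m}_{\mathrm{het}}(u, v) = \infty\) there is nothing to prove, so set \(E \defeq \energy^{1, m}_{\mathrm{het}}(u, v) < \infty\). Using the characterisation of the heterotopic energy by smooth competitors from \cref{proposition_heterotopic_restricts_to_smooth}, I would choose, for each \(j \in \Nset\), a map \(w_j \in \continuous^\infty(\manifold{M}, \manifold{N})\) homotopic to \(v\) with \(\int_{\manifold{M}} d(u, w_j)^m \le 1/j\) and \(\int_{\manifold{M}} \abs{\Deriv w_j}^m \le E + 1/j\). Then \(w_j \to u\) in \(\lebesgue^m\), hence in \(\lebesgue^1\); passing to a subsequence, one may assume in addition that \(w_j \to u\) almost everywhere, that \(\int_{\manifold{M}} \abs{\Deriv w_j}^m \to \ell\) for some \(\ell \le E\), and that \(\abs{\Deriv w_j}^m \to \mu\) weakly-\(\ast\) as measures for some finite Radon measure \(\mu\) on \(\manifold{M}\); testing against the constant function \(1\) gives \(\mu(\manifold{M}) = \ell\).

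I would then apply \cref{theorem_concentration_measures} to the sequence \((w_j)\), which is homotopic to \(v\), converges to \(u\) in \(\lebesgue^1\), and whose energy densities converge weakly-\(\ast\) to \(\mu\). This produces points \(a_1, \dotsc, a_I \in \manifold{M}\) and, for every sufficiently small \(\rho > 0\) — in particular small enough that the balls \(B_\rho(a_i)\) are pairwise disjoint — a map \(w \in \continuous(\manifold{M}, \manifold{N})\) homotopic to \(u\) with \(w = v\) on \(\manifold{M} \setminus \bigcup_{i = 1}^I B_\rho(a_i)\) and
\[
 \mu \ge \abs{\Deriv u}^m + \sum_{i = 1}^I \energy^{1, m}_{\mathrm{top}}\brk{\sqb{w, v, B_\rho(a_i)}}\, \delta_{a_i}\eqpunct{.}
\]
Since \(\mu\) dominates the right-hand side as measures, evaluating on \(\manifold{M}\) gives \(\ell = \mu(\manifold{M}) \ge \int_{\manifold{M}} \abs{\Deriv u}^m + \sum_{i = 1}^I \energy^{1, m}_{\mathrm{top}}\brk{\sqb{w, v, B_\rho(a_i)}}\). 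As the balls are disjoint and \(w = v\) outside them, \cref{proposition_Edis_controlled_several_bubbles} yields \(\sum_{i = 1}^I \energy^{1, m}_{\mathrm{top}}\brk{\sqb{w, v, B_\rho(a_i)}} \ge \energy^{1, m}_{\mathrm{disp}}(w, v)\), and because \(w\) is homotopic to \(u\) and \(\energy^{1, m}_{\mathrm{disp}}\) depends only on the homotopy classes (\cref{proposition_disparity_distance}) one has \(\energy^{1, m}_{\mathrm{disp}}(w, v) = \energy^{1, m}_{\mathrm{disp}}(u, v)\). Chaining these estimates, \(E \ge \ell \ge \int_{\manifold{M}} \abs{\Deriv u}^m + \energy^{1, m}_{\mathrm{disp}}(u, v)\), which together with \cref{proposition_upper_estimate} proves the theorem.

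The genuine difficulty of the statement has been packaged into the bubbling theorem \cref{theorem_concentration_measures}, so at this level there is no substantial obstacle; the points needing a little attention are the reduction to a smooth competing sequence and the bookkeeping of subsequences, so that almost everywhere convergence and weak-\(\ast\) convergence of the energy densities hold simultaneously, together with the harmless interchange of the roles of \(u\), \(v\), and \(w\), legitimate because \(\energy^{1, m}_{\mathrm{disp}}\) is a symmetric function of the homotopy classes. One could alternatively run the argument through \cref{theorem_intro_heterotopic_concentration}, whose conclusions \eqref{eq_iasooYu9pohhuf7Jai0Eghoo} and \eqref{eq_quivu5Dai3Oongis9ohPooch} are tailored to this purpose, at the cost of replacing the single measure inequality above by an explicit splitting of \(\int_{\manifold{M}} \abs{\Deriv w_j}^m\) over the balls \(B_r(a_i)\) and their complement, combined with weak lower semicontinuity on the complement, before letting \(r \to 0\).
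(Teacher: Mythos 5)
Your proof is correct and takes essentially the same route as the paper: the upper bound is \cref{proposition_upper_estimate}, and the lower bound is obtained by feeding a near-optimal smooth competing sequence into the bubbling result. The paper's one-line proof cites \cref{theorem_intro_heterotopic_concentration} (in its low-regularity form), which is itself just \cref{theorem_concentration_measures} combined with \cref{lemma_discrepancy_homotopy_transport} and \cref{proposition_Edis_controlled_several_bubbles}; your main line of argument, evaluating the measure inequality \(\mu \ge \abs{\Deriv u}^m + \sum_i \energy^{1,m}_{\mathrm{top}}(\cdots)\,\delta_{a_i}\) on all of \(\manifold{M}\) and then converting the sum of topological energies to \(\energy^{1,m}_{\mathrm{disp}}(u,v)\) via \cref{proposition_Edis_controlled_several_bubbles} and homotopy invariance, is a clean and correct unwinding of exactly that chain. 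Your closing remark about the alternative via the \(\liminf\) estimates of \cref{theorem_intro_heterotopic_concentration} matches the paper's stated proof even more literally, at the small cost of the explicit \(r \to 0\) bookkeeping you describe.
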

\begin{proof}
This follows from \cref{proposition_upper_estimate} and  \cref{theorem_intro_heterotopic_concentration} (or more precisely, its lower regularity counterpart, see the comment following the proof of \cref{theorem_intro_heterotopic_concentration}).
\end{proof}

We conclude with the following statement concerning the continuity of the heterotopic energy with respect to the strong \( \sobolev^{1,m} \) convergence.

\begin{proposition}
If \(\brk{u_j}_{j \in \Nset}\) is a sequence in \(\sobolev^{1, m}\brk{\manifold{M}, \manifold{N}}\) converging strongly to \(u \in \sobolev^{1, m} \brk{\manifold{M}, \manifold{N}}\), then 
\[
 \energy^{1, m}_{\mathrm{het}}
  \brk{u, v}
  = \lim_{j \to \infty} \energy^{1, m}_{\mathrm{het}}
  \brk{u_j, v}\eqpunct{.}
\]
\end{proposition}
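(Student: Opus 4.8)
The plan is first to upgrade \cref{theorem_characterisation_Heten} to a form valid for a merely Sobolev (not necessarily continuous) first argument, and then to read off the claim from the strong continuity of the Dirichlet energy together with the stability of homotopy classes. If \(\energy^{1, m}_{\mathrm{het}}\brk{u, v} = \infty\), the assertion is immediate: strong \(\sobolev^{1, m}\) convergence implies convergence in measure, so \cref{proposition_Ehet_lsc} yields \(\liminf_{j \to \infty} \energy^{1, m}_{\mathrm{het}}\brk{u_j, v} \ge \energy^{1, m}_{\mathrm{het}}\brk{u, v} = \infty\), and the limit is \(+\infty\). Hence I may assume \(\energy^{1, m}_{\mathrm{het}}\brk{u, v} < \infty\), so that \(u \in \sobolev^{1, m}\brk{\manifold{M}, \manifold{N}}\) by \cref{proposition_heter_energy_controls_energy}.

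Next I would record that \(\energy^{1, m}_{\mathrm{disp}}\) depends only on the \(\VMO\) homotopy classes of its two arguments --- this is exactly what underlies \cref{proposition_disparity_distance}, and follows from \cref{lemma_discrepancy_homotopy_transport} together with the invariance of \(\energy^{1, m}_{\mathrm{top}}\) under the action of the fundamental groupoid (\cref{proposition_properties_Etop}~\ref{it_Etop_action}). Accordingly, for any \(u' \in \sobolev^{1, m}\brk{\manifold{M}, \manifold{N}}\) one may unambiguously set \(\energy^{1, m}_{\mathrm{disp}}\brk{u', v} \defeq \energy^{1, m}_{\mathrm{disp}}\brk{\Hat{u}', v}\), where \(\Hat{u}'\) is any continuous map homotopic to \(u'\) in \(\VMO\brk{\manifold{M}, \manifold{N}}\) (such a map exists and its homotopy class is well defined).

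The heart of the argument is then to prove, for every \(u' \in \sobolev^{1, m}\brk{\manifold{M}, \manifold{N}}\), the identity \(\energy^{1, m}_{\mathrm{het}}\brk{u', v} = \int_{\manifold{M}} \abs{\Deriv u'}^m + \energy^{1, m}_{\mathrm{disp}}\brk{u', v}\). For the upper bound I would approximate \(u'\) strongly in \(\sobolev^{1, m}\) by smooth maps \(\Hat{u}_k\), apply \cref{theorem_characterisation_Heten} to each \(\Hat{u}_k\), use \(\energy^{1, m}_{\mathrm{disp}}\brk{\Hat{u}_k, v} = \energy^{1, m}_{\mathrm{disp}}\brk{u', v}\) for \(k\) large (\cref{proposition_stability_homotopy_classes}), and pass to the limit with \cref{proposition_Ehet_lsc} and \(\int_{\manifold{M}} \abs{\Deriv \Hat{u}_k}^m \to \int_{\manifold{M}} \abs{\Deriv u'}^m\). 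For the lower bound (where I may assume the left-hand side finite), fixing \(\varepsilon > 0\) I would pick a sequence \(\brk{v_k}_{k \in \Nset}\) homotopic to \(v\), converging to \(u'\) almost everywhere, hence in \(\lebesgue^1\) since \(\manifold{N}\) is compact, and with \(\liminf_k \int_{\manifold{M}} \abs{\Deriv v_k}^m \le \energy^{1, m}_{\mathrm{het}}\brk{u', v} + \varepsilon\); after extraction, \(\abs{\Deriv v_k}^m \weakto \mu\) for a Radon measure \(\mu\) with \(\mu\brk{\manifold{M}} \le \energy^{1, m}_{\mathrm{het}}\brk{u', v} + \varepsilon\). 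Feeding this into the bubbling theorem \cref{theorem_concentration_measures} produces points \(a_1, \dotsc, a_I\) and, for \(\rho\) small, a continuous map \(w\) homotopic to \(u'\), equal to \(v\) off \(\bigcup_i B_\rho\brk{a_i}\), with \(\mu \ge \abs{\Deriv u'}^m + \sum_i \energy^{1, m}_{\mathrm{top}}\brk{\sqb{w, v, B_\rho\brk{a_i}}}\delta_{a_i}\); evaluating total masses, then invoking \cref{proposition_Edis_controlled_several_bubbles} and the homotopy invariance of \(\energy^{1, m}_{\mathrm{disp}}\), and finally letting \(\varepsilon \to 0\), yields the lower bound.

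Once this Sobolev characterisation is in place, the conclusion is immediate: strong \(\sobolev^{1, m}\) convergence gives \(\int_{\manifold{M}} \abs{\Deriv u_j}^m \to \int_{\manifold{M}} \abs{\Deriv u}^m\), and \cref{proposition_stability_homotopy_classes} makes \(u_j\) homotopic to \(u\) for \(j\) large, so that \(\energy^{1, m}_{\mathrm{disp}}\brk{u_j, v} = \energy^{1, m}_{\mathrm{disp}}\brk{u, v}\) eventually; summing the two contributions gives \(\energy^{1, m}_{\mathrm{het}}\brk{u_j, v} \to \energy^{1, m}_{\mathrm{het}}\brk{u, v}\). The main obstacle is precisely this Sobolev characterisation, that is, removing the continuity hypothesis on the first argument from \cref{theorem_characterisation_Heten}; what makes it go through is that the bubbling statement \cref{theorem_concentration_measures}, unlike its introductory counterpart \cref{theorem_intro_heterotopic_concentration}, is already phrased for a merely Sobolev map, the only remaining care being the harmless ambiguity in the definition of \(\energy^{1, m}_{\mathrm{disp}}\brk{u', v}\), which is controlled by its homotopy invariance.
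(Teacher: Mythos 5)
Your proposal is correct and follows essentially the same route as the paper: reduce the claim to the characterisation \(\energy^{1, m}_{\mathrm{het}}\brk{u, v} = \int_{\manifold{M}} \abs{\Deriv u}^m + \energy^{1, m}_{\mathrm{disp}}\brk{u, v}\), then invoke strong continuity of the Dirichlet energy together with the stability of \(\sobolev^{1, m}\) homotopy classes (\cref{proposition_stability_homotopy_classes}) to conclude that the disparity term is eventually constant along the sequence. The paper's own proof is a one-line citation of \cref{theorem_characterisation_Heten} and \cref{proposition_continuity_disparity}, both of which carry the extra hypothesis that the first argument is continuous, while the proposition at hand is stated for arbitrary \(u_j, u \in \sobolev^{1, m}\brk{\manifold{M}, \manifold{N}}\). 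You correctly identify this mismatch and fill it: you dispose of the infinite case via \cref{proposition_Ehet_lsc}; you extend \(\energy^{1,m}_{\mathrm{disp}}\) unambiguously to a Sobolev first argument through its VMO homotopy class, which is legitimate by the symmetry in \cref{proposition_disparity_distance}~\ref{it_Edis_symmetric} combined with \cref{lemma_discrepancy_homotopy_transport} and \cref{proposition_properties_Etop}~\ref{it_Etop_action}; you then prove the upper bound of the extended characterisation by smooth approximation and \cref{proposition_Ehet_lsc}, and the lower bound via \cref{theorem_concentration_measures}, which --- as you note --- is already phrased for a merely Sobolev limit and thus needs no further adaptation. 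The result is the same argument as the paper's, carried out with the continuity hypothesis properly removed rather than left implicit.
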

\begin{proof}
This follows from \cref{theorem_characterisation_Heten} and \cref{proposition_continuity_disparity}.
\end{proof}

\begin{bibdiv}

\begin{biblist}
\bib{Abbondandolo_1996}{article}{
   author={Abbondandolo, Alberto},
   title={On the homotopy type of VMO},
   journal={Topol. Methods Nonlinear Anal.},
   volume={7},
   date={1996},
   number={2},
   pages={431--436},
   issn={1230-3429},
   doi={10.12775/TMNA.1996.018},
}

\bib{Baues_1977}{book}{
   author={Baues, Hans J.},
   title={Obstruction theory on homotopy classification of maps},
   series={Lecture Notes in Mathematics},
   volume={628},
   publisher={Springer-Verlag, Berlin-New York},
   date={1977},
   pages={xi+387},
   isbn={3-540-08534-3},
   doi={10.1007/BFb0065144},
}    

\bib{Bethuel_2020}{article}{
   author={Bethuel, Fabrice},
   title={A counterexample to the weak density of smooth maps between
   manifolds in Sobolev spaces},
   journal={Invent. Math.},
   volume={219},
   date={2020},
   number={2},
   pages={507--651},
   issn={0020-9910},
   doi={10.1007/s00222-019-00911-3},
}

\bib{Bethuel_Brezis_Coron_1990}{article}{
   author={Bethuel, F.},
   author={Brezis, H.},
   author={Coron, J.-M.},
   title={Relaxed energies for harmonic maps},
   conference={
      title={Variational methods},
      address={Paris},
      date={1988},
   },
   book={
      series={Progr. Nonlinear Differential Equations Appl.},
      volume={4},
      publisher={Birkh\"auser Boston, Boston, MA},
   },
   isbn={0-8176-3452-5},
   date={1990},
   pages={37--52},
   doi={10.1007/978-1-4757-1080-9\_3},
}

\bib{Bourgain_Brezis_Mironescu_2005}{article}{
    author={Bourgain, Jean},
    author={Brezis, Ha\"{\i}m},
    author={Mironescu, Petru},
    title={Lifting, degree, and distributional Jacobian revisited},
    journal={Comm. Pure Appl. Math.},
    volume={58},
    date={2005},
    number={4},
    pages={529--551},
    issn={0010-3640},
    doi={10.1002/cpa.20063},
}

\bib{Brezis_1997}{article}{
   author={Brezis, Ha\"im},
   title={Degree theory: old and new},
   conference={
      title={Topological nonlinear analysis, II},
      address={Frascati},
      date={1995},
   },
   book={
      series={Progr. Nonlinear Differential Equations Appl.},
      volume={27},
      publisher={Birkh\"auser Boston, Boston, MA},
   },
   isbn={0-8176-3886-5},
   date={1997},
   pages={87--108},
doi={10.1007/978-1-4612-4126-3\_2},
}

\bib{Brezis_Coron_1983}{article}{
   author={Brezis, Ha\"im},
   author={Coron, Jean-Michel},
   title={Large solutions for harmonic maps in two dimensions},
   journal={Comm. Math. Phys.},
   volume={92},
   date={1983},
   number={2},
   pages={203--215},
   issn={0010-3616},
   doi={10.1007/BF01210846},
}

\bib{Brezis_Coron_1984}{article}{
   author={Brezis, Ha\"im},
   author={Coron, Jean-Michel},
   title={Multiple solutions of $H$-systems and Rellich's conjecture},
   journal={Comm. Pure Appl. Math.},
   volume={37},
   date={1984},
   number={2},
   pages={149--187},
   issn={0010-3640},
   doi={10.1002/cpa.3160370202},
}

\bib{Brezis_Li_2001}{article}{
    author={Brezis,Ha\"{\i}m},
    author={Li, Yanyan}*{inverted={yes}},
    title={Topology and Sobolev spaces},
    journal={J. Funct. Anal.},
    volume={183},
    date={2001},
    number={2},
    pages={321--369},
    issn={0022-1236},
    doi={10.1006/jfan.2000.3736},
}

\bib{Brezis_Li_Mironescu_Nirenberg_1999}{article}{
   author={Brezis, Ha\"im},
   author={Li, Yanyan}*{inverted={yes}},
   author={Mironescu, Petru},
   author={Nirenberg, Louis},
   title={Degree and Sobolev spaces},
   journal={Topol. Methods Nonlinear Anal.},
   volume={13},
   date={1999},
   number={2},
   pages={181--190},
   issn={1230-3429},
   doi={10.12775/TMNA.1999.009},
}

\bib{Brezis_Mironescu_Shafrir_2016_CRAS}{article}{
   author={Brezis, Ha\"im},
   author={Mironescu, Petru},
   author={Shafrir, Itai},
   title={Distances between classes of sphere-valued Sobolev maps},
   journal={C. R. Math. Acad. Sci. Paris},
   volume={354},
   date={2016},
   number={7},
   pages={677--684},
   issn={1631-073X},
   doi={10.1016/j.crma.2016.05.001},
}

\bib{Brezis_Mironescu_Shafrir_2016}{article}{
   author={Brezis, Ha\"im},
   author={Mironescu, Petru},
   author={Shafrir, Itai},
   title={Distances between homotopy classes of $W^{s,p}(\mathbb{S}^N;\mathbb{
   S}^N)$},
   journal={ESAIM Control Optim. Calc. Var.},
   volume={22},
   date={2016},
   number={4},
   pages={1204--1235},
   issn={1292-8119},
   doi={10.1051/cocv/2016037},
}

\bib{Brezis_Nirenberg_1995}{article}{
   author={Brezis, H.},
   author={Nirenberg, L.},
   title={Degree theory and BMO},
   part={I},
   subtitle={Compact manifolds without boundaries},
   journal={Selecta Math. (N.S.)},
   volume={1},
   date={1995},
   number={2},
   pages={197--263},
   issn={1022-1824},
   doi={10.1007/BF01671566},
}

\bib{Brezis_Nirenberg_1996}{article}{
   author={Brezis, Ha\"im},
   author={Nirenberg, Louis},
   title={Degree theory and BMO},
   part={II},
   subtitle={Compact manifolds with boundaries},
   contribution={
   type={appendix},
   author={the authors and P.~Mironescu},
   },
   journal={Selecta Math. (N.S.)},
   volume={2},
   date={1996},
   number={3},
   pages={309--368},
   issn={1022-1824},
   doi={10.1007/BF01587948},
}

\bib{Cairns_1935}{article}{
	author   = {Cairns, Stewart Scott},
	journal  = {Bull. Amer. Math. Soc.},
	title    = {Triangulation of the manifold of class one},
	year     = {1935},
	issn     = {0002-9904},
	number   = {8},
	pages    = {549--552},
	volume   = {41},
	doi      = {10.1090/S0002-9904-1935-06140-3},
}

\bib{Detaille_Mironescu_Xiao_2025}{article}{
	author={Detaille, Antoine},
	author={Mironescu, Petru},
	author={Xiao, Kai}*{inverted={yes}},
	eprint={https://hal.science/hal-05050996v1},
	title={Pullback of closed forms by low regularity maps to manifolds, and applications},
	date={2025},
}

\bib{Detaille_VanSchaftingen}{arxiv}{
 author={Detaille, Antoine},
 author={Van Schaftingen, Jean},
 arxiv={2412.12889},
 title={Analytical obstructions to the weak approximation of Sobolev mappings into manifolds},
 date={2024},
}

\bib{Dinca_Mawhin_2021}{book}{
	author    = {Dinc{\u{a}}, George},
	author    = {Mawhin, Jean},
	publisher = {Birkh{\"{a}}user},
	title     = {Brouwer {Degree}},
	date      = {2021},
	isbn      = {9783030632304},
	number    = {95},
	series    = {Progr. Nonlinear Differential Equations Appl.},
	doi       = {10.1007/978-3-030-63230-4},
	issn      = {2374-0280},
	subtitle  = {The Core of Nonlinear Analysis},
}

\bib{Duzaar_Kuwert_1998}{article}{
    author={Duzaar, Frank},
    author={Kuwert, Ernst},
    title={Minimization of conformally invariant energies in homotopy classes},
    journal={Calc. Var. Partial Differential Equations},
    volume={6},
    date={1998},
    number={4},
    pages={285--313},
    issn={0944-2669},
    doi={10.1007/s005260050092},
}

\bib{Fitzi_Wenger_2020}{article}{
	author   = {Fitzi, Martin},
	author   = {Wenger, Stefan},
	journal  = {Proc. Amer. Math. Soc.},
	title    = {Morrey's {$\varepsilon$}-conformality lemma in metric spaces},
	year     = {2020},
	issn     = {0002-9939},
	number   = {10},
	pages    = {4285--4298},
	volume   = {148},
	doi      = {10.1090/proc/15065},
}

\bib{Giaquinta_Modica_Soucek_1998_II}{book}{
	author    = {Giaquinta, Mariano},
	author    = {Modica, Giuseppe},
	author    = {Sou{\v{c}}ek, Ji{\v{r}}{\'{i}}},
	publisher = {Springer},
	title     = {Cartesian currents in the calculus of variations {II}: {Variational} integrals},
	date      = {1998},
	number    = {38},
	series    = {Ergeb. Math. Grenzgeb.},
	pages = {697},
	doi = {10.1007/978-3-662-06218-0},
}

\bib{Grzela_Mazowiecka}{arxiv}{
	author={Grzela, Adam},
	author={Mazowiecka, Katarzyna},
	title={Existence of infinitely many homotopy classes from {$S^3$} to {$S^2$} having a minimizing {$W^{s,3/s}$}-harmonic map},
	arxiv={2506.05060},
	date={2025},
}

\bib{Hang_Lin_2003_II}{article}{
    author={Hang, Fengbo}*{inverted={yes}},
    author={Lin, Fanghua}*{inverted={yes}},
    title={Topology of Sobolev mappings},
    part={II},
    journal={Acta Math.},
    volume={191},
    date={2003},
    number={1},
    pages={55--107},
    issn={0001-5962},
    doi={10.1007/BF02392696},
}

\bib{Hang_Lin_2003_III}{article}{
	author={Hang, Fengbo},
	author={Lin, Fanghua},
	title={Topology of Sobolev mappings},
	part={III},
	journal={Comm. Pure Appl. Math.},
	volume={56},
	date={2003},
	number={10},
	pages={1383--1415},
	issn={0010-3640},
	doi={10.1002/cpa.10098},
}

\bib{Hardt_Riviere_2003}{article}{
   author={Hardt, Robert},
   author={Rivi\`ere, Tristan},
   title={Connecting topological Hopf singularities},
   journal={Ann. Sc. Norm. Super. Pisa Cl. Sci. (5)},
   volume={2},
   date={2003},
   number={2},
   pages={287--344},
   issn={0391-173X},
   doi={10.1007/s11511-008-0023-6},
}

\bib{Hardt_Riviere_2008}{article}{
   author={Hardt, Robert},
   author={Rivi\`ere, Tristan},
   title={Connecting rational homotopy type singularities},
   journal={Acta Math.},
   volume={200},
   date={2008},
   number={1},
   pages={15--83},
   issn={0001-5962},
   doi={10.1007/s11511-008-0023-6},
}

\bib{Hu_1959}{book}{
   author={Hu, Sze-tsen},
   title={Homotopy theory},
   series={Pure and Applied Mathematics},
   volume={VIII},
   publisher={Academic Press, New York-London},
   date={1959},
   pages={xiii+347},
}

\bib{Lemaire_1978}{article}{
 author={Lemaire, Luc},
 journal={J. Differential Geom.},
 issn={0022-040X},
 issn={1945-743X},
 doi={10.4310/jdg/1214434347},
 title={Applications harmoniques de surfaces riemanniennes},
 volume={13},
 pages={51--78},
 date={1978},
 publisher={International Press of Boston, Somerville, MA},
}

\bib{Levi_Shafrir_2014}{article}{
   author={Levi, Shay},
   author={Shafrir, Itai},
   title={On the distance between homotopy classes of maps between spheres},
   journal={J. Fixed Point Theory Appl.},
   volume={15},
   date={2014},
   number={2},
   pages={501--518},
   issn={1661-7738},
   doi={10.1007/s11784-014-0156-5},
}

\bib{Lions_1985}{article}{
   author={Lions, P.-L.},
   title={The concentration-compactness principle in the calculus of
   variations. The limit case. II},
   journal={Rev. Mat. Iberoamericana},
   volume={1},
   date={1985},
   number={2},
   pages={45--121},
   issn={0213-2230},
   doi={10.4171/RMI/12},
}

\bib{Luckhaus_1993}{article}{
   author={Luckhaus, Stephan},
   title={Convergence of minimizers for the $p$-Dirichlet integral},
   journal={Math. Z.},
   volume={213},
   date={1993},
   number={3},
   pages={449--456},
   issn={0025-5874},
   doi={10.1007/BF03025730},
}

\bib{Morrey_1948}{article}{
	author   = {Morrey, Charles B. Jr.},
	journal  = {Ann. of Math. (2)},
	title    = {The problem of {Plateau} on a {Riemannian} manifold},
	date     = {1948},
	issn     = {0003-486X},
	number   = {4},
	pages    = {807--851},
	volume   = {49},
	doi      = {10.2307/1969401},
}

\bib{Mucci_2012}{article}{
	author   = {Mucci, Domenico},
	journal  = {Adv. Calc. Var.},
	title    = {On sequences of maps with finite energies in trace spaces between manifolds},
	date     = {2012},
	issn     = {1864-8258},
	number   = {2},
	pages    = {161--230},
	volume   = {5},
	doi      = {10.1515/acv.2011.012},
}

\bib{Mucci_2012_Projective}{article}{
  author={Mucci, Domenico},
  journal={Discrete Contin. Dyn. Syst. Ser. B},
  issn={1531-3492},
  issn={1553-524X},
  doi={10.3934/dcdsb.2012.17.597},
  title={Maps into projective spaces: liquid crystal and conformal energies},
  volume={17},
  number={2},
  pages={597--635},
  date={2012},
  publisher={American Institute of Mathematical Sciences (AIMS), Springfield, MO},
}

\bib{Olum_1950}{article}{
   author={Olum, Paul},
   title={Obstructions to extensions and homotopies},
   journal={Ann. of Math. (2)},
   volume={52},
   date={1950},
   pages={1--50},
   issn={0003-486X},
   doi={10.2307/1969510},
}

\bib{Pakzad_Riviere_2003}{article}{
   author={Pakzad, M. R.},
   author={Rivi\`ere, T.},
   title={Weak density of smooth maps for the Dirichlet energy between
   manifolds},
   journal={Geom. Funct. Anal.},
   volume={13},
   date={2003},
   number={1},
   pages={223--257},
   issn={1016-443X},
   doi={10.1007/s000390300006},
}

\bib{Park_Schikorra_2023}{article}{
	author    = {Park, Woongbae}*{inverted={yes}},
	author    = {Schikorra, Armin},
	journal   = {Nonlinear Anal.},
	title     = {Quantitative estimates for fractional {Sobolev} mappings in rational homotopy groups},
	date      = {2023},
	note      = {Article ID 113349, 16 p.},
	volume    = {235},
	doi       = {10.1016/j.na.2023.113349},
	publisher = {Elsevier},
}

\bib{Parker_1996}{article}{
   author={Parker, Thomas H.},
   title={Bubble tree convergence for harmonic maps},
   journal={J. Differential Geom.},
   volume={44},
   date={1996},
   number={3},
   pages={595--633},
   issn={0022-040X},
   doi={10.4310/jdg/1214459224},
}

\bib{Petrache_VanSchaftingen_2017}{article}{
   author={Petrache, Mircea},
   author={Van Schaftingen, Jean},
   title={Controlled singular extension of critical trace Sobolev maps from
   spheres to compact manifolds},
   journal={Int. Math. Res. Not. IMRN},
   date={2017},
   number={12},
   pages={3647--3683},
   issn={1073-7928},
}

\bib{Riviere_1998}{article}{
    author={Rivi\`ere, Tristan},
    title={Minimizing fibrations and \(p\)-harmonic maps in homotopy classes from \(\mathbb{S}^3\) into \(\mathbb{S}^2\)},
    journal={Comm. Anal. Geom.},
    volume={6},
    date={1998},
    number={3},
    pages={427--483},
    issn={1019-8385},
    doi={10.4310/CAG.1998.v6.n3.a2},
}

\bib{Rubinstein_Shafrir_2007}{article}{
   author={Rubinstein, Jacob},
   author={Shafrir, Itai},
   title={The distance between homotopy classes of $S^1$-valued maps in
   multiply connected domains},
   journal={Israel J. Math.},
   volume={160},
   date={2007},
   pages={41--59},
   issn={0021-2172},
   doi={10.1007/s11856-007-0055-1},
}

\bib{Rubinstein_Sternberg_1996}{article}{
   author={Rubinstein, Jacob},
   author={Sternberg, Peter},
   title={Homotopy classification of minimizers of the Ginzburg-Landau
   energy and the existence of permanent currents},
   journal={Comm. Math. Phys.},
   volume={179},
   date={1996},
   number={1},
   pages={257--263},
   issn={0010-3616},
   doi={10.1007/BF02103722},
}

\bib{Sacks_Uhlenbeck_1981}{article}{
   author={Sacks, J.},
   author={Uhlenbeck, K.},
   title={The existence of minimal immersions of $2$-spheres},
   journal={Ann. of Math. (2)},
   volume={113},
   date={1981},
   number={1},
   pages={1--24},
   issn={0003-486X},
   doi={10.2307/1971131},
}

\bib{Sarason_1975}{article}{
   author={Sarason, Donald},
   title={Functions of vanishing mean oscillation},
   journal={Trans. Amer. Math. Soc.},
   volume={207},
   date={1975},
   pages={391--405},
   issn={0002-9947},
   doi={10.2307/1997184},
}
\bib{Schikorra_VanSchaftingen_2020}{article}{
    author={Schikorra, Armin},
    author={Van Schaftingen, Jean},
    title={An estimate of the Hopf degree of fractional Sobolev mappings}, 
    journal={Proc. Amer. Math. Soc.},
    volume={148},
    date={2020}, 
    number={7},
    pages={2877--2891},
    doi={10.1090/proc/15026},
}
\bib{Schoen_Uhlenbeck_1983}{article}{
    author={Schoen, Richard},
    author={Uhlenbeck, Karen},
    title={Boundary regularity and the Dirichlet problem for harmonic maps},
    journal={J. Differential Geom.},
    volume={18},
    date={1983},
    number={2},
    pages={253--268},
    issn={0022-040X},
    doi={10.4310/jdg/1214437663},
}

\bib{Schoen_Wolfson_2001}{article}{
    author={Schoen, R.},
    author={Wolfson, J.},
    title={Minimizing area among Lagrangian surfaces: the mapping problem},
    journal={J. Differential Geom.},
    volume={58},
    date={2001},
    number={1},
    pages={1--86},
    issn={0022-040X},
    doi={10.4310/jdg/1090348282},
}

\bib{Shafrir_2018}{article}{
   author={Shafrir, Itai},
   title={On the distance between homotopy classes in $W^{1/p,p}(\mathbb{S}^1,\mathbb{S}^1)$},
   journal={Confluentes Math.},
   volume={10},
   date={2018},
   number={1},
   pages={125--136},
   doi={10.5802/cml.48},
}

\bib{Struwe_1985}{article}{
   author={Struwe, Michael},
   title={On the evolution of harmonic mappings of Riemannian surfaces},
   journal={Comment. Math. Helv.},
   volume={60},
   date={1985},
   number={4},
   pages={558--581},
   issn={0010-2571},
   doi={10.1007/BF02567432},
}

\bib{VanSchaftingen_2020}{article}{
   author={Van Schaftingen, Jean},
   title={Estimates by gap potentials of free homotopy decompositions of
   critical Sobolev maps},
   journal={Adv. Nonlinear Anal.},
   volume={9},
   date={2020},
   number={1},
   pages={1214--1250},
   issn={2191-9496},
   doi={10.1515/anona-2020-0047},
}

\bib{White_1988}{article}{
   author={White, Brian},
   title={Homotopy classes in Sobolev spaces and the existence of energy
   minimizing maps},
   journal={Acta Math.},
   volume={160},
   date={1988},
   number={1-2},
   pages={1--17},
   issn={0001-5962},
   doi={10.1007/BF02392271},
}

\bib{Whitehead_1947}{article}{
    author={Whitehead, J. H. C.},
    title={An expression of Hopf's invariant as an integral},
    journal={Proc. Nat. Acad. Sci. U. S. A.},
    volume={33},
    date={1947},
    pages={117--123},
    issn={0027-8424},
    doi={10.1073/pnas.33.5.117},
}

\end{biblist}

\end{bibdiv}

\end{document}